\documentclass[11pt]{article}

\usepackage[T1]{fontenc}
\usepackage[margin=1in]{geometry}
\usepackage{setspace}
\usepackage{amsmath,amssymb,amsthm}
\usepackage{bm}
\usepackage{graphicx}
\usepackage{xcolor}
\usepackage{algorithm,algorithmicx}
\usepackage{algpseudocode}
\usepackage{multirow}
\usepackage{natbib}
\bibpunct[, ]{(}{)}{,}{a}{}{,}
\usepackage{tikz}
\usetikzlibrary{arrows,fit,calc,decorations.pathreplacing}
\usepackage{subfigure}
\usepackage{wrapfig}
\usepackage{pdflscape}
\usepackage{hyperref}
\hypersetup{hidelinks}

\theoremstyle{plain}
\newtheorem{theorem}{Theorem}
\newtheorem{proposition}{Proposition}
\newtheorem{lemma}{Lemma}
\newtheorem{corollary}{Corollary}

\newtheorem{assumption}{Assumption}
\theoremstyle{definition}
\newtheorem{definition}{Definition}
\newtheorem{example}{Example}
\theoremstyle{remark}
\newtheorem{remark}{Remark}
\newcommand{\R}{\ensuremath{\mathbb{R}}}
\newcommand{\Z}{\ensuremath{\mathbb{Z}}}

\newcommand{\conv}{\ensuremath{\text{conv}}}

\newcommand*{\defeq}{\stackrel{\text{def}}{=}}

\newcommand{\cl}{\ensuremath{\text{cl}}}
\newcommand{\Diag}{\ensuremath{\text{Diag}}}

\newcommand{\Row}{\ensuremath{\text{Row}}}

\newcommand{\Col}{\ensuremath{\text{Col}}}

\newcommand{\supp}{\mathrm{supp}}

\newcommand{\Nodes}{N}
\newcommand{\Arcs}{A}
\newcommand{\val}[1]{\nu_{#1}}
\newcommand{\valvec}{\bm{\val{}}}
\newcommand{\rootnode}{r}
\newcommand{\terminalnode}{t}

\newcommand{\tail}[1]{\ensuremath{t_{#1}}}
\newcommand{\head}[1]{\ensuremath{h_{#1}}}
\newcommand{\DD}{\ensuremath{\mathcal{D}}}

\newcommand{\A}{\bm{A}}

\newcommand{\Q}{\bm{Q}}
\newcommand{\F}{\bm{F}}

\newcommand{\norm}[1]{\left\lVert#1\right\rVert}

	\tikzstyle{zero arc} = [draw,dashed, line width=0.5pt,->]
	\tikzstyle{one arc} = [draw,line width=0.5pt,->]
	\tikzstyle{zero arc infeasible} = [zero arc,
preaction={
	draw,gray!30!white,-,
	double=gray!30!white,
	double distance=3pt,
}]
	\tikzstyle{one arc infeasible} = [one arc,
preaction={
	draw,gray!30!white,-,
	double=gray!30!white,
	double distance=3pt,
}]
	\tikzstyle{main node} = [circle,fill=gray!50,font=\scriptsize, inner sep=1pt]
	\tikzstyle{text node} = [font=\scriptsize]
	\tikzstyle{arc text} = [font=\scriptsize]

	\tikzstyle{optimal arc} = [-,line width=3pt, color=blue!20!white]
	\tikzstyle{blocked arc} = [-,line width=3pt, color=red]

\begin{document}
	
	\title{Convexification of Mixed-Integer Quadratic Optimization via Decision Diagrams}
\author{%
Soobin Choi$^{1}$ \quad Salar Fattahi$^{2}$ \quad Shaoning Han$^{3}$\\
Andr\'es G\'omez$^{1}$ \quad Leonardo Lozano$^{4}$\\[0.8em]
\small $^{1}$Daniel J. Epstein Department of Industrial and Systems Engineering,\\
\small University of Southern California, Los Angeles, California 90089\\
\small $^{2}$Department of Industrial and Operations Engineering,\\
\small University of Michigan, Ann Arbor, Michigan 48109\\
\small $^{3}$Department of Mathematics, National University of Singapore, Singapore 119076\\
\small $^{4}$Operations, Business Analytics, and Information Systems Department,\\
\small University of Cincinnati, Cincinnati, Ohio 45221\\[0.5em]
\small \texttt{soobinch@usc.edu; fattahi@umich.edu; shaoninghan@nus.edu.sg}\\
\small \texttt{gomezand@usc.edu; leolozano@uc.edu}}
\date{}

\maketitle

\begin{abstract}
We study mixed-integer quadratic optimization (MIQO) problems with indicator variables. We propose a unified framework, based on decision diagrams, that serves both to solve the associated optimization problems and to construct ideal conic quadratic extended formulations of the closure of the convex hull of the underlying mixed-integer set. The construction applies to arbitrary quadratics and to any combinatorial constraints admitting a tractable dynamic programming representation. The resulting diagrams and the ensuing convex hull descriptions are of polynomial size when the quadratic is low-rank, or when the support graph of the Hessian or of its inverse is a tree, recovering and generalizing several results from the literature. For structured sparse and inverse-sparse quadratics, we show that approximate decision diagrams have size linear in the dimension while yielding solutions with arbitrarily low optimality gap. Computational experiments demonstrate the effectiveness of the proposed approach.
\end{abstract}

\noindent\textbf{Keywords:} mixed-integer quadratic optimization; indicator variables; decision diagrams; convexification
\medskip

	\section{Introduction}
	Mixed-integer quadratic optimization (MIQO) problems with indicator variables are ubiquitous in statistics, finance, and control. Formally, given vectors $\bm{c},\bm{d}\in \R^n$ and $\bm{b}\in \R^q$, a matrix $\bm{F}\in \R^{n\times m}$ and a set $Z\subseteq \{0,1\}^n$, we consider MIQO problems of the form
	\begin{subequations}\label{prob:miqp}
		\begin{align}
			\displaystyle\min_{x_0,\bm{x},\bm{z}}\quad & \bm{d}^\top \bm{x} + \bm{c}^\top \bm{z} + x_0\\
			\text{s.t.}\quad & \bm{Ax}+\bm{Gz}\leq \bm{b} \label{eq:miqp_linear}\\
			& (x_0,\bm{x},\bm{z})\in X_{\bm{F},Z}\label{eq:miqp_nonlinear}
		\end{align}
	\end{subequations}
	where $\bm{A}$, $\bm{G}$ and $\bm{b}$ are matrices and vectors of appropriate dimensions and
	$$X_{\bm{F},Z}=\left\{(x_0,\bm{x},\bm{z})\in \R\times\R^{n}\times Z: \|\bm{F^\top x}\|_2^2\leq x_0,\; \bm{x}\circ(\bm{1}-\bm{z})=\bm{0}\right\}.$$
	In words, set $X_{\bm{F},Z}$ collects the epigraph variable $x_0$ of the quadratic $\|\bm{F^\top x}\|_2^2$, the indicator constraints linking each continuous variable $x_i$ to its indicator $z_i$ (so that $x_i=0$ whenever $z_i=0$), and the combinatorial constraints $\bm{z}\in Z$. Throughout the paper, $\bm{1}$ is a vector of ones of appropriate dimension and ``$\circ$'' represents the Hadamard (entrywise) product of vectors; if $Z=\{0,1\}^n$, we write $X_{\bm{F}}$ instead of $X_{\bm{F},Z}$ for simplicity. The indicator constraints $\bm{x}\circ (\bm{1}-\bm{z})=\bm{0}$ are usually linearized as $-M\bm{z}\leq \bm{x}\leq M\bm{z}$, provided a valid value of $M$ can be inferred from constraints \eqref{eq:miqp_linear} or set $X_{\bm{F},Z}$. 
	
Problems of the form \eqref{prob:miqp} arise naturally in finance, statistics, and control. In cardinality-constrained portfolio optimization, factor models decompose covariance into diagonal specific risk and a low-rank common-factor term \citep{bienstock1996computational}; in slowly varying regression and online monitoring, local temporal interactions produce sparse block-banded quadratics \citep{bertsimas2024slowly,gomez2024real}; and in hybrid model predictive control, eliminating the state variables typically produces a dense quadratic with a sparse or block-banded inverse \citep{lee2024convexification}. 

	Solution approaches for mixed-integer optimization (MIO) problems often rely on effective convex relaxations. A natural convex relaxation of \eqref{prob:miqp}, obtained by replacing the indicator constraints with big-M constraints and constraints $\bm{z}\in Z$ with $\bm{z}\in \conv(Z)$, where ``$\conv(Z)$'' denotes the convex hull of $Z$, is typically weak. As a result, branch-and-bound algorithms based on this natural relaxation require substantial enumeration before being able to prove optimality. A stronger relaxation, which is the main focus of this paper, is given by \begin{subequations}\label{eq:miqpRelax}
		\begin{align}
			\displaystyle\min_{x_0,\bm{x},\bm{z}}\quad & \bm{d}^\top \bm{x} + \bm{c}^\top \bm{z} + x_0\\
			\text{s.t.}\quad & \bm{Ax}+\bm{Gz}\leq \bm{b} \label{eq:miqpRelax_linear}\\
			& (x_0,\bm{x},\bm{z})\in \text{cl conv}(X_{\bm{F},Z})\label{eq:miqpRelax_nonlinear}
		\end{align}
	\end{subequations}
	where ``cl'' denotes the closure of a set. Relaxation \eqref{eq:miqpRelax} dominates the natural relaxation in terms of strength, and results in an exact reformulation of the problem if constraints \eqref{eq:miqp_linear} are removed. Unfortunately, since optimization over set $X_{\bm{F},Z}$ is NP-hard in general even if $Z=\{0,1\}^n$ \citep{natarajan1995sparse}, computing the convex hull of $X_{\bm{F},Z}$ is a daunting task: explicit descriptions often require complicated nonlinear functions defined by pieces, an exponential number of constraints, or an exponential number of additional variables.
	Nonetheless, if additional assumptions are imposed on set $X_{\bm{F},Z}$, then optimization over this set may become polynomial-time solvable, and compact conic quadratic descriptions of $\text{cl conv}(X_{\bm{F},Z})$ are sometimes possible. Indeed, several such results exist in the literature, each relying on different convexification techniques.
	
	Convexifications of $X_{\bm{F},Z}$ are available if either the dimension $n$ or the rank of the quadratic form---given by the number of columns $m$ of $\bm{F}$---is fixed. On the one hand, set $X_{\bm{F},Z}$ can be represented as a disjunction of $|Z|\leq 2^n$ convex sets, each corresponding to a possible value of the binary variables. Disjunctive programming techniques \citep{balas1979disjunctive,ceria1999convex} can thus be used to produce representations of $\text{cl conv}(X_{\bm{F},Z})$ with $\mathcal{O}(|Z|)$ additional variables and constraints. The widely used perspective reformulation \citep{frangioni2006perspective,akturk2009strong,gunluk2010perspective} can be interpreted as a special case of disjunctive programming convexifications with $n=1$. \citet{frangioni2020decompositions} computationally tested convexifications of low-dimensional sub-blocks; the approach appears to be effective only for blocks of dimension at most two, as the burden of handling the additional variables and constraints otherwise outweighs the benefits of the stronger relaxations. Exact convex hulls for the case $n=2$ have also been derived in lifted spaces containing the matrix variable $\bm{X}=\bm{xx^\top}$, often incorporating nonnegativity or box constraints on $\bm{x}$ \citep{anstreicher2021quadratic,han2x2,derosa2024explicit}; related lifted inequalities for general MIQO are studied by \citet{dong2013valid}. On the other hand, descriptions of $\text{cl conv}(X_{\bm{F},Z})$ for $m=1$ were first obtained by \citet{atamturk2025rank}; \citet{shafiee2024constrained} and \citet{wei2022ideal} extended these results to more general constraints. Moreover, \citet{Han2024c} show that disjunctive programming techniques can also be applied to settings with $m\geq 2$, deriving compact formulations of $\text{cl conv}(X_{\bm{F},Z})$ of size polynomial in $n$ but exponential in $m$.

	If both $n$ and $m$ are large, disjunctive programming techniques cannot be applied effectively, and obtaining compact convexifications requires exploiting the structure of matrix $\bm{F}$. \citet{liu2023graph} and \citet{lee2024convexification} consider the full-rank case, and derive compact extended formulations for $\text{cl conv}(X_{\bm{F}})$ when matrix $\bm{FF^\top}$ is tridiagonal or its inverse is tridiagonal, respectively. Both papers rely on dynamic programming to solve optimization problems over $X_{\bm{F}}$: the first then uses lifting \citep{richard2010lifting} to derive the convex hull description, whereas the second exploits an underlying polyhedrality of the set $\text{cl conv}(X_{\bm{F}})$ \citep{wei2024convex}. 
	Moreover, \citet{gomez2024real} use decision diagrams and the results of \citet{wei2024convex} to either describe or approximate $\text{cl conv}(X_{\bm{F}})$ when matrix $\bm{FF^\top}$ is full rank and banded. For other matrix classes, \citet{atamturk2018strong, liu2025polyhedral} derive strong formulations for quadratics with Stieltjes matrices.

	\paragraph{Contributions and outline}
	
	We develop a unified, structure-adaptive decision-diagram framework for optimizing over $X_{\bm{F},Z}$ and constructing ideal conic quadratic extended formulations of $\operatorname{cl}\operatorname{conv}(X_{\bm{F},Z})$. Decision diagrams are compact graphical representations of discrete sets derived from dynamic programs \citep{bergman2016,castro2022decision}; our construction extends their recent use for banded MIQO \citep{gomez2024real} to arbitrary matrices $\bm{F}$ and combinatorial sets $Z$. Its states are defined through Gram--Schmidt projections of the rows of $\bm{F}$, and exploitable structure emerges automatically through the merging of equivalent states. The framework yields polynomial-size exact diagrams for low-dimensional, low-rank, tree, and inverse-tree structures, removes prior full-rank requirements, incorporates structured constraints on $\bm{z}$, and can exploit multiple structures simultaneously. Moreover, for sparse or inverse-sparse matrices we construct $\epsilon$-exact diagrams by merging nearly identical states and prove that, under polynomial volume growth and uniformly bounded boundary size, they have size linear in $n$ for fixed $\epsilon>0$. 

	The rest of the paper is organized as follows. In \S\ref{sec:notation} we summarize the notation used in the paper. In \S\ref{sec:preliminaries} we review preliminaries on decision diagrams for discrete optimization and on the Gram-Schmidt process. In \S\ref{sec:DDMIQO} we introduce the proposed decision diagrams for MIQO, in \S\ref{sec:pseudocode} we discuss their construction and implementation, and in \S\ref{sec:convexification} we prove that they yield exact convexifications of $X_{\bm{F},Z}$. In \S\ref{sec:polycase} we show that the resulting diagrams and convexifications are of polynomial size for low-rank, tree and inverse tree structures, and in \S\ref{sec:approximation} we construct approximations resulting in arbitrarily low errors for sparse and inverse-sparse structures. In \S\ref{sec:computations} we illustrate how the approximation theory can inform variable ordering, and in \S\ref{sec:realData} we present computational results with real data.
	All proofs of propositions, theorems, lemmas, and corollaries in \S\ref{sec:DDMIQO} onwards are provided in the online companion (\S\ref{sec:online-companion-proofs}).

	\section{Notation}\label{sec:notation}
	
	In this section, we collect the notation used throughout the paper. For ease of reference, the notation is grouped according to the section in which it is first used.
	
	\paragraph{General notation}
	We denote by $\bm{0}_n$ and $\bm{1}_n$ the $n$-dimensional vectors of zeros and ones, and by $\bm{0}_{n\times m}$ the $n\times m$ matrix of zeros. When the dimensions are clear from the context, the subscripts may be omitted. Let $\bm{e_i}\in \R^n$ denote the $i$-th elementary vector of $\R^n$. For integers $j\geq i\geq 0$, let $[i:j]:=\{i,i+1,\ldots,j-1,j\}$. In particular, if $i=1$, we may simply write $[j]:=[1:j]$. Given a vector $\bm{z}\in \R^n$, $\Diag(\bm{z})\in \R^{n\times n}$ is the square matrix whose diagonal elements correspond to $\bm{z}$.
	For two matrices (or vectors) $\bm{V}$ and $\bm{W}$ of the same dimensions, the operation $\bm{V} \circ \bm{W}$ denotes their Hadamard (entrywise) product, that is, $(\bm{V} \circ \bm{W})_{ij} = V_{ij} W_{ij}$. Moreover, $\norm{\bm{V}}_{2,\infty}$ denotes the maximum of the row-wise two-norms of a matrix $\bm{V}$. Given two sets $L_1,L_2\subseteq \R^n$, we let $L_1+L_2\defeq \left\{\bm{x_1}+\bm{x_2}: \bm{x_1}\in L_1,\; \bm{x_2}\in L_2\right\}$ denote their Minkowski sum. Given a set $L\subseteq \R^n$, ``$\conv(L)$'' denotes its convex hull and ``$\cl(L)$'' its closure.
	
	\paragraph{Submatrices}
	We now define the notation used for submatrices. Consider a matrix $\bm{F}\in \R^{n\times m}$. Given any subset $S\subseteq [n]$, $\bm{F_S}\in \R^{|S|\times m}$ is the matrix obtained by deleting rows with indices not in $S$; more generally, given subsets $S\subseteq [n]$ and $S'\subseteq [m]$, $\bm{F_{S,S'}}\in \R^{|S|\times |S'|}$ is the submatrix obtained by additionally deleting columns with indices not in $S'$. We let $\bm{F_\downarrow}\defeq\bm{F_{[2:n]}}$ denote the matrix obtained by deleting the first row of $\bm{F}$. Similarly, given a vector $\bm{z}\in \{0,1\}^n$, we let $\bm{F_z}\defeq \bm{F_S}$ where $S=\left\{i\in [n]: z_i=1\right\}$ is the support of $\bm{z}$. Given two vectors $\bm{z_1}\in\{0,1\}^k$ and $\bm{z_2}\in \{0,1\}^{n-k}$ with $1\leq k<n$, we let $\bm{F_{(z_1,z_2)}}\defeq\bm{F_z}$ where $\bm{z^\top}=(\bm{z_1^\top}\;\bm{z_2^\top})$, that is, we treat vectors in the subscript as row vectors for notational convenience.  Moreover, given $\bm{z}\in \{0,1\}^k$ with $1\leq k<n$, we let $\bm{F_z}\defeq\bm{F_{(z,0_{n-k})}}$ (in words, if the dimension of the binary vector is less than the number of rows of $\bm{F}$, then the submatrix operation is applied to the first $k$ rows and the rest are discarded). Throughout the paper, the submatrix operation takes precedence over other operations; for example, $\bm{F_z}^\top=\left(\bm{F_z}\right)^\top$ and $\bm{F_{S,S}^{-1}}=\left(\bm{F_{S,S}}\right)^{-1}$. Moreover, when the matrix to be indexed is itself the result of an operation or already carries a subscript, we enclose it in parentheses: for example, $\left(\bm{Q_{T,T}^{-1}}\right)_{S}$ denotes the submatrix, with rows indexed by $S$, of the inverse of $\bm{Q_{T,T}}$. We note that all these operations extend naturally to vectors by treating them as matrices with a single column. 
	
	\begin{example}
		Consider matrix $\bm{F}=\begin{pmatrix} 1& 2\\ 3&4\\5&6\end{pmatrix}$ and vectors $\bm{z_1^\top}=\begin{pmatrix}1&0\end{pmatrix}$, $\bm{z_2^\top}=\begin{pmatrix}1\end{pmatrix}$ and $\bm{z^\top}=\begin{pmatrix}1&0&1\end{pmatrix}$. Then
		{\footnotesize\begin{align*}
				&\bm{F_z}=\bm{F_{(z_1,z_2)}}=\begin{pmatrix}1&2\\5&6
				\end{pmatrix},\; \bm{F_{[2]}}=\bm{F_{(z_2,z_1)}}=\begin{pmatrix}1&2\\3&4
				\end{pmatrix},\;
\bm{F_\downarrow}=\bm{F_{[2:3]}}=\begin{pmatrix}3&4\\5&6
				\end{pmatrix},\;
				\bm{F_{z_1}}=\bm{F_{z_2}}=\bm{F_{\{1\}}}=\begin{pmatrix}1&2\end{pmatrix}.	\hfill\blacksquare
		\end{align*}}
	
	\end{example}
	
	A closely related object is $\Diag(\bm{z})\bm{F}\in \R^{n\times m}$, the matrix whose $i$-th row is $\bm{F_{\{i\}}}$ if $z_i=1$ and $\bm{0}_m^\top$ if $z_i=0$. In other words, $\Diag(\bm{z})\bm{F}$ is related to $\bm{F_z}$, with the rows outside the support of $\bm{z}$ padded with zeros rather than removed. Matrix $\Diag(\bm{z})\bm{F}$ is used extensively, in part to avoid dimension mismatches. An important identity, used throughout the paper, is that if $\bm{x}\circ (\bm{1}-\bm{z})=\bm{0}$ holds, then $\bm{x}=\bm{z}\circ\bm{x}=\Diag(\bm{z})\bm{x}$ and
	$$\bm{F^\top x}=\bm{F^\top}(\bm{z}\circ \bm{x})=\bm{F^\top}\Diag(\bm{z}) \bm{x}=\left(\Diag(\bm{z})\bm{F}\right)^\top \bm{x}.$$

	\paragraph{Support graphs (\S\ref{sec:polycase})}
	Given a symmetric matrix $\Q \in \mathbb{R}^{n \times n}$, its \emph{support graph}, denoted by $\supp(\Q)$, is defined as the simple unweighted graph with vertex set $[n]$, where an edge $(i,j)$ exists if and only if $\Q_{ij} \neq 0$ for $i \neq j$. For simplicity, and by a slight abuse of notation, we identify $\supp(\bm{Q})$ with the set of edges defining the graph, and thus write $(i,j)\in \supp(\bm{Q})$ to denote that $(i,j)$ is an edge of $\supp(\bm{Q})$.
	For any subset \(S \subseteq [n]\), define \(\Gamma_{\Q}(S)\) as the set of neighbors of \(S\) in $\supp(\Q)$, i.e., \(\Gamma_{\Q}(S) := \{\, j \in [n] \setminus S \;|\; \exists\, i \in S \text{ with } (i,j) \in \supp(\Q) \,\}\).
	
	\paragraph{Spectral quantities and graph distances (\S\ref{sec:approximation})}
	We denote by $\mu_{\min}(\Q)$ and $\mu_{\max}(\Q)$ the smallest and largest eigenvalues of $\Q$, by $\kappa_2\defeq \mu_{\max}(\Q)/\mu_{\min}(\Q)$ its condition number, and we define the \emph{decay rate}
	$\sigma\defeq \frac{\sqrt{\kappa_2}-1}{\sqrt{\kappa_2}+1}\in [0,1).$
	
	For any node $u$ of a graph $G$ with vertex set $[n]$ and any integer $\eta\geq 1$, let $d_G$ denote the shortest-path distance in $G$ and refer to $\{v\in[n]:d_G(u,v)\leq\eta\}$ as the \emph{$\eta$-neighborhood} of $u$. Let $\Delta_{G,u,\eta} \defeq \left|\{v\in[n]:d_G(u,v)\leq\eta\}\right|$ and $\Delta_{G,\eta} \defeq \max_{u \in [n]} \Delta_{G,u,\eta}$; the latter denotes the maximum size of an $\eta$-neighborhood in $G$. Since we exclusively work with the graphs $G=\supp(\Q)$ or $G=\supp(\Q^{-1})$, we omit the subscript $G$ whenever it is clear from the context.

	\section{Preliminaries}\label{sec:preliminaries}
	
	In this section, we review preliminary material for the paper. \S\ref{sec:ddComb} reviews decision diagrams for combinatorial problems, and \S\ref{sec:linearAlgebra} covers fundamental topics on linear algebra that will be used throughout the paper.
	
	\subsection{Decision diagrams for discrete feasible spaces}\label{sec:ddComb}
	
	Decision diagrams have primarily been used to solve purely combinatorial problems, that is, problems of the form 
	\begin{align}
		\min_{\bm{z}\in \{0,1\}^n}\;\bm{c^\top z}\; \text{  s.t. }\; \bm{z}\in Z.\label{eq:combinatorial}
	\end{align}
	Naturally, problem \eqref{prob:miqp} is a generalization of \eqref{eq:combinatorial}. In this paper, we assume that there is a tractable decision-diagram-based approach for the simpler problem \eqref{eq:combinatorial}, as we discuss next.

	A binary decision diagram $\DD = (\Nodes,\Arcs,\valvec)$ for \eqref{eq:combinatorial} encodes the set of points $Z \subseteq \{0,1\}^n$ as a directed acyclic graph with node set $\Nodes$ and arc set $A \subseteq \Nodes \times \Nodes$. The set $\Nodes$ is partitioned into $n+1$ layers $\Nodes^1, \dots, \Nodes^{n+1}$, where $\Nodes^1 = \{\rootnode\}$ contains a \emph{root} node $\rootnode$ and $\Nodes^{n+1}$ contains a set of \emph{terminal nodes}. Arcs $a \in A$ connect nodes in consecutive layers and represent value assignments for the components of $\bm{z} \in Z$. Let $\ell(a) \in \{1,\dots,n\}$ be the layer from which arc $a \in A$ emanates. We denote the tail node of an arc by $\tail{a} \in \Nodes^{\ell(a)}$, the head node by $\head{a} \in \Nodes^{\ell(a)+1}$, and its value assignment by $\val{a} \in \{0,1\}$.
	
	A decision diagram $\DD$ encodes $Z$ through paths from the root $\rootnode$ to a terminal node $\terminalnode\in \Nodes^{n+1}$, as follows. Each component of $\bm{z} \in Z$ corresponds to a layer in $\DD$. An arc-specified $\rootnode-\terminalnode$ path $(a_1, \dots, a_n)$, where $\head{a_i} = \tail{a_{i+1}}$ for $i=1,\dots,n-1$, encodes the vector $\bm{z} = (\val{a_1}, \dots, \val{a_n})^\top$. Assigning length $c_{\ell(a)}\nu_a$ to each arc $a\in \Arcs$, a decision diagram is \emph{exact} if every point $\bm{z} \in Z$ maps to exactly one $\rootnode$--$\terminalnode$ path with length equal to $\bm{c^\top z}$ and vice versa. 
	
	Decision diagrams are commonly generated from the state-transition graph of a dynamic programming representation of the space \citep{bergman2016}, consisting of a \emph{state space} and a \emph{transition function}. A dynamic program sequentially sets values for the decision variables, storing in each state the relevant information about the partial solution obtained after fixing a subset of the variables. The transition function establishes how the system transitions between states. A key consideration when representing discrete feasible spaces is whether a partial solution can be extended to obtain a complete feasible solution and in how many ways this can be done. 
	
	\begin{definition}[Feasible completion set] \label{def:completionSet}
		Given any partial solution $\bm{v}\in \{0,1\}^{\ell-1}$, define the \underline{feasible completion set associated with $\bm{v}$} as
		$$Z^\ell(\bm{v})\defeq\left\{\bm{w}\in \{0,1\}^{n+1-\ell}: (\bm{v},\bm{w})\in Z\right\}.$$
		By convention we let $Z^1=Z$.\hfill$\blacksquare$
	\end{definition}
	
	A state $\rho^\ell$ at decision stage $\ell$, corresponding to a partial solution $\bm{v}\in \{0,1\}^{\ell-1}$, should ideally characterize $Z^\ell(\bm{v})$, and partial solutions $\bm{v_1}$ and $\bm{v_2}$ for which $Z^\ell(\bm{v_1}) = Z^\ell(\bm{v_2})$ should ideally correspond to the same state. Feasible completion sets can be efficiently computed or approximated for several combinatorial structures (e.g., knapsack constraints); however, characterizing them is challenging in general as even determining if $Z^\ell(\bm{v})\neq\emptyset$ is NP-hard. For spaces defined by linear constraints $\bm{Gz}\leq \bm{b}$, a natural state representation for partial solution $\bm{v}\in \{0,1\}^{\ell-1}$ uses a vector that stores the partial contributions to the left-hand side of the constraints at each decision stage, that is $\rho^\ell(\bm{v})=\bm{G_{[n],[\ell-1]}}\bm{v}$. 
	An illustrative knapsack construction, including its state-transition graph and corresponding decision diagram, is provided in the online companion (\S\ref{sec:knapsack-example}).

	We assume that there is a dynamic programming representation of $Z$ of reasonable size with states $\rho^\ell$, in which the state variable is a vector that records the state of the system after having assigned values to $\ell-1$ variables. In particular, all partial solutions corresponding to the same state must have the same completion set. %
    The state space $S_Z$ includes an initial state $\rho^1$ and an infeasible state ``\text{inf}'', the set of terminal states $T_Z$ includes the infeasible state and every state for which all variables have been already decided, and transition function $\phi_Z: (S_Z \setminus T_Z) \times \{0,1\} \rightarrow S_Z$ is defined as $\rho^{\ell+1} = \phi_Z(\rho^{\ell},\bar{z}_{\ell})$, where 
	\begin{equation}\label{eq:transitionCombinatorial}\phi_Z(\rho^{\ell},\bar{z}_{\ell})=\begin{cases}
			\text{inf}&\text{if } Z^\ell((\bm{v},\bar{z}_{\ell})) = \emptyset \ \text{ for } \bm{v} \text{ corresponding to } \rho^\ell \\
			\psi(\rho^{\ell},\bar{z}_{\ell})&\text{otherwise}\end{cases}\end{equation} 
	and $\psi$ is an update function that guarantees that all partial solutions corresponding to a state must share the same completion set.

	Decision diagrams have been used to model independent set, set covering and matching problems \citep{bergman2016discrete}, sequencing and routing constraints \citep{cire2013multivalued,castro2020mdd}, scheduling applications \citep{cire2019network}, graph coloring \citep{vanhoeve2022graph}, two-stage stochastic and bilevel problems \citep{lozano2018Binary,lozano2025}, and nonlinear objectives over discrete feasible regions \citep{bergman2018discrete,bergman2021dd,castro2020combinatorial}; see \citet{castro2022decision} for a recent survey. 
	Finally, we note that for the unconstrained case where the feasible space is given by  $Z=\{0,1\}^n$, we can define the state space as a singleton $S_{\{0,1\}^n}=\{0\}$ by considering transition function $\phi(\rho^{\ell},\bar{z}_{\ell})=0$, which results in a diagram with only one node per layer.

	\subsection{Linear algebra review}
	\label{sec:linearAlgebra}
	In this section we review the fundamental concepts of linear algebra used throughout the paper, chiefly the Gram-Schmidt process for constructing orthonormal bases of linear subspaces of $\R^m$; we refer the reader to \citet{golub2013matrix} for a comprehensive treatment.
	
	\begin{definition}[Span, column space and row space]
		$\bullet$ Given a set of $m$-dimensional vectors $\{\bm{v_1},\dots,\bm{v_n}\}$, the \underline{span} is the set of all linear combinations,
		$$\text{span}\{\bm{v_1},\dots,\bm{v_n}\}\defeq\left\{\bm{y}\in \R^m: \exists \bm{x}\in \R^n \text{such that }\bm{y}=\sum_{i=1}^n \bm{v_i}x_i\right\}.$$
		$\bullet$ Given a matrix $\bm{F}\in \R^{n\times m}$, the \underline{column space} is the span of all column vectors and the \underline{row space} is the span of all row vectors, that is,
		\begin{align*}\Col(\bm{F})&\defeq\left\{\bm{y}\in \R^n:\exists \bm{x}\in \R^m\text{ s.t. }\bm{y}=\bm{Fx}\right\},\; \text{and}\\
			\Row(\bm{F})&\defeq\Col(\bm{F^\top})=\left\{\bm{y}\in \R^m:\exists \bm{x}\in \R^n\text{ s.t. }\bm{y}=\bm{ F^\top x}\right\}.\blacksquare
		\end{align*}
	\end{definition}
	Observe that for any $\bm{z}\in \{0,1\}^n$, $\Row(\bm{F_z})=\Row\left(\Diag(\bm{z})\bm{F}\right)$ because matrix $\bm{F_z}$ differs from $\Diag(\bm{z})\bm{F}$ only because of the presence of zero rows.
	
	\begin{definition}[Orthonormal basis] An orthonormal basis of a subspace $L\subseteq \R^m$ is a set of orthonormal vectors that span $L$. We say that a matrix $\bm{B}\in \R^{\text{dim}(L)\times m}$ defines an \underline{orthonormal basis} of $L$ if its row vectors are an orthonormal basis of $L$, that is, if $L=\Row(\bm{B})$ and $\bm{BB^\top}=\bm{I}$.$\hfill\blacksquare$
	\end{definition}

	\begin{definition}[Projection onto a subspace] Given a subspace $L\subseteq \R^m$ with orthonormal basis given by $V=\left\{\bm{v_1},\dots,\bm{v_q}\right\}$, the orthogonal projection matrix onto $L$ is $\bm{G}\defeq \sum_{i=1}^q\bm{v_iv_i^\top}$. Equivalently, if $\bm{B}$ defines an orthonormal basis of $L$, then the \underline{orthogonal projection matrix onto $L$} is $\bm{G}=\bm{B^\top B}.\hfill\blacksquare$
	\end{definition}
	Observe that $\bm{G}$ indeed satisfies the two properties of orthogonal projection matrices, that is, $\bm{G^\top}=\bm{G}$ and $\bm{G^2}=\bm{G}$. Given any $\bm{v}\in \R^m$, $\bm{Gv}$ is the closest point to $\bm{v}$ in $L$ (using the usual Euclidean distance). More importantly for this paper, $(\bm{I}-\bm{G})\bm{v}$ is the projection of $\bm{v}$ onto the orthogonal complement $L^\perp$ of $L$, that is, $L^\perp\defeq\{\bm{y}\in \R^m: \bm{y^\top x}=0,\;\forall \bm{x}\in L\}=\{\bm{y}\in \R^m: \bm{By}=\bm{0}\}$. Note that if $\bm{v}\in \text{span}(L)$, then $\bm{Gv}=\bm{v}$ and $(\bm{I}-\bm{G})\bm{v}=\bm{0}$.
	Finally, we point out that although the basis of a subspace is not unique, orthogonal projection matrices are unique and do not depend on the choice of basis. As Proposition~\ref{prop:closedFormProjection} shows, an orthogonal projection matrix
	onto $\Row(\bm{F})$ admits an explicit closed form description if $\bm{F}$ has full row rank. 
	
	\begin{proposition}\label{prop:closedFormProjection}
		Given matrix $\bm{F}\in \R^{n\times m}$, if $\bm{FF^\top}$ is invertible, then the orthogonal projection matrix onto $\Row(\bm{F})$ is $\bm{G}=\bm{F^\top}\left(\bm{FF^\top}\right)^{-1}\bm{F}$.
	\end{proposition}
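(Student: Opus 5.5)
The plan is to verify directly that $\bm{G}=\bm{F^\top}\left(\bm{FF^\top}\right)^{-1}\bm{F}$ coincides with $\bm{B^\top B}$ for a matrix $\bm{B}$ defining an orthonormal basis of $\Row(\bm{F})$. Since orthogonal projection matrices do not depend on the choice of basis, it suffices to exhibit one such $\bm{B}$. We use the symmetric square root. Invertibility of $\bm{FF^\top}$ means $\bm{FF^\top}$ is symmetric positive definite, because it is always positive semidefinite. Hence it has a symmetric positive definite inverse square root $\bm{M}\defeq\left(\bm{FF^\top}\right)^{-1/2}$. Define $\bm{B}\defeq \bm{MF}\in\R^{n\times m}$.

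\textbf{Step 1: orthonormal rows.} Compute
$$\bm{BB^\top}=\bm{M}\bm{FF^\top}\bm{M}=\bm{I}_n,$$
using that $\bm{M}$ is symmetric and commutes with $\bm{FF^\top}$.

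\textbf{Step 2: correct row space.} Since $\bm{M}$ is invertible, $\Row(\bm{B})=\Row(\bm{MF})=\Row(\bm{F})$. Indeed, $\bm{B^\top x}=\bm{F^\top}(\bm{Mx})$, and $\bm{x}\mapsto\bm{Mx}$ is a bijection of $\R^n$. Thus $\bm{B}$ defines an orthonormal basis of $\Row(\bm{F})$, and also $\dim\Row(\bm{F})=n$.

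\textbf{Step 3: conclude.} By the definition of the projection matrix,
$$\bm{G}=\bm{B^\top B}=\bm{F^\top M^2F}=\bm{F^\top}\left(\bm{FF^\top}\right)^{-1}\bm{F}.$$

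The only mildly delicate point is relying on the stated uniqueness of projection matrices, which the paper asserts without proof. If I wanted to avoid that dependence, I would instead check directly that $\bm{G}$ has the defining properties. First, $\bm{G}$ is symmetric. Second, $\bm{G^2}=\bm{G}$, since the middle factor gives $\bm{FF^\top}\left(\bm{FF^\top}\right)^{-1}=\bm{I}$. Third, $\bm{G}$ fixes every $\bm{F^\top x}$: $\bm{GF^\top x}=\bm{F^\top x}$. Fourth, $\bm{G}$ annihilates $\Row(\bm{F})^\perp=\{\bm{y}:\bm{Fy}=\bm{0}\}$. These properties characterize the orthogonal projection onto $\Row(\bm{F})$ uniquely, which is the same conclusion. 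No step is a genuine obstacle.
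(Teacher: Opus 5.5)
The paper gives no proof of this statement. It appears in the linear-algebra review of the preliminaries as a standard fact, with a pointer to Golub and Van Loan, and the proofs in the online companion only start with the results on decision diagrams for MIQO. Your argument is correct and complete, so the useful comparison is between your two routes.

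\textbf{The square-root route.} Its advantage is that it matches the paper's own \emph{definition} of the projection matrix, namely $\bm{G}=\bm{B^\top B}$ for any $\bm{B}$ whose orthonormal rows span $\Row(\bm{F})$. The only outside ingredient is the asserted basis-independence of that definition.

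The symmetric square root is more than you need. Any invertible $\bm{M}$ with $\bm{M^\top M}=\left(\bm{FF^\top}\right)^{-1}$ works, and then you do not need $\bm{M}$ to commute with $\bm{FF^\top}$. The same computation gives $\bm{MFF^\top M^\top}=\bm{I}$ and $\bm{B^\top B}=\bm{F^\top}\left(\bm{FF^\top}\right)^{-1}\bm{F}$. A natural choice is the inverse of a Cholesky factor of $\bm{FF^\top}$, which is exactly the triangular factor that Gram--Schmidt on the rows of $\bm{F}$ produces. That version fits the paper's Gram--Schmidt viewpoint more closely.

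\textbf{The fallback verification.} This is the more self-contained route, and only your third and fourth properties are needed. A linear map that is the identity on $\Row(\bm{F})$ and zero on $\Row(\bm{F})^\perp$ is uniquely determined, because $\R^m=\Row(\bm{F})\oplus\Row(\bm{F})^\perp$. Symmetry and idempotence then follow automatically, so they need not be checked separately. This route also makes the basis-independence remark in the paper unnecessary.
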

	
	\subsubsection*{The Gram-Schmidt process}
	
	Given a set of $m$-dimensional vectors $\{\bm{f_1},\dots,\bm{f_n}\}\subset \mathbb{R}^m$, the Gram-Schmidt process recursively constructs an orthonormal basis for $\text{span}\{\bm{f_1},\dots,\bm{f_n}\}$. Let $\bm{F}\in \R^{n\times m}$ be the matrix whose $i$-th row corresponds to $\bm{f_i}$. The Gram-Schmidt process relies on the orthogonal decomposition theorem, which states that given a vector $\bm{v}\in \R^m$ and subspace $L\subseteq \R^m$, this vector can be written uniquely as a sum of a vector in $L$ and a vector in the orthogonal complement $L^\perp$. Letting $\bm{G}$ be the projection matrix onto $L$, this theorem corresponds to the identity $\bm{v}=\bm{Gv}+(\bm{I}-\bm{G})\bm{v}$. It follows that since $\bm{Gv}\in L$, we have that $L+\text{span}\{\bm{v}\}=L+\text{span}\{(\bm{I}-\bm{G})\bm{v}\}$. Proposition~\ref{prop:GramSchmidt} states this property more formally.
	\begin{proposition}\label{prop:GramSchmidt}
		Given any $\ell\in [2:n+1]$, let $\bm{G^\ell}$ be the orthogonal projection matrix onto $\Row\left(\bm{F_{[\ell-1]}}\right)$. Moreover, let $\bm{F_\perp^\ell}\defeq\bm{F_{[\ell:n]}}\left(\bm{I}-\bm{G^\ell}\right)$ be the matrix whose $i$-th row corresponds to the projection of vector $\bm{f_{\ell+i-1}}$ onto the orthogonal complement of $\Row\left(\bm{F_{[\ell-1]}}\right)$. Then
		$$\Row(\bm{F})=\Row\left(\bm{F_{[\ell-1]}}\right)+\Row\left(\bm{F_\perp^\ell}\right).$$
	\end{proposition}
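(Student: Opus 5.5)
The plan is to prove the two inclusions separately, using only the decomposition $\bm{v}=\bm{G^\ell v}+(\bm{I}-\bm{G^\ell})\bm{v}$ together with the fact that $\bm{G^\ell}$ maps into $L\defeq\Row\left(\bm{F_{[\ell-1]}}\right)$. Throughout, fix $\ell\in[2:n+1]$. In the boundary case $\ell=n+1$, the matrix $\bm{F_{[\ell:n]}}$ has no rows, so we read $\Row\left(\bm{F_\perp^\ell}\right)=\{\bm{0}\}$; the claim then reduces to $\Row(\bm{F})=\Row\left(\bm{F_{[n]}}\right)$, which is trivial. So assume $\ell\leq n$.

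\emph{Inclusion ``$\supseteq$''.} First, $L\subseteq\Row(\bm{F})$, because the rows of $\bm{F_{[\ell-1]}}$ are rows of $\bm{F}$. Next, each row of $\bm{F_\perp^\ell}$ has the form $(\bm{I}-\bm{G^\ell})\bm{f_j}=\bm{f_j}-\bm{G^\ell f_j}$ for some $j\in[\ell:n]$. Here $\bm{f_j}\in\Row(\bm{F})$, and $\bm{G^\ell f_j}\in L\subseteq\Row(\bm{F})$, since $\bm{G^\ell}=\bm{B^\top B}$ with $\Row(\bm{B})=L$. Hence every row of $\bm{F_\perp^\ell}$ lies in $\Row(\bm{F})$. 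Because $\Row(\bm{F})$ is a subspace, it contains the span of these rows and is closed under Minkowski sums, so $L+\Row\left(\bm{F_\perp^\ell}\right)\subseteq\Row(\bm{F})$.

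\emph{Inclusion ``$\subseteq$''.} The right-hand side is a sum of two subspaces, hence itself a subspace. It therefore suffices to show that it contains every row $\bm{f_j}$ of $\bm{F}$.
\begin{itemize}
\item For $j\leq \ell-1$, we have $\bm{f_j}\in L$ directly.
\item For $j\geq \ell$, write $\bm{f_j}=\bm{G^\ell f_j}+(\bm{I}-\bm{G^\ell})\bm{f_j}$. The first term lies in $L$, and the second term is a row of $\bm{F_\perp^\ell}$.
\end{itemize}
Taking linear combinations then gives $\Row(\bm{F})\subseteq L+\Row\left(\bm{F_\perp^\ell}\right)$.

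There is no real obstacle here. The only points needing care are (i) that $\bm{G^\ell}\bm{v}\in L$ for every $\bm{v}$, which follows from the factorization $\bm{G^\ell}=\bm{B^\top B}$ in the definition of the projection matrix, and (ii) the degenerate case $\ell=n+1$ handled above. The latter is relevant when $\Row\left(\bm{F_{[\ell-1]}}\right)$ is trivial or when row-index ranges are empty.
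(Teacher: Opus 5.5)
Your proposal is correct and is essentially the paper's own argument. The paper gives no separate formal proof; it justifies the statement via $\bm{v}=\bm{G^\ell v}+(\bm{I}-\bm{G^\ell})\bm{v}$ with $\bm{G^\ell v}\in L=\Row\left(\bm{F_{[\ell-1]}}\right)$, so that $L+\text{span}\{\bm{v}\}=L+\text{span}\{(\bm{I}-\bm{G^\ell})\bm{v}\}$ for each remaining row, and your two inclusions formalize exactly this. The one step you leave implicit is identifying the row $\bm{f_j^\top}(\bm{I}-\bm{G^\ell})$ of $\bm{F_\perp^\ell}$ with the vector $(\bm{I}-\bm{G^\ell})\bm{f_j}$, which uses $(\bm{G^\ell})^\top=\bm{G^\ell}$ (immediate from $\bm{G^\ell}=\bm{B^\top B}$).
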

	
	The Gram-Schmidt process sequential applies the following inductive property.
	\begin{proposition}\label{prop:GSInduction}
		Suppose that $V=\{\bm{v_1},\dots,\bm{v_q}\}$ is an orthonormal basis for $\Row(\bm{F_{[\ell-1]}})$ (implying $\bm{G^\ell}=\sum_{i=1}^q\bm{v_{i}v_i^\top}$), let matrix $\bm{H}=\bm{F_{[\ell:n]}}\left(\bm{I}-\bm{G^\ell}\right)$, and let $\bm{\bar v^\top}=\bm{H_{\{1\}}}$ be the first row of $\bm{H}$. Then:
		\begin{enumerate}
			\item If $\bm{\bar v}=\bm{0}$, then $V$ is an orthonormal basis for $\Row(\bm{F_{[\ell]}})$ and $$\bm{H_\downarrow}=\bm{F_{[\ell+1:n]}}\left(\bm{I}-\bm{G^{\ell+1}}\right).$$
			\item If $\bm{\bar v}\neq \bm{0}$, then $V\cup\{\bm{\bar v}/\|\bm{\bar v}\|_2\}$ is an orthonormal basis for $\Row(\bm{F_{[\ell]}})$ and $$\bm{H_\downarrow}\left(\bm{I}-\bm{\bar v\bar v^\top}/\|\bm{\bar v}\|_2^2\right)=\bm{F_{[\ell+1:n]}}\left(\bm{I}-\bm{G^{\ell+1}}\right).$$
		\end{enumerate}
	\end{proposition}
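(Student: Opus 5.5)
The plan is to treat the two cases separately, using only the definition of orthogonal projection matrices, their uniqueness (independence of the chosen orthonormal basis), and Proposition~\ref{prop:GramSchmidt}. Throughout, write $\bm{f_i^\top}$ for the $i$-th row of $\bm{F}$. Since $\bm{H}=\bm{F_{[\ell:n]}}(\bm{I}-\bm{G^\ell})$, we have $\bm{\bar v}=(\bm{I}-\bm{G^\ell})\bm{f_\ell}$, the component of $\bm{f_\ell}$ orthogonal to $\Row(\bm{F_{[\ell-1]}})$. Likewise, $\bm{H_\downarrow}=\bm{F_{[\ell+1:n]}}(\bm{I}-\bm{G^\ell})$.

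\emph{Case 1} ($\bm{\bar v}=\bm{0}$). Here $\bm{f_\ell}=\bm{G^\ell f_\ell}\in\Row(\bm{F_{[\ell-1]}})$. By Proposition~\ref{prop:GramSchmidt}, applied to $\bm{F_{[\ell]}}$ in place of $\bm{F}$, we get $\Row(\bm{F_{[\ell]}})=\Row(\bm{F_{[\ell-1]}})+\text{span}\{\bm{\bar v}\}=\Row(\bm{F_{[\ell-1]}})$. Hence $V$ is still an orthonormal basis. Because the projection matrix onto a subspace is unique, $\bm{G^{\ell+1}}=\bm{G^\ell}$, and the identity for $\bm{H_\downarrow}$ follows immediately from the expression above.

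\emph{Case 2} ($\bm{\bar v}\neq\bm{0}$). The vector $\bm{u}\defeq\bm{\bar v}/\|\bm{\bar v}\|_2$ has unit norm. It is orthogonal to each $\bm{v_i}$, since $\bm{v_i}\in\Row(\bm{F_{[\ell-1]}})$ and $\bm{\bar v}$ lies in the orthogonal complement; explicitly, $\bm{G^\ell\bar v}=\bm{G^\ell}(\bm{I}-\bm{G^\ell})\bm{f_\ell}=\bm{0}$ by idempotence. So $V\cup\{\bm{u}\}$ is orthonormal. Its span is $\Row(\bm{F_{[\ell-1]}})+\text{span}\{\bm{\bar v}\}$, which equals $\Row(\bm{F_{[\ell]}})$ by the orthogonal decomposition $\bm{f_\ell}=\bm{G^\ell f_\ell}+\bm{\bar v}$ (again Proposition~\ref{prop:GramSchmidt} on $\bm{F_{[\ell]}}$).

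By uniqueness of projection matrices, $\bm{G^{\ell+1}}=\sum_i\bm{v_iv_i^\top}+\bm{uu^\top}=\bm{G^\ell}+\bm{\bar v\bar v^\top}/\|\bm{\bar v}\|_2^2$. Expanding the product gives
\begin{align*}
(\bm{I}-\bm{G^\ell})\left(\bm{I}-\bm{\bar v\bar v^\top}/\|\bm{\bar v}\|_2^2\right)
&=\bm{I}-\bm{G^\ell}-\bm{\bar v\bar v^\top}/\|\bm{\bar v}\|_2^2+\bm{G^\ell\bar v\bar v^\top}/\|\bm{\bar v}\|_2^2\\
&=\bm{I}-\bm{G^{\ell+1}},
\end{align*}
where the last term vanishes because $\bm{G^\ell\bar v}=\bm{0}$. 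Left-multiplying by $\bm{F_{[\ell+1:n]}}$ yields the claimed identity for $\bm{H_\downarrow}$.

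The only delicate step is justifying that $\bm{G^\ell\bar v}=\bm{0}$. This single fact drives both the orthonormality of the extended basis and the cancellation of the cross term, and it follows directly from $\bm{G^\ell}$ being symmetric and idempotent. The rest is bookkeeping with the row-deletion notation $\bm{H_\downarrow}$.
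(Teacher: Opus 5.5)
Your proof is correct and complete. The paper gives no separate proof of this proposition (it is treated as a standard Gram--Schmidt fact, with a pointer to Golub and Van Loan), but your argument is the one implicit in the text just before it: the orthogonal decomposition $\bm{f_\ell}=\bm{G^\ell f_\ell}+(\bm{I}-\bm{G^\ell})\bm{f_\ell}$ gives $\Row(\bm{F_{[\ell]}})=\Row(\bm{F_{[\ell-1]}})+\text{span}\{\bm{\bar v}\}$, orthogonal projection matrices do not depend on the chosen basis, and $\bm{G^\ell\bar v}=\bm{0}$ removes the cross term.
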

	
	The modified Gram-Schmidt process is shown in Algorithm~\ref{alg:MGS}. The inductive steps in Proposition~\ref{prop:GSInduction} are repeated in lines~\ref{line:remove}--\ref{line:endIf}. The output of this algorithm is a set of vectors $V$ that forms an orthonormal basis for $\Row(\bm{F})$. %
	
	\begin{algorithm}[H]
		\singlespacing
		\caption{Modified Gram-Schmidt process}
		\label{alg:MGS}
		\begin{algorithmic}[1]
			\Require Matrix $\bm{F}\in \R^{n\times m}$.
			\Ensure Set $V$ containing the orthonormal vectors.
			
			\State $V=\emptyset$  \Comment{Set of orthonormal vectors, initially empty}
			
			\State $\bm{H}\gets \bm{F}$ \Comment{Vectors to be processed}

			\For{$\ell=1,\dots,n$}\label{line:loop}
			
			\State $\bm{v^\top}\gets \bm{H_{\{1\}}},\; \bm{H}\gets \bm{H_\downarrow}$\Comment{Removes the first row $\bm{v}$ of $\bm{H}$}\label{line:remove}
			
			\If{$\bm{v}\neq \bm{0}$} \Comment{$\bm{v}=\bm{0}$ if and only if $\bm{f_\ell}\in \text{span}(\bm{V})$}\label{line:isNotZero}
			
			\State $\bm{v}\gets \bm{v}/\|\bm{v}\|_2$ \Comment{Normalizes vector $\bm{v}$}
			
			\State $\bm{H}\gets \bm{H}\left(\bm{I}-\bm{vv^\top}\right)$
			\Comment{Projects row vectors of $\bm{H}$ onto $\text{span}\{\bm{v}\}^\perp$}\label{line:projection}
			
			\State $V\gets V\cup\{\bm{v}\}$ \Comment{Adds vector to set of orthonormal vectors}\label{line:addBasis}
			\EndIf\label{line:endIf}

			\EndFor
		\end{algorithmic}
	\end{algorithm}

	\section{Decision diagrams for MIQO}\label{sec:DDMIQO}
	
	In this section we consider optimization of a linear function over set $X_{\bm{F},Z}$:
	\begin{equation}\label{eq:optimizationX}
			\min_{\substack{\bm{x}\in \R^n,\bm{z}\in \{0,1\}^n\\\bm{x}\circ (\bm{1}-\bm{z})=\bm{0}}}\;\bm{d^\top x}+\bm{c^\top z}+\left\|\bm{F^\top x}\right\|_2^2
	\end{equation}
	Problem \eqref{eq:optimizationX} corresponds to problem \eqref{prob:miqp} without the linear constraints \eqref{eq:miqp_linear} and with the epigraph variable $x_0$ projected out. We discuss identification of whether \eqref{eq:optimizationX} has an optimal solution or is unbounded, and then develop the decision diagrams used to compute optimal solutions. 

	\subsection{Detecting unboundedness}
	We first discuss unboundedness of problem \eqref{eq:optimizationX}. Note that if there exists $\bm{\bar z}\in Z$ and $\bm{\bar x}\in \R^n$ such that $\bm{F^\top}\Diag(\bm{\bar z})\bm{\bar x}=\bm{0}$ and $\bm{d^\top}\Diag(\bm{\bar z})\bm{\bar x}=-1$, then the problem is unbounded by letting $\bm{z}=\bm{\bar z}$ and $\bm{x}=\lambda (\bm{\bar x}\circ \bm{\bar z})$ with $\lambda\to\infty$. In fact, problem \eqref{eq:optimizationX} is unbounded if and only if such a pair $(\bm{\bar z},\bm{\bar x})$ exists and, by Farkas' lemma, no such pair exists precisely when
	\begin{equation}\label{eq:FarkasUnbounded}
		\forall \bm{z}\in Z,\; \exists\bm{\delta_z}\in \R^m \text{ such that }  \Diag(\bm{z})\left(\bm{F\delta_z}-\bm{d}\right)=\bm{0}.
	\end{equation}
	
	Depending on the structure of set $Z$, detecting whether \eqref{eq:optimizationX} is unbounded can be a difficult task. Indeed, suppose that $Z=\left\{\bm{z}\in \{0,1\}^n: \sum_{i=1}^{n-1}z_i\leq k, z_n=1\right\}$. Then the conditions $\bm{F^\top}\Diag(\bm{\bar z})\bm{\bar x}=\bm{0}$ and $\bm{d^\top}\Diag(\bm{\bar z})\bm{\bar x}=-1$ amount to asking whether there exists a $k$-sparse vector satisfying a system of linear equalities, which is precisely the decision variant of the ``Minimum relevant variables in linear system'', known to be NP-complete \citep{garey1979computers}.
	
	In most of the situations we are concerned with, unboundedness is not a concern because the problem is known to be bounded a priori, or the constraints are such that unbounded situations are easy to identify. In particular, if $\bm{d}\in \Col(\bm{F})$, then problem \eqref{eq:optimizationX} is not unbounded: indeed, $\bm{d}\in \Col(\bm{F})$ if and only if there exists $\bm{\delta}\in \R^m$ such that $\bm{F\delta}=\bm{d}$, and this single vector $\bm{\delta}$ serves as a certificate that all conditions \eqref{eq:FarkasUnbounded} are satisfied.
	Under Assumption~\ref{ass:unbounded} below, this condition would also be necessary for problem~\eqref{eq:optimizationX} not to be unbounded. %
	\begin{assumption}\label{ass:unbounded}
		Matrix $\bm{F}$ and constraints $Z$ are such that problem \eqref{eq:optimizationX} is not unbounded if and only if there exists $\bm{\delta}\in \R^m$ such that $\bm{F\delta}=\bm{d}$.	
	\end{assumption}
	Assumption~\ref{ass:unbounded} is trivially satisfied if $\bm{1}\in Z$, since conditions \eqref{eq:FarkasUnbounded} with $\bm{z}=\bm{1}$ require the existence of this vector. The condition is also satisfied if $\bm{F}$ has full row rank and thus $\bm{FF^\top}\succ \bm{0}$, since in this case the problems are never unbounded and $\bm{\delta}=\bm{F^\top}\left(\bm{FF^\top}\right)^{-1}\bm{d}$ satisfies Assumption~\ref{ass:unbounded}. 
    
	\subsection{Towards a dynamic program for optimization}
	
	Now suppose that Assumption~\ref{ass:unbounded} holds and problem \eqref{eq:optimizationX} is not unbounded. In this case we find that 
	\begin{align}
		&\min_{\substack{\bm{x}\in \R^n,\bm{z}\in Z\\ \bm{x}\circ(\bm{1}-\bm{z})=\bm{0}}}\;\bm{d^\top x}+\bm{c^\top z}+\left\|\bm{F^\top x}\right\|_2^2 \notag\\
		=&\min_{\substack{\bm{x}\in \R^n,\bm{z}\in Z\\ \bm{x}\circ(\bm{1}-\bm{z})=\bm{0}}}\;\bm{\delta^\top F^\top x}+\bm{c^\top z}+\left\|\bm{F^\top x}\right\|_2^2\tag{$\because$ Assumption~\ref{ass:unbounded}}\notag\\
		=&\min_{\substack{\bm{y}\in \R^m,\bm{z}\in Z\\\bm{y}\in \Row(\Diag(\bm{z})\bm{F})}}\;\bm{\delta^\top y}+\bm{c^\top z}+\left\|\bm{y}\right\|_2^2\tag{$\because\; \bm{y}=\bm{F^\top x}=\left(\Diag(\bm{z})\bm{F}\right)^\top\bm{x}$}\notag\\
		=&\min_{\substack{\bm{y}\in \R^m,\bm{z}\in Z\\\bm{y}\in \Row(\bm{F_z})}}\;\bm{\delta^\top y}+\bm{c^\top z}+\left\|\bm{y}\right\|_2^2\label{eq:optXSimple}
	\end{align}
	where the last equality holds since $\bm{F_z}$ and $\Diag(\bm{z})\bm{F}$ have the same non-zero rows.
	To define a dynamic programming algorithm, suppose that the first $\ell-1$ coordinates of $\bm{z}$ are fixed, that is, $\bm{z_{[\ell-1]}}=\bm{v}$ for some $\bm{v}\in \{0,1\}^{\ell-1}$. In addition to the completion sets given in Definition~\ref{def:completionSet}, we provide two additional quantities related to the state of solving optimization problem \eqref{eq:optimizationX}.

	\begin{definition}[Partial projection matrix onto complement] \label{def:proj}
		Given any partial solution $\bm{v}\in \{0,1\}^{\ell-1}$, let $\bm{B^\ell}(\bm{v})\in \R^{q\times m}$, with $q=\text{rank}(\bm{F_v})$, define an orthonormal basis of $\Row(\bm{F_v})$, and
		define the \underline{projection matrix associated with $\bm{v}$} as
		$$\bm{G_\perp}^\ell(\bm{v})\defeq \bm{I}_{m\times m}-\bm{B^\ell}(\bm{v})^\top\bm{B^\ell}(\bm{v}).$$
		By convention we let $\bm{G_\perp^1}=\bm{I}$. \hfill $\blacksquare$
	\end{definition}

	\begin{definition}[Partial cost]\label{def:functionG}
		\leavevmode\par
		Given any partial solution $\bm{v}\in \{0,1\}^{\ell-1}$, define the \underline{objective cost associated with $\bm{v}$} as 
		\begin{equation}\label{eq:defG}
			\begin{aligned}
				g^\ell(\bm{v})\defeq\bm{c_{[\ell-1]}^\top v}+\min_{\bm{y}\in \Row\left(\bm{F_v}\right)}\;&\bm{\delta^\top y}+\left\|\bm{y}\right\|_2^2
			\end{aligned}
		\end{equation}
		By convention we let $g^1=0$. \hfill $\blacksquare$
	\end{definition}

	Observe that function $g^\ell$ corresponds to the best cost that can be attained assuming that $\bm{z_{[\ell:n]}}=\bm{0}$ and constraints $\bm{z}\in Z$ are ignored. Note that problem \eqref{eq:optXSimple} corresponds to $\min_{\bm{z}\in Z}g^{n+1}(\bm{z})$. As Lemma~\ref{prop:Gexplicit} shows, since $g^\ell$ corresponds to the minimization of a relatively simple convex function, it can be computed in closed form.
	
	\begin{lemma}\label{prop:Gexplicit}
		Letting $\bm{B^\ell}(\bm{v})$ be any orthonormal basis of $\Row(\bm{F_v})$, function $g^\ell(\bm{v})$ satisfies
		$$g^\ell(\bm{v})=\bm{c_{[\ell-1]}^\top v}-\frac{1}{4}\bm{\delta^\top B^\ell}(\bm{v})^\top \bm{B^\ell}(\bm{v})\bm{\delta}.$$
	\end{lemma}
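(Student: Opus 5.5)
The plan is to parametrize the subspace $\Row(\bm{F_v})$ through the orthonormal basis and reduce the minimization in \eqref{eq:defG} to an unconstrained strictly convex quadratic that can be solved in closed form. Let $q=\text{rank}(\bm{F_v})$ and $\bm{B}=\bm{B^\ell}(\bm{v})\in\R^{q\times m}$. Since the rows of $\bm{B}$ span $\Row(\bm{F_v})$, every $\bm{y}\in\Row(\bm{F_v})$ can be written as $\bm{y}=\bm{B^\top u}$ for some $\bm{u}\in\R^q$. Because $\bm{BB^\top}=\bm{I}$, this representation is unique, and the map $\bm{u}\mapsto\bm{B^\top u}$ is a bijection between $\R^q$ and $\Row(\bm{F_v})$. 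The minimization over $\bm{y}$ therefore equals a minimization over $\bm{u}\in\R^q$.

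Substituting, we get $\|\bm{y}\|_2^2=\bm{u^\top BB^\top u}=\|\bm{u}\|_2^2$ and $\bm{\delta^\top y}=(\bm{B\delta})^\top\bm{u}$. The inner problem thus becomes $\min_{\bm{u}\in\R^q}\,(\bm{B\delta})^\top\bm{u}+\|\bm{u}\|_2^2$. This function is strictly convex and separable. Setting its gradient to zero gives $\bm{u}^*=-\tfrac12\bm{B\delta}$, with optimal value $-\tfrac14\|\bm{B\delta}\|_2^2=-\tfrac14\bm{\delta^\top B^\top B\delta}$. Adding the constant term $\bm{c_{[\ell-1]}^\top v}$ then yields the claimed formula. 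Equivalently, $\bm{y}^*=-\tfrac12\bm{B^\top B\delta}$ is the projection of $-\tfrac12\bm{\delta}$ onto $\Row(\bm{F_v})$.

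Two remaining points need care.

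\emph{Independence of the basis.} The expression involves $\bm{B^\top B}$, which is the orthogonal projection matrix onto $\Row(\bm{F_v})$. As noted after the definition of projection matrices, this matrix does not depend on the choice of basis. Hence the formula holds for any orthonormal basis, as the statement asserts. Alternatively, this is immediate because the left-hand side $g^\ell(\bm{v})$ is itself basis-free.

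\emph{Degenerate cases.} If $q=0$, i.e.\ $\bm{F_v}$ has no nonzero rows or $\bm{v}=\bm{0}$, the subspace is $\{\bm{0}\}$ and the minimum equals $0$. The formula agrees with this, since $\bm{B}$ is an empty matrix and $\bm{B^\top B}=\bm{0}$. The case $\ell=1$ matches the convention $g^1=0$.

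No step is a genuine obstacle. The only subtlety is justifying that the parametrization $\bm{y}=\bm{B^\top u}$ covers the subspace exactly, which follows directly from the definition of an orthonormal basis.
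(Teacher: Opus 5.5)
Your proposal is correct and follows the paper's proof essentially line by line. The paper likewise writes $\bm{y}=\bm{B^\ell}(\bm{v})^\top\bm{\alpha}$, uses $\bm{B^\ell}(\bm{v})\bm{B^\ell}(\bm{v})^\top=\bm{I}$ to reduce to the unconstrained problem $\min_{\bm{\alpha}}\bm{\delta^\top B^\ell}(\bm{v})^\top\bm{\alpha}+\|\bm{\alpha}\|_2^2$, and sets the gradient to zero to obtain $\bm{\alpha}=-\frac{1}{2}\bm{B^\ell}(\bm{v})\bm{\delta}$; your remarks on basis independence and the degenerate case $q=0$ are extra care that the paper leaves implicit.
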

	
	Function $g^\ell$ completely ignores constraints $\bm{z}\in Z$ and assumes the variables $\bm{z_{[\ell:n]}}$ are fixed. Removing these restrictions,
	the best possible objective that can be achieved by changing the remaining variables $\bm{z_{[\ell:n]}}$ and $\bm{y}$ while fixing $\bm{z_{[\ell-1]}}=\bm{v}$ is
	\begin{equation}\label{eq:optFixedV}
		\begin{aligned}
			h^\ell(\bm{ v})\defeq 
			\bm{c_{[\ell-1]}^\top v}+\min_{\substack{\bm{w}\in \{0,1\}^{n+1-\ell}\\\bm{y}\in \R^m}}\;&\bm{c_{[\ell:n]}^\top w}+\bm{\delta^\top y}+\left\|\bm{y}\right\|_2^2\\
			\text{ s.t. }\;&\bm{w}\in Z^\ell(\bm{v}),\;\bm{y}\in \Row\left(\bm{F_{(v,w)}}\right)
		\end{aligned}
	\end{equation}
	where we use the convention that $h^\ell\left(\bm{ v}\right)=\infty$ when constraints  $\bm{w}\in Z^\ell(\bm{v})$ cannot be satisfied for the fixed choice of $\bm{v}$. We now show in Proposition~\ref{prop:hDecomposition} that function $h^\ell$ can be decomposed as the sum of two simpler functions, one corresponding to the direct value incurred by fixing $\bm{z_{[\ell-1]}}=\bm{v}$, and the other corresponding to future value by optimally choosing  variables $\bm{z_{[\ell:n]}}$.

	\begin{proposition}\label{prop:hDecomposition}
		For $\ell\in [2:n]$, function $h^\ell$ can be rewritten as
		{\small\begin{equation}\label{eq:hDecomposition}
				\begin{aligned}
					h^\ell(\bm{v})=
					g^\ell(\bm{v})+\min_{\substack{\bm{w}\in\{0,1\}^{n+1-\ell}\\
							\bm{y}\in \R^m}}\;&\bm{c_{[\ell:n]}^\top w}+\bm{\delta^\top y}+\left\| \bm{y}\right\|_2^2\\
					\text{s.t.}\;&\bm{w}\in Z^\ell(\bm{v}),\;\bm{y}\in\Row\left(\Diag(\bm{w})\bm{F_{[\ell:n]}}\bm{G_\perp^\ell}(\bm{v})\right),
				\end{aligned}
		\end{equation}}
		where $\bm{G_\perp^\ell}(\bm{v})$ and $g^\ell(\bm{v})$ are defined in Definition~\ref{def:proj} and \ref{def:functionG}, respectively.
	\end{proposition}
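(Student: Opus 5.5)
The approach is to split the subspace $\Row(\bm{F_{(v,w)}})$ orthogonally and then split the inner minimization to match. Write $L_1\defeq\Row(\bm{F_v})$, and let $\bm{B}=\bm{B^\ell}(\bm{v})$ be its orthonormal basis, so that $\bm{G_\perp}\defeq\bm{G_\perp^\ell}(\bm{v})=\bm{I}-\bm{B^\top B}$ is the orthogonal projection onto $L_1^\perp$. The first step applies Proposition~\ref{prop:GramSchmidt}, with $\bm{F}$ replaced by the matrix whose rows are those of $\bm{F_v}$ followed by those of $\Diag(\bm{w})\bm{F_{[\ell:n]}}$; zero rows do not change the row space. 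This gives
$$\Row\left(\bm{F_{(v,w)}}\right)=L_1+L_2,\qquad L_2\defeq\Row\left(\Diag(\bm{w})\bm{F_{[\ell:n]}}\bm{G_\perp}\right)\subseteq L_1^\perp.$$
The sum is therefore an orthogonal direct sum. Concretely, every $\bm{y}$ in the feasible set decomposes uniquely as $\bm{y}=\bm{y_1}+\bm{y_2}$ with $\bm{y_1}=\bm{B^\top B y}\in L_1$ and $\bm{y_2}=\bm{G_\perp y}\in L_2$. Conversely, every such pair $(\bm{y_1},\bm{y_2})$ gives a feasible $\bm{y}$.

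The second step splits the objective. Orthogonality gives $\|\bm{y}\|_2^2=\|\bm{y_1}\|_2^2+\|\bm{y_2}\|_2^2$, and the linear term separates as $\bm{\delta^\top y}=\bm{\delta^\top y_1}+\bm{\delta^\top y_2}$. The objective in \eqref{eq:optFixedV} therefore becomes
$$\left[\bm{c_{[\ell-1]}^\top v}+\bm{\delta^\top y_1}+\|\bm{y_1}\|_2^2\right]+\left[\bm{c_{[\ell:n]}^\top w}+\bm{\delta^\top y_2}+\|\bm{y_2}\|_2^2\right].$$
The first bracket depends only on $\bm{y_1}\in L_1$, and $L_1$ does not depend on $\bm{w}$. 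Its minimum over $\bm{y_1}\in L_1$ is exactly $g^\ell(\bm{v})$ by Definition~\ref{def:functionG}, and this minimum is finite by Lemma~\ref{prop:Gexplicit}.

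The third step finishes the argument. For each fixed $\bm{w}\in Z^\ell(\bm{v})$, the joint minimization over $(\bm{y_1},\bm{y_2})\in L_1\times L_2$ separates into two independent minimizations. Taking the outer minimum over $\bm{w}$, with $g^\ell(\bm{v})$ a constant, yields \eqref{eq:hDecomposition}. If $Z^\ell(\bm{v})=\emptyset$, both sides equal $\infty$, so that case also holds.

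The main point requiring care is the first step: confirming that $L_2$ is exactly the span of the projected rows with $w_i=1$, independently of zero rows. This is Proposition~\ref{prop:GramSchmidt} applied to $\bm{F_{(v,w)}}$. The reordering of rows is harmless, because the first rows of that matrix are precisely $\bm{F_v}$.
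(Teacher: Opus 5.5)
Your proof is correct and follows essentially the same route as the paper's: use Proposition~\ref{prop:GramSchmidt} to write $\Row(\bm{F_{(v,w)}})$ as the orthogonal sum of $\Row(\bm{F_v})$ and $\Row(\Diag(\bm{w})\bm{F_{[\ell:n]}}\bm{G_\perp^\ell}(\bm{v}))$, split $\|\bm{y}\|_2^2$ by orthogonality, and decouple the problem into the $\bm{y_1}$-part, which equals $g^\ell(\bm{v})$, and the $(\bm{w},\bm{y_2})$-part. Your additional remarks on uniqueness of the decomposition, on zero rows, and on the case $Z^\ell(\bm{v})=\emptyset$ are valid details that the paper leaves implicit.
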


Intuitively, the second term in \eqref{eq:hDecomposition} is the best potential additional improvement over $g^\ell(\bm{v})$ obtained by changing variables $\bm{z_{[\ell:n]}}$. 
	Obviously, this minimization problem in \eqref{eq:hDecomposition} is hard to compute, due both to the presence of combinatorial constraints $\bm{w}\in Z^\ell(\bm{v})$ and non-trivial linking constraints $\bm{y}\in\Row\left(\Diag(\bm{w})\bm{F_{[\ell:n]}}\bm{G_\perp^\ell}(\bm{v})\right)$. However, the key property we exploit in dynamic programming
    is that the inner minimization problem depends on $\bm{v}$ only through sets $Z^\ell(\bm{v})$ and matrix $\bm{F_{[\ell:n]}}\bm{G_\perp^\ell}(\bm{v})$. Thus, if several different partial solutions result in the same set and matrix, the inner minimization in \eqref{eq:hDecomposition} needs to be computed only once, reducing the overall complexity.

	\subsection{Decision diagram}\label{sec:dd}
	We are now ready to define a decision diagram for tackling problem \eqref{eq:optimizationX}.  The state variables at layer $\ell$ will have two components: a component representing the completion set $Z^\ell(\bm{v})$, and a $(n+1-\ell)\times m$ matrix representing $\bm{F_{[\ell:n]}}\bm{G_\perp^\ell}(\bm{v})$ where $\bm{v}\in \{0,1\}^{\ell-1}$ is a vector denoting the partial assignment of the indicator variables. 
	
	More formally, we assume the existence of a dynamic programming representation of $Z$ of reasonable size, involving a transition function $\phi_Z$ as defined in \eqref{eq:transitionCombinatorial}. We define the initial state $s^1=\{\rho^1,\bm{F}\}$ and the transition functions as follows. 
	Given any state $s^\ell=\{\rho^\ell,\bm{H^\ell}\}$ where $\bm{H^\ell}\in \R^{(n+1-\ell)\times m}$, let $\bm{u^\top}=\bm{H_{\{1\}}^\ell}$ denote the first row of $\bm{H^\ell}$, let $s^{\ell+1}=\phi(s^\ell,\bar z_\ell)$ where $\bar z_\ell$ is the value assigned to variable $z_\ell$ and
	\begin{align}\label{eq:transition}
		\phi(s^\ell,\bar z_\ell)=\begin{cases}\text{inf}&\text{if }\phi_Z(\rho^\ell,\bar z_\ell)=\text{inf}\\
			\{\psi(\rho^\ell,\bar z_\ell),\bm{H_{\downarrow}^\ell}\}&\text{if }\bar z_\ell=0\text{, or }\bar z_\ell=1 \text{ and }\bm{u}=\bm{0}\\
			\{\psi(\rho^\ell,\bar z_\ell),\bm{H_{\downarrow}^\ell}\left(\bm{I}-\bm{uu^\top}/\|\bm{u}\|_2^2\right)\}&\text{if }\bar z_\ell=1 \text{ and }\bm{u}\neq\bm{0}.
		\end{cases}
	\end{align}
	Observe that the first component of the transition function \eqref{eq:transition} is used to track the status of completion set $Z^\ell(\bm{v})$; the second component replicates the steps of the Gram--Schmidt process, lines~\ref{line:remove}-\ref{line:endIf} of Algorithm~\ref{alg:MGS} when $\bar z_\ell=1$, and corresponds to the status of matrix $\bm{F_{[\ell:n]}}\bm{G_\perp^\ell}(\bm{v})$. 
	According to Proposition~\ref{prop:hDecomposition}, these two components are sufficient to compute the inner minimization of function $h^\ell(\bm{v})$. In the unconstrained case with $Z=\{0,1\}^n$, we can ignore the first component and write more compactly
	\begin{align}\label{eq:transitionUnconstrained}
		\phi(s^\ell,\bar z_\ell)=\begin{cases}
			\{\bm{H_{\downarrow}^\ell}\}&\text{if }\bar z_\ell=0\text{, or }\bar z_\ell=1 \text{ and }\bm{u}=\bm{0}\\
			\{\bm{H_{\downarrow}^\ell}\left(\bm{I}-\bm{uu^\top}/\|\bm{u}\|_2^2\right)\}&\text{if }\bar z_\ell=1 \text{ and }\bm{u}\neq\bm{0}.
		\end{cases}
	\end{align}

	\begin{definition}[Exact decision diagram] \label{def:exactDD} Given $\bm{F}$ and $Z$, define the state transition graph $\mathcal{G}=(N,A)$ by applying the state transition function $\phi$ in \eqref{eq:transition} recursively, starting from the initial state $s^1$. Define the exact decision diagram $\mathcal{D}=(N,A,\bm{\nu},\bm{u})$ for $X_{\bm{F},Z}$ as the resulting graph in which each node corresponds to a state, there is a directed arc $(s^i,\phi(s^i, \bar z))$ for  $\bar z\in \{0,1\}$, the value assignment function $\bm{\nu}:A\to \{0,1\}$ is such that $\nu_{(s^i,\phi(s^i, \bar z))}=\bar z$, and the transition vectors $\bm{u}:A\to \R^m$ are such that $\bm{u_a}=\bm{0}$ if $\nu_a=0$ or $\bm{u}=\bm{0}$, and $\bm{u_a}=\bm{u}/\|\bm{u}\|_2$ otherwise, where $\bm{u^\top}=\bm{H_{\{1\}}^\ell}$ and $\bm{H^\ell}$ is the matrix stored by the tail node $t_{a}$.\hfill$\blacksquare$
	\end{definition}

    \begin{wrapfigure}{r}{0.31\textwidth}
			\vspace{-0.5\baselineskip}
			\centering
			\includegraphics[width=\linewidth,trim={2.8cm 20.1cm 11.5cm 2.6cm},clip]{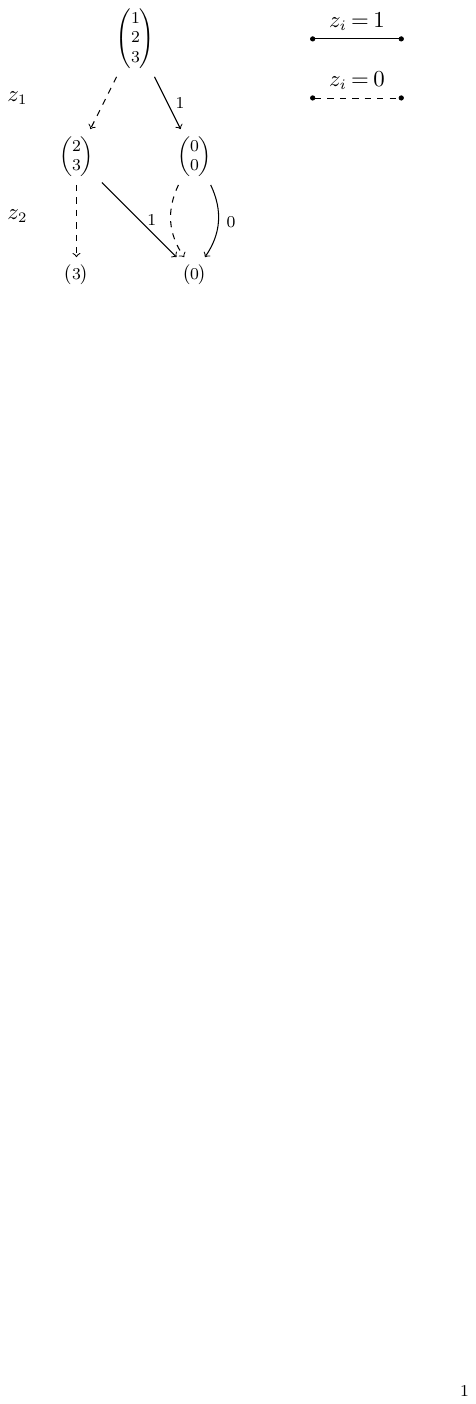}
			\caption{Exact DD for the rank-one instance.}
			\label{fig:rank-one}
			\vspace{-1\baselineskip}
		\end{wrapfigure}
	Decision diagrams constructed according to Definition~\ref{def:exactDD} are in general of exponential size even if $Z=\{0,1\}^n$. Nevertheless, we now provide three examples where some reduction in the size of the decision diagram occurs. In \S\ref{sec:polycase} we indeed formally show that the examples illustrate settings where the decision diagrams can be shown to be of polynomial size, as shown in \S\ref{sec:polycase}. 
	
	\begin{example}[Rank-one case]\label{ex:rank-one}
		Consider the setting where $Z=\{0,1\}^n$ and $\bm{F}=\begin{pmatrix}1\\2\\3\end{pmatrix}$, in which case matrix $\bm{Q}=\bm{FF^\top}=\begin{pmatrix}1 & 2& 3\\
			2&4&6\\
			3&6&9\end{pmatrix}$ has rank one.
Figure~\ref{fig:rank-one} shows the resulting exact decision diagram, where each node is represented by its state and transition vectors are shown next to bold arcs (other arcs, corresponding to decisions $z_i=0$ have zero transition vectors). Since $F$ has $m=1$ columns, the state matrices are vectors and the transition vectors are scalars. In this setting, every arc with value assignment $\nu_a=1$ leads to an absorbing state $\bm{H^\ell}=\bm{0}$, resulting in a substantial reduction of the number of states. 
        \hfill $\blacksquare$
	\end{example}
	
	\begin{example}[Tridiagonal case] \label{ex:tridiagonal}
    
		Consider the setting where $Z=\{0,1\}^n$ and $\bm{F}=\begin{pmatrix}1& 0 &0\\-1&1&0\\0&-1&1\end{pmatrix}$, in which case matrix $\bm{Q}=\bm{FF^\top}=\begin{pmatrix}1 & -1& 0\\
			-1&2&-1\\
			0&-1&1\end{pmatrix}$ is tridiagonal. 	\end{example}

		\begin{wrapfigure}{r}{0.40\textwidth}
			\vspace{-0.8\baselineskip}
			\centering
			\includegraphics[width=\linewidth,trim={2.8cm 18.1cm 8.4cm 2.6cm},clip]{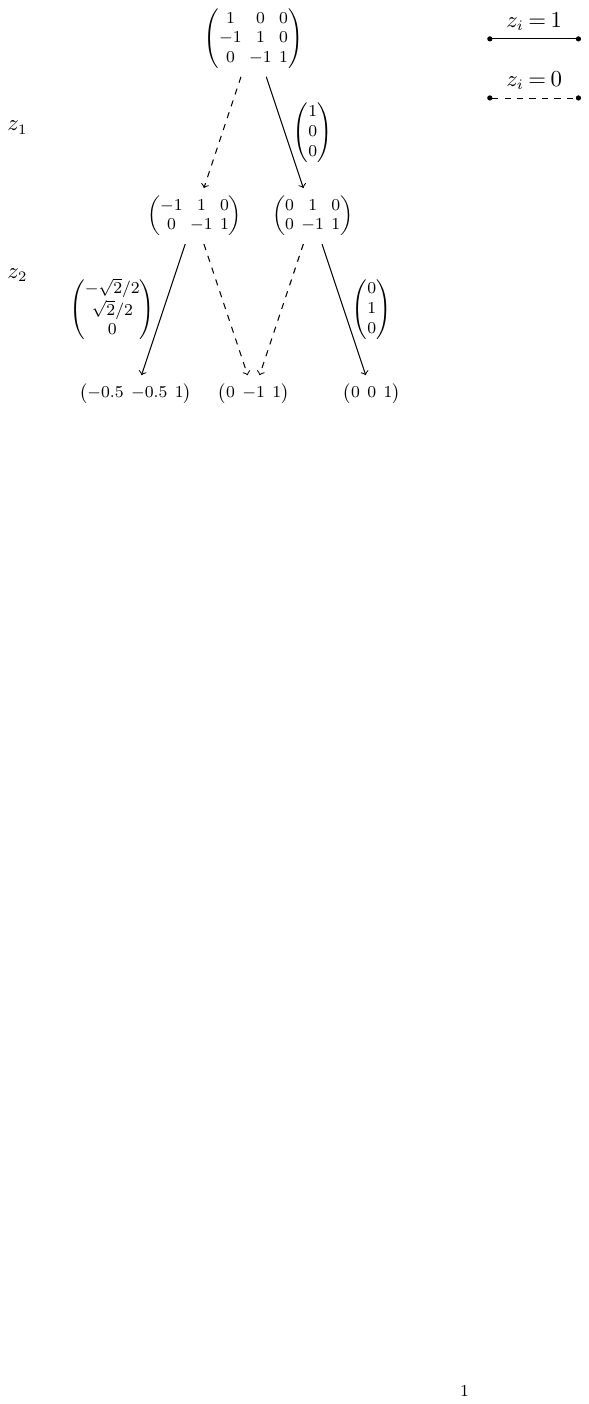}
			\caption{Exact DD for the tridiagonal instance.}
			\label{fig:tridiag}
			\vspace{-1\baselineskip}
		\end{wrapfigure}
	\noindent Figure~\ref{fig:tridiag} illustrates this sparsity pattern: the support of any row vector $\bm{f_i}$ does not overlap with the support of rows $\bm{f_j}$ with $j\geq i+2$ (in this example with $n=3$, the supports of first and third rows do not overlap). These vectors are thus orthogonal, and the Gram-Schmidt orthogonalization associated with setting $z_\ell=1$ affects only the following row. As a consequence, any decision $z_\ell=0$ reverts to the base state $\bm{H^{\ell+1}}=\bm{F_{[\ell+1:n]}}$, regardless of state $\bm{H^\ell}$, resulting in a compression of the diagram. %
\hfill $\blacksquare$

	\begin{example}[Inverse tridiagonal case]\label{ex:inverseTridiagonal}
		Consider the setting where $Z=\{0,1\}^n$ and $\bm{F}=\begin{pmatrix}1& 1/2 &1/4\\0&1&1/2\\0&0&1\end{pmatrix}$, in which case matrix $\bm{Q}=\bm{FF^\top}\approx\begin{pmatrix}1.31 & 0.63& 0.25\\
			0.63&1.25&0.50\\
			0.25&0.50&1\end{pmatrix}$ is full rank (shown with two digits of precision), and $\bm{Q^{-1}}=\begin{pmatrix}1 & -1/2& 0\\
			-1/2&5/4&-1/2\\
			0&-1/2&5/4\end{pmatrix}$ is tridiagonal. Figure~\ref{fig:invTridiag} depicts the decision diagram for $X_{\bm{F}}$. The decision diagram in this setting is a mirror image of the tridiagonal case discussed in Example~\ref{ex:tridiagonal}, with a state reset associated with decisions $z_\ell=1$.\hfill $\blacksquare$
		\begin{figure}[!ht]
			\begin{center}
				\includegraphics[width=0.7\textwidth, trim={2cm 18cm 5.5cm 2cm},clip]{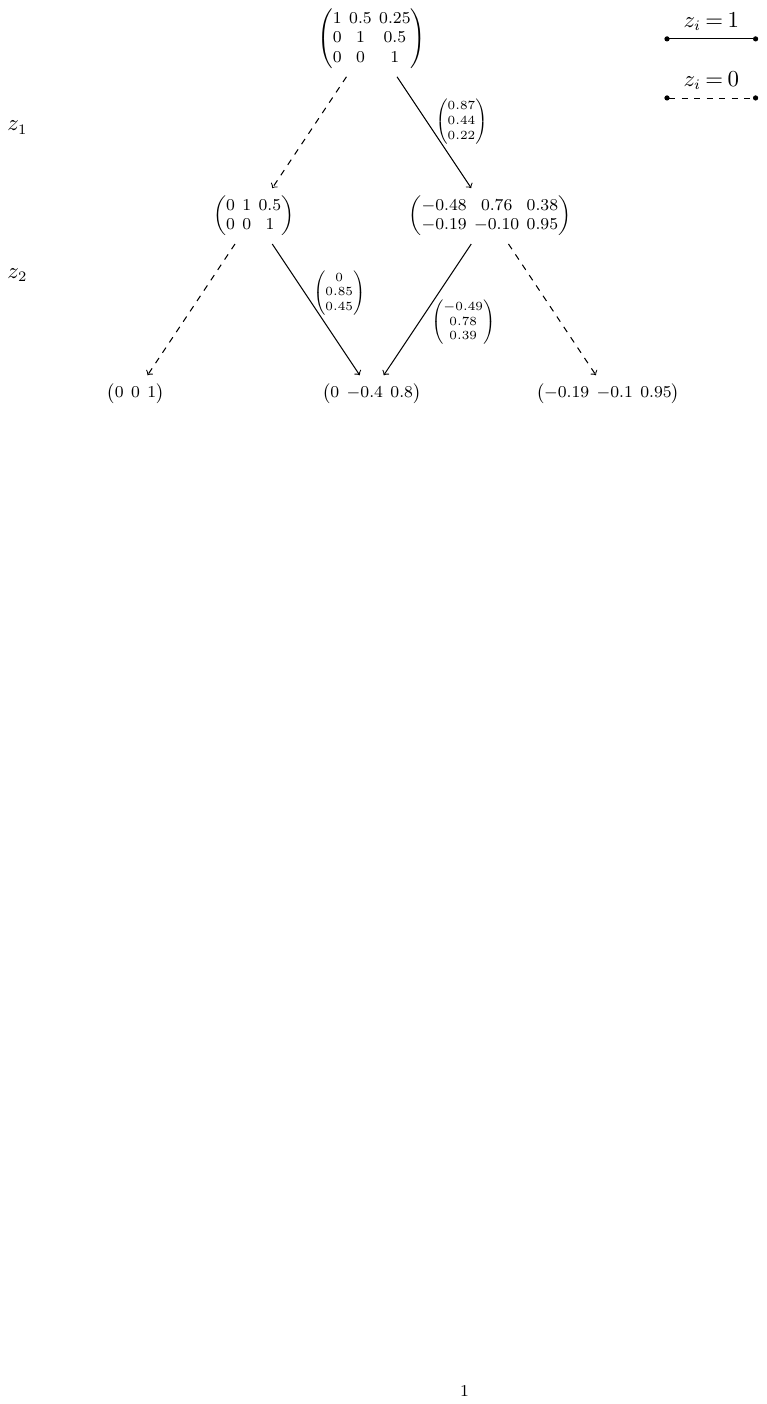}
				\caption{Decision diagram associated with Example~\ref{ex:inverseTridiagonal} at depth $\ell=3$. For each node we show the associated state $\bm{H^\ell}$, and for each arc with value assignment $\nu_a=1$ we show the associated transition vector $\bm{u_a}$; transition vectors of remaining arcs are $\bm{0}$. Numbers are depicted with two decimal digits of precision.}
				\label{fig:invTridiag}
			\end{center}
		\end{figure}
	\end{example}
	
	We now introduce the two fundamental properties of exact decision diagrams. The first one (Proposition~\ref{prop:state}) states that the matrix stored at each state indeed corresponds to $\bm{F_{[\ell:n]}}\bm{G_\perp^\ell}(\bm{v})$ for a suitable vector $\bm{v}$. The second one (Proposition~\ref{prop:pathLength}) discusses how to solve optimization problem \eqref{eq:optimizationX} as a shortest path problem on the decision diagram.
	
	\begin{proposition}\label{prop:state} Given any arc-specified path $(a_1,a_2,\ldots,a_{\ell-1})$ in $\mathcal{G}$ such that $\tail{a_1}=s^1$ and $\head{a_{\ell-1}}=s^\ell$, let $\bm{\bar H^\ell}$ be the matrix stored by state $s^\ell$ and $\bm{v}\in \{0,1\}^{\ell-1}$ be the partial solution represented by the path, that is, $v_i=\nu_{a_i}$. Then,
		\begin{enumerate}
			\item the non-zero transition vectors in $\{\bm{u_{a_1}},\dots, \bm{u_{a_{\ell-1}}}\}$ define an orthonormal basis of $\Row\left(\bm{F_v}\right)$;
			\item the identity $\bm{\bar H^\ell}=\bm{F_{[\ell:n]}}\bm{G_\perp^\ell}(\bm{v})$ holds.
		\end{enumerate}
	\end{proposition}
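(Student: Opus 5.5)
I will prove both claims simultaneously by induction on $\ell$, the length of the path plus one. The strengthened inductive hypothesis at layer $\ell$ is: the non-zero vectors among $\bm{u_{a_1}},\dots,\bm{u_{a_{\ell-1}}}$ form an orthonormal set $V$ with $\Row(\bm{F_v})=\text{span}(V)$, and $\bm{\bar H^\ell}=\bm{F_{[\ell:n]}}\bigl(\bm{I}-\sum_{\bm{w}\in V}\bm{ww^\top}\bigr)$. Since orthogonal projection matrices do not depend on the choice of basis, $\bm{I}-\sum_{\bm{w}\in V}\bm{ww^\top}=\bm{G_\perp^\ell}(\bm{v})$, which gives claim 2. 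For the base case $\ell=1$ the path is empty, $V=\emptyset$, $\Row(\bm{F_v})=\{\bm{0}\}$, $\bm{G_\perp^1}=\bm{I}$, and the initial state stores $\bm{F}$, so both claims hold. We may ignore the combinatorial component $\rho^\ell$ and the infeasible state throughout, since a path reaching $s^\ell$ only passes through non-infeasible states, and the matrix component evolves as in \eqref{eq:transitionUnconstrained}.

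For the inductive step, I would take a path to $s^{\ell+1}$ with last arc $a_\ell$ leaving $s^\ell$, and set $\bm{v'}=(\bm{v},\nu_{a_\ell})$. By hypothesis, $\bm{\bar H^\ell}=\bm{F_{[\ell:n]}}\bm{G_\perp^\ell}(\bm{v})$, so its first row is $\bm{u}^\top=\bm{f_\ell}^\top\bm{G_\perp^\ell}(\bm{v})$, the projection of $\bm{f_\ell}$ onto $\Row(\bm{F_v})^\perp$. The argument then splits into three cases.

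\emph{Case $\nu_{a_\ell}=0$.} Here $\bm{F_{v'}}=\bm{F_v}$, so the row space, basis and projector are unchanged, and $\bm{u_{a_\ell}}=\bm{0}$ adds nothing to $V$. The new state is $\bm{\bar H^\ell_\downarrow}=\bm{F_{[\ell+1:n]}}\bm{G_\perp^\ell}(\bm{v})=\bm{F_{[\ell+1:n]}}\bm{G_\perp^{\ell+1}}(\bm{v'})$.

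\emph{Case $\nu_{a_\ell}=1$ and $\bm{u}=\bm{0}$.} Then $\bm{f_\ell}\in\Row(\bm{F_v})$, so $\Row(\bm{F_{v'}})=\Row(\bm{F_v})$ and the same identities hold. This is the analogue of part 1 of Proposition~\ref{prop:GSInduction}.

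\emph{Case $\nu_{a_\ell}=1$ and $\bm{u}\neq\bm{0}$.} By Proposition~\ref{prop:GramSchmidt}, $\Row(\bm{F_{v'}})=\Row(\bm{F_v})+\text{span}\{\bm{u}\}$. Since $\bm{u}\perp\Row(\bm{F_v})$, the set $V\cup\{\bm{u}/\|\bm{u}\|_2\}$ is orthonormal and spans $\Row(\bm{F_{v'}})$, and $\bm{u}/\|\bm{u}\|_2$ is exactly $\bm{u_{a_\ell}}$. The new state is $\bm{F_{[\ell+1:n]}}\bm{G_\perp^\ell}(\bm{v})(\bm{I}-\bm{uu^\top}/\|\bm{u}\|_2^2)$. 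Because $\bm{G_\perp^\ell}(\bm{v})\bm{u}=\bm{u}$, this product equals $\bm{I}-\sum_{\bm{w}\in V}\bm{ww^\top}-\bm{uu^\top}/\|\bm{u}\|_2^2$, the projector onto $\Row(\bm{F_{v'}})^\perp$. This case is essentially part 2 of Proposition~\ref{prop:GSInduction}, applied to the matrix $\bm{F_{v'}}$ with its rows interleaved with $\bm{F_{[\ell:n]}}$.

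The main subtlety is that Propositions~\ref{prop:GramSchmidt} and~\ref{prop:GSInduction} are stated for consecutive rows of $\bm{F}$, whereas here only the selected rows are orthogonalized. I would handle this by working directly with projector identities as above rather than invoking those propositions literally. A second point is that a state may be reached by several paths, while the stored matrix is a single object. This is harmless: the induction is on an arbitrary path, so the conclusion shows that every path into $s^\ell$ yields the same $\bm{F_{[\ell:n]}}\bm{G_\perp^\ell}(\bm{v})$. Claim 1 for distinct paths concerns each path's own arcs, and the transition vector of an arc depends only on its tail state, which is consistent with this.
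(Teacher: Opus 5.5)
Your proof is correct and follows the same route as the paper: induction on the layer, with the case $\nu_{a_\ell}=0$ handled by $\Row(\bm{F_{(v,0)}})=\Row(\bm{F_v})$, and the case $\nu_{a_\ell}=1$ reduced to a single Gram--Schmidt step. The only difference is that you verify the $\nu_{a_\ell}=1$ step directly, using $\bm{G_\perp^\ell}(\bm{v})\bm{u}=\bm{u}$, instead of citing Proposition~\ref{prop:GSInduction}. That is a useful clarification, since that proposition is literally stated for consecutive rows of $\bm{F}$, whereas the Gram--Schmidt step here acts only on the selected rows.
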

	
	\begin{proposition}\label{prop:pathLength} Assume that Assumption~\ref{ass:unbounded} holds. Given an exact decision diagram $\DD$, vectors $\bm{c},\bm{d}\in \R^n$ such that problem \eqref{eq:optimizationX} is not unbounded, and vector $\bm{\delta}\in \R^m$ satisfying $\bm{F\delta}=\bm{d}$, define the length of an arc $a\in A$ as $l_a=c_{\ell(a)}\nu_{a}-\frac{1}{4}\left(\bm{\delta^\top u_{a}}\right)^2$. Then the length of any arc-specified path $(a_1,a_2,\ldots,a_{\ell-1})$ in $\mathcal{G}$  is the partial cost $g^\ell(\bm{v})$ defined in Definition~\ref{def:functionG}, where 
		$\bm{v}\in \{0,1\}^{\ell-1}$ is the partial solution represented by the path, that is, the vector satisfying $v_i=\nu_{a_i}$. In particular, any shortest path between the root and a terminal node corresponds to an optimal solution of~\eqref{eq:optimizationX}.
	\end{proposition}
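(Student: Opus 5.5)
The plan is to combine Lemma~\ref{prop:Gexplicit} with part~1 of Proposition~\ref{prop:state}. Fix an arc-specified path $(a_1,\dots,a_{\ell-1})$ from $s^1$ with partial solution $\bm{v}$, $v_i=\nu_{a_i}$. Summing the arc lengths gives
\[
\sum_{i=1}^{\ell-1} l_{a_i}=\sum_{i=1}^{\ell-1}c_i\nu_{a_i}-\frac14\sum_{i=1}^{\ell-1}\left(\bm{\delta^\top u_{a_i}}\right)^2
=\bm{c_{[\ell-1]}^\top v}-\frac14\sum_{i=1}^{\ell-1}\left(\bm{\delta^\top u_{a_i}}\right)^2 .
\]
The zero transition vectors contribute nothing to the second sum. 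By Proposition~\ref{prop:state}(1), the nonzero vectors among $\bm{u_{a_1}},\dots,\bm{u_{a_{\ell-1}}}$ form an orthonormal basis of $\Row(\bm{F_v})$.

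Stack these nonzero vectors as the rows of a matrix $\bm{B}$. Then $\bm{B}$ defines an orthonormal basis of $\Row(\bm{F_v})$, and
\[
\sum_i\left(\bm{\delta^\top u_{a_i}}\right)^2=\|\bm{B\delta}\|_2^2=\bm{\delta^\top B^\top B\delta}.
\]
Lemma~\ref{prop:Gexplicit} holds for \emph{any} orthonormal basis $\bm{B^\ell}(\bm{v})$, because $\bm{B^\top B}$ is the unique projection matrix. Taking $\bm{B^\ell}(\bm{v})=\bm{B}$ gives that the path length equals $g^\ell(\bm{v})$. The empty path ($\ell=1$) matches the convention $g^1=0$.

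For the optimality claim, take $\ell=n+1$. Paths ending at a feasible terminal node are in bijection with points $\bm{z}\in Z$. This follows from the first component of the transition \eqref{eq:transition}: the update $\psi$ tracks completion sets, and the infeasible state absorbs partial solutions with empty completion set. So root--terminal paths with feasible terminal nodes range exactly over $Z$, and each has length $g^{n+1}(\bm{z})$.

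The derivation \eqref{eq:optXSimple} under Assumption~\ref{ass:unbounded} then gives
\[
\min_{\bm{z}\in Z}g^{n+1}(\bm{z})
\]
as the optimal value of \eqref{eq:optimizationX}. A shortest path therefore yields an optimal $\bm{z}$. An optimal $\bm{x}$ is recovered from the minimizer $\bm{y}=-\tfrac12\bm{B^\top B\delta}\in\Row(\bm{F_z})$ by solving $\bm{F^\top}\Diag(\bm{z})\bm{x}=\bm{y}$, which is solvable because $\bm{y}\in\Row(\bm{F_z})$.

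The main subtlety is this correspondence between paths and $Z$. Distinct partial solutions merged into the same node have the same completion set and the same matrix $\bm{F_{[\ell:n]}}\bm{G_\perp^\ell}(\bm{v})$. The merging therefore preserves the set of root--terminal paths as exactly $Z$. The arc lengths are path-dependent only through this per-path orthonormal basis, which Proposition~\ref{prop:state} already handles for every path, so the merging causes no further issue. The remaining steps are bookkeeping.
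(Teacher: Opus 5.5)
Your proposal is correct and follows the paper's proof essentially step by step. You sum the arc lengths, use Proposition~\ref{prop:state}(1) to identify $\sum_i \bm{u_{a_i}u_{a_i}^\top}$ with $\bm{B^\top B}$ for an orthonormal basis $\bm{B}$ of $\Row(\bm{F_v})$, and invoke Lemma~\ref{prop:Gexplicit}; then you take $\ell=n+1$ together with \eqref{eq:optXSimple} for the optimality claim, and your extra remarks on the path--$Z$ correspondence and on recovering $\bm{x}$ from $\bm{y}$ are harmless details the paper leaves implicit.
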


	Finally, we close this section by recording a closed-form expression for the state matrices of the decision diagram, which holds whenever $\bm{Q}$ is invertible. 
	
	\begin{lemma}[Closed form of the states]\label{lem:closedFormState}
		Suppose that $\bm{Q}=\bm{FF^\top}$ is invertible. For any $\ell\in [2:n+1]$ and $\bm{v}\in \{0,1\}^{\ell-1}$ with support $T=\left\{i\in [\ell-1]: v_i=1\right\}$, the state matrix of the node of the exact decision diagram corresponding to $\bm{v}$ is
		\[
		\bm{H^\ell}(\bm{v}) = \bm{F_{[\ell:n]}}-\bm{Q_{[\ell:n],T}} \left(\bm{Q_{T,T}}\right)^{-1} \bm{F_{T}},
		\]
		with the convention that $\bm{H^\ell}(\bm{v})=\bm{F_{[\ell:n]}}$ if $T=\emptyset$.
	\end{lemma}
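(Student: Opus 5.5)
By Proposition~\ref{prop:state}, the state matrix reached by the path encoding $\bm{v}$ equals $\bm{F_{[\ell:n]}}\bm{G_\perp^\ell}(\bm{v})$. Here $\bm{G_\perp^\ell}(\bm{v})=\bm{I}-\bm{B^\ell}(\bm{v})^\top\bm{B^\ell}(\bm{v})$, and $\bm{B^\ell}(\bm{v})^\top\bm{B^\ell}(\bm{v})$ is the orthogonal projection onto $\Row(\bm{F_v})=\Row(\bm{F_T})$. The plan is therefore to (i) compute this projection in closed form, and (ii) multiply out $\bm{F_{[\ell:n]}}\bm{G_\perp^\ell}(\bm{v})$.

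For (i), I will use Proposition~\ref{prop:closedFormProjection} applied to $\bm{F_T}$. Its hypothesis needs $\bm{F_T}\bm{F_T}^\top$ to be invertible. This holds because $\bm{F_T}\bm{F_T}^\top=\bm{Q_{T,T}}$ is a principal submatrix of $\bm{Q}=\bm{FF^\top}$. Since $\bm{Q}$ is invertible and positive semidefinite, it is positive definite, and hence so is every principal submatrix. This also means $\bm{F_T}$ has full row rank $|T|$. Proposition~\ref{prop:closedFormProjection} then gives the projection onto $\Row(\bm{F_T})$ as $\bm{F_T^\top}(\bm{Q_{T,T}})^{-1}\bm{F_T}$. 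Because orthogonal projection matrices do not depend on the choice of basis, this equals $\bm{B^\ell}(\bm{v})^\top\bm{B^\ell}(\bm{v})$ for any orthonormal basis of $\Row(\bm{F_v})$, in particular the one given by the transition vectors.

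For (ii), I expand
$$\bm{F_{[\ell:n]}}\bm{G_\perp^\ell}(\bm{v})=\bm{F_{[\ell:n]}}-\bm{F_{[\ell:n]}}\bm{F_T^\top}(\bm{Q_{T,T}})^{-1}\bm{F_T},$$
and note that $\bm{F_{[\ell:n]}}\bm{F_T^\top}=(\bm{FF^\top})_{[\ell:n],T}=\bm{Q_{[\ell:n],T}}$, which gives the stated formula. If $T=\emptyset$, then $\bm{F_v}$ has no rows, so $\Row(\bm{F_v})=\{\bm{0}\}$ and $\bm{G_\perp^\ell}(\bm{v})=\bm{I}$; this yields the convention $\bm{H^\ell}(\bm{v})=\bm{F_{[\ell:n]}}$ and is consistent with $\bm{G_\perp^1}=\bm{I}$.

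There is no real obstacle. The one point that needs care is the invertibility of $\bm{Q_{T,T}}$, which follows from $\bm{Q}\succ\bm{0}$, together with making sure Proposition~\ref{prop:state} is applied so that the state depends only on $\bm{v}$ and not on the particular path.
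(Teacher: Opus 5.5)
Your proposal is correct and follows essentially the same argument as the paper. It writes the state as $\bm{F_{[\ell:n]}}\bm{G_\perp^\ell}(\bm{v})$ via Proposition~\ref{prop:state} and uses $\bm{Q}\succ\bm{0}$ to make $\bm{Q_{T,T}}=\bm{F_T}\bm{F_T^\top}$ invertible; it then applies Proposition~\ref{prop:closedFormProjection}, identifies $\bm{F_{[\ell:n]}}\bm{F_T^\top}=\bm{Q_{[\ell:n],T}}$, and treats $T=\emptyset$ separately.
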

	
	\section{Pseudocode and implementation details}\label{sec:pseudocode}
	
	In this section, we provide pseudocode for the construction of the decision diagrams introduced in \S\ref{sec:dd}, and discuss the associated implementation details. The algorithm builds the state transition graph layer by layer: it applies the transition function \eqref{eq:transition} to every node of the current layer, and then merges the nodes of the next layer that represent equivalent states. The pseudocode assumes that standard finite-precision libraries are used to carry out the matrix operations, and explicitly encodes the operations whose outcome depends on a numerical tolerance $\epsilon\geq 0$. Nonetheless, as we discuss at the end of the section, if $\bm{F}\in \mathbb{Q}^{n\times m}$, then the decision diagram can also be constructed using exact rational arithmetic.
	
	\paragraph{Data structures} The algorithm maintains a set of nodes and a set of arcs. A \emph{node} is a tuple $(i,\rho,\bm{H})$, where $i\in \Z_+$ is a unique identifier of the node, $\rho$ is a state of the dynamic programming representation of $Z$ (see \S\ref{sec:ddComb}), and $\bm{H}\in \R^{(n+1-\ell)\times m}$ is the state matrix of the node, with $\ell$ denoting the layer of the node. An \emph{arc} is a tuple $(i,j,\ell,\bar z,\bm{u})$, where $i$ and $j$ are the identifiers of the tail and head nodes, $\ell\in [n]$ is the layer from which the arc emanates, $\bar z\in \{0,1\}$ is the value assigned to variable $z_\ell$, and $\bm{u}\in \R^m$ is the associated transition vector. In particular, the value assignments $\bm{\nu}$ and transition vectors $\bm{u}$ of Definition~\ref{def:exactDD} are encoded directly in the arcs. In the unconstrained case $Z=\{0,1\}^n$, the component $\rho$ is constant and can simply be omitted.
	
	\paragraph{Elementary operation} The pseudocode relies on the following elementary operation. 
	
	\noindent $\bullet$ $\texttt{merge}(L,A,\epsilon)$: given a list of nodes $L$ and a list of arcs $A$, while two nodes $(i_1,\rho_1,\bm{H_1}),\allowbreak (i_2,\rho_2,\bm{H_2})\in L$ \emph{are identified to satisfy} $\rho_1=\rho_2$ and $\norm{\bm{H_1}-\bm{H_2}}_{2,\infty}\leq \epsilon$: remove node $i_2$ from $L$, and redirect every arc of $A$ with head $i_2$ to head $i_1$. 

	The wording ``are identified to satisfy'' in the routine \texttt{merge} is intentionally vague, as it depends on the implementation.
	A first implementation, which we call \underline{maximal merging}, compares all pairs of nodes in the layer, requiring a quadratic number of state comparisons. In \S\ref{sec:approximation} we describe a second implementation of merging called \underline{neighborhood merging}, which is based on exploiting sparsity of matrix $\bm{FF^\top}$ or its inverse, and is useful for proving approximation guarantees. However that implementation may too be impractical, as it requires a full a priori understanding of $\bm{FF^\top}$ and its application may lead to unnecessarily large decision diagrams.
	In practice, we implement \texttt{merge} using a single-pass hash-merging procedure described in the online companion (\S\ref{sec:hash-merging}, Algorithm~\ref{alg:mergeHash}); we refer to it as \underline{hash merging}.

\paragraph{Construction algorithm} The complete procedure is summarized in Algorithm~\ref{alg:constructionDD}. It processes the layers sequentially, maintaining the list $L$ of nodes in the current layer and the list $L'$ of nodes in the next layer. For each node of the current layer and each feasible decision $\bar z\in \{0,1\}$ for variable $z_\ell$ (line~\ref{line:feasCheck}), a new node and arc are created according to the transition function \eqref{eq:transition}: the combinatorial component of the state is updated through $\phi_Z$, and the state matrix is updated by removing its first row (if $\bar z=0$, or if $\bar z=1$ and $\bm{u}=\bm{0}$) or by performing the Gram-Schmidt projection step of Algorithm~\ref{alg:MGS} (if $\bar z=1$ and $\bm{u}\neq \bm{0}$, line~\ref{line:GSStep}). Once all nodes of the current layer have been processed, equivalent nodes of the next layer are merged (line~\ref{line:mergeCall}). After processing layer $n$, the state matrices are empty and all remaining nodes are merged into a terminal node.
	
	\begin{algorithm}[H]
		\singlespacing
		\caption{Construction of the decision diagram for $X_{\bm{F},Z}$}
		\label{alg:constructionDD}
		\begin{algorithmic}[1]
			\Require Matrix $\bm{F}\in \R^{n\times m}$; initial state $\rho^1$ and transition function $\phi_Z$ of a dynamic programming representation of $Z$; tolerance $\epsilon\geq 0$.
			\Ensure Node set $N$ and arc set $A$ encoding a decision diagram for $X_{\bm{F},Z}$.
			
			\State $N\gets \emptyset,\; A\gets \emptyset,\; idx\gets 0$ \Comment{Sets of nodes and arcs, initially empty}
			
			\State $L\gets \left\{(idx,\rho^1,\bm{F})\right\}$, $idx\gets idx+1$ \Comment{Layer $1$ consists of the root node}
			
			\For{$\ell=1,\dots,n$}
			
			\State $N\gets N\cup L$, $L'\gets \emptyset$ \Comment{$L'$ collects the nodes of layer $\ell+1$}
			
			\For{\textbf{each} $(i,\rho,\bm{H})\in L$}
			
			\State $\bm{u}\gets \bm{H_{\{1\}}}$, $\bm{R}\gets \bm{H_\downarrow}$ \Comment{Removes the first row $\bm{u}$ of $\bm{H}$}
			
			\For{$\bar z\in \{0,1\}$} \Comment{Attempts both decisions for variable $z_\ell$}
			
			\If{$\phi_Z(\rho,\bar z)\neq \text{inf}$} \label{line:feasCheck}\Comment{Skips decisions that are infeasible for $Z$}
			
			\If{$\bar z=1$ \textbf{and} $\|\bm{u}\|_2> \epsilon$}\label{line:dependenceCheck}
			
			\State $\bm{u_a}\gets \bm{u}/\|\bm{u}\|_2$ \Comment{Normalized transition vector}\label{line:normalization}
			
			\State $\bm{G}\gets \bm{R}\left(\bm{I}-\bm{u_au_a^\top}\right)$ \label{line:GSStep}\Comment{Gram-Schmidt step}
			
			\Else
			
			\State $\bm{G}\gets \bm{R}$ \Comment{State matrix is unchanged}
			
			\State $\bm{u_a}\gets \bm{0}$ \Comment{Transition vector is $\bm{0}$}
			
			\EndIf
			
			\State $L'\gets L'\cup \left\{\left(idx,\phi_Z(\rho,\bar z),\bm{G}\right)\right\}$ \Comment{Appends new node to $L'$}
			
			\State $A\gets A\cup \left\{(i,idx,\ell,\bar z, \bm{u_a})\right\}$
			
			\State  $idx\gets idx+1$
			
			\EndIf
			
			\EndFor
			
			\EndFor
			
			\State $\texttt{merge}(L',A,\epsilon)$ \label{line:mergeCall}\Comment{Merges equivalent nodes of layer $\ell+1$}
			
			\State $L\gets L'$
			
			\EndFor
			
			\State $N\gets N\cup L$ \Comment{Adds the terminal nodes}
		\end{algorithmic}
	\end{algorithm}

	\paragraph{Exact and $\epsilon$-exact decision diagrams}
	
	If $\epsilon=0$ and \texttt{merge} is implemented via maximal merging or the hash-merging procedure of \S\ref{sec:hash-merging}, then Algorithm~\ref{alg:constructionDD} outputs precisely the exact decision diagram of Definition~\ref{def:exactDD}. As explained in \ref{sec:exact arithmetic}, if the data $\bm{F}\in \mathbb{Q}^{n\times m}$ is rational, then exact decision diagrams can indeed be constructed using rational arithmetic and avoiding division by a potentially irrational number in line ~\ref{line:normalization} of Algorithm~\ref{alg:constructionDD}.
    Exact diagrams are interesting from a theoretical perspective, and the theoretical results in \S\ref{sec:convexification} and \S\ref{sec:polycase} pertain to this case. In contrast, $\epsilon$-exact decision diagrams, defined next, are practical, and in \S\ref{sec:approximation} we study their size and solution quality.
	
	\begin{definition}[$\epsilon$-exact decision diagram]\label{def:epsDD}
		An \underline{$\epsilon$-exact decision diagram} is a diagram obtained from the construction in Algorithm~\ref{alg:constructionDD} with $\epsilon>0$.\hfill$\blacksquare$
	\end{definition}
	
	We point out that $\epsilon$-exact decision diagrams for a given problem are not unique, as they depend on the implementation of the routine $\texttt{merge}$. 
	Additional technical improvements to Algorithm~\ref{alg:constructionDD} are discussed in the online companion (\S\ref{sec:additional-improvements}).

	\section{Convexification}\label{sec:convexification}
	
	From Proposition~\ref{prop:pathLength}, we see that under a technical assumption, we can use decision diagrams to solve optimization problems over set $X_{\bm{F},Z}$. We repeat the definition of this set for convenience:
	$$X_{\bm{F},Z}=\left\{(x_0,\bm{x},\bm{z})\in \R\times \R^n\times Z: \|\bm{F^\top x}\|_2^2\leq x_0,\; \bm{x}\circ (\bm{1}-\bm{z})=\bm{0}\right\}.$$
	In this section, we show that we can also use decision diagrams to construct conic quadratic relaxations of $\cl\;\conv(X_{\bm{F},Z})$, and that the relaxations are ideal under the same technical assumption. 
	
	\begin{definition}[SOCP relaxation]\label{def:socp} Given  $\bm{F}\in \R^{n\times m}$, set $Z\subseteq \{0,1\}^n$ and an exact decision diagram $\mathcal{D}=(N,A,\bm{\nu},\bm{u})$ for $X_{\bm{F},Z}$, let $R_{\bm{F},Z}\subseteq \R^{1+2n+2|A|}$ be the set of points $(x_0,\bm{x},\bm{z},\bm{w},\bm{r})$ satisfying
    \vspace{-1em}
		\begin{subequations}\label{eq:convexhull}
			\begin{align}
				&x_0\geq \sum_{a\in A}\frac{w_a^2}{r_a}\label{eq:convexhull_nonlinear}\\
				&\bm{F^\top x}=\sum_{a\in A}\bm{u_a}w_a\label{eq:convexhull_x}\\
				&\bm{z}=\sum_{a\in A:\nu_a=1}\bm{e_{\ell(a)}}r_a\label{eq:convexhull_z}\\
				&\sum_{a\in A: h_a=v}r_a=\sum_{a\in A: t_a=v}r_a\qquad \forall v\in N:2 \leq \ell(v)\leq n\label{eq:convexhull_f3}\\
				&\sum_{a\in A:\ell(a)=1}r_a = 1; \; \sum_{a\in A:\ell(a)=n}r_a = 1; \; \bm{r}\geq \bm{0}\label{eq:convexhull_nonneg}.
			\end{align}
		\end{subequations}\hfill$\blacksquare$
	\end{definition}
	
	In \eqref{eq:convexhull_nonlinear}, each term $w_a^2/r_a$ is interpreted via closures: $w_a^2/r_a\defeq 0$ if $w_a=r_a=0$, and $w_a^2/r_a\defeq+\infty$ if $r_a=0$ and $w_a\neq 0$. Variables $\bm{r}$ and flow conservation constraints \eqref{eq:convexhull_f3}-\eqref{eq:convexhull_nonneg} indicate a flow in the graph $\mathcal{G}$ defining the decision diagram. Constraints \eqref{eq:convexhull_z} ensure that for any binary vector $\bm{z}\in \{0,1\}^n$, variables $\bm{r}$ are precisely the path associated with this solution. Moreover, variables $\bm{w}$ and constraints \eqref{eq:convexhull_x} are used to encode that $\bm{F^\top x}\in \Row(\bm{F_z})$. Since constraints \eqref{eq:convexhull_x}-\eqref{eq:convexhull_nonneg} are linear and \eqref{eq:convexhull_nonlinear} is SOCP-representable as a sum of quadratic-over-linear functions \citep{lobo1998applications}, set $R_{\bm{F},Z}$ is also SOCP-representable and thus convex.
	
	\begin{proposition}[Validity] \label{prop:valid}
		It holds that $X_{\bm{F},Z}\subseteq \text{proj}_{(x_0,\bm{x},\bm{z})}R_{\bm{F},Z}$.
	\end{proposition}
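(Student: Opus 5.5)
Given $(x_0,\bm{x},\bm{z})\in X_{\bm{F},Z}$, we build $(\bm{w},\bm{r})$ explicitly from the unique root--terminal path of $\mathcal{D}$ that encodes $\bm{z}$. Since $\bm{z}\in Z$, the first component of the transition function \eqref{eq:transition} never reaches ``inf'' along $\bm{z}$: the completion sets $Z^\ell(\bm{z_{[\ell-1]}})$ are nonempty for every $\ell$. Hence there is an arc-specified path $(a_1,\dots,a_n)$ from $s^1$ to a terminal node with $\nu_{a_\ell}=z_\ell$. We set $r_a=1$ for $a\in\{a_1,\dots,a_n\}$ and $r_a=0$ otherwise. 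This is a unit root--terminal flow, so \eqref{eq:convexhull_f3}--\eqref{eq:convexhull_nonneg} hold. Moreover $\sum_{a:\nu_a=1}\bm{e_{\ell(a)}}r_a=\sum_{\ell: z_\ell=1}\bm{e_\ell}=\bm{z}$, which is \eqref{eq:convexhull_z}.

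Next we choose $\bm{w}$. By Proposition~\ref{prop:state}(1) applied with $\ell=n+1$, the nonzero vectors among $\bm{u_{a_1}},\dots,\bm{u_{a_n}}$ form an orthonormal basis of $\Row(\bm{F_z})$. Since $\bm{x}\circ(\bm{1}-\bm{z})=\bm{0}$, the identity from \S\ref{sec:notation} gives $\bm{F^\top x}=(\Diag(\bm{z})\bm{F})^\top\bm{x}\in\Row(\Diag(\bm{z})\bm{F})=\Row(\bm{F_z})$. We therefore set $w_{a_\ell}=\bm{u_{a_\ell}^\top F^\top x}$ for each $\ell$, and $w_a=0$ for arcs off the path; note that $w_{a_\ell}=0$ automatically whenever $\bm{u_{a_\ell}}=\bm{0}$. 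Expanding $\bm{F^\top x}$ in the orthonormal basis gives $\bm{F^\top x}=\sum_\ell \bm{u_{a_\ell}}w_{a_\ell}=\sum_{a\in A}\bm{u_a}w_a$, which is \eqref{eq:convexhull_x}.

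Finally we check \eqref{eq:convexhull_nonlinear}. Arcs off the path have $w_a=r_a=0$ and contribute $0$ by the closure convention. Arcs on the path have $r_a=1$. By orthonormality (Parseval), $\sum_a w_a^2/r_a=\sum_\ell w_{a_\ell}^2=\|\bm{F^\top x}\|_2^2\le x_0$. This places the point in $\text{proj}_{(x_0,\bm{x},\bm{z})}R_{\bm{F},Z}$.

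The only delicate step is the first: we must justify that the path encoding $\bm{z}$ exists in $\mathcal{D}$ and ends at a terminal node, and that Proposition~\ref{prop:state} applies along it. For this we would argue inductively from the definitions \eqref{eq:transitionCombinatorial} and \eqref{eq:transition}, noting that the infeasible state is produced only when the completion set is empty. Everything after that is orthonormal-basis bookkeeping.
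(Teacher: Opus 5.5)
Your proof is correct and follows the paper's argument: route a unit flow along the path encoding $\bm{z}$, expand $\bm{F^\top x}\in\Row(\bm{F_z})$ in the orthonormal basis from point 1 of Proposition~\ref{prop:state}, and use orthonormality (Parseval) to identify $\sum_{a} w_a^2/r_a$ with $\|\bm{F^\top x}\|_2^2\leq x_0$. Your explicit coefficients $w_{a_\ell}=\bm{u_{a_\ell}^\top F^\top x}$ and your remark that the path never reaches the infeasible state are more detailed versions of steps the paper states without elaboration.
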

	
	Proposition~\ref{prop:valid} verifies that set $R_{\bm{F},Z}$ in Definition~\ref{def:socp} is indeed a valid extended relaxation of $X_{\bm{F},Z}$, regardless of whether Assumption~\ref{ass:unbounded} is satisfied or not. The next result indicates that this relaxation is ideal if the assumption is satisfied.
	
	\begin{theorem}\label{theo:hullDD}
		If Assumption~\ref{ass:unbounded} holds, then $\text{cl conv}(X_{\bm{F},Z})= \text{proj}_{(x_0,\bm{x},\bm{z})}R_{\bm{F},Z}$.
	\end{theorem}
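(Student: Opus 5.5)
Since $\text{proj}_{(x_0,\bm{x},\bm{z})}R_{\bm{F},Z}$ is convex and, by Proposition~\ref{prop:valid}, contains $X_{\bm{F},Z}$, one inclusion would follow from closedness of the projection. Instead of proving closedness directly, I will compare support functions. Both sets are convex, and $\text{cl conv}(X_{\bm{F},Z})$ is closed. So it suffices to show that for every linear objective $(\beta,\bm{d},\bm{c})$,
\[
\inf\{\beta x_0+\bm{d^\top x}+\bm{c^\top z}: (x_0,\bm{x},\bm{z},\bm{w},\bm{r})\in R_{\bm{F},Z}\}\;\geq\;\inf\{\beta x_0+\bm{d^\top x}+\bm{c^\top z}: (x_0,\bm{x},\bm{z})\in X_{\bm{F},Z}\}.
\]
Then no point of the projection can be strictly separated from $\text{cl conv}(X_{\bm{F},Z})$, which gives $\text{proj}\,R_{\bm{F},Z}\subseteq \text{cl conv}(X_{\bm{F},Z})$. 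Together with validity this yields equality. It also shows that the closure of the projection is harmless, since we compare closures on both sides.

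The first step is to reduce the objectives. If $\beta<0$, both infima are $-\infty$, since $x_0$ is unbounded above in both sets. If $\beta=0$, I perturb to $\beta=\epsilon>0$ and let $\epsilon\downarrow 0$ (on $X$ this uses that the infimum over $\beta=\epsilon$ decreases to the $\beta=0$ value), or I handle it directly by the unboundedness argument below. After scaling, take $\beta=1$. If the right-hand side is $-\infty$ there is nothing to prove. Otherwise problem \eqref{eq:optimizationX} is bounded, and Assumption~\ref{ass:unbounded} gives $\bm{\delta}$ with $\bm{F\delta}=\bm{d}$. Then for points of $R_{\bm{F},Z}$ we get $\bm{d^\top x}=\bm{\delta^\top F^\top x}=\sum_a(\bm{\delta^\top u_a})w_a$ by \eqref{eq:convexhull_x}. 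The relaxed objective is therefore bounded below by
\[
\sum_{a\in A}\Big(\frac{w_a^2}{r_a}+(\bm{\delta^\top u_a})w_a\Big)+\sum_{a:\nu_a=1}c_{\ell(a)}r_a,
\]
using \eqref{eq:convexhull_z} for $\bm{c^\top z}$. Minimizing each term over $w_a$ gives $-\frac14(\bm{\delta^\top u_a})^2 r_a$; when $r_a=0$ the closure convention forces $w_a=0$, and the bound still holds. The bound thus becomes $\sum_a l_a r_a$ with $l_a$ exactly the arc lengths of Proposition~\ref{prop:pathLength}. This decoupling of the $\bm{x}$-part into arc-wise perspective terms is the crux.

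The remaining step is that $\bm{r}$ is a unit flow from root to terminal in an acyclic layered graph, by \eqref{eq:convexhull_f3}--\eqref{eq:convexhull_nonneg}. By flow decomposition, $\sum_a l_a r_a$ is a convex combination of path lengths and hence at least the shortest root–terminal path length. By Proposition~\ref{prop:pathLength}, this equals the optimal value of \eqref{eq:optimizationX}, which is the right-hand side with $\beta=1$ (for $\beta=1$, minimizing over $x_0$ gives $x_0=\|\bm{F^\top x}\|_2^2$). Every root–terminal path corresponds to some $\bm{z}\in Z$ because infeasible states are pruned.

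The main obstacle is the degenerate case $\beta=0$ and, more generally, making the support-function argument rigorous. When $\beta=0$ and $(\bm{d},\bm{c})$ gives a finite infimum over $X_{\bm{F},Z}$, boundedness in $\bm{x}$ along every feasible support should force $\bm{d}\in\Col(\bm{F})$ via Assumption~\ref{ass:unbounded}. The assumption is phrased for the quadratic problem, but boundedness with $\beta=0$ implies boundedness with $\beta=1$, so it applies. Then the same computation goes through with the perspective terms dropped: $\sum_a(\bm{\delta^\top u_a})w_a$ must be bounded below over $R$, which in turn requires $\bm{\delta^\top u_a}=0$ on arcs that can carry flow. I would check this by arguing that the $\beta=0$ case is the limit of the $\beta=\epsilon$ bounds, where the relaxed value for $\beta=\epsilon$ is at most $\epsilon x_0$ plus the $\beta=0$ relaxed value at any fixed feasible point.
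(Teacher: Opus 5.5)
Your core computation is the paper's: write $\bm{d}=\bm{F\delta}$, use \eqref{eq:convexhull_x} to get $\bm{d^\top x}=\sum_a(\bm{\delta^\top u_a})w_a$, minimize each perspective term over $w_a$ to obtain the arc lengths $l_a$ of Proposition~\ref{prop:pathLength}, and finish with the path polytope. Your one-sided version (a lower bound plus strict separation, with $\beta<0$ and $\beta=0$ treated explicitly) is, if anything, tidier than the paper's. The paper treats only a unit coefficient on $x_0$, and instead shows that the optimum is attained at an integral path and recovers $\bm{x}$. The $\beta=0$ case you flag as the main obstacle is easy. For fixed $p\in\text{proj}\,R_{\bm{F},Z}$ you have $\epsilon x_0+\bm{d^\top x}+\bm{c^\top z}\ge v_\epsilon\ge v_0$, where the second inequality holds because $x_0\ge 0$ on $X_{\bm{F},Z}$. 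Now let $\epsilon\downarrow 0$; you never need $v_\epsilon\to v_0$.

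The gap is the final step. Separation gives $\text{proj}\,R_{\bm{F},Z}\subseteq\text{cl conv}(X_{\bm{F},Z})$. Validity plus convexity give $\text{conv}(X_{\bm{F},Z})\subseteq\text{proj}\,R_{\bm{F},Z}$. Together these show only that the closure of the projection equals $\text{cl conv}(X_{\bm{F},Z})$. The theorem asserts equality for the projection itself, so the inclusion $\text{cl conv}(X_{\bm{F},Z})\subseteq\text{proj}\,R_{\bm{F},Z}$ requires the projection to be closed. Comparing support functions cannot supply that; your remark that ``we compare closures on both sides'' is exactly where the argument slips.

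The fix is short. Take $(x_0^k,\bm{x}^k,\bm{z}^k)\to(x_0,\bm{x},\bm{z})$ in the projection, with witnesses $(\bm{w}^k,\bm{r}^k)$. The constraints \eqref{eq:convexhull_f3}--\eqref{eq:convexhull_nonneg} describe a unit flow in a layered acyclic graph, so $\bm{0}\le\bm{r}^k\le\bm{1}$. Constraint \eqref{eq:convexhull_nonlinear} gives $(w_a^k)^2\le x_0^k r_a^k\le x_0^k$, so $\bm{w}^k$ is bounded as well. Along a convergent subsequence the linear constraints pass to the limit, and \eqref{eq:convexhull_nonlinear} does too, by lower semicontinuity of the closed perspective $w^2/r$. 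The paper's optimization-equivalence argument also leaves this step implicit. You should state it explicitly rather than claim the separation argument covers it.
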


	Note that formulation \eqref{eq:convexhull} involves $\mathcal{O}(|A|)$ additional variables and constraints: the convexification is thus compact precisely when the underlying decision diagram is small. Bounding the size of the diagrams for structured classes of matrices is the subject of \S\ref{sec:polycase} and \S\ref{sec:approximation}.
	Finally, we point out that even if the data $\bm{F}$ is rational, vectors $\bm{u_a}$ may be irrational. Nonetheless, as explained in \ref{sec:exact arithmetic}, if the decision diagram was constructed exactly over the rationals, then an extended formulation of $R_{\bm{F},Z}$ with rational coefficients only can also be obtained.

	\section{Decision diagrams of polynomial size}\label{sec:polycase}
	
	The decision diagrams proposed in \S\ref{sec:DDMIQO} can be prohibitively large in general, involving $\mathcal{O}(2^n)$ nodes, even if $Z=\{0,1\}^n$. As a result, the convexifications in \S\ref{sec:convexification} may require an exponential number of variables and constraints. Nonetheless, Examples~\ref{ex:rank-one}, \ref{ex:tridiagonal}, and \ref{ex:inverseTridiagonal} show situations where state merging occurs, and the size of the decision diagram and resulting convexifications can be tractable. In this section, we prove that for certain classes of matrices $\bm{Q}=\bm{FF^\top}$, namely, low-rank, tree, and inverse-tree structures, the decision diagrams and convexifications are indeed of polynomial size. In the process, we recover or generalize several results from the literature. We point out that all results of this section pertain to exact decision diagrams.
	
	\subsection{Low-rank structures}\label{sec:lowRank}
	\citet{Han2024c} study the set
	\[
	X_{\theta}=\left\{(x_0,\bm{x},\bm{z})\in \R\times \R^n\times \{0,1\}^n: \theta\left(\bm{F^\top x}\right)\leq x_0,\; \bm{x}\circ (\bm{1}-\bm{z})=\bm{0}\right\}.
	\]
	where $\bm{F}\in \R^{n\times m}$ and $\theta:\R^m\to\R$ is a convex function. Using disjunctive programming arguments \citep{ceria1999convex}, \citet{Han2024c} propose extended formulations of $X_\theta$ with $\mathcal{O}(n^m)$ additional variables and constraints, which is polynomial if $m$ is treated as a constant. 
	Observe that for $\theta(\bm{y})=\|\bm{y}\|_2^2$, set $X_\theta$ is the special case of $X_{\bm{F},Z}$ where $Z=\{0,1\}^n$. In Example~\ref{ex:rank-one} we discussed an example with $m=1$. 
	We now formally state that exact decision diagrams have at most $\mathcal{O}(n^m)$ nodes, matching the results of \citet{Han2024c} for the quadratic case.
	\begin{proposition}\label{prop:lowRankSize}
		The number of nodes in the exact decision diagram for $X_{\bm{F}}$ is $\mathcal{O}(n^m)$.
	\end{proposition}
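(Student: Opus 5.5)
By Proposition~\ref{prop:state}, the matrix stored at a node of layer $\ell$ reached by partial solution $\bm{v}\in\{0,1\}^{\ell-1}$ is $\bm{F_{[\ell:n]}}\bm{G_\perp^\ell}(\bm{v})$. The projection $\bm{G_\perp^\ell}(\bm{v})=\bm{I}-\bm{B^\ell}(\bm{v})^\top\bm{B^\ell}(\bm{v})$ depends on $\bm{v}$ only through the subspace $\Row(\bm{F_v})$, because orthogonal projection matrices do not depend on the choice of basis. Since $Z=\{0,1\}^n$, there is no combinatorial component in the state. Hence, with maximal merging and $\epsilon=0$, the number of distinct nodes in layer $\ell$ is at most the number of distinct subspaces $\Row(\bm{F_v})$ as $\bm{v}$ ranges over $\{0,1\}^{\ell-1}$. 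The plan is therefore to count these subspaces.

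The key step is the following observation. Every subspace $\Row(\bm{F_S})$ with $S\subseteq[\ell-1]$ is spanned by some subset $S'\subseteq S$ of linearly independent rows, and $|S'|=\text{rank}(\bm{F_S})\leq m$. Concretely, $S'$ is the set of indices where Algorithm~\ref{alg:MGS} (equivalently, the transition \eqref{eq:transition}) adds a new basis vector, i.e.\ the arcs with nonzero $\bm{u_a}$. Thus the set of reachable subspaces in layer $\ell$ is contained in $\{\Row(\bm{F_{S'}}): S'\subseteq[\ell-1],\ |S'|\leq m\}$. The number of such sets $S'$ is $\sum_{k=0}^{m}\binom{\ell-1}{k}=\mathcal{O}(n^m)$ for fixed $m$.

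Summing over the $n+1$ layers gives $\mathcal{O}(n^{m+1})$ nodes. To reach the claimed $\mathcal{O}(n^m)$ I need a sharper count, and I expect this to be the main obstacle. I would try two ideas.

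First, I would handle full-rank subspaces separately. Once $\text{rank}(\bm{F_v})=\text{rank}(\bm{F})$, which is at most $m$, the state is $\bm{F_{[\ell:n]}}(\bm{I}-\bm{G})$ with $\bm{G}$ the projection onto $\Row(\bm{F})$. This state is $\bm{0}$ and is absorbing, so it contributes only one node per layer. More generally, if $\text{rank}(\bm{F})=r\leq m$, the count per layer becomes $\sum_{k<r}\binom{\ell-1}{k}+1=\mathcal{O}(n^{r-1})$.

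Second, I would refine the choice of $S'$ canonically. Take $S'$ to be the greedy (lexicographically first) independent subset produced by Gram--Schmidt, and note that the node depends only on the subspace. Summing $\mathcal{O}((\ell-1)^{m-1})$ over $\ell\in[n+1]$ then yields $\mathcal{O}(n^m)$ nodes, as required.

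I would also verify that the unmerged-but-equal issue does not arise. Nodes with equal matrices are merged because maximal merging with $\epsilon=0$ compares exact equality of states, so distinct nodes in a layer correspond to distinct matrices, and hence to distinct subspaces. If $\bm{F_{[\ell:n]}}$ does not determine the subspace, the bound still holds, because the map from subspaces to states is what is counted.
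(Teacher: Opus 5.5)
Your argument is correct and is essentially the paper's proof. Your first idea is exactly how the paper counts, with $m$ in place of your slightly sharper $r=\text{rank}(\bm{F})$: all $\bm{v}$ with $\text{rank}(\bm{F_v})=\text{rank}(\bm{F})$ collapse into the single absorbing state $\bm{0}$, and every other reachable subspace is spanned by at most $r-1\leq m-1$ rows of $\bm{F_{[\ell-1]}}$, so a layer has at most $1+\sum_{k<r}\binom{\ell-1}{k}$ states, and you then sum over the $n+1$ layers. Your second, ``canonical $S'$'' idea is superfluous and would not by itself lower the exponent, since choosing a canonical spanning subset does not eliminate the $\binom{\ell-1}{m}$ subsets of size $m$; the saving comes entirely from the collapse of the full-rank case, which your first idea already supplies.
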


	On the one hand, the results of \citet{Han2024c} are more general in the sense that the description of $\text{cl conv}(X_\theta)$ allows for more general nonlinear functions; moreover, \citet{Han2024c} also show that their results hold in the presence of constraints $\bm{x}\geq \bm{0}$. On the other hand, the results of this paper are more general since the set $X_{\bm{F},Z}$ allows for the inclusion of additional constraints. %
	Finally, we close this subsection by noting that, for the case with $m=1$, \citet{atamturk2025rank} propose an extended SOCP formulation for $\text{cl}\; \text{conv}(X_{\bm{F}})$ with one additional variable, instead of $\mathcal{O}(n)$ as suggested by Proposition~\ref{prop:lowRankSize}. This formulation could be derived relatively easily from \eqref{eq:convexhull} by projecting out additional variables, but we refrain from doing so in the interest of space. These results were later generalized to sets with additional constraints on $\bm{z}$ and $\bm{x}$ by \citet{wei2020convexification} and \citet{shafiee2024constrained}, where the size of the formulations depends on the specific constraints.

	\subsection{Tree structures}\label{sec:tree}
	In this section, we assume that matrix $\bm{Q}= \bm{FF^\top}$ is invertible and $Z=\{0,1\}^n$. For this subsection and next, recall the definitions of support and neighbor sets from \S\ref{sec:notation}.  We show that if $\supp(\Q)$ forms a rooted tree with $k$ leaves, then the resulting decision diagram contains at most $\mathcal{O}(n^{k+1})$ nodes. As a consequence, the extended formulation in Definition~\ref{def:socp} is an extended formulation for $\text{cl conv}(X_{\bm{F}})$ of polynomial size if the number of leaves is treated as a constant. Note that in Example~\ref{ex:tridiagonal}, the matrix $\bm{Q}$ is tridiagonal, implying that $\supp(\bm{Q})$ forms a path, i.e., a tree with one leaf (if the root is taken to be either node $1$ or node $n$), corresponds to the special case $k = 1$. In fact, extended formulations with $\mathcal{O}(n^2)$ additional variables for $\Q$ tridiagonal were proposed by \citet{liu2023graph} (although the formulations were SDPs rather than SOCPs). In this subsection we recover and generalize these results to arbitrary tree graphs.

	Assume that $\supp(\Q)$ is a rooted tree, which we may take to be connected without loss of generality (as otherwise set $X_{\bm{F}}$ can be decomposed). We assign labels to its nodes in a bottom-up manner, so that every node receives a label smaller than the label of its parent and, in particular, the root is assigned label $n$, see Figure~\ref{fig:oneCut}(a) for an example. A useful consequence of this labeling is that, for every $\ell\in [n]$, the set $[\ell:n]$ induces a connected subgraph of $\supp(\Q)$ containing the root. %
	The nodes in the decision diagram are then processed according to this labeling, starting from the leaves. To complete the construction, we introduce a source node $s_0$ connected to the root and a sink node $s_1$ connected to all leaves. The resulting augmented graph is denoted by $\mathcal{G}_{\Q}=(V_{\bm{Q}},E_{\bm{Q}})$.
	
	\begin{definition}[Minimal cut]
		A minimal cut of the augmented graph $\mathcal{G}_{\Q}$ is a subset $S\subseteq V_{\Q}$ such that $s_0\in S$, $s_1\not \in S$ and both $S$ and $V_{\bm{Q}}\setminus S$ are connected on $\mathcal{G}_{\Q}$. The cutset associated with cut $S$ is $E_S=\{(i,j)\in E_{\Q}:i\in S\text{ and }j\not\in S\}$.\hfill$\blacksquare$
	\end{definition}
	
	Given the structure of the graph $\mathcal{G}_{\Q}$, the number of distinct minimal cuts is polynomial in the number of nodes but exponential in the number of leaves. 
	
	\begin{lemma}\label{lem::number-mincut}
		If $\supp(\Q)$ is a connected rooted tree with $n$ nodes and
		$k$ leaves, then the number of distinct minimal cuts in
		$\mathcal{G}_{\Q}$ is at most $
		\left(\frac{n-1}{k}+2\right)^k.$
	\end{lemma}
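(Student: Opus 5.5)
The plan is to parametrize each minimal cut by where it cuts each root-to-leaf path, and then count these parameter choices. The augmented graph $\mathcal{G}_{\Q}$ is the tree $\supp(\Q)$ with $s_0$ attached to the root $n$ and $s_1$ attached to all $k$ leaves.

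First, I characterize a minimal cut $S$. Since $s_0\in S$ and $S$ is connected, $S\setminus\{s_0\}$ is either empty or a subtree of $\supp(\Q)$ containing the root. In the second case, $V_{\Q}\setminus S$ contains $s_1$ and is connected. Any vertex outside $S$ is connected to $s_1$ through its subtree, so $V_{\Q}\setminus S$ is connected automatically. Thus I expect minimal cuts to be in bijection with the empty set together with the root-containing subtrees of $\supp(\Q)$, apart from the corner case where the subtree is everything. There, $s_1$ alone is still connected and still gives a valid cut. So minimal cuts correspond exactly to rooted subtrees $T$, including $T=\emptyset$.

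Second, I encode rooted subtrees by their intersections with the root-to-leaf paths. Order the leaves $\lambda_1,\dots,\lambda_k$ and decompose the tree into $k$ vertex-disjoint chains $P_1,\dots,P_k$. Chain $P_1$ is the full path from the root to $\lambda_1$. Each $P_j$ with $j\ge2$ is the segment of the root-to-$\lambda_j$ path not already covered by $P_1,\dots,P_{j-1}$. Its top vertex has a parent already lying in an earlier chain. These chains partition the $n$ vertices, so $\sum_j |P_j| = n$.

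A rooted subtree $T$ is downward-closed toward the root, so $T\cap P_j$ is a prefix of $P_j$ (from its top). This gives $|P_j|+1$ possible lengths, and different subtrees give different prefix vectors. Hence the number of cuts is at most $\prod_{j=1}^k (|P_j|+1)$.

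Third, I optimize. By AM–GM under $\sum_j (|P_j|+1)=n+k$, this product is at most $\left(\frac{n}{k}+1\right)^k$. Comparing with the target, $\frac{n}{k}+1 \le \frac{n-1}{k}+2$ because $\frac{1}{k}\le 1$. I will double-check whether the bound should have used $\sum_j|P_j|=n-1$, for instance by excluding the root, but either count fits inside the stated bound.

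The main obstacle is the first step: making sure that "minimal cut" exactly matches rooted subtrees. That means handling the $s_0$ and $s_1$ edge cases and checking that the connectivity of the complement imposes no extra constraints. In case the characterization fails, an injection from cuts into prefix vectors is all the bound needs, and that injection only uses connectivity of $S$.
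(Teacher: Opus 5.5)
Your proof is correct, and it takes a genuinely different route from the paper.

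The paper argues by induction on the tree. It establishes the exact recursion $M(\mathcal{T})=1+\prod_{j=1}^d M(\mathcal{T}_j)$ over the subtrees hanging from the children of the root, where the $1$ is the cut $\{s_0\}$. It then bounds the product using the induction hypothesis and a weighted AM--GM inequality, and finally uses a binomial-theorem estimate to absorb the extra $+1$.

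You instead characterize minimal cuts globally as $\{s_0\}\cup T$, with $T$ an ancestor-closed vertex set (possibly empty or the whole tree). Your check that the complement is connected is exactly what is needed: every vertex outside $T$ has its entire subtree outside $T$, hence a leaf adjacent to $s_1$. You then inject these sets into $\prod_{j=1}^k[0:|P_j|]$ through a decomposition of the tree into $k$ vertex-disjoint vertical chains, and finish with plain AM--GM.

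What your approach buys:
\begin{itemize}
\item It avoids the induction and the $+1$ bookkeeping entirely.
\item It gives the sharper bound $\left(\frac{n}{k}+1\right)^k$. This equals the stated bound for $k=1$ and is strictly smaller for $k\geq 2$. For example, for a star rooted at its center it gives at most $\sqrt{e}\,2^{n-1}$ instead of $3^{n-1}$, against the true count $2^{n-1}+1$.
\end{itemize}

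What the paper's approach buys is an exact recursive count of the cuts. Your injection overcounts, since most prefix vectors are not ancestor-closed, but that is irrelevant for the lemma.

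On your hedge: $\sum_j|P_j|=n$ is the right count, because the chains partition all tree vertices and the root lies in $P_1$. No adjustment is needed.
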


	We now proceed to associate with each partial solution $\bm{v}\in \{0,1\}^{\ell-1}$ a minimal cut in the graph.
	
	\begin{definition}[One-cut]\label{def:one-cut}
		Suppose that $\supp(\Q)$ is a rooted tree with $k$ leaves and consider the augmented graph $\mathcal{G}_{\Q}$.
		For any $\ell\in [2:n]$ and $\bm{v} \in \{0,1\}^{\ell-1}$, let $\mathcal{G}_{\bm{Q}}(\bm{v};1)$ be the subgraph of $\mathcal{G}_{\Q}$ induced by nodes $V_{\bm{Q}}(\bm{v};1)\defeq\{i\in [\ell-1]:v_i=1\}\cup[\ell:n]$, let $\tilde S_1(\bm{v})$ be the connected component of $\mathcal{G}_{\bm{Q}}(\bm{v};1)$ containing node $n$, and define the \underline{one-cut} as $S_1^\ell(\bm{v})=\tilde S_1(\bm{v})\cap [\ell-1]$.\hfill$\blacksquare$
	\end{definition}
	
	\begin{wrapfigure}{r}{0.54\textwidth}
		\vspace{-1.0\baselineskip}
		\centering
		\subfigure[]{
			\begin{tikzpicture}[scale=0.72,
				treenode/.style={circle,fill=gray!50,font=\scriptsize,inner sep=2pt},
				auxnode/.style={circle,draw,font=\scriptsize,inner sep=1.5pt}]
				\node[auxnode] (s0) at (2.7,5.4) {$s_0$};
				\node[treenode] (n7) at (2.7,4.3) {$7$};
				\node[treenode] (n5) at (1.3,3.3) {$5$};
				\node[treenode] (n6) at (3.3,3.3) {$6$};
				\node[treenode] (n1) at (0.7,2.1) {$1$};
				\node[treenode] (n2) at (1.9,2.1) {$2$};
				\node[treenode] (n4) at (3.3,2.1) {$4$};
				\node[treenode] (n3) at (3.3,0.9) {$3$};
				\node[auxnode] (s1) at (1.9,-0.4) {$s_1$};
				\draw (s0)--(n7); \draw (n7)--(n5); \draw (n7)--(n6);
				\draw (n5)--(n1); \draw (n5)--(n2); \draw (n6)--(n4); \draw (n4)--(n3);
				\draw (n1)--(s1); \draw (n2)--(s1); \draw (n3)--(s1);
			\end{tikzpicture}
		}
		\hspace{0.8em}
		\subfigure[]{
			\begin{tikzpicture}[scale=0.72,
				freenode/.style={circle,fill=gray!50,font=\scriptsize,inner sep=2pt},
				onenode/.style={circle,fill=black,text=white,font=\scriptsize,inner sep=2pt},
				zeronode/.style={circle,draw,fill=white,font=\scriptsize,inner sep=2pt},
				auxnode/.style={circle,draw,font=\scriptsize,inner sep=1.5pt}]
				\node[auxnode] (s0) at (2.7,5.4) {$s_0$};
				\node[freenode] (n7) at (2.7,4.3) {$7$};
				\node[zeronode] (n5) at (1.3,3.3) {$5$};
				\node[freenode] (n6) at (3.3,3.3) {$6$};
				\node[onenode] (n1) at (0.7,2.1) {$1$};
				\node[zeronode] (n2) at (1.9,2.1) {$2$};
				\node[onenode] (n4) at (3.3,2.1) {$4$};
				\node[onenode] (n3) at (3.3,0.9) {$3$};
				\node[auxnode] (s1) at (1.9,-0.4) {$s_1$};
				\draw[gray!60] (s0)--(n7); \draw[gray!60] (n7)--(n5);
				\draw[gray!60] (n5)--(n1); \draw[gray!60] (n5)--(n2);
				\draw[gray!60] (n1)--(s1); \draw[gray!60] (n2)--(s1); \draw[gray!60] (n3)--(s1);
				\draw[line width=1.1pt] (n7)--(n6); \draw[line width=1.1pt] (n6)--(n4); \draw[line width=1.1pt] (n4)--(n3);
				\draw[thick] ($(n7)!0.5!(n5)+(0.08,-0.11)$) -- ($(n7)!0.5!(n5)+(-0.08,0.11)$);
				\draw[thick] ($(n3)!0.5!(s1)+(0.10,-0.10)$) -- ($(n3)!0.5!(s1)+(-0.10,0.10)$);
				\node[draw,dashed,rounded corners=7pt,inner sep=8pt,fit=(n7)(n6)(n4)(n3),label={[font=\scriptsize]above right:$\tilde S_1(\bm{v})$}] {};
				\node[draw,dotted,rounded corners=5pt,inner sep=3pt,fit=(n4)(n3),label={[font=\scriptsize]right:$S_1^{6}(\bm{v})$}] {};
			\end{tikzpicture}
		}
		\caption{Augmented tree (a) and one-cut $S_1^6(\bm{v})=\{3,4\}$ (b) for the instance in Definition~\ref{def:one-cut}.}
		\label{fig:oneCut}
	\vspace{-0.5\baselineskip}
	\end{wrapfigure}
	
	Figure~\ref{fig:oneCut} depicts the augmented graph and illustrates the construction of a one-cut. In panel (b), black, white, and gray nodes are fixed to one, fixed to zero, and undecided, respectively. The bold edges induce the component $\tilde S_1(\bm{v})=\{3,4,6,7\}$ (dashed region), while the dotted region is the one-cut $S_1^6(\bm{v})=\{3,4\}$; the strokes mark the associated cutset. The main properties of one-cuts are: $\bullet$ if $i\in S_1^\ell(\bm{v})$ then $v_i=1$; and $\bullet$ if $(i,j)\in \supp(\Q)$ with $i\in[\ell-1]\setminus S_1^\ell(\bm{v})$ and $j\in S_1^\ell(\bm{v})$, then $v_i=0$.
	Moreover, it is easy to check that the set $\tilde S_1(\bm{v})\cup\{s_0\}$ in Definition~\ref{def:one-cut} is a minimal cut, thus from Lemma~\ref{lem::number-mincut} there are at most $(n+1)^k$ such sets. As a direct consequence, there are also at most $(n+1)^k$ distinct sets $S_1^\ell(\bm{v})$.

	We now establish the following key result, which plays a central role in characterizing the size of the decision diagram when $\supp(\Q)$ is a connected tree. Specifically, we show that the state of the decision diagram associated with decision $\bm{v}$ depends only on $S_1^\ell(\bm{v})$.
	
	\begin{theorem}\label{thm::identical-states-zero}
		Suppose that $\supp(\Q)$ is a connected tree. For any $\ell\in [2:n]$ and $\bm{v} \in \{0,1\}^{\ell-1}$ let $S = S_1^\ell(\bm{v})$ be the associated one-cut. Then, in the decision diagram, the state matrix of the node corresponding to assignment $\bm{v}$ is
		\[
		\bm{H^\ell}(\bm{v}) 
		= \bm{F_{[\ell:n]}}-\bm{Q_{[\ell:n], S}}\left(\bm{Q_{S, S}}\right)^{-1}\bm{F_{S}}.
		\]
	\end{theorem}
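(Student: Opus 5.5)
Starting point: Lemma~\ref{lem:closedFormState} gives, with $T=\{i\in[\ell-1]:v_i=1\}$,
\[
\bm{H^\ell}(\bm{v})=\bm{F_{[\ell:n]}}-\bm{Q_{[\ell:n],T}}\left(\bm{Q_{T,T}}\right)^{-1}\bm{F_T}.
\]
It therefore suffices to show
\[
\bm{Q_{[\ell:n],T}}\left(\bm{Q_{T,T}}\right)^{-1}\bm{F_T}=\bm{Q_{[\ell:n],S}}\left(\bm{Q_{S,S}}\right)^{-1}\bm{F_S}
\]
with $S=S_1^\ell(\bm{v})\subseteq T$. Geometrically, the right-hand side is the orthogonal projection of the rows of $\bm{F_{[\ell:n]}}$ onto $\Row(\bm{F_T})$, so the claim says the rows $\bm{f_i}$, $i\in T\setminus S$, contribute nothing. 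Set $U\defeq T\setminus S$.

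\emph{Key combinatorial fact.} No edge of $\supp(\Q)$ joins $U$ to $S\cup[\ell:n]$. Take $i\in U$ and $j\in S\cup[\ell:n]$. Then $j\in\tilde S_1(\bm{v})$, because $S\subseteq\tilde S_1(\bm{v})$, and $[\ell:n]$ is connected, contains $n$, and lies in $V_{\Q}(\bm{v};1)$. Also $i\in V_{\Q}(\bm{v};1)$ since $v_i=1$. So an edge $(i,j)$ would place $i$ in the component $\tilde S_1(\bm{v})$, giving $i\in S$, a contradiction. Consequently $\bm{Q_{U,S}}=\bm{0}$ and $\bm{Q_{[\ell:n],U}}=\bm{0}$. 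This uses only the definition of the one-cut, not the tree structure; the tree structure matters only for counting cuts later.

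\emph{Linear algebra.} Order $T$ as $(S,U)$. Then $\bm{Q_{T,T}}$ is block diagonal, $\mathrm{diag}(\bm{Q_{S,S}},\bm{Q_{U,U}})$, and both blocks are invertible as principal submatrices of $\Q\succ\bm 0$. Also $\bm{Q_{[\ell:n],T}}=\left(\bm{Q_{[\ell:n],S}}\;\;\bm{0}\right)$. Expanding the block product gives
\[
\bm{Q_{[\ell:n],T}}\left(\bm{Q_{T,T}}\right)^{-1}\bm{F_T}=\bm{Q_{[\ell:n],S}}\left(\bm{Q_{S,S}}\right)^{-1}\bm{F_S}+\bm 0\cdot\left(\bm{Q_{U,U}}\right)^{-1}\bm{F_U},
\]
and the second term vanishes, which proves the claim.

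\emph{Edge cases.} If $S=\emptyset$, read the right-hand side as $\bm{F_{[\ell:n]}}$, which agrees with the convention in Lemma~\ref{lem:closedFormState}. If $T=\emptyset$, then $S=\emptyset$ and both sides equal $\bm{F_{[\ell:n]}}$.

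\emph{Main obstacle.} The delicate step is the combinatorial fact. It requires that $[\ell:n]$ lies entirely inside $\tilde S_1(\bm{v})$. This follows from the bottom-up labeling: each $j\in[\ell:n]$ other than $n$ has its parent in $[\ell:n]$, so the induced subgraph on $[\ell:n]$ is connected and contains the root.
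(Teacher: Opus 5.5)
Your proposal is correct and follows the paper's proof. Both start from Lemma~\ref{lem:closedFormState}. Both use the one-cut definition to show that $T\setminus S$ has no $\supp(\bm{Q})$-edges to $S$ or to $[\ell:n]\subseteq\tilde S_1(\bm{v})$. Both then drop the extra term using the block-diagonal inverse of $\bm{Q_{T,T}}$ together with $\bm{Q_{[\ell:n],T\setminus S}}=\bm{0}$. Your explicit edge-by-edge justification of the no-edge claim, and your treatment of the cases $S=\emptyset$ and $T=\emptyset$, are somewhat more careful than the paper, which simply appeals to the discussion following Definition~\ref{def:one-cut}.
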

	
	An important implication of the above theorem is that if two sparsity patterns $\bm{v_1}, \bm{v_2} \in \{0,1\}^{\ell-1}$ share the same one-cut, then their corresponding states in the decision diagram are identical, i.e., $\bm{H^{\ell}}(\bm{v_1}) = \bm{H^{\ell}}(\bm{v_2})$. This observation, combined with the bound on the number of minimal cuts in Lemma~\ref{lem::number-mincut}, leads to the following important corollary.
	
	\begin{corollary}\label{cor::polsize-zero}
		Suppose that $\supp(\Q)$ is a connected rooted tree with $k$ leaves. In this case, the number of nodes in the decision diagram constructed according to the variable order induced by the labeling of $\supp(\Q)$ is at most $n\left(\frac{n-1}{k}+2\right)^k=\mathcal{O}\left(n^{k+1}\right)$.
	\end{corollary}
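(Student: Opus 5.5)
The plan is to count nodes layer by layer. In each layer, the nodes of the exact decision diagram are distinct state matrices $\bm{H^\ell}$, since $Z=\{0,1\}^n$ makes the combinatorial component trivial. With maximal merging at $\epsilon=0$, two partial solutions $\bm{v_1},\bm{v_2}\in\{0,1\}^{\ell-1}$ reaching the same matrix are represented by a single node. Hence the number of nodes in layer $\ell$ is at most the number of distinct matrices $\bm{H^\ell}(\bm{v})$ as $\bm{v}$ ranges over $\{0,1\}^{\ell-1}$. Proposition~\ref{prop:state} guarantees that every node reached by a path is indeed $\bm{F_{[\ell:n]}}\bm{G_\perp^\ell}(\bm{v})$ for the corresponding $\bm{v}$, so this count is exact.

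For layers $\ell\in[2:n]$, Theorem~\ref{thm::identical-states-zero} shows that $\bm{H^\ell}(\bm{v})$ depends on $\bm{v}$ only through the one-cut $S_1^\ell(\bm{v})$. The one-cut is in turn determined by the set $\tilde S_1(\bm{v})$, since $S_1^\ell(\bm{v})=\tilde S_1(\bm{v})\cap[\ell-1]$. I would check (as asserted before the theorem) that $\tilde S_1(\bm{v})\cup\{s_0\}$ is a minimal cut of $\mathcal{G}_{\Q}$:
\begin{itemize}
\item it contains $s_0$ and not $s_1$;
\item it is connected, because $\tilde S_1(\bm{v})$ is a connected component containing the root $n$, which is adjacent to $s_0$;
\item its complement is connected, because each vertex outside $\tilde S_1(\bm{v})$ lies in a subtree hanging off the component, and every subtree contains a leaf adjacent to $s_1$.
\end{itemize}
Thus the map $\bm{v}\mapsto S_1^\ell(\bm{v})$ factors through the set of minimal cuts, and Lemma~\ref{lem::number-mincut} bounds the number of distinct states in layer $\ell$ by $\left(\frac{n-1}{k}+2\right)^k$.

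The remaining layers are trivial. Layer $1$ has only the root, and layer $n+1$ consists of empty matrices, which all merge into the single terminal node. Each of these therefore contributes at most one node, which is within the per-layer bound. Summing over the $n+1$ layers gives at most about $n\left(\frac{n-1}{k}+2\right)^k$ nodes. To get exactly the factor $n$ rather than $n+1$, I would absorb the two singleton layers into the slack of the bound, or treat layer $n$ together with the terminal. Since $\frac{n-1}{k}+2\le n+1$, the total is $\mathcal{O}(n^{k+1})$.

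The main care point is the precise bookkeeping of the boundary layers so that the constant $n$ (rather than $n+1$) is met. The other step needing attention is confirming that distinct paths yielding equal matrices are genuinely merged in the exact diagram, which holds under maximal merging with $\epsilon=0$.
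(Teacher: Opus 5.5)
Your proposal is correct and follows the paper's own proof: a per-layer count using Theorem~\ref{thm::identical-states-zero}, the injection of one-cuts into minimal cuts of $\mathcal{G}_{\Q}$, Lemma~\ref{lem::number-mincut}, and singleton root and terminal layers. The bookkeeping you flagged is settled exactly as in the paper by the inequality $(n-1)\left(\frac{n-1}{k}+2\right)^k+2\leq n\left(\frac{n-1}{k}+2\right)^k$, which holds because $\left(\frac{n-1}{k}+2\right)^k\geq 2^k\geq 2$.
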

	
	\begin{remark}
		Note that, unlike the argument given in Example~\ref{ex:tridiagonal}, the result in Theorem~\ref{thm::identical-states-zero} does not rely on the sparsity pattern (or other properties) of $\bm{F}$. In particular, decision diagrams applied to tree-structured matrices $\bm{Q}$ produce graphs and extended formulations of size $\mathcal{O}(n^{k+1})$ independent of the decomposition $\bm{Q}=\bm{FF^\top}$ used. \hfill $\blacksquare$
	\end{remark}
	The above result, together with our construction of $\cl\conv(X_{\bm{F}})$ from the decision diagram, implies the existence of an SOCP representation of $\cl\conv(X_{\bm{F}})$ of size $\mathcal{O}\left(n^{k+1}\right)$.
	Our result strictly generalizes that of \citet{liu2023graph}, who provide an SDP characterization of $\cl \conv(X)$ for the special case $k = 1$; in this case, our bound yields an SOCP representation of size $O(n^2)$, but also applies to $k\geq 1$.
	However, when the number of leaves scales linearly with $n$ (e.g., star graphs with $k = n-1$), the resulting representation becomes exponential in size. We point that polynomial-time algorithms for tree structured problems exist regardless of the number of leaves \citep{bhathena2025parametric}. %
	
	\subsection{Inverse tree structures}\label{sec:invTree}
	We now turn to the case where the support graph of $\bm{Q^{-1}}$ is a tree. As in \S\ref{sec:tree}, we assume throughout this section that $\bm{Q}=\bm{FF^\top}$ is invertible and $Z=\{0,1\}^n$. To streamline the presentation, we define $\bm{A}\defeq\bm{Q^{-1}}$ and assume that $\supp(\bm{A})$ is a connected tree with $k$ leaves. 
	
	We employ the same graph construction of \S\ref{sec:tree}, now applied to $\supp(\bm{A})$ instead of $\supp(\Q)$: we root the tree and label its nodes so that every node receives a label smaller than the label of its parent (thus the root is assigned label $n$). We process the variables in the order given by the labels, and let $\mathcal{G}_{\bm{A}}=(V_{\bm{A}},E_{\bm{A}})$ be the augmented graph obtained by adding a source node $s_0$ adjacent to the root and a sink node $s_1$ adjacent to all leaves. As discussed in \S\ref{sec:tree}, this labeling guarantees that, for every $\ell\in[n]$, the set $[\ell:n]$ induces a connected subgraph of $\supp(\bm{A})$ containing the root.
	
	The key difference with \S\ref{sec:tree} is the relevant notion of cut. For tree structures, the state associated with a partial solution $\bm{v}$ is determined by the indices fixed to \emph{one} that remain connected to the undecided portion of the graph. For inverse tree structures, the roles of zeros and ones are interchanged: the state is determined by the connected component of undecided indices and indices fixed to \emph{zero} that contains the undecided portion of the graph, together with its boundary of indices fixed to one. This mirror-image behavior is already apparent in Example~\ref{ex:inverseTridiagonal}, where state resets are associated with decisions $z_\ell=1$ instead of $z_\ell=0$.
	
	\begin{definition}[Zero-cut]\label{def:zero-cut}
		Suppose that $\supp(\bm{A})$ is a rooted tree. For any $\ell\in [2:n]$ and $\bm{v} \in \{0,1\}^{\ell-1}$, let $\mathcal{G}_{\bm{A}}(\bm{v};0)$ be the subgraph of $\supp(\bm{A})$ induced by nodes $V_{\bm{A}}(\bm{v};0)\defeq\{i\in [\ell-1]:v_i=0\}\cup[\ell:n]$. Define the \underline{zero-cut} $S_0^\ell(\bm{v})$ as the connected component of $\mathcal{G}_{\bm{A}}(\bm{v};0)$ containing node $n$.\hfill$\blacksquare$
	\end{definition}
	
	\begin{wrapfigure}{R}{0.36\textwidth}
		\vspace{-1.0\baselineskip}
		\centering
		\begin{tikzpicture}[scale=0.76,
			freenode/.style={circle,fill=gray!50,font=\scriptsize,inner sep=2pt},
			onenode/.style={circle,fill=black,text=white,font=\scriptsize,inner sep=2pt},
			zeronode/.style={circle,draw,fill=white,font=\scriptsize,inner sep=2pt},
			auxnode/.style={circle,draw,font=\scriptsize,inner sep=1.5pt}]
			\node[auxnode] (s0) at (2.7,5.4) {$s_0$};
			\node[freenode] (n7) at (2.7,4.3) {$7$};
			\node[onenode] (n5) at (1.3,3.3) {$5$};
			\node[freenode] (n6) at (3.3,3.3) {$6$};
			\node[zeronode] (n1) at (0.7,2.1) {$1$};
			\node[onenode] (n2) at (1.9,2.1) {$2$};
			\node[zeronode] (n4) at (3.3,2.1) {$4$};
			\node[zeronode] (n3) at (3.3,0.9) {$3$};
			\node[auxnode] (s1) at (1.9,-0.4) {$s_1$};
			\draw[gray!60] (s0)--(n7); \draw[gray!60] (n7)--(n5);
			\draw[gray!60] (n5)--(n1); \draw[gray!60] (n5)--(n2);
			\draw[gray!60] (n1)--(s1); \draw[gray!60] (n2)--(s1); \draw[gray!60] (n3)--(s1);
			\draw[line width=1.1pt] (n7)--(n6); \draw[line width=1.1pt] (n6)--(n4); \draw[line width=1.1pt] (n4)--(n3);
			\draw[thick] ($(n7)!0.5!(n5)+(0.08,-0.11)$) -- ($(n7)!0.5!(n5)+(-0.08,0.11)$);
			\draw[thick] ($(n3)!0.5!(s1)+(0.10,-0.10)$) -- ($(n3)!0.5!(s1)+(-0.10,0.10)$);
			\node[draw,dashed,rounded corners=7pt,inner sep=8pt,fit=(n7)(n6)(n4)(n3),label={[font=\scriptsize]above right:$S_0^{6}(\bm{v})$}] {};
			\node[draw,dotted,rounded corners=5pt,inner sep=3pt,fit=(n5),label={[font=\scriptsize]left:$\Gamma_{\bm{A}}\left(S_0^{6}(\bm{v})\right)$}] {};
		\end{tikzpicture}
		\caption{Zero-cut for the instance in Definition~\ref{def:zero-cut}.}
		\label{fig:zeroCut}
	\vspace{-0.5\baselineskip}
	\end{wrapfigure}
	
	Figure~\ref{fig:zeroCut} illustrates the construction of a zero-cut. Black, white, and gray nodes are fixed to one, fixed to zero, and undecided, respectively. The bold edges form the component $S_0^6(\bm{v})=\{3,4,6,7\}$ (dashed region), and the dotted halo marks its neighbor set; thus the roles of zeros and ones are interchanged relative to Figure~\ref{fig:oneCut}(b). In contrast with Definition~\ref{def:one-cut}, it is convenient here to include the undecided indices $[\ell:n]$ in the zero-cut. The main properties of zero-cuts are: $\bullet$ $[\ell:n]\subseteq S_0^\ell(\bm{v})$, since $[\ell:n]$ induces a connected subgraph of $\mathcal{G}_{\bm{A}}(\bm{v};0)$ containing node $n$; $\bullet$ if $i\in S_0^\ell(\bm{v})\cap[\ell-1]$, then $v_i=0$; $\bullet$ all neighbors of the zero-cut correspond to indices already fixed to one, that is, $\Gamma_{\bm{A}}\big(S_0^\ell(\bm{v})\big)\subseteq\{i\in[\ell-1]:v_i=1\}$; $\bullet$ the set $S_0^\ell(\bm{v})\cup\{s_0\}$ is a minimal cut of the augmented graph $\mathcal{G}_{\bm{A}}$. As a consequence of this last point, similar to the arguments in \S\ref{sec:tree}, Lemma~\ref{lem::number-mincut} implies that at any given layer $\ell$ there are at most $(n+1)^k$ distinct zero-cuts.

	The key to obtain similar results as the tree case is a matrix identity allowing translations between inverses of submatrices of $\bm{Q}$ and the corresponding inverses of submatrices of $\bm{A}$. 
	\begin{lemma}\label{lem::inverse}
		Let $S$, $T$ and $V$ be a partition of $[n]$, with $S,T\neq \emptyset$ and $V$ possibly empty, such that no edge of $\supp(\bm{A})$ connects $S$ and $V$, that is, $\bm{A_{S,V}}=\bm{0}$. Then
		\begin{align}
			\bm{Q_{S,T}} \left(\bm{Q_{T,T}}\right)^{-1} \;=\; - \left(\bm{A_{S,S}}\right)^{-1} \bm{A_{S,T}}.
		\end{align}
	\end{lemma}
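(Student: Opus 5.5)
The plan is to derive the identity directly from the block structure of $\bm{QA}=\bm{I}$, with no graph theory beyond the hypothesis $\bm{A_{S,V}}=\bm{0}$. Since $\bm{Q}$ is symmetric positive definite, so is $\bm{A}=\bm{Q^{-1}}$. Hence every principal submatrix of either matrix is invertible; in particular $\bm{Q_{T,T}}$ and $\bm{A_{S,S}}$ are both invertible, and both sides of the claimed identity are well defined.

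First, I will write the identity $\bm{AQ}=\bm{I}$ restricted to rows $S$ and columns $T$. Since $S,T,V$ partition $[n]$, this gives
$$\bm{A_{S,S}}\bm{Q_{S,T}}+\bm{A_{S,T}}\bm{Q_{T,T}}+\bm{A_{S,V}}\bm{Q_{V,T}}=\bm{0},$$
because the $(S,T)$ block of the identity is zero ($S\cap T=\emptyset$). If $V$ is empty, the third term is simply absent.

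Next, I will use the hypothesis $\bm{A_{S,V}}=\bm{0}$ to kill the third term, leaving
$$\bm{A_{S,S}}\bm{Q_{S,T}}=-\bm{A_{S,T}}\bm{Q_{T,T}}.$$
Left-multiplying by $(\bm{A_{S,S}})^{-1}$ and right-multiplying by $(\bm{Q_{T,T}})^{-1}$ gives exactly $\bm{Q_{S,T}}(\bm{Q_{T,T}})^{-1}=-(\bm{A_{S,S}})^{-1}\bm{A_{S,T}}$.

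There is essentially no obstacle here. The only points needing care are choosing the right block equation, namely rows $S$ and columns $T$ of $\bm{AQ}$ (not $\bm{QA}$, which would not isolate $\bm{A_{S,S}}$ on the left), and justifying invertibility of the principal submatrices through positive definiteness.
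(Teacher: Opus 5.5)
Your proof is correct, and it takes a shorter route than the paper's. You read off the $(S,T)$ block of $\bm{AQ}=\bm{I}$, drop the $\bm{A_{S,V}}$ term by hypothesis, and invert $\bm{A_{S,S}}$ and $\bm{Q_{T,T}}$, both invertible since $\bm{Q}\succ\bm{0}$.

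The paper goes a longer way. It first writes $(\bm{Q_{T,T}})^{-1}=\bm{A_{T,T}}-\bm{A_{T,T^c}}(\bm{A_{T^c,T^c}})^{-1}\bm{A_{T^c,T}}$ with $T^c=S\cup V$, using the partitioned-inverse (Schur complement) formula. It then extracts two block equations from $\bm{QA}=\bm{I}$: rows $S$ against columns $T$, and rows $S$ against columns $T^c$. Combining these gives the sparsity-free identity $\bm{Q_{S,T}}(\bm{Q_{T,T}})^{-1}=-\bm{I_{S,T^c}}(\bm{A_{T^c,T^c}})^{-1}\bm{A_{T^c,T}}$. Only at the end does it use $\bm{A_{S,V}}=\bm{0}$ to make $\bm{A_{T^c,T^c}}$ block diagonal.

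The reason for that detour is reuse: the sparsity-free identity is what Lemma~\ref{lem::invQ2} records for the inverse-sparse approximation analysis. Your technique gives that too, at no extra cost. The $(T^c,T)$ block of $\bm{AQ}=\bm{I}$ reads $\bm{A_{T^c,T^c}}\bm{Q_{T^c,T}}+\bm{A_{T^c,T}}\bm{Q_{T,T}}=\bm{0}$. Hence $\bm{Q_{T^c,T}}(\bm{Q_{T,T}})^{-1}=-(\bm{A_{T^c,T^c}})^{-1}\bm{A_{T^c,T}}$, and restricting to any row set inside $T^c$ yields Lemma~\ref{lem::invQ2} with no Schur complement at all.

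So your argument is strictly more economical. The paper's only advantage is that it displays the general identity as an intermediate step.
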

	
	We now state the analog of Theorem~\ref{thm::identical-states-zero} for the case where $\supp(\bm{A})$ is a tree: the state of the decision diagram associated with a partial solution $\bm{v}$ depends only on the zero-cut $S_0^\ell(\bm{v})$.
	\begin{theorem}\label{thm::identical-states-one}
		Suppose that $\supp(\bm{A})$ is a connected tree. For any $\ell\in [2:n]$ and $\bm{v} \in \{0,1\}^{\ell-1}$, let $S = S_0^\ell(\bm{v})$ be the associated zero-cut and let $\Gamma=\Gamma_{\bm{A}}(S)$ be the set of neighbors of $S$ in $\supp(\bm{A})$. Then, in the decision diagram, the state matrix of the node corresponding to assignment $\bm{v}$ is
		\[
		\bm{H^\ell}(\bm{v})
		= \bm{F_{[\ell:n]}} + \left(\bm{A_{S,S}^{-1}}\right)_{[\ell:n]}\bm{A_{S,\Gamma}} \bm{F_{\Gamma}},
		\]
		where $\left(\bm{A_{S,S}^{-1}}\right)_{[\ell:n]}$ denotes the submatrix of $\bm{A_{S,S}^{-1}}$ corresponding to rows with indices in $[\ell:n]\subseteq S$, and the second term is understood to be $\bm{0}$ if $\Gamma=\emptyset$.
	\end{theorem}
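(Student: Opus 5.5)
Starting from Lemma~\ref{lem:closedFormState}, with $T=\{i\in[\ell-1]:v_i=1\}$ the support of $\bm{v}$, the state matrix is
\[
\bm{H^\ell}(\bm{v})=\bm{F_{[\ell:n]}}-\bm{Q_{[\ell:n],T}}\left(\bm{Q_{T,T}}\right)^{-1}\bm{F_T}.
\]
The plan is to rewrite the correction term using Lemma~\ref{lem::inverse}. We work with the partition $S=S_0^\ell(\bm{v})$, $T$, and $V=[n]\setminus(S\cup T)$.

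First we check that this is a partition and that it meets the hypothesis of Lemma~\ref{lem::inverse}. By the listed properties of zero-cuts, $[\ell:n]\subseteq S$ and $S\cap[\ell-1]$ contains only zeros, so $S\cap T=\emptyset$. The set $V$ consists of indices in $[\ell-1]$ fixed to zero that lie outside the component $S$. If some $j\in V$ were adjacent in $\supp(\bm{A})$ to some $i\in S$, then $j$ would belong to the vertex set $V_{\bm{A}}(\bm{v};0)$ and to the same connected component as $i$, i.e., $j\in S$, a contradiction. Hence $\bm{A_{S,V}}=\bm{0}$.

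The case $T=\emptyset$ is handled separately. Then $\Gamma\subseteq T$ forces $\Gamma=\emptyset$, and both sides equal $\bm{F_{[\ell:n]}}$. For $T\neq\emptyset$, Lemma~\ref{lem::inverse} gives $\bm{Q_{S,T}}(\bm{Q_{T,T}})^{-1}=-(\bm{A_{S,S}})^{-1}\bm{A_{S,T}}$. Restricting to the rows indexed by $[\ell:n]\subseteq S$ yields
\[
\bm{Q_{[\ell:n],T}}\left(\bm{Q_{T,T}}\right)^{-1}=-\left(\bm{A_{S,S}^{-1}}\right)_{[\ell:n]}\bm{A_{S,T}}.
\]
Multiplying by $\bm{F_T}$ turns the correction term into $+\left(\bm{A_{S,S}^{-1}}\right)_{[\ell:n]}\bm{A_{S,T}}\bm{F_T}$.

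It remains to replace $\bm{A_{S,T}}\bm{F_T}$ by $\bm{A_{S,\Gamma}}\bm{F_\Gamma}$. Since $\Gamma=\Gamma_{\bm{A}}(S)\subseteq T$ by the zero-cut properties, the columns of $\bm{A_{S,T}}$ indexed by $T\setminus\Gamma$ are zero, because those indices are not neighbors of $S$. Therefore $\bm{A_{S,T}}\bm{F_T}=\bm{A_{S,\Gamma}}\bm{F_\Gamma}$, with this product read as $\bm{0}$ when $\Gamma=\emptyset$. This gives the formula in Theorem~\ref{thm::identical-states-one}.

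The main point requiring care is the verification that $\bm{A_{S,V}}=\bm{0}$ via the component argument, together with the observation that $[\ell:n]\subseteq S$. The latter uses the labeling, which guarantees that $[\ell:n]$ induces a connected subgraph containing node $n$. Invertibility of $\bm{A_{S,S}}$ and $\bm{Q_{T,T}}$ follows from $\bm{Q}\succ\bm{0}$, so that principal submatrices of $\bm{Q}$ and $\bm{A}$ are positive definite.
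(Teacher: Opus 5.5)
Your proposal is correct and follows the paper's proof essentially step for step: the closed form from Lemma~\ref{lem:closedFormState}, the partition $(S,T,V)$, Lemma~\ref{lem::inverse} restricted to the rows $[\ell:n]\subseteq S$, and finally $\bm{A_{S,T}F_T}=\bm{A_{S,\Gamma}F_\Gamma}$ via $\Gamma\subseteq T$, with the case $T=\emptyset$ handled separately. The only cosmetic difference is that you verify $\bm{A_{S,V}}=\bm{0}$ by a direct connected-component argument, whereas the paper reads it off from $\Gamma\subseteq T$.
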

	An immediate consequence of Theorem~\ref{thm::identical-states-one} is that the state associated with a partial solution depends on it only through its zero-cut: if $\bm{v_1}, \bm{v_2} \in \{0,1\}^{\ell-1}$ satisfy $S_0^\ell(\bm{v_1})=S_0^\ell(\bm{v_2})$, then $\bm{H^{\ell}}(\bm{v_1}) = \bm{H^{\ell}}(\bm{v_2})$ and the corresponding states are merged in the decision diagram. This observation yields the following counterpart of Corollary~\ref{cor::polsize-zero}.
	
	\begin{corollary}\label{cor::polsize-one}
		Suppose that $\supp(\bm{A})$ is a connected rooted tree with $k$ leaves. Then the number of nodes in the decision diagram constructed according to the node ordering induced by the labeling of $\supp(\bm{A})$ is at most $n\left(\frac{n-1}{k}+2\right)^k=\mathcal{O}\left(n^{k+1}\right)$.
	\end{corollary}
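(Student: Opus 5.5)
The bound follows from Theorem~\ref{thm::identical-states-one} together with the minimal-cut count of Lemma~\ref{lem::number-mincut}, in the same way as Corollary~\ref{cor::polsize-zero}. We fix a layer $\ell\in[2:n]$ and bound the number of distinct state matrices at that layer.

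\textbf{Step 1: reduce to counting zero-cuts.} The node at layer $\ell$ reached by the partial assignment $\bm{v}\in\{0,1\}^{\ell-1}$ stores $\bm{H^\ell}(\bm{v})$. By Theorem~\ref{thm::identical-states-one}, this matrix is a function of $S=S_0^\ell(\bm{v})$ alone: $\Gamma=\Gamma_{\bm{A}}(S)$ is determined by $S$, and the formula involves only $\bm{A}$, $\bm{F}$, $S$, $\Gamma$ and $\ell$. Since $Z=\{0,1\}^n$, there is no combinatorial state component. Exact merging identifies nodes with equal state matrices. Hence the number of nodes at layer $\ell$ is at most the number of distinct zero-cuts $S_0^\ell(\bm{v})$ as $\bm{v}$ ranges over $\{0,1\}^{\ell-1}$.

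\textbf{Step 2: zero-cuts are minimal cuts.} We verify that $S_0^\ell(\bm{v})\cup\{s_0\}$ is a minimal cut of $\mathcal{G}_{\bm{A}}$.
\begin{itemize}
\item The set contains $s_0$. It is connected, because $S_0^\ell(\bm{v})$ is connected and contains the root $n$, which is adjacent to $s_0$.
\item It excludes $s_1$.
\item Its complement is connected. Every vertex of the tree outside the component lies in a subtree hanging below the component, and every such subtree contains a leaf, which is adjacent to $s_1$.
\end{itemize}
Distinct zero-cuts give distinct minimal cuts. Therefore, by Lemma~\ref{lem::number-mincut}, layer $\ell$ contains at most $\left(\frac{n-1}{k}+2\right)^k$ nodes.

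\textbf{Step 3: sum over layers.} Layer $1$ is the root and layer $n+1$ is the single terminal node, since all remaining nodes are merged there. Summing over the $n$ layers gives at most $n\left(\frac{n-1}{k}+2\right)^k$ nodes, up to absorbing these $O(1)$ boundary layers into the count; for $k\ge1$ this is $\mathcal{O}(n^{k+1})$.

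The only step that needs care is the connectivity of the complement in Step 2. It relies on the labeling property that $[\ell:n]$ induces a connected subtree containing the root. This ensures the component containing $n$ is obtained by deleting edges from the rooted tree, so each removed part is a union of full subtrees, each of which reaches a leaf and hence $s_1$.
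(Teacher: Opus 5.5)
Your proposal is correct and follows the paper's proof: Theorem~\ref{thm::identical-states-one} reduces the per-layer node count to the number of distinct zero-cuts, the injection $S_0^\ell(\bm{v})\mapsto S_0^\ell(\bm{v})\cup\{s_0\}$ into minimal cuts combined with Lemma~\ref{lem::number-mincut} bounds that number, and summing over layers finishes the argument. Your explicit check of the minimal-cut property, which the paper only asserts after Definition~\ref{def:zero-cut}, is a welcome addition; in Step 3 the absorption of the two boundary layers is exact rather than only up to constants, since $\left(\frac{n-1}{k}+2\right)^k\ge 2$ gives $(n-1)\left(\frac{n-1}{k}+2\right)^k+2\le n\left(\frac{n-1}{k}+2\right)^k$.
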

	
	Corollary~\ref{cor::polsize-one}, together with Theorem~\ref{theo:hullDD}, implies the existence of an ideal SOCP extended formulation of $\cl\conv(X_{\bm{F}})$ of size $\mathcal{O}\left(n^{k+1}\right)$ whenever $\supp(\bm{Q^{-1}})$ is a tree with $k$ leaves. The special case $k=1$ corresponds to matrices $\bm{Q}$ whose inverse is tridiagonal, illustrated in Example~\ref{ex:inverseTridiagonal}: in this case, the decision diagram has $\mathcal{O}(n^2)$ nodes, recovering the setting studied by \citet{lee2024convexification} and generalizing it to arbitrary tree supports of $\bm{Q^{-1}}$. %

	\section{Approximation}\label{sec:approximation}
	In the previous section, we showed that the constructed decision diagrams have polynomial size in three settings. In this section, we consider more general sparsity patterns of $\Q$ and $\Q^{-1}$, for which exact decision diagrams may be exponentially large, and construct decision diagrams of reduced size by merging nodes whose state matrices are close but not necessarily identical, as discussed in \S\ref{sec:pseudocode}. Recall from \S\ref{sec:pseudocode} that $\epsilon$-exact diagrams are the diagrams produced by Algorithm~\ref{alg:constructionDD} with tolerance $\epsilon>0$, and that their size depends on the implementation of the routine \texttt{merge}; the goal of this section is to exhibit implementations for which the resulting diagrams are provably small while still guaranteeing high-quality solutions. Throughout the section, we assume that $Z=\{0,1\}^n$ and that $\bm{Q}=\bm{FF^\top}\succ \bm{0}$. Moreover, to reduce clutter, throughout this section we omit the comma in double subscripts, writing for example $\Q_{VW}$ instead of $\Q_{V,W}$.
	
	Recall from \S\ref{sec:notation} the definitions of $\eta$-neighborhoods, their size maximum $\Delta_{G,\eta}$, distances $d_G(i,j) $as well as the definition of the decay rate $\sigma$ which is closely associated with the condition number. The \emph{volume growth function} characterizes the rate at which $\Delta_{G,\eta}$ increases with $\eta$.
	
	\begin{definition}[Polynomial volume growth]\label{def::vol-Q}
		A graph $G$ is said to have a \underline{polynomial volume growth} with parameters $\gamma, \delta > 0$ if $\Delta_{G,\eta} \le \delta\, \eta^\gamma$ for all $\eta \ge 1$.\hfill$\blacksquare$
	\end{definition}
	
	Many well-studied graph families satisfy this property. For instance, banded graphs with bandwidth $k$ exhibit \emph{linear} volume growth, i.e., $\Delta_{G,\eta} \le 2k \eta+1$. More generally, any $d$-dimensional grid graph (and its subgraphs) satisfies $\Delta_{G,\eta} = \mathcal{O}(\eta^d)$.
	In contrast, trees with $\mathcal{O}(n)$ leaves may violate this assumption. Notable examples include complete binary trees, for which $\Delta_{G,\eta} = \mathcal{O}(2^\eta)$, and star graphs, where $\Delta_{G,1} = n$.	
	Our next definition quantifies how similar two partial solutions are, as seen from a given node of the graph.
	
	\begin{definition}[$\eta$-similarity]\label{def:similarity}
		We say that $\bm{v_1}, \bm{v_2} \in \{0,1\}^{\ell-1}$ are \underline{$\eta$-similar} in $G$ with respect to a node $i \in [n]$ if $(\bm{v_1})_j = (\bm{v_2})_j$ for every $j \in [\ell-1]$ satisfying $d_G(i,j) \le \eta$.\hfill$\blacksquare$
	\end{definition}
	
	The results of this section state that partial solutions that are $\eta$-similar with respect to all undecided nodes lead to nearly identical states. In this section we will consider decision diagrams that merge nodes that are $\eta$-similar (a process we refer to as neighborhood merging) and establish that if $\eta$ is properly chosen, the decision diagrams are $\epsilon$-exact. Moreover, the number of distinct states of such $\epsilon$-exact decision diagram is thus governed by the number of nodes near the boundary between decided and undecided variables, which motivates the following definition.
	
	\begin{definition}[$\eta$-boundary]\label{def:boundary}
		Given a graph $G$ with vertex set $[n]$, a layer $\ell\in [2:n]$ and an integer $\eta\geq 1$, define the \underline{$\eta$-boundary at layer $\ell$} as
		$$\Lambda_{G,\ell}^{\eta} \defeq \left\{j\in [\ell-1]: d_G(i,j)\leq \eta \text{ for some } i \in [\ell:n]\right\},$$
		and let $\omega_{G}(\eta)\defeq \max_{\ell\in [2:n]}\left|\Lambda_{G,\ell}^{\eta}\right|$ denote the maximum size of an $\eta$-boundary.\hfill$\blacksquare$
	\end{definition}
	
	If $G$ is banded with bandwidth $k$ (with respect to the natural order), then $\Lambda_{G,\ell}^{\eta}\subseteq \{\ell-k\eta,\dots,\ell-1\}$ and thus $\omega_G(\eta)\leq k\eta$. %
	Figure~\ref{fig:boundary} illustrates the $\eta$-boundary of a banded support graph, along with the induced merging of nodes of the decision diagram.
	
	\begin{figure}[!ht]
		\centering
		\begin{tikzpicture}[scale=0.92,
			onenode/.style={circle,fill=black,text=white,font=\scriptsize,inner sep=1.5pt,minimum size=11pt},
			zeronode/.style={circle,draw,fill=white,font=\scriptsize,inner sep=1.5pt,minimum size=11pt},
			freenode/.style={circle,fill=gray!50,font=\scriptsize,inner sep=1.5pt,minimum size=11pt}]
			\def\ya{0}
			\node[font=\scriptsize] at (-0.35,\ya) {$\bm{v_1}$};
			\node[onenode]  (a1)  at (0.9,\ya) {$1$};
			\node[zeronode] (a2)  at (1.8,\ya) {$2$};
			\node[onenode]  (a3)  at (2.7,\ya) {$3$};
			\node[onenode]  (a4)  at (3.6,\ya) {$4$};
			\node[zeronode] (a5)  at (4.5,\ya) {$5$};
			\node[onenode]  (a6)  at (5.4,\ya) {$6$};
			\node[onenode]  (a7)  at (6.3,\ya) {$7$};
			\node[zeronode] (a8)  at (7.2,\ya) {$8$};
			\node[freenode] (a9)  at (8.1,\ya) {$9$};
			\node[freenode] (a10) at (9.0,\ya) {$10$};
			\node[freenode] (a11) at (9.9,\ya) {$11$};
			\node[freenode] (a12) at (10.8,\ya) {$12$};
			\foreach \i/\j in {1/2,2/3,3/4,4/5,5/6,6/7,7/8,8/9,9/10,10/11,11/12}{\draw (a\i)--(a\j);}
			\foreach \i/\j in {1/3,2/4,3/5,4/6,5/7,6/8,7/9,8/10,9/11,10/12}{\draw (a\i) to[bend left=45] (a\j);}
			\def\yb{-2.4}
			\node[font=\scriptsize] at (-0.35,\yb) {$\bm{v_2}$};
			\node[zeronode] (b1)  at (0.9,\yb) {$1$};
			\node[onenode]  (b2)  at (1.8,\yb) {$2$};
			\node[onenode]  (b3)  at (2.7,\yb) {$3$};
			\node[zeronode] (b4)  at (3.6,\yb) {$4$};
			\node[zeronode] (b5)  at (4.5,\yb) {$5$};
			\node[onenode]  (b6)  at (5.4,\yb) {$6$};
			\node[onenode]  (b7)  at (6.3,\yb) {$7$};
			\node[zeronode] (b8)  at (7.2,\yb) {$8$};
			\node[freenode] (b9)  at (8.1,\yb) {$9$};
			\node[freenode] (b10) at (9.0,\yb) {$10$};
			\node[freenode] (b11) at (9.9,\yb) {$11$};
			\node[freenode] (b12) at (10.8,\yb) {$12$};
			\foreach \i/\j in {1/2,2/3,3/4,4/5,5/6,6/7,7/8,8/9,9/10,10/11,11/12}{\draw (b\i)--(b\j);}
			\foreach \i/\j in {1/3,2/4,3/5,4/6,5/7,6/8,7/9,8/10,9/11,10/12}{\draw (b\i) to[bend right=45] (b\j);}
			\foreach \x/\m in {0.9/\neq,1.8/\neq,2.7/=,3.6/\neq,4.5/=,5.4/=,6.3/=,7.2/=}{
				\node[font=\scriptsize] at (\x,-1.2) {$\m$};}
			\draw[dashed,rounded corners=7pt] (4.05,-3.35) rectangle (7.65,0.95);
			\node[font=\scriptsize] at (5.85,1.2) {$\Lambda_{\ell}^{\eta}$};
			\draw[decorate,decoration={brace,mirror,amplitude=5pt}] (0.7,-3.5) -- (7.4,-3.5)
			node[midway,below=5pt,font=\scriptsize] {decided, $[\ell-1]$};
			\draw[decorate,decoration={brace,mirror,amplitude=5pt}] (7.9,-3.5) -- (11.0,-3.5)
			node[midway,below=5pt,font=\scriptsize] {undecided, $[\ell:n]$};
		\end{tikzpicture}
		\caption{Illustration of Definition~\ref{def:boundary} and Theorem~\ref{thm:approxSummary} for a support graph $\supp(\Q)$ with bandwidth $k=2$, with $\ell=9$ and $\eta=2$. Nodes fixed to one are black, nodes fixed to zero are white, and undecided nodes are gray. Partial solutions $\bm{v_1}$ and $\bm{v_2}$ coincide on the $\eta$-boundary $\Lambda_{\ell}^{\eta}=\{5,6,7,8\}$ but differ elsewhere.%
		}
		\label{fig:boundary}
	\end{figure}
	
	Equipped with these definitions, the main results of this section can be summarized as follows.
	
	\begin{theorem}[Summary of the main results of the section]\label{thm:approxSummary}
		Suppose that $G=\supp(\Q)$ or $G=\supp(\Q^{-1})$ has polynomial volume growth with parameters $\gamma,\delta>0$, and that the associated $\eta$-boundaries satisfy $\omega_{G}(\eta)\leq \delta'\eta^{\gamma'}$ for all $\eta\geq 1$, for some parameters $\gamma',\delta'>0$. Then, for every $\epsilon>0$, there exists an $\epsilon$-exact decision diagram for $X_{\bm{F}}$ with the number of nodes bounded by
		\[
		(n+1)\cdot 2^{\delta'\eta_\epsilon^{\gamma'}},
		\] where $\eta_\epsilon=\mathcal{O}\left(\log(1/\epsilon)\right)$ with constants depending only on $\gamma$, $\delta$ and the extreme eigenvalues of $\Q$.
	\end{theorem}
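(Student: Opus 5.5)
The proof will show that \emph{neighborhood merging} is a valid implementation of \texttt{merge} with tolerance $\epsilon$. At layer $\ell$, two nodes are merged whenever their partial solutions agree on the $\eta$-boundary $\Lambda^{\eta}_{G,\ell}$, for $\eta=\eta_\epsilon$ to be chosen. The node count is then immediate. Each layer contains at most $2^{|\Lambda^\eta_{G,\ell}|}\le 2^{\omega_G(\eta)}\le 2^{\delta'\eta^{\gamma'}}$ classes, and there are $n+1$ layers, which gives the stated bound. The substance is therefore a \emph{decay lemma}: if $\bm{v_1},\bm{v_2}$ are $\eta$-similar with respect to every undecided node $i\in[\ell:n]$ (equivalently, they agree on $\Lambda^\eta_{G,\ell}$), then $\norm{\bm{H^\ell}(\bm{v_1})-\bm{H^\ell}(\bm{v_2})}_{2,\infty}\le C\sigma'^{\eta}$. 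Here $\sigma'<1$ and $C$ depend only on $\gamma,\delta$ and the extreme eigenvalues of $\Q$. Choosing $\eta_\epsilon=\lceil \log(C/\epsilon)/\log(1/\sigma')\rceil=\mathcal{O}(\log(1/\epsilon))$ then makes every merge satisfy the tolerance test of \texttt{merge}.

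To prove the decay lemma I would start from Lemma~\ref{lem:closedFormState}. With $T_k=\supp(\bm{v_k})$, row $i$ of the state is $\bm{f_i^\top}-\Q_{iT_k}\Q_{T_kT_k}^{-1}\bm{F_{T_k}}$. This is the residual of projecting $\bm{f_i}$ onto $\Row(\bm{F_{T_k}})$, so its norm is at most $\|\bm{f_i}\|\le\sqrt{\mu_{\max}}$. The coefficient vector $\bm{\beta}=\Q_{T_kT_k}^{-1}\Q_{T_k i}$ is what must be controlled.

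In the case $G=\supp(\Q)$, the submatrix $\Q_{TT}$ has support inside $G$ and spectrum in $[\mu_{\min},\mu_{\max}]$. The Demko--Moss--Smith Chebyshev argument gives $|(\Q_{TT}^{-1})_{jk}|\le C_0\sigma^{d_G(j,k)}$, and since $\Q_{T i}$ is supported on neighbors of $i$, $|\beta_j|\lesssim\sigma^{d_G(i,j)}$. To compare $T_1$ and $T_2$, which coincide on the $\eta$-ball around $i$, I would introduce the common set $T_0=T_1\cap B_\eta(i)$ and show that each state row is within $\mathcal{O}(\sigma^{\eta/2})$-type error of the one computed with $T_0$. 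One way is a resolvent-type identity: the difference of projections onto $\Row(\bm{F_{T_0}})$ and $\Row(\bm{F_{T_k}})$, applied to $\bm{f_i}$, equals a sum over the extra indices $j\in T_k\setminus T_0$, which all lie at distance $>\eta$ from $i$. Each such term carries decay $\sigma^{d(i,j)}$. The sum is then bounded by summing $\sigma^{r}$ over distance shells, using polynomial volume growth ($|\{j:d(i,j)=r\}|\le\delta r^\gamma$). This yields $\sum_{r>\eta}\delta r^\gamma\sigma^r\le C'\eta^\gamma\sigma^\eta$, which is absorbed into $C\sigma'^\eta$ for any $\sigma'\in(\sigma,1)$.

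In the case $G=\supp(\Q^{-1})$, I would use the inverse-side description instead, in the spirit of Lemma~\ref{lem::inverse} and Theorem~\ref{thm::identical-states-one}. Let $S$ be the set of indices fixed to zero together with the undecided ones. The residual state becomes $\bm{F_{[\ell:n]}}+(\bm{A_{SS}^{-1}})_{[\ell:n]}\bm{A_{S,S^c}}\bm{F_{S^c}}$, obtained by a Schur complement in $\bm{A}=\Q^{-1}$; this is a formula valid without tree structure, which I would derive first. Now $\bm{A_{SS}}$ is sparse with respect to $G$ and has condition number at most $\kappa_2$. The same exponential-decay-plus-volume-growth summation then applies, with the roles of zeros and ones swapped.

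The main obstacle is the comparison step: bounding the state difference uniformly, even though $T_1$ and $T_2$ may differ on arbitrarily many far-away indices. I would handle this through the intermediate set $T_0$ and the decay-weighted shell sum; volume growth is exactly what keeps that sum from exploding.
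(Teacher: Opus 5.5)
Your proposal is correct and follows the paper's route. Neighborhood merging on the $\eta$-boundary gives the $(n+1)\cdot 2^{\omega_G(\eta)}$ count. The row-wise decay bound comes from the closed-form states of Lemma~\ref{lem:closedFormState}: for $G=\supp(\Q)$ you compare against the common near set via Schur complements (Theorem~\ref{thm::approx-Q}), and for $G=\supp(\Q^{-1})$ you use the Schur-complement formula in $\bm{A}=\Q^{-1}$ (Lemma~\ref{lem::invQ2} and Theorem~\ref{thm::approx-A}), with volume growth controlling the shell sums in both cases. The one cosmetic difference is that you absorb the polynomial factor into a slower rate $\sigma'\in(\sigma,1)$, whereas the paper tracks it explicitly via \citet[Claim 1]{bhathena2026solving} to obtain the concrete $\eta$ in Corollaries~\ref{cor::epsilon-Q-size} and~\ref{cor::epsilon-A-size}.
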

	
	In other words, for any fixed $\epsilon>0$, the size of the $\epsilon$-exact decision diagrams is polynomial---in fact, linear---in the dimension $n$, but grows exponentially with $\eta_\epsilon^{\gamma'}$, and thus doubly exponentially with the boundary exponent $\gamma'$. In the special case $\gamma'=1$, the size is polynomial in $1/\epsilon$ as well: in particular, if $\Q$ or $\Q^{-1}$ is banded with bandwidth $k$, then $\gamma=\gamma'=1$ and $\delta'\leq k$, and the resulting diagrams have at most $(n+1)\cdot(1/\epsilon)^{\mathcal{O}\left(k/\log(1/\sigma)\right)}$ nodes. A refined version of Theorem~\ref{thm:approxSummary} is stated in \S\ref{sec:sparse} for the sparse case. Guarantees for the inverse-sparse case can be obtained by relating submatrices of $\bm{Q}$ and its inverse (similarly to Lemma~\ref{lem::inverse}) and using similar steps, and we defer its discussion to \ref{sec:invSparse}.%

    \subsection{Sparse structures}\label{sec:sparse}
 We first state a lemma to control the influence of decisions taken far from the corresponding node, which follows directly from \citet[Lemma 6]{bhathena2026solving}.
	
	\begin{lemma}\label{lem::decay}
		Fix any $i \in [n]$ and $\bm{v} \in \{0,1\}^{\ell-1}$. Let $V = \{j\in [\ell-1]: v_j=1,\; d(i,j)\leq \eta\}$ and $W = \{j\in [\ell-1]: v_j=1,\; d(i,j)> \eta\}$, where distances are computed in $\supp(\Q)$. Then, we have
		\begin{align*}
			\norm{\Q_{\{i\},V}(\Q_{VV})^{-1}\Q_{VW}}_2\leq \frac{2\mu^2_{\max}(\Q)}{\mu_{\min}(\Q)}\sqrt{\Delta_{i,1}\Delta_{i,\eta}}\, \sigma^\eta.
		\end{align*}
	\end{lemma}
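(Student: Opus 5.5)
The plan is to separate the contributions of the three factors. The row vector $\bm{e}^\top\defeq\Q_{\{i\},V}$ and the block $\Q_{VW}$ are controlled by sparsity. The inverse $(\Q_{VV})^{-1}$ is controlled by a polynomial approximation whose locality on $\supp(\Q)$ makes the ``near'' part vanish exactly. This is the standard Demko--Moss--Smith argument for exponential decay of inverses of sparse matrices, and the paper attributes the lemma to \citet[Lemma 6]{bhathena2026solving}, so I would reproduce that style of argument.

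\textbf{Step 1 (supports).} $\bm{e}$ is supported on $V\cap\{j: d(i,j)\le 1\}$, so it has at most $\Delta_{i,1}$ nonzeros, each bounded by $\mu_{\max}(\Q)$. This gives $\|\bm{e}\|_2\le \sqrt{\Delta_{i,1}}\,\mu_{\max}(\Q)$, since every entry of a PSD matrix is bounded by its largest eigenvalue. A row $j\in V$ of $\Q_{VW}$ is nonzero only if $j$ is adjacent to some $k\in W$. Since $d(i,k)>\eta$ and $d(i,j)\le\eta$, this forces $d(i,j)=\eta$. Hence $\Q_{VW}$ is supported on rows in the ``sphere'' $\{j\in V: d(i,j)=\eta\}$, which has at most $\Delta_{i,\eta}$ elements. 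Moreover $\|\Q_{VW}\|_2\le \mu_{\max}(\Q)$, as it is a submatrix of $\Q$.

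\textbf{Step 2 (polynomial approximation).} By Cauchy interlacing, the spectrum of $\bm{M}\defeq\Q_{VV}$ lies in $[\mu_{\min}(\Q),\mu_{\max}(\Q)]$. The classical Chebyshev bound then gives, for each degree $q$, a polynomial $p_q$ of degree $q$ with
$$\|\bm{M}^{-1}-p_q(\bm{M})\|_2\le \frac{(1+\sqrt{\kappa_2})^2}{2\mu_{\max}(\Q)}\,\sigma^{q+1}\le \frac{2}{\mu_{\min}(\Q)}\sigma^{q+1}.$$
Because $\supp(\bm{M})\subseteq\supp(\Q)$, the vector $\bm{e}^\top p_q(\bm{M})$ is supported on nodes of $V$ within graph distance $1+q$ of $i$. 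Choosing $q=\eta-2$ makes this support disjoint from the row support of $\Q_{VW}$ found in Step~1, so $\bm{e}^\top p_q(\bm{M})\Q_{VW}=\bm{0}$.

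\textbf{Step 3 (combine).} Writing $\bm{e}^\top\bm{M}^{-1}\Q_{VW}=\bm{e}^\top(\bm{M}^{-1}-p_q(\bm{M}))\Q_{VW}$ and applying submultiplicativity gives
$$\norm{\bm{e}^\top\bm{M}^{-1}\Q_{VW}}_2\le \sqrt{\Delta_{i,1}}\,\mu_{\max}\cdot\frac{2}{\mu_{\min}}\sigma^{\eta-1}\cdot\mu_{\max},$$
which has the claimed form. The factor $\sqrt{\Delta_{i,\eta}}$ enters if one instead bounds $\Q_{VW}$ row-wise, as $\sqrt{\#\text{rows}}\times\max$ row norm, rather than spectrally; I would use whichever bookkeeping matches the cited statement.

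\textbf{Main obstacle.} I expect the main obstacle to be the exact exponent: the naive locality count above yields $\sigma^{\eta-1}$ rather than $\sigma^{\eta}$. My first attempt to recover the missing factor of $\sigma$ would be to apply the polynomial approximation to the symmetric quantity $\Q_{\{i\},V}\bm{M}^{-1}$ paired against boundary rows, using the sharper Chebyshev constant $(1+\sqrt{\kappa_2})^2/(2\mu_{\max})$. Failing that, I would absorb it into the constant $2\mu_{\max}^2/\mu_{\min}$ exactly as in \citet{bhathena2026solving}, since the lemma is stated to follow from their result.
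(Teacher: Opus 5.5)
\textbf{There is a genuine gap at the final exponent.} Your Steps~1--2 are sound. The rows of $\Q_{VW}$ do live on the sphere $\{j\in V: d(i,j)=\eta\}$, and the Demko--Moss--Smith polynomial of degree $\eta-2$ does annihilate $\Q_{\{i\},V}\,p(\Q_{VV})\,\Q_{VW}$. But Step~3 only yields $\frac{2\mu_{\max}^2}{\mu_{\min}}\sqrt{\Delta_{i,1}}\,\sigma^{\eta-1}$, which does not imply the lemma, and none of your fallbacks repairs this:
\begin{itemize}
\item A sharper Chebyshev constant changes constants, not the exponent.
\item $\sqrt{\Delta_{i,\eta}}$ is a purely combinatorial quantity and cannot pay for a factor $1/\sigma$.
\item $1/\sigma=(\sqrt{\kappa_2}+1)/(\sqrt{\kappa_2}-1)$ is unbounded as $\kappa_2\to1$, while $2\mu_{\max}^2/\mu_{\min}$ stays bounded, so the factor cannot be absorbed into the constant.
\end{itemize}
Concretely, take a path $i,j_1,\dots,j_\eta,k$ with $\Q=\bm{I}+b\,(\text{adjacency})$, $V=\{j_1,\dots,j_\eta\}$, $W=\{k\}$, and let $b\to0$. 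Then $\sigma=\Theta(b)$, so your bound is $\Theta(b^{\eta-1})$ while the target is $\Theta(b^{\eta})$; the true value is $\Theta(b^{\eta+1})$. The paper gives no argument of its own beyond citing Lemma~6 of Bhathena et al., so deferring to ``their bookkeeping'' does not supply the missing step either.

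\textbf{The missing idea: the outer factors are off-diagonal blocks of $\Q$, and each carries a factor $\sigma$.}
\begin{itemize}
\item If $i\in V$, then $\Q_{\{i\},V}\Q_{VV}^{-1}$ is a unit row vector. The product is then $\Q_{\{i\},W}=\bm{0}$, because every $k\in W$ has $d(i,k)>\eta\ge1$.
\item Otherwise $\{i\}$, $V$, $W$ are pairwise disjoint. So $\Q_{\{i\},V}$ and $\Q_{VW}$ are submatrices of $\Q-c\bm{I}$ with $c=\frac{\mu_{\max}+\mu_{\min}}{2}$. Their norms are therefore at most $\frac{\mu_{\max}-\mu_{\min}}{2}=\frac{\mu_{\min}}{2}(1+\sqrt{\kappa_2})^2\sigma$.
\item This quantity is at most $\mu_{\max}/2$, and also at most $2\mu_{\max}\sigma$, using $(1+\sqrt{\kappa_2})^2\le4\kappa_2$.
\end{itemize}
Inserting these bounds into your Step~3 gives $\frac{\mu_{\max}}{2}\cdot\frac{2}{\mu_{\min}}\sigma^{\eta-1}\cdot2\mu_{\max}\sigma=\frac{2\mu_{\max}^2}{\mu_{\min}}\sigma^{\eta}$. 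This is at most the stated bound since $\Delta_{i,1},\Delta_{i,\eta}\ge1$. For $\eta=1$, take $p\equiv0$ and use $\|\Q_{VV}^{-1}\|_2\le1/\mu_{\min}$.

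\textbf{An equivalent fix measures decay from $i$ itself rather than from its neighbors.} Let $I=\{i\}\cup V$. The block-inverse (Schur complement) formula gives $\Q_{\{i\},V}\Q_{VV}^{-1}=-s\,\bigl((\Q_{II})^{-1}\bigr)_{\{i\},V}$ with $0<s\le Q_{ii}\le\mu_{\max}$. A degree-$(\eta-1)$ Chebyshev approximation of $(\Q_{II})^{-1}$, applied to the $i$-th unit vector, bounds this row on $\{j:d(i,j)=\eta\}$ by $\frac{2}{\mu_{\min}}\sigma^{\eta}$. Multiplying by $s\le\mu_{\max}$ and $\|\Q_{VW}\|_2\le\mu_{\max}$ gives the same constant $\frac{2\mu_{\max}^2}{\mu_{\min}}\sigma^{\eta}$.
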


	The next theorem is the key result of this section: it shows that the rows of the state matrices depend only weakly on the decisions taken far away (in $\supp(\Q)$) from the corresponding node.
	
	\begin{theorem}\label{thm::approx-Q}
		Fix any $\ell\in [2:n]$ and $i \in [\ell:n]$, and consider $\bm{v_1}, \bm{v_2} \in \{0,1\}^{\ell-1}$ that are $\eta$-similar in $\supp(\Q)$ with respect to $i$. Then,
		\begin{align*}
			\norm{\bm{H^\ell}(\bm{v_1})_{\{i\}}-\bm{H^\ell}(\bm{v_2})_{\{i\}}}_2\leq \left(\frac{10\,\mu^{3.5}_{\max}(\Q)}{\mu^{3}_{\min}(\Q)}\right)\sqrt{\Delta_{i,1}\Delta_{i,\eta}}\,\sigma^\eta,
		\end{align*}
		where $\bm{H^\ell}(\bm{v})_{\{i\}}$ is the row of the state matrix $\bm{H^\ell}(\bm{v})$ corresponding to index $i$.
	\end{theorem}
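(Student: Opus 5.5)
Since $\bm{Q}\succ\bm{0}$, we start from the closed form in Lemma~\ref{lem:closedFormState}. For $\bm{v_k}$ with support $T_k$, row $i$ of the state is
\[
\bm{H^\ell}(\bm{v_k})_{\{i\}}=\bm{F_{\{i\}}}-\bm{Q_{\{i\}T_k}}(\bm{Q_{T_kT_k}})^{-1}\bm{F_{T_k}}.
\]
Split $T_k=V\cup W_k$, where $V=\{j\in T_k: d(i,j)\le\eta\}$ is common to both partial solutions by $\eta$-similarity, and $W_k$ collects the far indices. The plan is to compare each row with the \emph{truncated} row $\bm{F_{\{i\}}}-\bm{Q_{\{i\}V}}(\bm{Q_{VV}})^{-1}\bm{F_V}$, which depends only on $V$. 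By the triangle inequality it then suffices to show, for each $k$,
\[
\norm{\bm{Q_{\{i\}T}}(\bm{Q_{TT}})^{-1}\bm{F_T}-\bm{Q_{\{i\}V}}(\bm{Q_{VV}})^{-1}\bm{F_V}}_2\le C\sqrt{\Delta_{i,1}\Delta_{i,\eta}}\,\sigma^\eta,
\]
with $C=5\mu_{\max}^{3.5}/\mu_{\min}^3$. If $W=\emptyset$ this difference is zero.

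To bound the difference, apply block inversion of $\bm{Q_{TT}}$ with blocks $V,W$. Write $\bm{M}=\bm{Q_{VV}^{-1}}\bm{Q_{VW}}$ and let $\bm{S}=\bm{Q_{WW}}-\bm{Q_{WV}}\bm{M}$ be the Schur complement. A standard computation gives
\[
\bm{Q_{\{i\}T}}\bm{Q_{TT}^{-1}}\bm{F_T}-\bm{Q_{\{i\}V}}\bm{Q_{VV}^{-1}}\bm{F_V}
=\bigl(\bm{Q_{\{i\}W}}-\bm{Q_{\{i\}V}}\bm{M}\bigr)\bm{S}^{-1}\bigl(\bm{F_W}-\bm{M}^\top\bm{F_V}\bigr).
\]
Because $i\in[\ell:n]$ is undecided, $i\notin T$. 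The first factor is $\bm{Q_{\{i\}W}}-\bm{Q_{\{i\}V}}\bm{Q_{VV}^{-1}}\bm{Q_{VW}}$, and we bound its two pieces separately.

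\begin{itemize}
\item $\bm{Q_{\{i\}W}}$: for $\eta\ge1$, no $j\in W$ is adjacent to $i$, so $\bm{Q_{\{i\}W}}=\bm{0}$.
\item $\bm{Q_{\{i\}V}}\bm{Q_{VV}^{-1}}\bm{Q_{VW}}$: Lemma~\ref{lem::decay} bounds this by $\frac{2\mu_{\max}^2}{\mu_{\min}}\sqrt{\Delta_{i,1}\Delta_{i,\eta}}\,\sigma^\eta$.
\end{itemize}

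The remaining factors are bounded uniformly. By interlacing, $\bm{S}^{-1}$ is a principal submatrix of $\bm{Q_{TT}^{-1}}$, so $\norm{\bm{S}^{-1}}_2\le 1/\mu_{\min}$. For the last factor, note that $\bm{F_W}-\bm{M}^\top\bm{F_V}=[-\bm{M}^\top\;\bm{I}]\bm{F_T}$. Hence its Gram matrix is
\[
[-\bm{M}^\top\;\bm{I}]\,\bm{Q_{TT}}\,[-\bm{M}^\top\;\bm{I}]^\top=\bm{S},
\]
so $\norm{\bm{F_W}-\bm{M}^\top\bm{F_V}}_2=\norm{\bm{S}}_2^{1/2}\le\mu_{\max}^{1/2}$.

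Multiplying the three bounds gives $2\mu_{\max}^{2.5}\mu_{\min}^{-2}\sqrt{\Delta_{i,1}\Delta_{i,\eta}}\,\sigma^\eta$ for each $k$. Summing over $k=1,2$, and absorbing $\mu_{\max}/\mu_{\min}\ge1$ and the remaining constant slack, yields the stated bound with $10\mu_{\max}^{3.5}/\mu_{\min}^3$.

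The main obstacle is getting the Schur-complement identity right and confirming that the Lemma~\ref{lem::decay} quantity is exactly the first factor. If that fails, the fallback is to bound the difference directly: write $\bm{Q_{TT}^{-1}}$ via a Neumann-type expansion around the block-diagonal matrix $\mathrm{diag}(\bm{Q_{VV}},\bm{Q_{WW}})$, which would likely cost the extra powers of $\mu_{\max}/\mu_{\min}$ that appear in the stated constant.
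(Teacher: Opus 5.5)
Your proof is correct, and the step you flagged checks out. With $\bm{M}=\bm{Q_{VV}^{-1}}\bm{Q_{VW}}$, block inversion of $\bm{Q_{TT}}$ gives exactly $\bm{Q_{\{i\}T}}\bm{Q_{TT}^{-1}}\bm{F_T}-\bm{Q_{\{i\}V}}\bm{Q_{VV}^{-1}}\bm{F_V}=(\bm{Q_{\{i\}W}}-\bm{Q_{\{i\}V}}\bm{M})\bm{S}^{-1}(\bm{F_W}-\bm{M}^\top\bm{F_V})$. Since $\bm{Q_{\{i\}W}}=\bm{0}$, and your sets $V,W_k$ are exactly those of Lemma~\ref{lem::decay} for $\bm{v_k}$, the first factor is precisely the quantity that lemma controls. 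The fallback is therefore unnecessary.

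Your route differs from the paper's in how the difference is decomposed. The paper compares $\bm{v_1}$ and $\bm{v_2}$ directly by telescoping into three terms: the change of $\hat{\bm{Q}}_{\{i\},T}$ (which vanishes), the change of the padded inverse, and the change of $\hat{\bm{F}}_T$. It expands the inverse difference by block inversion so that the $\bm{Q_{VV}^{-1}}$ blocks cancel. It then bounds each surviving product separately, using estimates such as $\norm{\bm{Q_{WV}}}_2\le\mu_{\max}(\Q)$, $\norm{\bm{Q_{VV}^{-1}}}_2\le 1/\mu_{\min}(\Q)$ and $\norm{\bm{F}}_2\le\sqrt{\mu_{\max}(\Q)}$. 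This loses a factor $1+\mu_{\max}(\Q)/\mu_{\min}(\Q)$, and is where the constant $8+2=10$ and the powers $\mu^{3.5}_{\max}(\Q)/\mu^{3}_{\min}(\Q)$ come from.

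You instead pivot through the common near-field row $\bm{F_{\{i\}}}-\bm{Q_{\{i\}V}}\bm{Q_{VV}^{-1}}\bm{F_V}$. Your exact factorization keeps structure that the term-by-term bounds discard. The factor $\bm{F_W}-\bm{M}^\top\bm{F_V}=\bm{F_W}(\bm{I}-\bm{F_V^\top}\bm{Q_{VV}^{-1}}\bm{F_V})$ is the Gram--Schmidt residual of $\bm{F_W}$ against $\Row(\bm{F_V})$, and its Gram matrix is exactly $\bm{S}$. This gives the sharper bound $4\mu^{2.5}_{\max}(\Q)/\mu^{2}_{\min}(\Q)$ before you weaken it to the stated constant. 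You could push further: $\norm{\bm{S}^{-1}(\bm{F_W}-\bm{M}^\top\bm{F_V})}_2=\norm{\bm{S}^{-1}}_2^{1/2}\le\mu_{\min}(\Q)^{-1/2}$, which yields $4\mu^{2}_{\max}(\Q)/\mu^{1.5}_{\min}(\Q)$.

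What the paper's direct three-term telescoping buys is a uniform template. The paper reuses it, with $\bm{A}=\bm{Q}^{-1}$ and complements of the supports, for the inverse-sparse analogue (Theorem~\ref{thm::approx-A}).
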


	Theorem~\ref{thm::approx-Q} has a key implication for the merging procedure. Fix $\ell\in[2:n]$, and suppose that $\bm{v_1},\bm{v_2}\in \{0,1\}^{\ell-1}$ coincide on the $\eta$-boundary $\Lambda_{\ell}^{\eta}$ of $\supp(\Q)$, that is, $(\bm{v_1})_j=(\bm{v_2})_j$ for all $j\in \Lambda_{\ell}^{\eta}$. Then, by Definition~\ref{def:boundary}, vectors $\bm{v_1}$ and $\bm{v_2}$ are $\eta$-similar with respect to every $i\in [\ell:n]$, and Theorem~\ref{thm::approx-Q} yields
	\[
	\norm{\bm{H^\ell}(\bm{v_1}) - \bm{H^\ell}(\bm{v_2})}_{2,\infty}
	\;\leq\;
	C_{\Q} \, \sqrt{\Delta_{1} \, \Delta_{\eta}} \, \sigma^\eta,
	\qquad\text{where}\qquad
	C_{\Q}\defeq \frac{10\,\mu^{3.5}_{\max}(\Q)}{\mu^{3}_{\min}(\Q)}
	\]
	and $\Delta_{\eta} \defeq \max_{i \in [n]} \Delta_{i,\eta}$. In other words, all partial solutions that coincide on the $\eta$-boundary can be merged into a single node of an $\epsilon$-exact decision diagram, provided that $\eta$ is sufficiently large. This observation leads to the following main result.
	\begin{corollary}\label{cor::epsilon-Q-size}
		Suppose $\supp(\Q)$ has polynomial volume growth with parameters $\gamma,\delta>0$ (Definition~\ref{def::vol-Q}). Given any $\epsilon>0$, there exists an $\epsilon$-exact decision diagram with at most $(n+1)\cdot 2^{\omega_{\Q}(\eta)}$ nodes, where $\omega_{\Q}(\eta)$ is the maximum size of the $\eta$-boundaries of $\supp(\Q)$ (Definition~\ref{def:boundary}) and
		\begin{align}\label{eq::m-lb-Q}
			\eta \;=\;
			\left\lceil\max\left\{
			\frac{2\log(C_{\Q}\delta / \epsilon)}{\log(1/\sigma)}, \;
			\frac{2\gamma}{\log(1/\sigma)}, \;
			\left(\frac{\gamma}{\log(1/\sigma)}\right)^2
			\right\}\right\rceil.
		\end{align}
	\end{corollary}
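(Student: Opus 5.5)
The plan is to build the diagram by neighborhood merging: at each layer $\ell$, merge every pair of nodes whose partial solutions $\bm{v_1},\bm{v_2}\in\{0,1\}^{\ell-1}$ coincide on the $\eta$-boundary $\Lambda_{\ell}^{\eta}$ of $\supp(\Q)$, with $\eta$ as in \eqref{eq::m-lb-Q}. Two things must be checked. First, the size bound: the resulting diagram has at most $2^{|\Lambda^\eta_\ell|}\le 2^{\omega_{\Q}(\eta)}$ nodes per layer. Second, the accuracy: every merged pair satisfies $\norm{\bm{H_1}-\bm{H_2}}_{2,\infty}\le\epsilon$, so the procedure is a legitimate run of \texttt{merge} with tolerance $\epsilon$ and therefore yields an $\epsilon$-exact diagram in the sense of Definition~\ref{def:epsDD}. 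Summing over the $n+1$ layers, where the root layer and the terminal layer contribute one node each, gives the bound $(n+1)\cdot 2^{\omega_{\Q}(\eta)}$.

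For the accuracy step I would use the consequence of Theorem~\ref{thm::approx-Q} recorded just before the corollary. If $\bm{v_1},\bm{v_2}$ agree on $\Lambda^\eta_\ell$, then they are $\eta$-similar with respect to every $i\in[\ell:n]$, so
\[
\norm{\bm{H^\ell}(\bm{v_1})-\bm{H^\ell}(\bm{v_2})}_{2,\infty}\le C_{\Q}\sqrt{\Delta_1\Delta_\eta}\,\sigma^\eta .
\]
Polynomial volume growth gives $\Delta_1\le\delta$ and $\Delta_\eta\le\delta\eta^\gamma$, so the right-hand side is at most $C_{\Q}\delta\,\eta^{\gamma/2}\sigma^\eta$. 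It then suffices to show $C_{\Q}\delta\,\eta^{\gamma/2}\sigma^{\eta}\le\epsilon$. Writing $\lambda=\log(1/\sigma)$, I would split $\sigma^\eta=\sigma^{\eta/2}\sigma^{\eta/2}$. The first term of the maximum gives $\eta\lambda/2\ge\log(C_{\Q}\delta/\epsilon)$, that is, $C_{\Q}\delta\,\sigma^{\eta/2}\le\epsilon$. It remains to show $\eta^{\gamma/2}\sigma^{\eta/2}\le1$, i.e. $\gamma\log\eta\le\lambda\eta$. This is where the other two terms enter: the function $\eta\mapsto\lambda\eta-\gamma\log\eta$ is increasing once $\eta\ge\gamma/\lambda$, which holds since $\eta\ge 2\gamma/\lambda$. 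For the value itself, I would use $\log\eta\le\sqrt\eta$, so it suffices that $\gamma\sqrt\eta\le\lambda\eta$, i.e. $\eta\ge(\gamma/\lambda)^2$, which is the third term. The degenerate case $\sigma=0$ (with $\Q$ a multiple of the identity) is trivial, since every state is then $\bm{F_{[\ell:n]}}$.

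The main obstacle I foresee is making sure the merged diagram is consistent across layers. Nodes are merged based on their representative partial solutions, but after a merge a node's actual stored matrix comes from one representative only, and later transitions are applied to that matrix rather than to the exact state of every path through the node. To avoid having the bound depend on accumulated drift, I would define the diagram directly rather than through iterated approximate transitions. Each layer-$\ell$ node is indexed by an assignment $\bm{b}$ of $\Lambda^\eta_\ell$. It stores the exact matrix $\bm{H^\ell}(\bm{v})$ for a canonical representative $\bm{v}$ (say, $\bm{b}$ padded with zeros). Arcs are sent according to the boundary pattern of $(\bm{v},\bar z)$ restricted to $\Lambda^\eta_{\ell+1}$, which is well defined because $\Lambda^\eta_{\ell+1}\subseteq\Lambda^\eta_\ell\cup\{\ell\}$. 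With this construction, every state in the diagram is within $\epsilon$ of the exact state of any path reaching it, and this is exactly what Algorithm~\ref{alg:constructionDD} with neighborhood merging and tolerance $\epsilon$ produces.
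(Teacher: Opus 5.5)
Your proposal is correct and follows essentially the same route as the paper: neighborhood merging on the $\eta$-boundaries $\Lambda_\ell^\eta$, the $\|\cdot\|_{2,\infty}$ consequence of Theorem~\ref{thm::approx-Q} combined with $\sqrt{\Delta_1\Delta_\eta}\le\delta\eta^{\gamma/2}$, and the reduction to $\gamma\log\eta\le\log(1/\sigma)\,\eta$, which you verify directly via $\log\eta\le\sqrt{\eta}$ where the paper invokes \citet[Claim 1]{bhathena2026solving}. Your treatment of consistency across layers (each node stores the exact state of one member of its class, using $\Lambda_{\ell+1}^\eta\subseteq\Lambda_\ell^\eta\cup\{\ell\}$) matches the argument the paper gives in the proof of Proposition~\ref{prop:epsQuality}; one small point is that Algorithm~\ref{alg:constructionDD} keeps whichever class member's state is generated first rather than your zero-padded one, which is harmless because every member is within $\epsilon$ of all the others.
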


	Since the value of $\eta$ in \eqref{eq::m-lb-Q} is independent of $n$ and grows only logarithmically in $1/\epsilon$, Corollary~\ref{cor::epsilon-Q-size} shows that the size of the $\epsilon$-exact decision diagram scales linearly in $n$, multiplied by the factor $2^{\omega_{\Q}(\eta)}$ determined by the boundary widths. In particular, if $\Q$ is banded with bandwidth $k$, then $\omega_{\Q}(\eta)\leq k\eta$ and the size is at most $(n+1)\cdot 2^{k\eta}$, which is polynomial in $1/\epsilon$ (with exponent proportional to $k/\log(1/\sigma)$). Recently, \citet{gomez2024real} established a similar result for banded structures; our guarantee matches theirs in this case and extends it to general sparsity patterns.
    
\subsection{Quality of the resulting solutions}\label{sec:approxQuality}
	
	We close the section by showing that the compression does not come at a significant cost in terms of solution quality.%
	
	\begin{proposition}\label{prop:epsQuality}
		Suppose that $\bm{Q}\succ\bm{0}$, let $\bm{\delta}\in\R^m$ satisfy $\bm{F\delta}=\bm{d}$, and let $\mathcal{D}_\epsilon$ be an $\epsilon$-exact decision diagram obtained via neighborhood merging as in Corollary~\ref{cor::epsilon-Q-size} or Corollary~\ref{cor::epsilon-A-size}, with arc lengths defined as in Proposition~\ref{prop:pathLength} using the states stored in the diagram. For $\bm{z}\in Z$, let $L(\bm{z})$ denote the length of the unique root--terminal path of $\mathcal{D}_\epsilon$ encoding $\bm{z}$ and $g^{n+1}(\bm{z})$ is the actual objective value associated with $\bm{z}$. Then
		$$\left|L(\bm{z})-g^{n+1}(\bm{z})\right|\leq \frac{\|\bm{\delta}\|_2^2}{\sqrt{\mu_{\min}(\Q)}}\, n\epsilon\qquad \forall \bm{z}\in Z.$$
	\end{proposition}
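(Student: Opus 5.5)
The plan is a layer-by-layer telescoping comparison between the path in $\mathcal{D}_\epsilon$ and the exact Gram--Schmidt path for $\bm{z}$. Fix $\bm{z}\in Z$ and let $(a_1,\dots,a_n)$ be the path of $\mathcal{D}_\epsilon$ encoding it. Let $\bm{\tilde H^\ell}$ be the state matrix stored at the tail of $a_\ell$, and let $\bm{H^\ell}\defeq\bm{H^\ell}(\bm{z_{[\ell-1]}})$ be the exact state from Proposition~\ref{prop:state}. The exact arc lengths are $l_\ell=z_\ell c_\ell-\frac14(\bm{\delta^\top u_\ell})^2$ with $\bm{u_\ell}$ the normalized first row of $\bm{H^\ell}$, and by Proposition~\ref{prop:pathLength} they sum to $g^{n+1}(\bm{z})$. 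The diagram lengths $\tilde l_\ell$ are defined the same way from $\bm{\tilde H^\ell}$. Hence
\[
|L(\bm{z})-g^{n+1}(\bm{z})|\le\sum_{\ell:z_\ell=1}\tfrac14\bigl|(\bm{\delta^\top \tilde u_\ell})^2-(\bm{\delta^\top u_\ell})^2\bigr|,
\]
and it suffices to show that each summand is at most $\|\bm{\delta}\|_2^2\epsilon/\sqrt{\mu_{\min}(\Q)}$.

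\textbf{Step 1 (states are $\epsilon$-close).} Neighborhood merging, as in Corollary~\ref{cor::epsilon-Q-size}, only identifies partial solutions that agree on the $\eta$-boundary. So the node reached by $\bm{z_{[\ell-1]}}$ carries the exact state $\bm{H^\ell}(\bm{v'})$ of some representative $\bm{v'}$ that is $\eta$-similar to $\bm{z_{[\ell-1]}}$ with respect to every undecided index. The $\eta$ chosen in Corollary~\ref{cor::epsilon-Q-size} comes from Theorem~\ref{thm::approx-Q} (respectively its inverse-sparse analogue), and it gives $\norm{\bm{\tilde H^\ell}-\bm{H^\ell}}_{2,\infty}\le\epsilon$. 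In particular the first rows $\bm{\tilde h}$ and $\bm{h}$ satisfy $\|\bm{\tilde h}-\bm{h}\|_2\le\epsilon$.

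This step rests on the fact that states are not recomputed from merged ancestors: each stored state equals the exact closed form (Lemma~\ref{lem:closedFormState}) of some representative. That is how neighborhood merging is defined, so merging errors do not accumulate; I would state this explicitly.

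\textbf{Step 2 (per-arc perturbation).} This is the main obstacle, because normalization $\bm{h}\mapsto\bm{h}/\|\bm{h}\|$ is not Lipschitz near $0$. To get around it, I would lower-bound $\|\bm{h}\|_2$ in the invertible case. By Lemma~\ref{lem:closedFormState}, the first row of $\bm{H^\ell}$ is the component of $\bm{f_\ell}$ orthogonal to $\Row(\bm{F_T})$, so $\|\bm{h}\|_2^2$ is a Schur complement $Q_{\ell\ell}-\bm{Q_{\ell T}}\bm{Q_{TT}^{-1}}\bm{Q_{T\ell}}\ge\mu_{\min}(\Q)$. The same bound holds for $\bm{\tilde h}$, since it is also an exact first row of some state. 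Then I would avoid normalizing separately and write
\[
(\bm{\delta^\top u})^2=\bm{\delta^\top h h^\top\delta}/\|\bm{h}\|^2,
\]
and bound the difference of the two rank-one projections $\bm{hh^\top}/\|\bm{h}\|^2$. The standard estimate is $\|\bm{P}_{\bm{h}}-\bm{P}_{\bm{\tilde h}}\|_2\le 2\|\bm{h}-\bm{\tilde h}\|/\max(\|\bm{h}\|,\|\bm{\tilde h}\|)$, which is at most $2\epsilon/\sqrt{\mu_{\min}}$. Multiplying by $\frac14\|\bm{\delta}\|_2^2$ leaves a constant of $1/2$ or better per arc.

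Arcs with $z_\ell=0$ contribute zero. Arcs with $z_\ell=1$ use the $\epsilon$-threshold of line~\ref{line:dependenceCheck}, but by the lower bound that threshold is never triggered once $\epsilon<\sqrt{\mu_{\min}}$. Summing at most $n$ arcs gives the stated bound $\|\bm{\delta}\|_2^2 n\epsilon/\sqrt{\mu_{\min}(\Q)}$.
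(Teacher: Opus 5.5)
Your architecture matches the paper's proof. You compare each arc with $z_\ell=1$ against the exact diagram, get $\|\bm{h}-\bm{\tilde h}\|_2\le\epsilon$ from the choice of $\eta$, lower-bound both squared norms by the Schur-complement entry $Q_{\ell\ell}-\bm{Q_{\{\ell\},T}}(\bm{Q_{T,T}})^{-1}\bm{Q_{T,\{\ell\}}}\geq\mu_{\min}(\Q)$, and sum over at most $n$ arcs.

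Step~2 differs only mildly, and favorably. Bounding the difference of the rank-one projectors $\bm{hh^\top}/\|\bm{h}\|_2^2$ instead of the normalized vectors sidesteps the sign of $\bm{h}/\|\bm{h}\|_2$. It gives per-arc constant $1/2$, or even $1/4$, because that projector difference has spectral norm equal to the sine of the angle between $\bm{h}$ and $\bm{\tilde h}$, which is at most $\|\bm{h}-\bm{\tilde h}\|_2/\max\{\|\bm{h}\|_2,\|\bm{\tilde h}\|_2\}$. The paper's estimate gives constant $1$. Your remark on the threshold in line~\ref{line:dependenceCheck} covers a point the paper leaves implicit. For $\epsilon\geq\sqrt{\mu_{\min}(\Q)}$ the claim is trivial anyway, since each arc contributes at most $\frac14\|\bm{\delta}\|_2^2$.

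The gap is the justification in Step~1. In Algorithm~\ref{alg:constructionDD}, states \emph{are} recomputed from merged ancestors. A child's state is the transition applied to whatever state its merged parent stores. After merging, the child keeps the state of one pre-merge node, which may hang off a different parent than the one on the path of $\bm{z}$. So ``each stored state is the exact state of a member of its class'' is not true by definition of neighborhood merging. It is an invariant that the first paragraph of the paper's proof establishes by induction over all nodes:
\begin{itemize}
\item Suppose a node of layer $\ell$ stores $\bm{H^\ell}(\bm{\bar w})$ for some $\bm{\bar w}$ sharing its class's pattern on $\Lambda_\ell^\eta$. By Proposition~\ref{prop:state}, its $\bar z$-child stores $\bm{H^{\ell+1}}((\bm{\bar w},\bar z))$.
\item The vector $(\bm{\bar w},\bar z)$ has the child's pattern on $\Lambda_{\ell+1}^\eta$ because of the nesting $\Lambda_{\ell+1}^{\eta}\cap[\ell-1]\subseteq\Lambda_{\ell}^{\eta}$. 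Indeed, any $j\leq\ell-1$ within distance $\eta$ of some $i\in[\ell+1:n]$ is within distance $\eta$ of an element of $[\ell:n]$. In addition, coordinate $\ell$ equals $\bar z$ for every partial solution reaching that child.
\end{itemize}

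The same nesting is what makes neighborhood merging well defined in the first place. Each pre-merge node already carries many partial solutions, and they must share a pattern on $\Lambda_{\ell+1}^\eta$. Without this induction you cannot conclude that the stored representative is $\eta$-similar to $\bm{z_{[\ell-1]}}$ with respect to every $i\in[\ell:n]$. Then Theorem~\ref{thm::approx-Q} (or Theorem~\ref{thm::approx-A}) does not apply, and neither does your Schur-complement bound for $\bm{\tilde h}$ or your threshold remark. Add this induction and the proof is complete.
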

	
    Analogous guarantees were previously known only for banded matrices \citep{gomez2024real}.	In \S\ref{sec:computations}, we illustrate how the theory developed in this section can inform variable ordering through an example arising from stochastic hybrid model predictive control.

\section{Computational study on variable ordering}\label{sec:computations}

The size of the decision diagrams usually depends on the order in which variables are processed (an exception being the low-rank setting in \S\ref{sec:lowRank}). The polynomial-time proofs in \S\ref{sec:tree} and \S\ref{sec:invTree} for trees and inverse trees, respectively, assume that the variables are processed starting from the leaves of the trees and finishing at the root. However, in practice and when using $\epsilon$-exact diagrams, this order might not achieve the best performance. Indeed, as we illustrate computationally in this section, better orders can be obtained by controlling the sizes of the $\eta$-boundaries throughout the tree.

\subsection{Motivating setting}

We consider a simple one-dimensional stochastic hybrid model predictive control problem. A scalar state evolves according to linear dynamics driven by a continuous control input, and a binary indicator determines whether that control is active; the objective combines quadratic state-tracking error with fixed activation costs \citep{bemporad1999control,borrelli2017predictive}. The first-period control is selected before the subsequent dynamics, reference trajectories, and costs are revealed. This uncertainty is represented by $k$ scenarios, yielding the following two-stage formulation:
\begin{subequations}\label{eq:stochasticMPC}
	\begin{align}
		\min_{\substack{s_1,x_0,z_0\\\{\bm{s^j},\bm{x^j},\bm{z^j}\}_{j=1}^k}}\;&(s_1-\bar s_1)^2+\sum_{j=1}^k\sum_{t=2}^{T}(s_t^j-\bar s_t^j)^2+c_0z_0+\sum_{j=1}^k\sum_{t=1}^{T-1}c_t^j z_t^j\label{eq:stochasticMPC_obj}\\
		\text{s.t.}\;&s_1=x_0\label{eq:stochasticMPC_dynamic1}\\
		&s_2^j=a_1^j s_1+x_1^j \qquad\forall j\in [k]\label{eq:stochasticMPC_dynamic2}\\
		&s_{t+1}^j=a_t^j s_t^j+x_t^j \qquad\forall j\in [k],\; \forall t\in [2:T-1]\label{eq:stochasticMPC_dynamic3}\\
		&x_{0}(1-z_0)=0\\
		&x_t^j(1-z_t^j)=0\qquad\forall j\in [k],\; \forall t\in [T-1]\\
		&(s_1,x_0,z_0)\in \R\times \R\times \{0,1\}\\
		&(\bm{s^j},\bm{x^j},\bm{z^j})\in \R^{[2:T]}\times\R^{[T-1]}\times\{0,1\}^{[T-1]}\qquad \forall j\in [k],
	\end{align}
\end{subequations}
where $(\bm{a^j},\bm{s^j},\bm{c^j})$ are the dynamics, trajectories and costs under scenario $j\in[k]$.

After projecting out variables $\bm{s}$ using dynamics \eqref{eq:stochasticMPC_dynamic1}-\eqref{eq:stochasticMPC_dynamic3}, the resulting quadratic objective in variables $\bm{x}$ involves a fully dense matrix $\bm{Q}$. However, the inverse $\bm{A}=\bm{Q^{-1}}$ is sparse, see Figure~\ref{fig:varOrders}(a) for an example. Specifically, $\supp(\A)$ includes a $(k+1)$-dimensional clique associated with variables $x_0$ and $x_1^j$ for all $j\in [k]$, but the only other arcs are $(x_t^j, x_{t+1}^{j})$ for $j\in [k]$ and $t\in [T-1]$. 

\subsection{Variable orders}

We consider three potential orders $\bm{\pi}$ of the $1+(T-1)k$ variables $x_0$ and $\{x_t^j\}_{j, t}$ to tackle \eqref{eq:stochasticMPC}. The support graph of $\bm{A}$ and the three orders are depicted in Figure~\ref{fig:varOrders}.

\paragraph{Natural order} The natural order, depicted in Figure~\ref{fig:varOrders}(b), in which variables are ordered  first according to their time component and ties are broken according to the scenario index, that is, $$\bm{\pi}=(x_0,x_1^1, x_1^2,\dots,x_1^k, x_2^1, x_2^2,\dots,x_2^k,\dots,x_{T-1}^{1},\dots, x_{T-1}^{k-1},x_{T-1}^{k}).$$
Observe that using the natural order, the resulting matrix $\bm{A}$ has bandwidth $k$, and thus $\supp(\A)$ has linear volume growth and, using Theorem~\ref{thm:approxSummary}, we find that $\epsilon$-exact decision diagrams have size proportional to $T(1/\epsilon)^k$ (where additional terms depending on the eigenvalues of $\bm{A}$ are omitted).

\paragraph{Reverse order} The reverse of the natural order, depicted in Figure~\ref{fig:varOrders}(c), which preserves the bandwidth $k$. This order is also similar to the one used in \S\ref{sec:invTree} to prove polynomial sizes for the inverse-tree setting. While the results in \S\ref{sec:invTree} are not directly applicable since $\supp(\A)$ is not a tree, the proof techniques can be reused to prove that, with the reverse order, exact decision diagrams are still polynomial in $T$ but exponential in $k$. Indeed, during the first $(T-2)k$ layers the decision diagram behaves identically to a tree; since the number of nodes can at most double from one layer to the next, the number of nodes in the last $k+1$ grows by a factor of at most $2^{k+1}$. We leave a formal proof of this result to the reader. 

\paragraph{Leg order}  In this order, depicted in Figure~\ref{fig:varOrders}(d), variables are ordered first according to their scenario index, ties broken in reverse order of the time index, and variable $x_0$ is last, that is, $$\bm{\pi}=(x_{T-1}^1,x_{T-2}^1,\dots, x_1^1,x_{T-1}^2,x_{T-2}^2,\dots, x_{T-1}^k,\dots,x_2^k,x_1^k,x_0).$$
Note that, under this permutation, the matrix $\bm{A}$ is no longer banded. Nonetheless, the rationale for using this order is that its $\eta$-boundaries are typically smaller than those of the natural and reverse orders. Indeed, for the natural and reverse orders, the $\eta$-boundaries have size $\eta k$ after the first $k$ levels. In contrast, for the leg order, $\eta$-boundaries have size $\eta$ while processing nodes associated with the first scenario, $2\eta$ while processing nodes associated with the second scenario and so forth. While the size of the worst-case $\eta$-boundary is still $k\eta$ after processing nodes associated with the last scenario (resulting in the same worst-case bounds according to Theorem~\ref{thm:approxSummary}), smaller sizes throughout most of the execution of the algorithm can yield additional merging and smaller decision diagrams. Figure~\ref{fig:boundaryOrders} illustrates the $\eta$-boundaries induced by the reverse and leg orders in the example of Figure~\ref{fig:varOrders}.

\begin{figure}[!ht]
	\centering
	\subfigure[Support graph $\supp(\bm{A})$]{
		\begin{tikzpicture}[scale=0.72,
			varnode/.style={circle,draw,fill=white,font=\tiny,inner sep=1.2pt},
			hubnode/.style={circle,draw,fill=gray!30,font=\tiny,inner sep=1.2pt}]
			\fill[gray!15] (0,0) circle (1.45);
			\node[font=\scriptsize,text=gray] at (150:2.1) {clique};
			\node[hubnode] (x0) at (0,0) {$x_0$};
			\foreach \j/\a in {1/90,2/210,3/330}{
				\foreach \t in {1,...,4}{
					\node[varnode] (x\j\t) at ({\a}:{1.08*\t}) {$x_{\t}^{\j}$};
				}
			}
			\draw (x0)--(x11); \draw (x0)--(x21); \draw (x0)--(x31);
			\draw (x11)--(x21); \draw (x21)--(x31); \draw (x31)--(x11);
			\foreach \j in {1,2,3}{\foreach \t/\s in {1/2,2/3,3/4}{\draw (x\j\t)--(x\j\s);}}
		\end{tikzpicture}
	}
	\hspace{2em}
	\subfigure[Natural order]{
		\begin{tikzpicture}[scale=0.62,
			numnode/.style={circle,draw,fill=white,font=\tiny,inner sep=1.4pt},
			hubnode/.style={circle,draw,fill=gray!30,font=\tiny,inner sep=1.4pt}]
			\fill[gray!15] (0,0) circle (1.45);
			\node[hubnode] (x0) at (0,0) {$1$};
			\foreach \j/\a/\na/\nb/\nc/\nd in {1/90/2/5/8/11, 2/210/3/6/9/12, 3/330/4/7/10/13}{
				\node[numnode] (x\j1) at ({\a}:0.95) {$\na$};
				\node[numnode] (x\j2) at ({\a}:1.9) {$\nb$};
				\node[numnode] (x\j3) at ({\a}:2.85) {$\nc$};
				\node[numnode] (x\j4) at ({\a}:3.8) {$\nd$};
			}
			\draw (x0)--(x11); \draw (x0)--(x21); \draw (x0)--(x31);
			\draw (x11)--(x21); \draw (x21)--(x31); \draw (x31)--(x11);
			\foreach \j in {1,2,3}{\foreach \t/\s in {1/2,2/3,3/4}{\draw (x\j\t)--(x\j\s);}}
		\end{tikzpicture}
	}
	
	\subfigure[Reverse order]{
		\begin{tikzpicture}[scale=0.62,
			numnode/.style={circle,draw,fill=white,font=\tiny,inner sep=1.4pt},
			hubnode/.style={circle,draw,fill=gray!30,font=\tiny,inner sep=1.4pt}]
			\fill[gray!15] (0,0) circle (1.45);
			\node[hubnode] (x0) at (0,0) {$13$};
			\foreach \j/\a/\na/\nb/\nc/\nd in {1/90/12/9/6/3, 2/210/11/8/5/2, 3/330/10/7/4/1}{
				\node[numnode] (x\j1) at ({\a}:0.95) {$\na$};
				\node[numnode] (x\j2) at ({\a}:1.9) {$\nb$};
				\node[numnode] (x\j3) at ({\a}:2.85) {$\nc$};
				\node[numnode] (x\j4) at ({\a}:3.8) {$\nd$};
			}
			\draw (x0)--(x11); \draw (x0)--(x21); \draw (x0)--(x31);
			\draw (x11)--(x21); \draw (x21)--(x31); \draw (x31)--(x11);
			\foreach \j in {1,2,3}{\foreach \t/\s in {1/2,2/3,3/4}{\draw (x\j\t)--(x\j\s);}}
		\end{tikzpicture}
	}
	\hspace{2em}
	\subfigure[Leg order]{
		\begin{tikzpicture}[scale=0.62,
			numnode/.style={circle,draw,fill=white,font=\tiny,inner sep=1.4pt},
			hubnode/.style={circle,draw,fill=gray!30,font=\tiny,inner sep=1.4pt}]
			\fill[gray!15] (0,0) circle (1.45);
			\node[hubnode] (x0) at (0,0) {$13$};
			\foreach \j/\a/\na/\nb/\nc/\nd in {1/90/4/3/2/1, 2/210/8/7/6/5, 3/330/12/11/10/9}{
				\node[numnode] (x\j1) at ({\a}:0.95) {$\na$};
				\node[numnode] (x\j2) at ({\a}:1.9) {$\nb$};
				\node[numnode] (x\j3) at ({\a}:2.85) {$\nc$};
				\node[numnode] (x\j4) at ({\a}:3.8) {$\nd$};
			}
			\draw (x0)--(x11); \draw (x0)--(x21); \draw (x0)--(x31);
			\draw (x11)--(x21); \draw (x21)--(x31); \draw (x31)--(x11);
			\foreach \j in {1,2,3}{\foreach \t/\s in {1/2,2/3,3/4}{\draw (x\j\t)--(x\j\s);}}
		\end{tikzpicture}
	}
	\caption{Illustration of the support graph of $\bm{A}=\bm{Q^{-1}}$ and of the three variable orders for the stochastic hybrid MPC problem \eqref{eq:stochasticMPC} with $k=3$ scenarios and $T=5$ time periods.}
	\label{fig:varOrders}
\end{figure}

\begin{figure}[!ht]
	\centering
	\subfigure[Reverse order, $\ell=9$]{
		\begin{tikzpicture}[scale=0.62,
			dnode/.style={circle,draw,fill=white,font=\tiny,inner sep=1.4pt},
			unode/.style={circle,fill=gray!50,font=\tiny,inner sep=1.4pt}]
			\node[unode] (x0) at (0,0) {$13$};
			\foreach \j/\a/\na/\fa/\nb/\fb/\nc/\fc/\nd/\fd in {1/90/12/unode/9/unode/6/dnode/3/dnode, 2/210/11/unode/8/dnode/5/dnode/2/dnode, 3/330/10/unode/7/dnode/4/dnode/1/dnode}{
				\node[\fa] (x\j1) at ({\a}:0.95) {$\na$};
				\node[\fb] (x\j2) at ({\a}:1.9) {$\nb$};
				\node[\fc] (x\j3) at ({\a}:2.85) {$\nc$};
				\node[\fd] (x\j4) at ({\a}:3.8) {$\nd$};
			}
			\draw (x0)--(x11); \draw (x0)--(x21); \draw (x0)--(x31);
			\draw (x11)--(x21); \draw (x21)--(x31); \draw (x31)--(x11);
			\foreach \j in {1,2,3}{\foreach \t/\s in {1/2,2/3,3/4}{\draw (x\j\t)--(x\j\s);}}
			\draw[dashed] (x13) circle (0.42); \draw[dashed] (x14) circle (0.42);
			\draw[dashed] (x22) circle (0.42); \draw[dashed] (x23) circle (0.42);
			\draw[dashed] (x32) circle (0.42); \draw[dashed] (x33) circle (0.42);
			\node[font=\scriptsize] at ($(x14)+(1.3,0.15)$) {$\Lambda_{9}^{2}$};
		\end{tikzpicture}
	}
	\hspace{2em}
	\subfigure[Leg order, $\ell=4$]{
		\begin{tikzpicture}[scale=0.62,
			dnode/.style={circle,draw,fill=white,font=\tiny,inner sep=1.4pt},
			unode/.style={circle,fill=gray!50,font=\tiny,inner sep=1.4pt}]
			\node[unode] (x0) at (0,0) {$13$};
			\foreach \j/\a/\na/\fa/\nb/\fb/\nc/\fc/\nd/\fd in {1/90/4/unode/3/dnode/2/dnode/1/dnode, 2/210/8/unode/7/unode/6/unode/5/unode, 3/330/12/unode/11/unode/10/unode/9/unode}{
				\node[\fa] (x\j1) at ({\a}:0.95) {$\na$};
				\node[\fb] (x\j2) at ({\a}:1.9) {$\nb$};
				\node[\fc] (x\j3) at ({\a}:2.85) {$\nc$};
				\node[\fd] (x\j4) at ({\a}:3.8) {$\nd$};
			}
			\draw (x0)--(x11); \draw (x0)--(x21); \draw (x0)--(x31);
			\draw (x11)--(x21); \draw (x21)--(x31); \draw (x31)--(x11);
			\foreach \j in {1,2,3}{\foreach \t/\s in {1/2,2/3,3/4}{\draw (x\j\t)--(x\j\s);}}
			\draw[dashed] (x12) circle (0.42); \draw[dashed] (x13) circle (0.42);
			\node[font=\scriptsize] at ($(x12)+(1.3,0.0)$) {$\Lambda_{4}^{2}$};
		\end{tikzpicture}
	}
	
	\subfigure[Leg order, $\ell=9$]{
		\begin{tikzpicture}[scale=0.62,
			dnode/.style={circle,draw,fill=white,font=\tiny,inner sep=1.4pt},
			unode/.style={circle,fill=gray!50,font=\tiny,inner sep=1.4pt}]
			\node[unode] (x0) at (0,0) {$13$};
			\foreach \j/\a/\na/\fa/\nb/\fb/\nc/\fc/\nd/\fd in {1/90/4/dnode/3/dnode/2/dnode/1/dnode, 2/210/8/dnode/7/dnode/6/dnode/5/dnode, 3/330/12/unode/11/unode/10/unode/9/unode}{
				\node[\fa] (x\j1) at ({\a}:0.95) {$\na$};
				\node[\fb] (x\j2) at ({\a}:1.9) {$\nb$};
				\node[\fc] (x\j3) at ({\a}:2.85) {$\nc$};
				\node[\fd] (x\j4) at ({\a}:3.8) {$\nd$};
			}
			\draw (x0)--(x11); \draw (x0)--(x21); \draw (x0)--(x31);
			\draw (x11)--(x21); \draw (x21)--(x31); \draw (x31)--(x11);
			\foreach \j in {1,2,3}{\foreach \t/\s in {1/2,2/3,3/4}{\draw (x\j\t)--(x\j\s);}}
			\draw[dashed] (x11) circle (0.42); \draw[dashed] (x12) circle (0.42);
			\draw[dashed] (x21) circle (0.42); \draw[dashed] (x22) circle (0.42);
			\node[font=\scriptsize] at ($(x12)+(1.3,0.0)$) {$\Lambda_{9}^{2}$};
		\end{tikzpicture}
	}
	\hspace{2em}
	\subfigure[Leg order, $\ell=12$]{
		\begin{tikzpicture}[scale=0.62,
			dnode/.style={circle,draw,fill=white,font=\tiny,inner sep=1.4pt},
			unode/.style={circle,fill=gray!50,font=\tiny,inner sep=1.4pt}]
			\node[unode] (x0) at (0,0) {$13$};
			\foreach \j/\a/\na/\fa/\nb/\fb/\nc/\fc/\nd/\fd in {1/90/4/dnode/3/dnode/2/dnode/1/dnode, 2/210/8/dnode/7/dnode/6/dnode/5/dnode, 3/330/12/unode/11/dnode/10/dnode/9/dnode}{
				\node[\fa] (x\j1) at ({\a}:0.95) {$\na$};
				\node[\fb] (x\j2) at ({\a}:1.9) {$\nb$};
				\node[\fc] (x\j3) at ({\a}:2.85) {$\nc$};
				\node[\fd] (x\j4) at ({\a}:3.8) {$\nd$};
			}
			\draw (x0)--(x11); \draw (x0)--(x21); \draw (x0)--(x31);
			\draw (x11)--(x21); \draw (x21)--(x31); \draw (x31)--(x11);
			\foreach \j in {1,2,3}{\foreach \t/\s in {1/2,2/3,3/4}{\draw (x\j\t)--(x\j\s);}}
			\draw[dashed] (x11) circle (0.42); \draw[dashed] (x12) circle (0.42);
			\draw[dashed] (x21) circle (0.42); \draw[dashed] (x22) circle (0.42);
			\draw[dashed] (x32) circle (0.42); \draw[dashed] (x33) circle (0.42);
			\node[font=\scriptsize] at ($(x12)+(1.3,0.0)$) {$\Lambda_{12}^{2}$};
		\end{tikzpicture}
	}
	\caption{Illustration of the $\eta$-boundaries (Definition~\ref{def:boundary}) with $\eta=2$ induced by the reverse and leg orders on the instance of Figure~\ref{fig:varOrders} ($k=3$, $T=5$). In each panel, nodes are labeled by their position in the corresponding order, decided nodes ($[\ell-1]$) are white, undecided nodes ($[\ell:n]$) are gray, and the nodes of the $\eta$-boundary $\Lambda_{\ell}^{\eta}$ are marked with dashed halos. (a) In the reverse order the $k$ legs are processed in parallel, and the boundary contains variables of every leg throughout the construction: at $\ell=9$, the boundary already has the worst-case size $\eta k=6$. (b)--(d) In the leg order the boundary is confined to the leg currently being processed, plus at most $\eta$ variables adjacent to the clique for each previously completed leg: the worst-case size $\eta k=6$ is attained only while completing the last leg. In particular, at the same layer $\ell=9$, the boundary under the leg order (of size $2\eta=4$, panel (c)) is smaller than under the reverse order (of size $\eta k=6$, panel (a)).}
	\label{fig:boundaryOrders}
\end{figure}

\subsection{Experiments}
We compare the three variable orders over instances of the stochastic hybrid model predictive control problem across different performance measures.

\paragraph{Data generation}
Each element is generated independently as $a_{t}^j\sim U(0,1)$, resulting in different dynamics across time periods and scenarios. Costs for the indicator variables are also generated independently as $c_t^j\sim U(0,1)$. Finally, instead of the linear expression $-2\bar s_1s_1-2\sum_{j=1}^k\sum_{t=2}^{T}\bar s_t^j s_t^j$ induced by the objective \eqref{eq:stochasticMPC_obj} (which can be transformed to an equivalent linear expression involving variables $\bm{x}$ after projecting out $\bm{s}$), we simply set an objective of the form $\bm{d^\top x}$, where each coefficient of $\bm{d}$ is generated from $U(0,1)$. While this choice of objective is not the most realistic, we point out that the choice of linear coefficients does not affect the construction of the decision diagram and thus does not significantly impact the results presented next. We generate instances for the numbers of scenarios $k\in \{3,4,5\}$ and time periods $T\in \{10,20,50\}$ and, for each combination of parameters, we generate five instances. 

\paragraph{Results}
Table~\ref{tab:varOrder} shows the results of these experiments, using Gurobi 12 with default settings, a 300-second time limit and a single thread. For each ordering, we report the CPU time needed to construct the diagram and solve the shortest-path problems in seconds, the number of arcs in the resulting diagrams, the number of instances solved within the time limit, and the error in objective value, which we compute and report only for instances solved to optimality by both methods as follows: 
\[
\text{error} = \frac{|\text{val}_{DD}-\text{val}^*|}{|\text{val}^*|},
\]
where $\text{val}_{DD}$ is the value obtained by the shortest path problem over the diagram and $\text{val}^*$ is the optimal value reported by Gurobi.

Table~\ref{tab:varOrder} illustrates that the variable ordering can indeed have a significant effect on the performance of the methods. The leg order results in considerably smaller diagrams across all values of $\epsilon$, reducing the average number of arcs by roughly half when compared to the natural order. Consequently, more instances are solved within the time limit: the natural order solves 76, the reverse order solves 88, and the leg order solves 112. For reference, Gurobi solves 90 instances within the time limit. Interestingly, Gurobi cannot solve any instances with $T = 50$ whereas the decision-diagram-based methods solve most of these instances when $k=3$ and even for $k=5$ the leg order method is competitive.  

Regarding the effect of $\epsilon$, we note that the errors for the reverse and leg orders are consistently below $0.05\%$ across all values of $\epsilon$, while the error for the natural order increases only marginally, to at most $0.15\%$ when $\epsilon = 10^{-2}$. 

\begin{landscape}    	
	\begin{table}[!ht]
		\begin{center}
			\singlespacing
			\footnotesize
			\renewcommand{\arraystretch}{0.72}
			\caption{Comparing variable orderings over instances of model predictive control problems. Each row reports averages over five instances, which are solved with a time limit of 300 seconds. For instances not solved within the time limit, we record a CPU time of 300 seconds.}
			\label{tab:varOrder}
			\resizebox{\linewidth}{!}{%
				\begin{tabular}{rcrrrrrrrrrrrrrrr}
					\hline
					{\bf } &     {\bf } &     {\bf } &          \multicolumn{ 4}{c}{{\bf Natural order}} &          \multicolumn{ 4}{c}{{\bf Reverse order}} &              \multicolumn{ 4}{c}{{\bf Leg order}} & \multicolumn{ 2}{c}{{\bf Gurobi}} \\
					
					{\bf $k$} & {\bf $\epsilon$} & {\bf $T$} & {\bf Time (s)} & {\bf Arcs} & {\bf \# Sol} &  {\bf Error} & {\bf Time (s)} & {\bf Arcs} & {\bf \# Sol} &  {\bf Error} & {\bf Time (s)} & {\bf Arcs} & {\bf \# Sol} &  {\bf Error} & {\bf Time (s)} & {\bf \# Sol} \\
					\hline
					\hline
					&            &         10 &       1.09 &      55613 &          5 &     0.00\% &       0.39 &      21552 &          5 &     0.00\% &       0.24 &      15366 &          5 &     0.00\% &       0.19 &          5 \\
					
					&      $10^{-5}$ &         20 &      15.55 &     203440 &          5 &     0.02\% &       7.72 &     123231 &          5 &     0.02\% &       2.16 &      61447 &          5 &     0.02\% &       0.27 &          5 \\
					
					&            &         50 &     275.02 &     360476 &          1 &          - &     257.72 &     455294 &          3 &          - &      29.38 &     179381 &          5 &          - &     300.00 &          0 \\
					
					&            &         10 &       0.50 &      20362 &          5 &     0.00\% &       0.28 &      13604 &          5 &     0.00\% &       0.14 &       7502 &          5 &     0.00\% &       0.19 &          5 \\
					
					3 &      $10^{-3}$ &         20 &       3.70 &      54240 &          5 &     0.03\% &       2.88 &      47279 &          5 &     0.03\% &       0.73 &      19888 &          5 &     0.03\% &       0.27 &          5 \\
					
					&            &         50 &      49.14 &     153455 &          5 &          - &      46.98 &     148202 &          5 &          - &       7.75 &      54342 &          5 &          - &     300.00 &          0 \\
					
					&            &         10 &       0.24 &      10150 &          5 &     0.02\% &       0.20 &       8448 &          5 &     0.02\% &       0.08 &       3936 &          5 &     0.02\% &       0.31 &          5 \\
					
					&      $10^{-2}$ &         20 &       1.49 &      22812 &          5 &     0.04\% &       1.20 &      21255 &          5 &     0.04\% &       0.36 &       8974 &          5 &     0.04\% &       0.36 &          5 \\
					
					&            &         50 &      17.59 &      64049 &          5 &          - &      17.76 &      63350 &          5 &          - &       3.50 &      23919 &          5 &          - &     300.00 &          0 \\
					\hline
					&            &         10 &      41.84 &    1649219 &          5 &     0.01\% &       5.23 &     255477 &          5 &     0.01\% &       2.45 &     171016 &          5 &     0.01\% &       0.31 &          5 \\
					
					&      $10^{-5}$ &         20 &     300.00 &     278637 &          0 &          - &     300.00 &     775666 &          0 &          - &      27.41 &     755537 &          5 &     0.03\% &       1.29 &          5 \\
					
					&            &         50 &     300.00 &     130321 &          0 &          - &     300.00 &     178879 &          0 &          - &     300.00 &     351381 &          0 &          - &     300.00 &          0 \\
					
					&            &         10 &       8.14 &     282816 &          5 &     0.01\% &       2.60 &     111420 &          5 &     0.01\% &       0.96 &      63288 &          5 &     0.01\% &       0.24 &          5 \\
					
					4 &      $10^{-3}$ &         20 &     190.78 &     536476 &          4 &     0.03\% &      59.65 &     461543 &          5 &     0.02\% &       5.72 &     162783 &          5 &     0.02\% &       0.98 &          5 \\
					
					&            &         50 &     300.00 &     172725 &          0 &          - &     300.00 &     205857 &          0 &          - &     170.06 &     450648 &          5 &          - &     300.00 &          0 \\
					
					&            &         10 &       2.01 &      65494 &          5 &     0.01\% &       1.19 &      49273 &          5 &     0.01\% &       0.33 &      19154 &          5 &     0.01\% &       0.39 &          5 \\
					
					&     $10^{-2}$ &         20 &      15.60 &     159412 &          5 &     0.05\% &      13.20 &     146660 &          5 &     0.05\% &       1.76 &      46658 &          5 &     0.05\% &       1.32 &          5 \\
					
					&            &         50 &     300.00 &     238995 &          0 &          - &     288.29 &     265780 &          1 &          - &      26.03 &     125066 &          5 &          - &     300.00 &          0 \\
					\hline
					&            &         10 &     300.00 &    2890545 &          0 &          - &      70.53 &    2807484 &          5 &     0.02\% &      30.16 &    1769758 &          5 &     0.02\% &       0.62 &          5 \\
					
					&      $10^{-5}$ &         20 &     300.00 &     188146 &          0 &          - &     300.00 &     214388 &          0 &          - &     300.00 &     573269 &          0 &          - &      13.16 &          5 \\
					
					&            &         50 &     300.00 &      83718 &          0 &          - &     300.00 &     104579 &          0 &          - &     300.00 &     210954 &          0 &          - &     300.00 &          0 \\
					
					&            &         10 &     175.06 &    4672707 &          5 &     0.02\% &      28.32 &    1000803 &          5 &     0.02\% &       5.91 &     361514 &          5 &     0.02\% &       0.28 &          5 \\
					
					5 &     $10^{-3}$ &         20 &     300.00 &     205703 &          0 &          - &     300.00 &     271160 &          0 &          - &      49.73 &     929330 &          5 &     0.03\% &      13.12 &          5 \\
					
					&            &         50 &     300.00 &     104746 &          0 &          - &     300.00 &     155526 &          0 &          - &     300.00 &     265620 &          0 &          - &     300.00 &          0 \\
					
					&            &         10 &      21.03 &     574846 &          5 &     0.15\% &       8.87 &     308757 &          5 &     0.03\% &       1.38 &      82697 &          5 &     0.03\% &       0.67 &          5 \\
					
					&      $10^{-2}$ &         20 &     266.28 &     297221 &          1 &     0.06\% &     242.83 &     803857 &          4 &     0.03\% &       7.50 &     175541 &          5 &     0.02\% &      13.28 &          5 \\
					
					&            &         50 &     300.00 &     122934 &          0 &          - &     300.00 &     151934 &          0 &          - &     258.73 &     390826 &          2 &          - &     300.00 &          0 \\
					\hline
					&            & {\bf Total} & {\bf 151.30} & {\bf 503676} &   {\bf 76} & {\bf 0.03\%} & {\bf 127.99} & {\bf 339676} &   {\bf 88} & {\bf 0.02\%} & {\bf 67.87} & {\bf 269622} &  {\bf 112} & {\bf 0.02\%} & {\bf 101.75} &   {\bf 90} \\
					\hline
				\end{tabular}%
			}
		\end{center}
	\end{table}
	
\end{landscape}

	\section{Computations with real data}\label{sec:realData}
	
	In this section, we report computations with real data.
	
	\subsection{Portfolio index tracking}\label{sec:portfolioComp}
	
We consider the portfolio index tracking problem: given a benchmark portfolio $\bm{\bar x}\in \R_+^n$ with $\bm{1^\top \bar x}=1$, the goal is to construct a portfolio holding at most $k$ assets that tracks the benchmark as closely as possible, where the tracking error is measured with respect to the estimated covariance of the returns. Factor models are standard in portfolio risk estimation \citep{sharpe1963simplified}; accordingly, we estimate the covariance as $\bm{Q}+\Diag(\bm{d})$, where $\bm{Q}=\bm{FF^\top}=\sum_{j=1}^m \bm{f_j}\bm{f_j^\top}$ is the low-rank covariance induced by the factors $\bm{F}=\begin{pmatrix}\bm{f_1}&\cdots&\bm{f_m}\end{pmatrix}\in \R^{n\times m}$, and $\bm{d}\in \R_+^n$ collects the specific variances not explained by the factors. The resulting optimization problem is
	\begin{subequations}\label{eq:indexTracking}
		\begin{align}
			\min_{\bm{x}, \bm{z}\in \R^n}\;& (\bm{x}-\bm{\bar x})^\top\left(\bm{Q}+\Diag(\bm{d})\right)(\bm{x}-\bm{\bar x})\\
			\text{s.t.}\;&\bm{1^\top x}=1,\quad \bm{1^\top z}\leq k,\quad
			\bm{0}\leq \bm{x}\leq \bm{z}, \quad \bm{z}\in \{0,1\}^n,
		\end{align}
	\end{subequations}
	where $k\in \Z_+$ is the target sparsity. We denote by ${X}_{\textsf{PIT}}$ the feasible region of \eqref{eq:indexTracking}.
	
Problem \eqref{eq:indexTracking} is closely related to cardinality-constrained mean--variance portfolio selection, which trades expected return against variance under budget and support constraints \citep{markowitz1952portfolio,bienstock1996computational,gao2011cardinality}, but its relaxation is much weaker. Indeed, the natural convex relaxation of \eqref{eq:indexTracking} always produces a trivial lower bound of 0 and a root gap of 100\%, because the solution $\bm{z}=\bm{x}=\bm{\bar x}$ is feasible for the relaxation.
	
	\subsubsection*{Instance construction}
	We generate instances from real stock market data, following the methodology of \citet{han2x2}. We use the daily stock return data provided by Boris Marjanovic in Kaggle ({\url{kaggle.com/borismarjanovic/price-volume-data-for-all-us-stocks-etfs}}), using all data since 1/1/1991 and retaining the stocks with available data in at least 99\% of the trading days since that date (returns corresponding to missing data are set to 0), resulting in $n=214$ securities. From these data, we compute the expected return $\bm{\bar \mu}$ and the sample covariance matrix $\bm{\bar\Sigma}$ of the daily returns (\%). We set the benchmark $\bm{\bar x}=\bm{\bar \mu}/\|\bm{\bar\mu}\|_1$ to ensure $\|\bm{\bar x}\|_1=1$ holds.
	
	The factor model is then constructed as follows. Given a target rank $m\in \Z_+$ and a target factor sparsity $\kappa\in \Z_+\cup\{\infty\}$, we compute a sparse eigendecomposition $\bm{\bar\Sigma}\approx\sum_{j=1}^m\bm{f_j}\bm{f_j^\top}$ using the truncated power method \citep{yuan2013truncated}, in which the factors are ordered by decreasing explained variance and the leading $m$ factors satisfy $\|\bm{f_j}\|_0\leq \kappa$. 
    We then set
	$$\bm{Q}=\sum_{j=1}^m\bm{f_j}\bm{f_j^\top}\qquad \text{and}\qquad \bm{d}=\text{diag}\Big(\bm{\bar \Sigma}-\bm{Q}\Big),$$
	that is, the leading $m$ (sparse) factors are retained exactly, while the variance explained by the remaining factors is approximated by its diagonal. The case $\kappa=\infty$ imposes no sparsity and corresponds to the usual truncated eigendecomposition. Larger values of $m$ and $\kappa$ thus yield more faithful approximations of $\bm{\bar\Sigma}$, but result in denser and higher-rank matrices $\bm{Q}$.
		Detailed results on the quality of the factor approximation are provided in the online companion (\S\ref{sec:portfolio-approx-quality}).

	\subsubsection*{Formulations}
	
	Expanding the objective of \eqref{eq:indexTracking}, the tracking error decomposes as
	$$(\bm{x}-\bm{\bar x})^\top\left(\bm{Q}+\Diag(\bm{d})\right)(\bm{x}-\bm{\bar x})=L(\bm{x})+\sum_{j=1}^m\left(\bm{f_j^\top x}\right)^2+\sum_{i=1}^nd_ix_i^2,$$
	where $L(\bm{x})\defeq \bm{\bar x^\top}\left(\bm{Q}+\Diag(\bm{d})\right)\bm{\bar x}-2\bm{\bar x^\top}\left(\bm{Q}+\Diag(\bm{d})\right)\bm{x}$ collects the constant and the linear terms. The formulations we compare share the term $L(\bm{x})$, and differ only in the strengthening applied to the quadratic terms. We consider two families of formulations.
	
	\smallskip\noindent$\bullet$ \textbf{The perspective formulation (\texttt{PER})}, in which the separable part is strengthened using the perspective reformulation and the factor terms are left as plain quadratics:
	\begin{align*}
		\min_{\bm{x},\, \bm{z},\, \bm{\tau}}\;& L(\bm{x})+  \sum_{j=1}^m \tau_j^2+\sum_{i=1}^n d_i\frac{x_i^2}{z_i}
		\text{  s.t.  }\;(\bm x, \bm z)\in X_{\mathsf{PIT}},\quad\tau_j=\bm{f_j^\top x}\qquad j\in [m].%
	\end{align*}
	
	\smallskip\noindent$\bullet$ \textbf{The decision diagram formulations (\texttt{DD1} and \texttt{DD2})}, in which, in addition to the perspective strengthening of the separable part, the quadratic associated with the leading $m'\in \Z_+$ factors ($m'\leq m$) is convexified using the results of \S\ref{sec:convexification}. Specifically, letting $\bm{F'}=\begin{pmatrix}\bm{f_1}&\cdots&\bm{f_{m'}}\end{pmatrix}$, we construct a decision diagram for $X_{\bm{F'},Z}$ with $Z=\{0,1\}^n$ and replace the term $\sum_{j=1}^{m'}(\bm{f_j^\top x})^2$ with an epigraph variable $x_0$ constrained through the extended formulation \eqref{eq:convexhull}:
	\begin{align*}
		\min_{\bm{x},\, \bm{z},\, \bm{\tau},\, x_0,\, \bm{w},\, \bm{r}}\;& L(\bm{x})+  x_0+  \sum_{j=m'+1}^m \tau_j^2+\sum_{i=1}^n d_i\frac{x_i^2}{z_i}\\
		\text{s.t.}\;&\tau_j=\bm{f_j^\top x}\;\; \forall j\in [m'+1:m],\quad
		(\bm x, \bm z)\in X_{\mathsf{PIT}}, \quad (x_0,\bm{x},\bm{z},\bm{w},\bm{r})\in R_{\bm{F'},Z}.
	\end{align*}
	Formulations \texttt{DD1} and \texttt{DD2} correspond to $m'=1$ and $m'=2$, respectively. Note that the cardinality constraint is not encoded in the diagram, but kept explicitly in the formulation. In our computations we use $\epsilon$-exact diagrams constructed with Algorithm~\ref{alg:constructionDD} and the hash-merging procedure of \S\ref{sec:hash-merging}, with precision $\epsilon=10^{-5}$ to identify zero rows in line~\ref{line:dependenceCheck} of Algorithm~\ref{alg:constructionDD}, and precision $\epsilon'=10^{-3}\sqrt{m'}$ for the call to \texttt{merge} in line~\ref{line:mergeCall}. Recall that, by the results of \S\ref{sec:lowRank}, the size of the diagrams is guaranteed to be $\mathcal{O}(n^{m'})$, and can be smaller if sparsity of $\bm{F'}$ can be exploited.
	
	\subsubsection*{Results}
	
	All formulations are solved with Gurobi 13 with default settings and a time limit of 600 seconds. Table~\ref{tab:portfolioResults} reports, for each combination of factor sparsity $\kappa\in\{5,10,25,50,\infty\}$, rank $m\in \{1,5,10,25\}$ and cardinality $k\in \{10,20,40\}$, either the time (in seconds) required to solve the instance to optimality or, if the time limit is reached, the end gap reported by the solver. For methods \texttt{DD1} and \texttt{DD2}, the time reported includes the total time to construct the decision diagram and solve the ensuing associated MIQO. For each instance, the best performance among the formulations tested is indicated in bold. 
	
	\begin{table}[!ht]%
		\begin{center}
			\caption{\linespread{0.8}\selectfont Each cell contains the time in seconds (if solved) or the end gap (if not) after running branch-and-bound for 600 seconds for instances with $n=214$ and cardinality $k=10/20/40$. Each row corresponds to the sparsity $\kappa$ of the factor model, and each column to its rank $m$. The best performance for each instance is shown in bold.}
			\label{tab:portfolioResults}
            
			\small\setlength{\tabcolsep}{4pt}
            \renewcommand{\arraystretch}{0.5}
			\begin{tabular}{c c|l l l l}
				\hline
				&&$\bm{m=1}$&$\bm{m=5}$&$\bm{m=10}$&$\bm{m=25}$\\
				\hline
				\multirow{3}{*}{$\bm{\kappa=5}$}&\texttt{PER}&$103/\bm{307}/1\%$&$4\%/13\%/\bm{4\%}$&$\bm{6}/22\%/9\%$&$\bm{25}/\bm{108}/\bm{11\%}$\\
				&\texttt{DD1}&$\bm{3}/462/\bm{516}$&$\bm{393}/\bm{415}/\bm{4\%}$&$2\%/\bm{4\%}/\bm{5\%}$&$29/40\%/25\%$\\
				&\texttt{DD2}&&$35\%/15\%/6\%$&$5\%/21\%/11\%$&$52\%/45\%/25\%$\\
				\hline
				\multirow{3}{*}{$\bm{\kappa=10}$}&\texttt{PER}&$1\%/4\%/1\%$&$557/12\%/\bm{4\%}$&$37\%/22\%/\bm{10\%}$&$\bm{97}/27\%/\bm{11\%}$\\
				&\texttt{DD1}&$\bm{247}/\bm{371}/\bm{548}$&$\bm{351}/\bm{466}/6\%$&$\bm{17\%}/\bm{10\%}/11\%$&$128/\bm{23\%}/18\%$\\
				&\texttt{DD2}&&$555/12\%/5\%$&$42\%/24\%/11\%$&$61\%/33\%/18\%$\\
				\hline
				\multirow{3}{*}{$\bm{\kappa=25}$}&\texttt{PER}&$1\%/4\%/0\%$&$19\%/11\%/1\%$&$31\%/13\%/3\%$&$42\%/19\%/6\%$\\
				&\texttt{DD1}&$1\%/4\%/0\%$&$14\%/11\%/1\%$&$22\%/13\%/3\%$&$46\%/22\%/11\%$\\
				&\texttt{DD2}&&$\bm{27}/\bm{40}/\bm{140}$&$\bm{50}/\bm{71}/\bm{95}$&$\bm{103}/\bm{210}/\bm{238}$\\
				\hline
				\multirow{3}{*}{$\bm{\kappa=50}$}&\texttt{PER}&$\bm{269}/4\%/0\%$&$\bm{11\%}/5\%/2\%$&$\bm{31\%}/\bm{16\%}/\bm{3\%}$&$\bm{4}/14\%/2\%$\\
				&\texttt{DD1}&$2\%/4\%/\bm{536}$&$21\%/\bm{4\%}/\bm{1\%}$&$38\%/\bm{16\%}/4\%$&$27\%/\bm{13}\%/\bm{1\%}$\\
				&\texttt{DD2}&&$23\%/10\%/3\%$&$39\%/18\%/6\%$&$39\%/17\%/5\%$\\
				\hline
				\multirow{3}{*}{$\bm{\kappa=\infty}$}&\texttt{PER}&$1\%/\bm{3}/\bm{3}$&$3\%/12\%/7\%$&$\bm{10}/\bm{287}/13\%$&$\bm{87}/\bm{8}/\bm{428}$\\
				&\texttt{DD1}&$\bm{42}/127/80$&$\bm{77}/\bm{144}/\bm{105}$&$414/24\%/\bm{2\%}$&$9\%/42\%/24\%$\\
				&\texttt{DD2}&&$28\%/77\%/98\%$&$84\%/90\%/99\%$&$96\%/98\%/99\%$\\
				\hline
			\end{tabular}
            
		\end{center}
	\end{table}
	
	Several patterns emerge from Table~\ref{tab:portfolioResults}. First, consider the rank-one setting $m=1$, in which formulation \texttt{DD1} convexifies the complete coupling term: \texttt{DD1} solves twice as many instances to optimality as \texttt{PER} (10 versus 5 of the 15 instances with $m=1$), often dramatically faster: for example, with dense factors ($\kappa=\infty$), \texttt{DD1} solves all three instances within 130 seconds, while \texttt{PER} fails to close the gap for $k=10$. Second, when the factors are sparse and the rank is moderate ($\kappa=25$ and $m\geq 5$), formulation \texttt{DD2} is the only method able to solve the instances to optimality, doing so in under 150 seconds, while both \texttt{PER} and \texttt{DD1} terminate with gaps of up to $46\%$: in this regime, convexifying two factors jointly captures a substantial portion of the risk while the sparsity of the factors keeps the associated formulations small. This advantage does not extend to $\kappa=50$, where most formulations fail to solve the instances with $m\geq 5$. Third, with fully dense factors ($\kappa=\infty$) and $m\geq 5$, formulation \texttt{DD2} is no longer viable: the rank-two diagrams associated with dense factors are too large, and the overhead of the resulting extended formulations outweighs the strengthening; \texttt{DD1}, in contrast, remains competitive, attaining the best performance in 5 of the 9 instances with $\kappa=\infty$ and $m\leq 10$. Finally, as the rank $m$ grows, the fraction of the risk convexified by \texttt{DD1} and \texttt{DD2} shrinks and their advantage narrows: for $m=25$, the perspective formulation attains the best performance in most of the instances.
	
	Additional information on parameter tuning is provided in the online companion (\S\ref{sec:detailed-portfolio}).

	\subsection{Sparse exponential smoothing with trend}\label{sec:sparseHolt}
	
	We now consider a sparse version of Holt's double exponential smoothing.
	
	\subsubsection{Description}
	
	Let $\{y_t\}_{t=1}^T$ be a univariate time series. Let $\alpha,\beta\in (0,1)$ be fixed smoothing parameters, let $s_t^\ell$ denote the filtered level after observing $y_t$, and let $s_t^b$ denote the corresponding trend. The model reads	\small\begin{subequations}\label{eq:sparseHolt}
		\begin{align}
			\min_{\bm{s},\bm{x},\bm{z}}\quad&
			\sum_{t=1}^{T-1}
			\left(y_{t+1}-s_t^\ell-s_t^b\right)^2
			+\lambda_0\sum_{t=1}^{T-2}\left(z^\ell_t+z^b_t\right)
			\label{eq:sparseHoltObjective}\\
			\text{s.t.}\quad&
			s_{t+1}^\ell=\alpha y_{t+1}+(1-\alpha)(s_t^\ell+s_t^b)+x_t^\ell
			\qquad \forall t\in[T-2]\label{eq:sparseHolt_level}\\
			\quad&
			s_{t+1}^b=\beta (s_{t+1}^\ell-s_t^\ell)+(1-\beta)s_t^b+x_t^b
			\qquad \forall t\in[T-2]\label{eq:sparseHolt_trend}\\
			&\sum_{t=1}^{T-2}\left(z^\ell_t+z^b_t\right)\leq k\label{eq:sparseHoltCardinality}\\
			&x^\ell_t(1-z^\ell_t)=0\quad
			x^b_t(1-z^b_t)=0
			\qquad \forall t\in[T-2]\label{eq:sparseHoltIndicators}\\
			&\{\bm{s_t}\in\R^2\}_{t=1}^{T-1},\quad
			\{\bm{x_t}\in\R^2\}_{t=1}^{T-2},\quad
			\{\bm{z_t}\in\{0,1\}^2\}_{t=1}^{T-2},
		\end{align}
	\end{subequations}\normalsize
	where $\bm{s_t}=(s_t^\ell,s_t^b)^\top$, $\bm{x_t}=(x_t^\ell,x_t^b)^\top$ and $\bm{z_t}=(z_t^\ell,z_t^b)^\top$. If $\bm{x}=\bm{0}$, then the dynamics \eqref{eq:sparseHolt_level}-\eqref{eq:sparseHolt_trend} are exactly those for Holt's forecasting method: the level $s_{t+1}^\ell$ is a convex combination of the most recent observation $y_{t+1}$ and level $s_t^\ell+s_t^b$; the trend $s_{t+1}^b$ is a convex combination of the most recent observed change $s_{t+1}^\ell-s_t^\ell$ and the previous estimated trend $s_t^b$. However, the presence of variables $x_t^\ell$ and $x_t^b$ allows for abrupt changes in the estimated levels and trends, while constraints \eqref{eq:sparseHoltCardinality}-\eqref{eq:sparseHoltIndicators} and penalty $\lambda_0\sum_{t=1}^{T-2}(z_t^\ell+z_t^b)$ ensure that these changes occur sparingly. The objective term $\sum_{t=1}^{T-1}
	\left(y_{t+1}-s_t^\ell-s_t^b\right)^2$ seeks the choices that result in the best predictions overall.   
	
	\paragraph{Projection}
	
	Substituting $s_{t+1}^\ell=\alpha y_{t+1}+(1-\alpha)(s_t^\ell+s_t^b)+x_t^\ell$ in \eqref{eq:sparseHolt_trend}, we find that dynamics \eqref{eq:sparseHolt_level}-\eqref{eq:sparseHolt_trend} are equivalent to	{\small\begin{equation}\label{eq:sparseHoltStateSpace}
			\begin{pmatrix}s_{t+1}^\ell\\ s_{t+1}^b\end{pmatrix}
			=
			\underbrace{\begin{pmatrix}
					1-\alpha&1-\alpha\\
					-\alpha\beta&1-\alpha\beta
			\end{pmatrix}}_{\bm{A}} 
			\begin{pmatrix}s_{t}^\ell\\ s_{t}^b\end{pmatrix}
			+
			\underbrace{\begin{pmatrix}
					1&0\\
					\beta&1
			\end{pmatrix}}_{\bm{G}}
			\begin{pmatrix}x_{t}^\ell\\ x_{t}^b\end{pmatrix}
			+
			\underbrace{\begin{pmatrix}
					\alpha\\ \alpha\beta
			\end{pmatrix}}_{\bm{c}}y_{t+1}
			\quad\forall t\in[T-2].
	\end{equation}}
	
	Letting $\bm{x_0}=\bm{s_1}$ and applying \eqref{eq:sparseHoltStateSpace} recursively, we find that $\bm{s_{t+1}}=\bm{A}^{t}\bm{x_0}+\sum_{i=1}^t\bm{A}^{t-i}\bm{G}\bm{x_i}+\sum_{i=1}^t\bm{A}^{t-i}\bm{c}y_{i+1},$
	and thus all variables $\bm{s}$ can be projected out, resulting in an MIQO with indicators of the form \eqref{eq:optimizationX}. Although variables $\bm{x_0}$ are not controlled by indicators, they can be incorporated easily by introducing associated variables $\bm{z^0}\in \{0,1\}^2$ and constraints $\bm{z^0}=\bm{1}$. %
	
	\paragraph{Connections with theory and previous work}
Problem \eqref{eq:sparseHolt} can be interpreted as a partially observed hybrid model predictive control model: the state $\bm{s_t}$ follows linear dynamics, the sparse corrections $\bm{x_t}$ act as control inputs subject to on/off decisions $\bm{z_t}$, and the objective penalizes one-step prediction errors and activation costs \citep{bemporad1999control,borrelli2017predictive}. It differs from the inverse-sparse setting studied by \citet{lee2024convexification} in two crucial respects, each of which renders those methods inapplicable. First, the quadratic term is rank-deficient and not invertible: it has rank $T-1$ while there are $2(T-1)$ continuous variables $\bm{x}$.
Second, the block-tridiagonal inverse structure studied by \citet{lee2024convexification} would arise here only after two modeling changes: replacing the prediction-error objective by a separable state-tracking objective $\sum_{t=1}^{T-1}\|\bm{s_t}-\bm{\bar s_t}\|_2^2$, and using a single indicator $z_t$ per period to control both $x_t^\ell$ and $x_t^b$. After projecting out the state variables under those assumptions, the resulting quadratic matrix $\bm{Q}$ has a block-tridiagonal inverse and the problem is polynomial-time solvable by dynamic programming. In contrast, model \eqref{eq:sparseHolt} retains the prediction-error objective and uses two indicators per time period.

	\subsubsection{Experiments}
	
	We now report experiments with real data. 

	\paragraph{Data description}
	We use the seasonally adjusted U.S. Initial Claims series identified by
	\texttt{ICSA} in the Federal Reserve Bank of St. Louis FRED database. The
	series measures the number of new claims for unemployment insurance and is
	reported weekly. The series page can be found at	\url{https://fred.stlouisfed.org/series/ICSA}, and CSV can be directly downloaded from \url{https://eco3min.fr/dataset/us-initial-claims.csv}. 
	The raw file contains 3,109 observations, from January 13,
	1967 through August 7, 2026. Each row contains an ISO date and the number of initial claims expressed in thousands. We aggregate data either annually or quarterly by averaging the number of claims across each year/quarter, resulting in two datasets $\{\tilde y_t\}_{t=1}^T$ consisting of either $T=60$ years or $T=239$ quarters.
	Finally, we scale the data as $\bm{y}=\bm{\tilde y}/\|\bm{\tilde y}\|_2$. 
	
	\paragraph{Methods and parameters}
	We set $\lambda_0=0$, $k\in \{5,10,15\}$, and $\alpha=\beta=\bar\alpha$ with $\bar\alpha\in \{0.5,0.7,0.9\}$. We use $M=10$ for all big-M constraints in these experiments. We test the following methods:
	
	\smallskip\noindent$\bullet$ \textbf{The natural formulation (\texttt{NAT})} solved by Gurobi, given by \eqref{eq:sparseHolt} but encoding indicator variables via big-M constraints $-M\bm{z_t}\leq \bm{x_t}\leq M\bm{z_t}$ for $t\in [T-2]$.
	
	\smallskip\noindent$\bullet$ \textbf{The full decision diagram formulations (\texttt{DD:full})}, obtained by projecting out the state variables $\bm{s}$, and using the hash-merging procedure with parameter $10^{-3}$ in line~\ref{line:mergeCall} of Algorithm~\ref{alg:constructionDD}, while using precision $\epsilon=10^{-5}$ to identify zero rows in line~\ref{line:dependenceCheck}. The cardinality constraint is included in the construction of the diagram: the left-hand side of \eqref{eq:sparseHoltCardinality} is the state associated with $Z$. 
	
	\smallskip\noindent$\bullet$ \textbf{The partial decision diagram relaxation (\texttt{DD:}$\tau$)}, obtained from a partial convexification of the problem. This method partitions the time horizon into $h=\left\lceil\frac{T-1}{\tau}\right\rceil$ disjoint blocks of size $\tau$ 
	as 
	$[T-1]=\mathcal I_1\mathbin{\dot\cup}\cdots
	\mathbin{\dot\cup}\mathcal I_h$ where	
	$\mathcal I_\kappa=[1+(\kappa-1)\tau: \kappa\tau ] \text{ for }\kappa\in [h-1],$
	and the last block $\mathcal I_h$ potentially contains fewer than $\tau$ time periods. %
	Let $R_{\kappa}$ with $\kappa\in [h-1]$ denote the convex hull of the set defined by variables $\{\bm{s_t}\in\R^2\}_{t=(\kappa-1)\tau}^{\kappa\tau},\;
		\{\bm{x_t}\in\R^2\}_{t=(\kappa-1)\tau}^{\kappa\tau-1},\;
		\{\bm{z_t}\in\{0,1\}^2\}_{t=(\kappa-1)\tau}^{\kappa\tau-1},\; w_\kappa\in \R,$ and constraints
	\small\begin{align*}
		&w_\kappa\geq \sum_{t\in \mathcal{I_\kappa}}(y_{t+1}-s_t^\ell-s_t^b)^2\\
		&s_{t+1}^\ell=\alpha y_{t+1}+(1-\alpha)(s_t^\ell+s_t^b)+x_t^\ell
		\qquad \forall t\in[(\kappa-1)\tau:\kappa\tau -1]\\
		\quad&
		s_{t+1}^b=\beta (s_{t+1}^\ell-s_t^\ell)+(1-\beta)s_t^b+x_t^b
		\qquad \forall t\in[(\kappa-1)\tau:\kappa\tau -1]\\
		&x^\ell_t(1-z^\ell_t)=0,\quad
		x^b_t(1-z^b_t)=0
		\qquad \forall t\in[(\kappa-1)\tau:\kappa\tau -1]
	\end{align*}\normalsize
	and let $R_h$ be defined analogously (but potentially involving fewer time periods); observe that there are no constraints involving variables $\bm{z}$. Then the relaxation of the partial decision diagram formulation $\texttt{DD:}\tau$ is 
	{\small\begin{subequations}\label{eq:chDecomp}
			\begin{align}
				\min_{\substack{\bm{s},\bm{x},\bm{z},\bm{w}\\\sum_{t=1}^{T-2}\left(z^\ell_t+z^b_t\right)\leq k}}\quad&
				\sum_{\kappa=1}^{h}w_\kappa\\
				\text{s.t.}\quad& \left(\bm{s_{[(\kappa-1)\tau:\kappa\tau ]}},\bm{x_{[(\kappa-1)\tau:\kappa\tau-1]}},\bm{z_{[(\kappa-1)\tau:\kappa\tau-1]}},w_\kappa\right)\in R_\kappa\label{eq:chConstraints}\quad\forall \kappa\in [h],
			\end{align}
	\end{subequations}}
	\noindent where every constraint \eqref{eq:chConstraints} is implemented via the extended SOCP formulation obtained from a decision diagram using precision $10^{-3}$ for the hash-merging procedure and $10^{-5}$ to identify zero rows. %
    Problem \eqref{eq:chDecomp} is a convex relaxation of \eqref{eq:sparseHolt} used to compute lower bounds.

	\paragraph{Results}

	All MIO and SOCP formulations are solved with Gurobi 12 with default settings and a time limit of 600 seconds. Figure~\ref{fig:real-profiles} compares the quality and computational cost of the lower bounds over the 18 real-data instances using empirical distribution profiles; detailed results are reported in the online companion (\S\ref{sec:detailed-sparseHolt}). The decision diagrams from \texttt{DD:20} can be too large in certain instances with low $\bar \alpha$, resulting in several memory and time limits. Nonetheless, \texttt{DD:10} achieves the best performance overall according to Figure~\ref{fig:real-profiles}: while not as effective as \texttt{NAT} or \texttt{DD:full} at proving gaps below 20\%, it dominates the other methods for other thresholds and is substantially faster. %

	\begin{figure}[t]
		\centering
		\includegraphics[width=0.8\textwidth, trim= {0cm 0cm 0cm 1cm},clip]{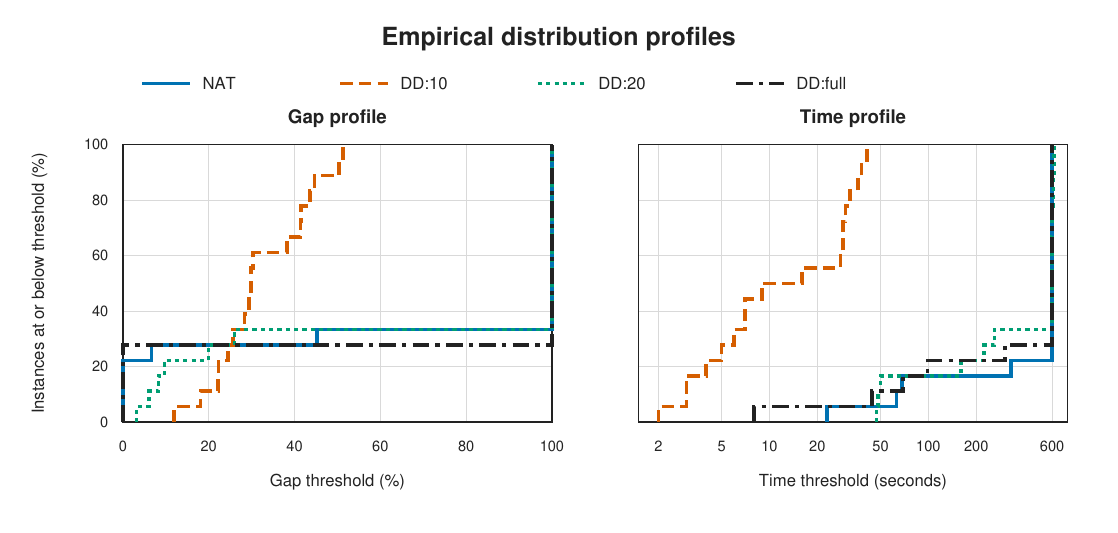}
		\caption{Empirical distribution profiles for lower-bound quality and
			computation time over the real-data instances. Higher and farther left
			is better. Colors and line styles identify the four methods.}
		\label{fig:real-profiles}
	\end{figure}

	\section*{Funding}
Soobin Choi and Andr\'es G\'omez were supported, in part, by grant FA9550-24-1-0086 from the Air Force Office of Scientific Research. Andr\'es G\'omez was additionally supported by grant N000142612117 from the Office of Naval Research. Salar Fattahi was supported, in part, by CAREER grant CCF-2337776 from the National Science Foundation and grant N000142612074 from the Office of Naval Research. Shaoning Han was partially supported by the NUS (Suzhou) Research Institute (Grant No. SZ2025-03). Leonardo Lozano was supported by grant N000142612073 from the Office of Naval Research.
\bibliography{references}

@article{lozano2025,
  title={A single-level reformulation of binary bilevel programs using decision diagrams},
  author={V{\'a}squez, Sebasti{\'a}n and Lozano, Leonardo and van Hoeve, Willem-Jan},
  journal={Mathematical Programming},
  pages={1--54},
  year={2025},
  publisher={Springer}
}

@article{Han2024c,
	
	author = {Han, Shaoning and Gómez, Andrés},
	
	title = {Compact Extended Formulations for Low-Rank Functions with Indicator Variables},
	
	journal = { Mathematics of Operations Research},
	
	year = {2024},
	
	url = {https://pubsonline.informs.org/doi/abs/10.1287/moor.2021.0281},
	
	doi = {https://doi.org/10.1287/moor.2021.0281}
	
}

@article{bhathena2026solving,
  title={Solving Convex Quadratic Optimization with Indicators Over Structured Graphs},
  author={Bhathena, Aaresh and Fattahi, Salar and G{\'o}mez, Andr{\'e}s and K{\"u}{\c{c}}{\"u}kyavuz, Simge},
  journal={arXiv preprint arXiv:2603.02103},
  year={2026}
}

@article{atamturk2025rank,
  title={Rank-one convexification for sparse regression},
  author={Atamturk, Alper and Gomez, Andres},
  journal={Journal of Machine Learning Research},
  volume={26},
  number={35},
  pages={1--50},
  year={2025}
}

@article{bhathena2025parametric,
  title={A parametric approach for solving convex quadratic optimization with indicators over trees},
  author={Bhathena, Aaresh and Fattahi, Salar and G{\'o}mez, Andr{\'e}s and K{\"u}{\c{c}}{\"u}kyavuz, Simge},
  journal={Mathematical Programming},
  year={2025},
  publisher={Springer}
}

@article{gomez2024real,
  title={Real-time solution of quadratic optimization problems with banded matrices and indicator variables},
  author={Gomez, Andres and Han, Shaoning and Lozano, Leonardo},
  journal={arXiv preprint arXiv:2405.03051},
  year={2024}
}

@article{bertsimas2024slowly,
  title={Slowly varying regression under sparsity},
  author={Bertsimas, Dimitris and Digalakis Jr, Vassilis and Li, Michael Lingzhi and Lami, Omar Skali},
  journal={Operations Research},
  year={2024},
  publisher={INFORMS}
}

@ARTICLE{lozano2018Binary,
author="Lozano, Leonardo and Smith, J. Cole",
title="A binary decision diagram based algorithm for solving a class of binary two-stage stochastic programs",
journal="Mathematical Programming",
  year = {2022},
  volume={191},
  pages={381--404}
}

@article{bergman2021dd,
  title={Decision diagram decomposition for quadratically constrained binary optimization},
  author={Bergman, David and Lozano, Leonardo},
  journal={INFORMS Journal on Computing},
  volume={33},
  number={1},
  pages={401--418},
  year={2021},
  publisher={INFORMS}
}

@article{castro2022decision,
  title={Decision diagrams for discrete optimization: A survey of recent advances},
  author={Castro, Margarita P and Cire, Andre A and Beck, J Christopher},
  journal={INFORMS Journal on Computing},
  volume={34},
  number={4},
  pages={2271--2295},
  year={2022},
  publisher={INFORMS}
}

@article{bergman2018discrete,
  title={Discrete nonlinear optimization by state-space decompositions},
  author={Bergman, David and Cire, Andre A},
  journal={Management Science},
  volume={64},
  number={10},
  pages={4700--4720},
  year={2018},
  publisher={Informs}
}

@book{bergman2016,
  title={Decision diagrams for optimization},
  author={Bergman, David and Cire, Andre A and Van Hoeve, Willem-Jan and Hooker, John},
  volume={1},
  year={2016},
  publisher={Springer}
}

@article{castro2020combinatorial,
  title={A combinatorial cut-and-lift procedure with an application to 0-1 second-order conic programming},
  author={Castro, Margarita P and Cire, Andre A and Beck, J Christopher},
  journal={Mathematical Programming},
  year={2021}
}

@article{cire2019network,
  title={A Network-Based Formulation for Scheduling Clinical Rotations},
  author={Cire, Andre A and Diamant, Adam and Yunes, Tallys and Carrasco, Alejandro},
  journal={Production and Operations Management},
  volume={28},
  number={5},
  pages={1186--1205},
  year={2019},
  publisher={Wiley Online Library}
}

@article{castro2020mdd,
  title={An MDD-based Lagrangian approach to the multicommodity pickup-and-delivery TSP},
  author={Castro, Margarita P and Cire, Andre A and Beck, J Christopher},
  journal={INFORMS Journal on Computing},
  volume={32},
  number={2},
  pages={263--278},
  year={2020},
  publisher={Informs}
}

@article{cire2013multivalued,
  title={Multivalued decision diagrams for sequencing problems},
  author={Cire, Andre A and Van Hoeve, Willem-Jan},
  journal={Operations Research},
  volume={61},
  number={6},
  pages={1411--1428},
  year={2013},
  publisher={INFORMS}
}

@article{liu2023graph,
  title={A graph-based decomposition method for convex quadratic optimization with indicators},
  author={Liu, Peijing and Fattahi, Salar and G{\'o}mez, Andr{\'e}s and K{\"u}{\c{c}}{\"u}kyavuz, Simge},
  journal={Mathematical Programming},
  volume={200},
  number={2},
  pages={669--701},
  year={2023},
  publisher={Springer}
}

@article{richard2010lifting,
  title={Lifting inequalities: a framework for generating strong cuts for nonlinear programs},
  author={Richard, Jean-Philippe P and Tawarmalani, Mohit},
  journal={Mathematical Programming},
  volume={121},
  number={1},
  pages={61--104},
  year={2010},
  publisher={Springer}
}

@article{frangioni2006perspective,
  title={Perspective cuts for a class of convex 0--1 mixed integer programs},
  author={Frangioni, Antonio and Gentile, Claudio},
  journal={Mathematical Programming},
  volume={106},
  number={2},
  pages={225--236},
  year={2006},
  publisher={Springer}
}

@article{balas1979disjunctive,
  title={Disjunctive programming},
  author={Balas, Egon},
  journal={Annals of discrete mathematics},
  volume={5},
  pages={3--51},
  year={1979},
  publisher={Elsevier}
}

@article{bemporad1999control,
  title={Control of systems integrating logic, dynamics, and constraints},
  author={Bemporad, Alberto and Morari, Manfred},
  journal={Automatica},
  volume={35},
  number={3},
  pages={407--427},
  year={1999},
  publisher={Elsevier}
}

@article{anstreicher2021quadratic,
  title={Quadratic optimization with switching variables: the convex hull for $n= 2$},
  author={Anstreicher, Kurt M and Burer, Samuel},
  journal={Mathematical Programming},
  volume={188},
  number={2},
  pages={421--441},
  year={2021},
  publisher={Springer}
}

@article{akturk2009strong,
  title={A strong conic quadratic reformulation for machine-job assignment with controllable processing times},
  author={Akt{\"u}rk, M Selim and Atamt{\"u}rk, Alper and G{\"u}rel, Sinan},
  journal={Operations Research Letters},
  volume={37},
  number={3},
  pages={187--191},
  year={2009},
  publisher={Elsevier}
}

@article{atamturk2018strong,
  title={Strong formulations for quadratic optimization with {M}-matrices and indicator variables},
  author={Atamt{\"u}rk, Alper and G{\'o}mez, Andr{\'e}s},
  journal={Mathematical Programming},
  volume={170},
  number={1},
  pages={141--176},
  year={2018},
  publisher={Springer}
}

@article{ceria1999convex,
  title={Convex programming for disjunctive convex optimization},
  author={Ceria, Sebasti{\'a}n and Soares, Jo{\~a}o},
  journal={Mathematical Programming},
  volume={86},
  number={3},
  pages={595--614},
  year={1999},
  publisher={Springer}
}

@inproceedings{dong2013valid,
  title={On valid inequalities for quadratic programming with continuous variables and binary indicators},
  author={Dong, Hongbo and Linderoth, Jeff},
  booktitle={International Conference on Integer Programming and Combinatorial Optimization},
  pages={169--180},
  year={2013},
  organization={Springer}
}

@article{gunluk2010perspective,
  title={Perspective reformulations of mixed integer nonlinear programs with indicator variables},
  author={G{\"u}nl{\"u}k, Oktay and Linderoth, Jeff},
  journal={Mathematical Programming},
  volume={124},
  number={1},
  pages={183--205},
  year={2010},
  publisher={Springer}
}

@article{bienstock1996computational,
  title={Computational study of a family of mixed-integer quadratic programming problems},
  author={Bienstock, Daniel},
  journal={Mathematical programming},
  volume={74},
  number={2},
  pages={121--140},
  year={1996},
  publisher={Springer}
}

@book{borrelli2017predictive,
  title={Predictive Control for Linear and Hybrid systems},
  author={Borrelli, Francesco and Bemporad, Alberto and Morari, Manfred},
  year={2017},
  publisher={Cambridge University Press}
}

@article{lobo1998applications,
  title={Applications of second-order cone programming},
  author={Lobo, Miguel Sousa and Vandenberghe, Lieven and Boyd, Stephen and Lebret, Herv{\'e}},
  journal={Linear algebra and its applications},
  volume={284},
  number={1-3},
  pages={193--228},
  year={1998},
  publisher={Elsevier}
}

@article{han2x2,
  title={2x2-Convexifications for convex quadratic optimization with indicator variables},
  author={Han, Shaoning and G{\'o}mez, Andr{\'e}s and Atamt{\"u}rk, Alper},
  journal={Mathematical Programming},
  volume = {202},
  number = {1},
  pages = {95-134},
  year={2023}
}

@article{gao2011cardinality,
  title={Cardinality constrained linear-quadratic optimal control},
  author={Gao, Jianjun and Li, Duan},
  journal={IEEE Transactions on Automatic Control},
  volume={56},
  number={8},
  pages={1936--1941},
  year={2011},
  publisher={IEEE}
}

@article{wei2022ideal,
  title={Ideal formulations for constrained convex optimization problems with indicator variables},
  author={Wei, Linchuan and G{\'o}mez, Andr{\'e}s and K{\"u}{\c{c}}{\"u}kyavuz, Simge},
  journal={Mathematical Programming},
  volume={192},
  number={1},
  pages={57--88},
  year={2022},
  publisher={Springer}
}

@inproceedings{wei2020convexification,
  title={On the convexification of constrained quadratic optimization problems with indicator variables},
  author={Wei, Linchuan and G{\'o}mez, Andr{\'e}s and K{\"u}{\c{c}}{\"u}kyavuz, Simge},
  booktitle={International conference on integer programming and combinatorial optimization},
  pages={433--447},
  year={2020},
  organization={Springer}
}

@article{frangioni2020decompositions,
  title={Decompositions of semidefinite matrices and the perspective reformulation of nonseparable quadratic programs},
  author={Frangioni, Antonio and Gentile, Claudio and Hungerford, James},
  journal={Mathematics of Operations Research},
  volume={45},
  number={1},
  pages={15--33},
  year={2020},
  publisher={INFORMS}
}

@article{shafiee2024constrained,
  title={Constrained optimization of rank-one functions with indicator variables},
  author={Shafiee, Soroosh and K{\i}l{\i}n{\c{c}}-Karzan, Fatma},
  journal={Mathematical Programming},
  pages={1--47},
  year={2024},
  publisher={Springer}
}

@article{lee2024convexification,
  title={Convexification of multi-period quadratic programs with indicators},
  author={Lee, Jisun and G{\'o}mez, Andr{\'e}s and Atamt{\"u}rk, Alper},
  journal={arXiv preprint arXiv:2412.17178},
  year={2024}
}

@book{garey1979computers,
  title={Computers and intractability},
  author={Garey, Michael R and Johnson, David S},
  year={1979},
  publisher={wh freeman New York}
}

@article{wei2024convex,
  title={On the convex hull of convex quadratic optimization problems with indicators},
  author={Wei, Linchuan and Atamt{\"u}rk, Alper and G{\'o}mez, Andr{\'e}s and K{\"u}{\c{c}}{\"u}kyavuz, Simge},
  journal={Mathematical Programming},
  volume={204},
  number={1},
  pages={703--737},
  year={2024},
  publisher={Springer}
}

@article{markowitz1952portfolio,
  title={Portfolio selection},
  author={Markowitz, Harry},
  journal={The Journal of Finance},
  volume={7},
  number={1},
  pages={77--91},
  year={1952}
}

@article{sharpe1963simplified,
  title={A simplified model for portfolio analysis},
  author={Sharpe, William F},
  journal={Management Science},
  volume={9},
  number={2},
  pages={277--293},
  year={1963}
}

@article{natarajan1995sparse,
  title={Sparse approximate solutions to linear systems},
  author={Natarajan, Balas Kausik},
  journal={SIAM Journal on Computing},
  volume={24},
  number={2},
  pages={227--234},
  year={1995}
}

@article{liu2025polyhedral,
  title={Polyhedral analysis of quadratic optimization problems with {S}tieltjes matrices and indicators},
  author={Liu, Peijing and Atamt{\"u}rk, Alper and G{\'o}mez, Andr{\'e}s and K{\"u}{\c{c}}{\"u}kyavuz, Simge},
  journal={Mathematical Programming},
  year={2025},
  note={In press, doi:10.1007/s10107-025-02272-7}
}

@article{derosa2024explicit,
  title={Explicit convex hull description of bivariate quadratic sets with indicator variables},
  author={De Rosa, Antonio and Khajavirad, Aida},
  journal={Mathematical Programming},
  year={2024},
  note={doi:10.1007/s10107-024-02173-1}
}

@book{golub2013matrix,
  title={Matrix Computations},
  author={Golub, Gene H and Van Loan, Charles F},
  edition={4th},
  publisher={Johns Hopkins University Press},
  year={2013}
}

@article{bergman2016discrete,
  title={Discrete optimization with decision diagrams},
  author={Bergman, David and Cire, Andre A and van Hoeve, Willem-Jan and Hooker, John N},
  journal={INFORMS Journal on Computing},
  volume={28},
  number={1},
  pages={47--66},
  year={2016}
}

@article{vanhoeve2022graph,
  title={Graph coloring with decision diagrams},
  author={van Hoeve, Willem-Jan},
  journal={Mathematical Programming},
  volume={192},
  pages={631--674},
  year={2022}
}

@article{yuan2013truncated,
  title={Truncated Power Method for Sparse Eigenvalue Problems.},
  author={Yuan, Xiao-Tong and Zhang, Tong},
  journal={Journal of Machine Learning Research},
  volume={14},
  number={4},
  year={2013}
}
	\bibliographystyle{plainnat}

\clearpage
\begin{center}
{\Large\bfseries Online Companion to ``Convexification of Mixed-Integer Quadratic Optimization via Decision Diagrams''}
\end{center}
\bigskip

\setcounter{section}{0}
\renewcommand{\thesection}{EC.\arabic{section}}
\renewcommand{\thesubsection}{\thesection.\arabic{subsection}}
\renewcommand{\thesubsubsection}{\thesubsection.\arabic{subsubsection}}
\setcounter{equation}{0}
\renewcommand{\theequation}{EC.\arabic{equation}}
\setcounter{figure}{0}
\renewcommand{\thefigure}{EC.\arabic{figure}}
\setcounter{table}{0}
\renewcommand{\thetable}{EC.\arabic{table}}
\setcounter{algorithm}{0}
\renewcommand{\thealgorithm}{EC.\arabic{algorithm}}
\setcounter{theorem}{0}
\renewcommand{\thetheorem}{EC.\arabic{theorem}}
\setcounter{proposition}{0}
\renewcommand{\theproposition}{EC.\arabic{proposition}}
\setcounter{lemma}{0}
\renewcommand{\thelemma}{EC.\arabic{lemma}}
\setcounter{corollary}{0}
\renewcommand{\thecorollary}{EC.\arabic{corollary}}
\setcounter{fact}{0}
\renewcommand{\thefact}{EC.\arabic{fact}}
\setcounter{assumption}{0}
\renewcommand{\theassumption}{EC.\arabic{assumption}}
\setcounter{definition}{0}
\renewcommand{\thedefinition}{EC.\arabic{definition}}
\setcounter{example}{0}
\renewcommand{\theexample}{EC.\arabic{example}}
\setcounter{remark}{0}
\renewcommand{\theremark}{EC.\arabic{remark}}

\providecommand{\theHsection}{}
\renewcommand{\theHsection}{EC.\arabic{section}}
\providecommand{\theHsubsection}{}
\renewcommand{\theHsubsection}{\theHsection.\arabic{subsection}}
\providecommand{\theHsubsubsection}{}
\renewcommand{\theHsubsubsection}{\theHsubsection.\arabic{subsubsection}}
\providecommand{\theHequation}{}
\renewcommand{\theHequation}{EC.\arabic{equation}}
\providecommand{\theHfigure}{}
\renewcommand{\theHfigure}{EC.\arabic{figure}}
\providecommand{\theHtable}{}
\renewcommand{\theHtable}{EC.\arabic{table}}
\providecommand{\theHalgorithm}{}
\renewcommand{\theHalgorithm}{EC.\arabic{algorithm}}
\providecommand{\theHtheorem}{}
\renewcommand{\theHtheorem}{EC.\arabic{theorem}}
\providecommand{\theHproposition}{}
\renewcommand{\theHproposition}{EC.\arabic{proposition}}
\providecommand{\theHlemma}{}
\renewcommand{\theHlemma}{EC.\arabic{lemma}}
\providecommand{\theHcorollary}{}
\renewcommand{\theHcorollary}{EC.\arabic{corollary}}
\providecommand{\theHfact}{}
\renewcommand{\theHfact}{EC.\arabic{fact}}
\providecommand{\theHassumption}{}
\renewcommand{\theHassumption}{EC.\arabic{assumption}}
\providecommand{\theHdefinition}{}
\renewcommand{\theHdefinition}{EC.\arabic{definition}}
\providecommand{\theHexample}{}
\renewcommand{\theHexample}{EC.\arabic{example}}
\providecommand{\theHremark}{}
\renewcommand{\theHremark}{EC.\arabic{remark}}

\newenvironment{resultrestatement}[1]
{\par\medskip\noindent{\scshape #1.}\ \itshape\ignorespaces}
{\par\nopagebreak[4]\medskip}
\section{A knapsack decision-diagram example}\label{sec:knapsack-example}

This section illustrates the state-transition construction reviewed in \S\ref{sec:ddComb} for a feasible space defined by a knapsack constraint.

\begin{example}[Knapsack case] \label{ex:knapsack}
	Consider $Z=\{\bm{z}\in \{0,1\}^n: \bm{a^\top z}\leq b\}$ for some $\bm{a}\in \Z_+^n$ and $b\in \Z_+$. Define states $\rho^\ell$ in which the state variable records the contribution to the left-hand side at decision stage $\ell$. As a result, the state space can be defined as $S_Z=[0:b]\cup\{\text{inf}\}$ where ``\text{inf}'' denotes an infeasible state. The initial state is $\rho^1=0$. The set of terminal states $T_Z$ includes the infeasible state and every state for which all variables have already been decided. The system transitions between states according to the transition function $\phi_Z: (S_Z \setminus T_Z) \times \{0,1\} \rightarrow S_Z$ defined as $\rho^{\ell+1} = \phi_Z(\rho^{\ell},\bar{z}_{\ell})$, where $\bar{z}_{\ell}$ is the value assigned to variable $z_{\ell}$, and 
	$$\phi_Z(\rho^{\ell},\bar{z}_{\ell})=\begin{cases}
		\text{inf}&\text{if }\rho^\ell+a_\ell \bar{z}_{\ell}>b\\
		\rho^\ell+a_\ell \bar{z}_{\ell}&\text{otherwise.}\end{cases}$$ 
	
	Figure~\ref{fig:DP}(a) presents the state transition graph for a knapsack constraint with $\bm{a^\top} = \begin{pmatrix}3&3&2\end{pmatrix}$ and $b = 5$. The corresponding decision diagram in Figure~\ref{fig:DP}(b) is obtained in this example by removing the infeasible states and merging all equivalent nodes, i.e., nodes that share the same feasible completion set. Note that all points in $Z$ correspond to exactly one $\rootnode$--$4$ path in the diagram and vice versa. Moreover, all partial solutions that share the same feasible completion set correspond to a partial path ending at the same node, e.g., $\begin{pmatrix}0&0\end{pmatrix}$, $\begin{pmatrix}0&1\end{pmatrix}$, and $\begin{pmatrix}1&0\end{pmatrix}$ all share the same completion set $\{0,1\}$ and correspond to partial paths ending at node $3$. 		\hfill $\blacksquare$

	\begin{figure}[ht]
		\centering
		\subfigure[]{
			\centering
			\begin{tikzpicture}
				\filldraw[black] (0,0)  node[anchor=center] {$0$};
				\draw[->,black,thick,dashed] (0,-0.25) -- (-1,-1);
				\draw[->,black,thick] (0,-0.25) -- (1,-1);
				\filldraw[black] (-1,-1.25)  node[anchor=center] {$0$};
				\filldraw[black] (1,-1.25) node[anchor=center] {$3$};
				
				\draw[->,black,thick,dashed] (-1,-1.5) -- (-2,-2.5);
				\draw[->,black,thick] (-1,-1.5) -- (0,-2.5);
				\filldraw[black] (-2.2,-2.75)  node[anchor=center] {$0$};
				
				\draw[->,black,thick] (1,-1.5) -- (1.75,-2.5);
				\draw[->,black,thick,dashed] (1,-1.5) -- (0,-2.5);
				\filldraw[black] (1.75,-2.75) node[anchor=center] {$\text{inf}$};
				\filldraw[black] (0,-2.75) node[anchor=center] {$3$};
				
				\filldraw[black] (-3.3,-4.25) node[anchor=center] {$0$};
				\filldraw[black] (-2,-4.25) node[anchor=center] {$2$};
				\filldraw[black] (-1,-4.25) node[anchor=center] {$3$};
				\filldraw[black] (0,-4.25) node[anchor=center] {$5$};
				
				\draw[->,black,thick,dashed] (-2.25,-3) -- (-3.2,-4);
				\draw[->,black,thick] (-2.25,-3) -- (-2,-4);
				\draw[->,black,thick,dashed] (0,-3) -- (-1,-4);
				\draw[->,black,thick] (0,-3) -- (0,-4);
				\filldraw[black] (-4.5,0) node[anchor=center] {$z_1$};
				\filldraw[black] (-4.5,-1.25) node[anchor=center] {$z_2$};
				\filldraw[black] (-4.5,-2.75) node[anchor=center] {$z_3$};
			\end{tikzpicture}
		}
		\hspace{3em}
		\subfigure[]{
			\centering
			\begin{tikzpicture}     
				\node[main node] (r) at (3,-1) {$\;\rootnode\;$};
				\node[main node] (u1)  at (2,-2)  {$1$};
				\node[main node] (u2)  at (4,-2)  {$2$};
				\node[main node] (u3)  at (3,-3.5) {$3$};        
				\node[main node] (t) at (3,-5) {$4$};
				
				\path[every node/.style={font=\sffamily\small}]
				(r) 
				edge[zero arc] node [left, arc text] {} (u1)
				edge[one arc] node [right, arc text] {} (u2)
				(u1) 
				edge[one arc, bend left=20] node [right, arc text] {} (u3)
				edge[zero arc, bend right=20] node [left, arc text] {} (u3)
				(u2)                
				edge[zero arc] node [right, arc text] {} (u3)
				(u3)                                
				edge[zero arc, bend right=20] node [left, arc text] {} (t) 
				edge[one arc, bend left=20] node [right, arc text] {} (t);
			\end{tikzpicture} 
		}
		\caption{State transition graph and corresponding decision diagram for a feasible space defined by knapsack constraint $3z_1+3z_2+2z_3 \leq 5$.} 
		\label{fig:DP} 
	\end{figure}
	
\end{example}

\section{Additional Technical Improvements}\label{sec:additional-improvements}

\subsection{Hash Merging}\label{sec:hash-merging}

Algorithm~\ref{alg:mergeHash} describes a third implementation based on hash maps, which we call \underline{hash merging}; it requires a single pass over the nodes and is suitable for practical use. Hash merging works as follows: each node is assigned the key $K=\left(\rho,\lfloor \bm{H}\rceil_{\bar\epsilon}\right)$, where $\bar\epsilon\defeq\epsilon/\sqrt{m}$, $\lfloor \bm{H}\rceil_{\bar\epsilon}\defeq \bm{H}$ if $\epsilon=0$, and $\lfloor \bm{H}\rceil_{\bar\epsilon}$ is the entrywise rounding of $\bm{H}/\bar\epsilon$ to the nearest integer if $\epsilon>0$. The first node encountered with a given key is designated as the representative of that key; every subsequent node with the same key is deleted, and the arcs pointing to it are redirected to the representative.
\begin{algorithm}[H]
	\singlespacing
	\caption{Implementation of $\texttt{merge}(L,A,\epsilon)$ using hash maps}
	\label{alg:mergeHash}
	\begin{algorithmic}[1]
		\Require List of nodes $L$ of a layer, list of arcs $A$, tolerance $\epsilon\geq 0$.
		\Ensure Merged list of nodes $L$, with the arcs of $A$ redirected accordingly.
		
		\State $M\gets \emptyset$ \Comment{Hash map from keys to representative nodes, initially empty}
		
		\State $r\gets \emptyset$ \Comment{Hash map from deleted nodes to representatives, initially empty}
		
		\For{\textbf{each} $(i,\rho,\bm{H})\in L$}
		
		\State $K\gets \left(\rho,\lfloor \bm{H}\rceil_{\bar\epsilon}\right)$ \Comment{Key associated with the node, with $\bar\epsilon=\epsilon/\sqrt{m}$}
		
		\If{$M$ contains key $K$}
		
		\State $r[i]\gets M[K]$, $L\gets L\setminus \left\{(i,\rho,\bm{H})\right\}$ \Comment{Merges node $i$ into $M[K]$}
		
		\Else
		
		\State $M[K]\gets i$ \Comment{Node $i$ is the representative of key $K$}
		
		\EndIf
		
		\EndFor
		
		\For{\textbf{each} $(i,j,\ell,\bar z,\bm{u})\in A$ \textbf{such that} $r[j]$ is defined}
		
		\State Replace arc with $(i,r[j],\ell,\bar z,\bm{u})$ \Comment{Redirects arc to $r[j]$}
		
		\EndFor
	\end{algorithmic}
\end{algorithm}

Algorithm~\ref{alg:mergeHash} should be interpreted as a conservative approximation of \texttt{maximal merging}: it performs a subset of the merges allowed by the specification, but never an inadmissible one. Indeed, two nodes assigned the same key satisfy $\rho_1=\rho_2$ and $\max_{i,j}\left|(\bm{H_1})_{ij}-(\bm{H_2})_{ij}\right|<\bar\epsilon$ (or $\bm{H_1}=\bm{H_2}$ if $\epsilon=0$); since each row has $m$ entries, this implies $\norm{\bm{H_1}-\bm{H_2}}_{2,\infty}< \sqrt{m}\,\bar\epsilon=\epsilon$, thus all merges performed by Algorithm~\ref{alg:mergeHash} are admissible. The approximation is due to two distinct effects, both of which may only prevent admissible merges. First, the quantization operates entrywise: two states satisfying $\norm{\bm{H_1}-\bm{H_2}}_{2,\infty}\leq \epsilon$ may differ by more than $\bar\epsilon$ in some entry, and thus receive different keys. Second, even states that are entrywise within $\bar\epsilon$ of each other may fall in adjacent cells of the quantization grid and receive different keys. In both cases the nodes simply remain unmerged: the hash-based implementation thus trades some merging opportunities for speed, potentially increasing the size of the resulting diagram but never compromising its validity; if $\epsilon=0$, no approximation is incurred and exactly the nodes with identical states are merged. Since computing and hashing a key requires $\mathcal{O}\left((n-\ell)m\right)$ operations and hash map queries take constant expected time, Algorithm~\ref{alg:mergeHash} runs in expected time linear in the number of nodes and arcs of the layer.

\subsection{Further Implementation Improvements}

We discuss three additional improvements to Algorithm~\ref{alg:constructionDD} that can enhance its practical performance.

\smallskip\noindent\emph{Full row rank.} If $\bm{F}$ has full row rank---equivalently, if $\bm{Q}=\bm{FF^\top}\succ\bm{0}$---then no first row $\bm{u}$ ever vanishes. In this case, the condition $\|\bm{u}\|_2>\epsilon$ in line~\ref{line:dependenceCheck} can be replaced with the condition $\bar z=1$ alone. Doing so removes the risk of falsely detecting linear dependence, that is, of incorrectly treating a nearly dependent row as the zero row when $\bm{F}$ is ill-conditioned. 

\smallskip\noindent\emph{Re-normalization.} Instead of using directly the state matrix, a canonical representative of each state obtained by normalizing each row can be used instead. This canonical representation potentially increases the number of nodes that can be merged and improves the numerical behavior of the method. Indeed, rescaling a row of a state matrix by a nonzero constant affects neither the transition vectors nor the arcs generated: the projection $\bm{I}-\bm{uu^\top}/\|\bm{u}\|_2^2$ in \eqref{eq:transition} and the normalized transition vectors of Definition~\ref{def:exactDD} are invariant to rescalings of $\bm{u}$, and each row of the state matrix evolves independently of the scale of the other rows. Nodes whose state matrices coincide after normalization thus generate identical sub-diagrams and identical arc information, and merging them is valid even when their unnormalized states differ. Moreover, since row norms shrink after each projection, renormalizing at every iteration prevents the loss of precision associated with rows of vanishing norm.

To implement this change, it suffices to normalize every row of $\bm{F}$ at the start of the algorithm, renormalize rows of $\bm{G}$ after operation $\bm{G}\gets \bm{R}(\bm{I}-\bm{u_au_a^\top})$ in line~\ref{line:GSStep} (after checking for zero rows), and delete the normalization step in line~\ref{line:normalization}. The drawback of this operation, however, is that additional normalizations are performed, increasing the cost of each operation in line~\ref{line:GSStep}.

\smallskip\noindent\emph{Exploiting sparsity.} Consider for example a tridiagonal matrix $\bm{FF^\top}$ as discussed in Example~\ref{ex:tridiagonal}, where $\bm{F}$ preserves the tridiagonal structure. It is easy to show that, since the support of rows $\bm{F_{\{i\}}}$ and $\bm{F_{\{i+j\}}}$ do not overlap for $j\geq 2$, the Gram-Schmidt operation in line~\ref{line:GSStep} only affects row $\ell+1$ of the state matrix. As a consequence, at level $\ell$ there is no need to store rows $\ell+j$ for $j\geq 2$. The net effect of this change is to reduce memory requirements, reduce the cost of matrix multiplications in line~\ref{line:GSStep}, and decrease the cost of computing and comparing keys in Algorithm~\ref{alg:mergeHash}. The same arguments can be used for the case when $\bm{FF^\top}$ is banded (discarding rows corresponding to $\ell+j$ where $j$ is greater than the bandwidth) and, as we show in \S\ref{sec:tree}, similar arguments apply in even more general cases.
Additionally, if matrix $\bm{F}$ is sparse and structured (e.g., banded), each state matrix will also be sparse. In this case, using libraries for sparse matrix multiplications can significantly reduce the number of operations needed.

\section{Decision diagrams with exact arithmetic}\label{sec:exact arithmetic}

If $\bm{F}\in \mathbb{Q}^{n\times m}$, then the decision diagram in Algorithm~\ref{alg:constructionDD} can be constructed exactly over the rationals. To do so, the normalization of the transition vectors, which may introduce irrational numbers, is avoided: the arcs store the unnormalized vectors $\bm{u_a}\gets\bm{u}$, the Gram-Schmidt step is carried out as $\bm{G}\gets\bm{R}\left(\bm{I}-\bm{uu^\top}/\|\bm{u}\|_2^2\right)$, which involves rational operations only, and the condition $\|\bm{u}\|_2>\epsilon$ is replaced with $\bm{u}\neq\bm{0}$. All states are then rational, and \texttt{merge} with $\epsilon=0$ reduces to testing equality of rational matrices. The normalized transition vectors of Definition~\ref{def:exactDD} never need to be formed explicitly: it suffices to replace the arc lengths of Proposition~\ref{prop:pathLength} with $l_a=c_{\ell(a)}\nu_a-\frac{1}{4}\left(\bm{\delta^\top u_a}\right)^2/\|\bm{u_a}\|_2^2$, which are rational as well.

Moreover, the SOCP relaxations involving in \S\ref{sec:convexification} can also be adapted to involve rational coefficients only. Indeed, if $\bm{F}\in \mathbb{Q}^{n\times m}$ and that the decision diagram is constructed using exact rational arithmetic, the arcs then store rational, unnormalized transition vectors $\bm{u_a}$, whereas the vectors appearing in Definition~\ref{def:socp} are the possibly irrational unit vectors $\bm{u_a}/\|\bm{u_a}\|_2$. A representation of $R_{\bm{F},Z}$ involving rational data only can nonetheless be recovered from \eqref{eq:convexhull}. First, fix $w_a=0$ for every arc with $\bm{u_a}=\bm{0}$. Then, for the remaining arcs, perform the change of variables $\tilde w_a\gets w_a/\|\bm{u_a}\|_2$: constraint \eqref{eq:convexhull_x} becomes
$\bm{F^\top x}=\sum_{a\in A}\bm{u_a}\tilde w_a$
and constraint \eqref{eq:convexhull_nonlinear} becomes
$x_0\geq \sum_{a\in A}\|\bm{u_a}\|_2^2\,\tilde w_a^2/r_a$,
where $\bm{u_a}\in \mathbb{Q}^m$ and $\|\bm{u_a}\|_2^2\in \mathbb{Q}$, and the remaining constraints \eqref{eq:convexhull_z}-\eqref{eq:convexhull_nonneg} have $0/1$ coefficients. Therefore, all coefficients of the resulting ideal extended formulation are rational.

\section{Detailed Computational Results}\label{sec:detailed-computational-results}
This section supplements the computational study in \S\ref{sec:realData} with detailed results and parameter-tuning information.

\subsection{Portfolio Index Tracking}\label{sec:detailed-portfolio}

\subsubsection{Quality of the Factor Approximation}\label{sec:portfolio-approx-quality}

Table~\ref{tab:portfolioApprox} reports the quality of the approximation $\bm{\bar\Sigma}\approx \bm{Q}+\Diag(\bm{d})$ as a function of $m$ and $\kappa$. With dense factors ($\kappa=\infty$), a handful of factors suffice to approximate the sample covariance almost exactly, with errors below $0.5\%$ for $m\geq 5$. Under sparsity constraints the errors are naturally larger, decreasing from approximately $9\%$ to $6\%$ as $m$ and $\kappa$ increase; in all cases, the share of the total variance attributed to the specific terms $\bm{d}$ is below $1\%$, indicating that the low-rank part carries the bulk of the risk.

\begin{table}[!ht]
	\begin{center}
		\caption{Quality of the approximation $\bm{\bar\Sigma}\approx \bm{Q}+\Diag(\bm{d})$, where $\bm{Q}$ has rank $m$ and factor sparsity $\kappa$. Errors are computed as $\text{error}=\frac{\|\bm{\bar\Sigma}-\bm{Q}-\Diag(\bm{d})\|_2^2}{\|\bm{\bar\Sigma}\|_2^2}$, and the diagonal dominance of the resulting matrix is $\text{diag}=\frac{\sum_{i\in [n]}d_i^2}{\sum_{i\in [n]}(Q_{ii}+d_i)^2}$.}
		\label{tab:portfolioApprox}
		\renewcommand{\arraystretch}{0.5}
		\begin{tabular}{c |c c c c}
			\hline
			&$\bm{m=1}$&$\bm{m=5}$&$\bm{m=10}$&$\bm{m=25}$\\
			\hline
			\multirow{2}{*}{$\bm{\kappa=5}$}&error=$9.36\%$&error=$9.46\%$&error=$9.17\%$&error=$8.89\%$\\
			&diag=$0.69\%$&diag=$0.29\%$&diag=$0.25\%$&diag=$0.22\%$\\
			\hline
			\multirow{2}{*}{$\bm{\kappa=10}$}&error=$9.36\%$&error=$9.48\%$&error=$8.84\%$&error=$8.41\%$\\
			&diag=$0.69\%$&diag=$0.29\%$&diag=$0.25\%$&diag=$0.22\%$\\
			\hline
			\multirow{2}{*}{$\bm{\kappa=25}$}&error=$9.35\%$&error=$9.50\%$&error=$7.81\%$&error=$7.54\%$\\
			&diag=$0.69\%$&diag=$0.29\%$&diag=$0.24\%$&diag=$0.22\%$\\
			\hline
			\multirow{2}{*}{$\bm{\kappa=50}$}&error=$9.35\%$&error=$8.63\%$&error=$6.67\%$&error=$5.94\%$\\
			&diag=$0.69\%$&diag=$0.29\%$&diag=$0.24\%$&diag=$0.17\%$\\
			\hline
			\multirow{2}{*}{$\bm{\kappa=\infty}$}&error=$9.34\%$&error=$0.47\%$&error=$0.12\%$&error=$0.04\%$\\
			&diag=$0.69\%$&diag=$0.21\%$&diag=$0.18\%$&diag=$0.15\%$\\
			\hline
		\end{tabular}
	\end{center}
\end{table}

\subsubsection{Parameter Tuning}

Most of the parameters used in \S\ref{sec:portfolioComp} (choices of $k$, $\kappa$, $m$, $m'$, $\epsilon$, $\epsilon'$, and time limits) were decided before carrying out the computations and not revisited afterward, with the following exceptions. 

\noindent $\bullet$ We also tested target sparsity of the portfolio $k=5$, but the resulting instances were too easy so we do not report this setting. Indeed, note that the $\binom{214}{5}\approx 3.5\cdot 10^9$ potential portfolios are almost within the reach of pure enumeration methods.

\noindent $\bullet$ We considered using \texttt{DD3} (that is, decision diagram convexification with $m'=3$), but quickly realized that the resulting decision diagrams were too large for effective use within Gurobi's branch-and-bound framework. After initial experimentation, we decided against testing this setting for all parameter combinations.

\noindent $\bullet$ Prior to testing the methods on the index-tracking problems \eqref{eq:indexTracking}, we also applied them to the standard long-only cardinality-constrained mean--variance portfolio model, which trades expected return against portfolio variance under budget and support constraints \citep{markowitz1952portfolio,bienstock1996computational,gao2011cardinality}. Using the same dataset, the root gaps of the perspective formulation are already very small (typically less than 5\%), and method \texttt{PER} solves all instances within a few seconds. In such ``easy'' instances, the more sophisticated \texttt{DD1} and \texttt{DD2} are unnecessarily complex and do not result in any practical benefits.

\subsection{Sparse Exponential Smoothing with Trend}\label{sec:detailed-sparseHolt}

\subsubsection{Run-Level Results}

All MIO and SOCP formulations are solved with Gurobi 12 with default settings and a time limit of 600 seconds. Figure~\ref{fig:real-profiles} compares the quality and computational cost of
the lower bounds over the 18 real-data instances using empirical distribution
profiles, while Table~\ref{tab:realHolt} provides detailed information for each run. It reports, for each combination of smoothing $\bar\alpha\in\{0.5,0.7,0.9\}$, cardinality $k\in \{5,10,15\}$, and choice of block size $\tau\in \{10,20\}$ (for method $\texttt{DD:}\tau$):

\noindent $\bullet$ the time (in seconds) required to run the method, corresponding to the branch-and-bound time for \texttt{NAT}, the time to construct the decision diagram for \texttt{DD:full}, and the time to solve the SOCP continuous relaxation for $\texttt{DD:}\tau$.

\noindent $\bullet$ the lower bound expressed as a gap with respect to the best known upper bound $\text{UB}$ for a given instance, computed as $\text{gap}=(\text{UB}-\text{LB}_{\texttt{method}})/\text{UB}$. The upper bound corresponds to the objective value found by either \texttt{NAT} or \texttt{DD:full} if either method solves to optimality, and to the best solution found by \texttt{NAT} otherwise. The lower bound $\text{LB}_{\texttt{NAT}}$ is the final bound after branch-and-bound, and $\text{LB}_{\texttt{DD:}\tau}$ is the objective value for the SOCP relaxation; for method \texttt{DD:full}, the gap is either 0\% if the method terminates or 100\% otherwise.

\noindent $\bullet$ the best objective value found by a given method; for \texttt{DD:full}, the value is obtained by manually evaluating the solution obtained from the decision diagram (it differs by at most 0.04\% from the value obtained by directly computing the shortest path from the decision diagram). We point out that no numerical issues were detected in any instance involving a decision diagram, that is, the objective values of the solutions obtained are at least as good as those of the best solutions obtained from \texttt{NAT}, and in some cases better.

\noindent $\bullet$ the number of branch-and-bound nodes used by \texttt{NAT}.

\noindent $\bullet$ the number of arcs from the decision diagrams; for $\texttt{DD:}\tau$, this value is the sum of arcs of all decision diagrams obtained from each block.

We observe that \texttt{NAT} is effective for instances with $k=5$; despite root relaxations of 100\% in all cases, it is able to reduce or close the gap effectively during branch-and-bound. However, it fails dramatically in instances with $k\in \{10,15\}$, resulting in all cases in trivial lower bounds of $0$ and gaps of 100\%. As a consequence, it appears in the performance profile in Figure~\ref{fig:real-profiles} (left) as an almost flat line. Decision diagram methods are highly sensitive to the parameter $\bar\alpha$, performing much better for larger values of this parameter. Method \texttt{DD:full} can solve several instances where $\alpha=0.9$, as well as the setting $(\alpha,k)=(0.7,5)$, but fails to construct the full decision diagram in other settings. It has a similar profile in Figure~\ref{fig:real-profiles} (left) as \texttt{NAT}, although the specific instances solved by each method are different.

\begin{landscape}
	\begin{table}[!ht]
		\singlespacing
		\centering
		\caption{Detailed results with real data.}
		\label{tab:realHolt}
		\scriptsize
		\setlength{\tabcolsep}{2pt}
		\renewcommand{\arraystretch}{0.78}
		\resizebox{0.80\linewidth}{!}{%
			\begin{tabular}{c c | c| c c c c c| c c c c c}
				\multirow{2}{*}{$\bm{\bar \alpha}$} &\multirow{2}{*}{$\bm{k}$} &\multirow{2}{*}{\textbf{method}}&\multicolumn{5}{c|}{\underline{\textbf{Annual }$\bm{(T=60)}$}}&\multicolumn{5}{c}{\underline{\textbf{Quarterly }$\bm{(T=239)}$}}\\
				&&&\textbf{time}&\textbf{gap}&\textbf{val}&\textbf{bb\_nodes}&\textbf{dd\_arcs}&\textbf{time}&\textbf{gap}&\textbf{val}&\textbf{bb\_nodes}&\textbf{dd\_arcs}\\
				\hline
				\multirow{14}{*}{0.5}&\multirow{4}{*}{5}&\texttt{NAT}&68&0\%&1.00&102,206&-&600&45\%&1.00&107,346&-\\
				&&\texttt{\texttt{DD:full}}&\multicolumn{5}{l|}{memory limit}&\multicolumn{5}{l}{memory limit}\\
				&&\texttt{DD:10}&2&30\%&-&-&29,134&16&51\%&-&-&124,638\\
				&&\texttt{DD:20}&\multicolumn{5}{l|}{memory limit}&\multicolumn{5}{l}{memory limit}\\
				&\multirow{4}{*}{10}&\texttt{NAT}&600&100\%&1.00&608,356&-&600&100\%&1.00&238,463&-\\
				&&\texttt{\texttt{DD:full}}&\multicolumn{5}{l|}{memory limit}&\multicolumn{5}{l}{memory limit}\\
				&&\texttt{DD:10}&3&41\%&-&-&29,134&29&38\%&-&-&124,638\\
				&&\texttt{DD:20}&\multicolumn{5}{l|}{memory limit}&\multicolumn{5}{l}{memory limit}\\		
				&\multirow{4}{*}{15}&\texttt{NAT}&600&100\%&1.00&166,493&-&600&100\%&1.00&242,768&-\\
				&&\texttt{\texttt{DD:full}}&\multicolumn{5}{l|}{memory limit}&\multicolumn{5}{l}{memory limit}\\
				&&\texttt{DD:10}&5&50\%&-&-&29,134&29&45\%&-&-&124,638\\
				&&\texttt{DD:20}&\multicolumn{5}{l|}{memory limit}&\multicolumn{5}{l}{memory limit}\\	
				
				\hline
				\multirow{14}{*}{0.7}&\multirow{4}{*}{5}&\texttt{NAT}&63&0\%&1.00&39,892&-&600&7\%&1.00&252,224&-\\
				&&\texttt{\texttt{DD:full}}&99&0\%&1.00&-&1,850,663&\multicolumn{5}{l}{memory limit}\\
				&&\texttt{DD:10}&7&25\%&-&-&30,536&38&26\%&-&-&130,582\\
				&&\texttt{DD:20}&\multicolumn{5}{l|}{time limit solving SOCP}&\multicolumn{5}{l}{time limit solving SOCP}\\
				&\multirow{4}{*}{10}&\texttt{NAT}&600&100\%&1.00&330,205&-&600&100\%&1.00&245,125&-\\
				&&\texttt{\texttt{DD:full}}&\multicolumn{5}{l|}{memory limit}&\multicolumn{5}{l}{memory limit}\\
				&&\texttt{DD:10}&9&30\%&-&-&30,353&41&30\%&-&-&130,582\\
				&&\texttt{DD:20}&\multicolumn{5}{l|}{time limit solving SOCP}&\multicolumn{5}{l}{time limit solving SOCP}\\		
				&\multirow{4}{*}{15}&\texttt{NAT}&600&100\%&1.00&489,837&-&600&100\%&1.00&312,328&-\\
				&&\texttt{\texttt{DD:full}}&\multicolumn{5}{l|}{memory limit}&\multicolumn{5}{l}{memory limit}\\
				&&\texttt{DD:10}&7&44\%&-&-&30,536&36&28\%&-&-&130,582\\
				&&\texttt{DD:20}&\multicolumn{5}{l|}{time limit solving SOCP}&\multicolumn{5}{l}{time limit solving SOCP}\\		
				\hline
				\multirow{14}{*}{0.9}&\multirow{4}{*}{5}&\texttt{NAT}&23&0\%&1.00&17,892&-&330&0\%&1.00&104,633&-\\
				&&\texttt{\texttt{DD:full}}&8&0\%&1.00&-&160,982&304&0\%&1.00&-&722,587\\
				&&\texttt{DD:10}&4&12\%&-&-&21,926&30&18\%&-&-&91,876\\
				&&\texttt{DD:20}&50&8\%&-&-&78,200&160&3\%&-&-&317,334\\
				&\multirow{4}{*}{10}&\texttt{NAT}&600&100\%&1.00&433,019&-&600&100\%&1.00&296,202&-\\
				&&\texttt{\texttt{DD:full}}&44&0\%&1.00&-&687,930&\multicolumn{5}{l}{time limit constructing DD}\\
				&&\texttt{DD:10}&6&29\%&-&-&21,926&28&22\%&-&-&91,876\\
				&&\texttt{DD:20}&48&20\%&-&-&78,200&224&6\%&-&-&317,334\\		
				&\multirow{4}{*}{15}&\texttt{NAT}&600&100\%&1.06&601,433&-&600&100\%&1.00&242,861&-\\
				&&\texttt{\texttt{DD:full}}&69&0\%&1.00&-&1,226,189&\multicolumn{5}{l}{memory limit}\\
				&&\texttt{DD:10}&3&42\%&-&-&21,926&32&22\%&-&-&91,876\\
				&&\texttt{DD:20}&47&26\%&-&-&78,200&261&10\%&-&-&317,334\\		
				
			\end{tabular}%
		}
	\end{table}
\end{landscape}

\subsubsection{Parameter Tuning}

We also tested smoothing parameters $\bar\alpha\in \{0.8,0.95\}$, and the results are as expected: the performance of decision diagram methods for $\bar \alpha=0.8$ lies between the performance reported for $\bar \alpha\in \{0.7,0.9\}$, and the setting $\bar\alpha=0.95$ performed even better than the settings reported here. We did not include these results to avoid biasing the results towards instances in which decision diagram methods are better than \texttt{NAT}, in order to have a more balanced discussion.

We tested the setting $\texttt{DD}:\tau$ with $\tau\in \{2,3,5\}$. The gaps in these cases were equal or close to 100\%, so we do not report these results. We also experimented with decision diagrams using the hash-merging procedure of \S\ref{sec:hash-merging} and precision $\epsilon=10^{-2}$ (instead of $10^{-3}$ as reported in the paper). The resulting diagrams are substantially more compact (involving roughly one-tenth as many arcs), but in some instances the solutions obtained from the diagrams were suboptimal due to the coarser aggregation.

Finally, we experimented with solving formulations $\texttt{DD}:\tau$ with branch-and-bound, but the results were not promising. Just processing the root node of the branch-and-bound tree takes, on average, three times as long as solving the SOCP relaxation (the times reported in Table~\ref{tab:realHolt}), due to presolve and other features specific to solving MIOs. Just solving the root node already consumes a significant portion of the 600-second budget, and only a few hundred, or even only a few dozen, branch-and-bound nodes are then explored, which is insufficient to further close the relaxation gap or find good feasible solutions.

\section{Proofs of Formal Results}\label{sec:online-companion-proofs}
This section restates each proposition, theorem, lemma, and corollary immediately before its proof. The restatements retain the numbering used in the main manuscript and introduce no duplicate labels or theorem counters.

\subsection{Lemma~\ref{prop:Gexplicit}}
\begin{resultrestatement}{Lemma~\ref{prop:Gexplicit}}
	Letting $\bm{B^\ell}(\bm{v})$ be any orthonormal basis of $\Row(\bm{F_v})$, function $g^\ell(\bm{v})$ satisfies
		$$g^\ell(\bm{v})=\bm{c_{[\ell-1]}^\top v}-\frac{1}{4}\bm{\delta^\top B^\ell}(\bm{v})^\top \bm{B^\ell}(\bm{v})\bm{\delta}.$$
\end{resultrestatement}

\begin{proof}
	Clearly $\Row(\bm{F_v})=\Row\left(\bm{B^\ell}(\bm{v})\right)$, thus we find from \eqref{eq:defG} that
	\begin{align*}
		g^\ell(\bm{v})=&\bm{c_{[\ell-1]}^\top v}+\min_{\bm{y}\in \Row\left(\bm{B^\ell}(\bm{v})\right)}\;\bm{\delta^\top y}+\left\|\bm{y}\right\|_2^2\\
		=&\bm{c_{[\ell-1]}^\top v}+\min_{\bm{y}\in \R^m,\bm{\alpha}\in \R^q}\;\bm{\delta^\top y}+\left\|\bm{y}\right\|_2^2\text{ s.t. } \bm{y}=\bm{B^\ell}(\bm{v})^\top\bm{\alpha}\\
		=&\bm{c_{[\ell-1]}^\top v}+\min_{\bm{\alpha}\in \R^q}\;\bm{\delta^\top}\bm{B^\ell}(\bm{v})^\top\bm{\alpha}+\left\|\bm{B^\ell}(\bm{v})^\top\bm{\alpha}\right\|_2^2\\
		=&\bm{c_{[\ell-1]}^\top v}+\min_{\bm{\alpha}\in \R^q}\;\bm{\delta^\top}\bm{B^\ell}(\bm{v})^\top\bm{\alpha}+\left\|\bm{\alpha}\right\|_2^2\tag{$\because \bm{B^\ell}(\bm{v})\bm{B^\ell}(\bm{v})^\top=\bm{I}$}.
	\end{align*}
	
	To solve the optimization problem $\min_{\bm{\alpha}\in \R^q}\left\{\bm{\delta^\top B^\ell}(\bm{v})^\top\bm{\alpha}+\|\bm{\alpha}\|_2^2\right\}$, it suffices to set the gradient of the objective function to $\bm{0}$ and solve for $\bm{\alpha}$. In particular, we find that
	\begin{align*}
		\bm{B^\ell}(\bm{v})\bm{\delta}+2\bm{\alpha}=\bm{0}\Leftrightarrow \bm{\alpha}=-\frac{1}{2} \bm{B^\ell}(\bm{v})\bm{\delta}.
	\end{align*}
	Substituting $\bm{\alpha}$ with its optimal value, we find the result.
\end{proof}

\subsection{Proposition~\ref{prop:hDecomposition}}
\begin{resultrestatement}{Proposition~\ref{prop:hDecomposition}}
	For $\ell\in [2:n]$, function $h^\ell$ can be rewritten as
		{\small\begin{equation*}
				\begin{aligned}
					h^\ell(\bm{v})=
					g^\ell(\bm{v})+\min_{\substack{\bm{w}\in\{0,1\}^{n+1-\ell}\\
							\bm{y}\in \R^m}}\;&\bm{c_{[\ell:n]}^\top w}+\bm{\delta^\top y}+\left\| \bm{y}\right\|_2^2\\
					\text{s.t.}\;&\bm{w}\in Z^\ell(\bm{v}),\;\bm{y}\in\Row\left(\Diag(\bm{w})\bm{F_{[\ell:n]}}\bm{G_\perp^\ell}(\bm{v})\right),
				\end{aligned}
		\end{equation*}}
		where $\bm{G_\perp^\ell}(\bm{v})$ and $g^\ell(\bm{v})$ are defined in Definition~\ref{def:proj} and \ref{def:functionG}, respectively.
\end{resultrestatement}

\begin{proof}
	We focus on the set associated with the continuous variables $\bm{y}$ in \eqref{eq:optFixedV}, $$Y^\ell(\bm{v},\bm{w})\defeq \{\bm{y}\in \R^m: \bm{y}\in \Row(\bm{F_{(v,w)}})\}.$$
	We find from Proposition~\ref{prop:GramSchmidt} that $\bm{y}\in Y^\ell(\bm{v},\bm{w})$ if and only if $\bm{y}=\bm{y_1}+\bm{y_2}$ with $\bm{y_1}\in \Row\left(\bm{F_v}\right)$ and $\bm{y_2}\in \Row\left(\bm{F_{(0,w)}G_\perp^\ell}(\bm{v})\right)=\Row\left(\Diag(\bm{w})\bm{F_{[\ell:n]}}\bm{G_\perp^\ell}(\bm{v})\right)$. Moreover, since every row of $\bm{F_{[\ell:n]}}\bm{G_\perp^\ell}$ is orthogonal to $\Row\left(\bm{F_v}\right)$, we find that $\|\bm{y}\|_2^2=\|\bm{y_1}+\bm{y_2}\|_2^2=\|\bm{y_1}\|_2^2+\|\bm{y_2}\|_2^2$ and problem \eqref{eq:optFixedV} reduces to 
	\begin{align*}
		h^\ell(\bm{ v})= 
		\bm{c_{[\ell-1]}^\top v}+\min_{\substack{\bm{w}\in \{0,1\}^{n+1-\ell}\\\bm{y_1},\bm{y_2}\in \R^m}}\;&\bm{c_{[\ell:n]}^\top w}+\bm{\delta^\top y_1}+\bm{\delta^\top y_2}+\left\|\bm{y_1}\right\|_2^2+\left\|\bm{y_2}\right\|_2^2\\
		\text{ s.t. }\;&\bm{y_1}\in \Row\left(\bm{F_v}\right)\\
		&\bm{w}\in Z^\ell(\bm{v}),\;\bm{y_2}\in \Row\left(\Diag(\bm{w})\bm{F_{[\ell:n]}}\bm{G_\perp^\ell}(\bm{v})\right).
	\end{align*}
	Noting that the problem decomposes in variables $\bm{y_1}$ and $(\bm{w},\bm{y_2})$, and that the problem associated with $\bm{y_1}$ is precisely \eqref{eq:defG}, we obtain the desired result.
\end{proof}

\subsection{Proposition~\ref{prop:state}}
\begin{resultrestatement}{Proposition~\ref{prop:state}}
	Given any arc-specified path $(a_1,a_2,\ldots,a_{\ell-1})$ in $\mathcal{G}$ such that $\tail{a_1}=s^1$ and $\head{a_{\ell-1}}=s^\ell$, let $\bm{\bar H^\ell}$ be the matrix stored by state $s^\ell$ and $\bm{v}\in \{0,1\}^{\ell-1}$ be the partial solution represented by the path, that is, $v_i=\nu_{a_i}$. Then,
		\begin{enumerate}
			\item the non-zero transition vectors in $\{\bm{u_{a_1}},\dots, \bm{u_{a_{\ell-1}}}\}$ define an orthonormal basis of $\Row\left(\bm{F_v}\right)$;
			\item the identity $\bm{\bar H^\ell}=\bm{F_{[\ell:n]}}\bm{G_\perp^\ell}(\bm{v})$ holds.
		\end{enumerate}
\end{resultrestatement}

\begin{proof}
	We prove the result by induction on the layer $\ell$.
	
	\paragraph{Base case} If $\ell=1$, then there are no arcs in the path and  $\Row\left(\bm{F_v}\right)=\bm{0}=\text{span}\;\emptyset$, verifying point 1. Since $\bm{\bar H^1}=\bm{F}$, $\bm{G_\perp^1}=\bm{I}$ and by convention, and $\bm{F_{[1:n]}}=\bm{F}$, point 2 is also satisfied. 
	\paragraph{Inductive step} Assume that the result holds for path $(a_1,a_2,\ldots,a_{\ell-1})$: 
	
	\noindent $\bullet$ The matrix $\bm{\bar U^\ell}$ with rows corresponding to non-zero vectors of $\{\bm{u_{a_{i}}}\}_{i=1}^{\ell-1}$ defines an orthonormal basis of $\Row(\bm{F_v})$; thus, by definition of $\bm{G_\perp^\ell}(\bm{v})$, the identity $\bm{G^\ell}(\bm{v})=\bm{I}-(\bm{\bar U^\ell})^\top\bm{\bar U^\ell}=\bm{I}-\sum_{i=1}^{\ell-1}\bm{u_{a_i}u_{a_i}^\top}$ holds. 
	
	\noindent$\bullet$  The matrix stored by state $\head{a_{\ell-1}}$ satisfies $\bm{\bar H^\ell}=\bm{F_{[\ell:n]}}\bm{G_\perp^\ell}(\bm{v})$.
	
	We now prove the result for path $(a_1,a_2,\ldots,a_{\ell-1},a_\ell)$ with associated vector $\begin{pmatrix}\bm{v}\\ \nu_{a_\ell}\end{pmatrix}$, depending on the value of $\nu_{a_\ell}$.
	
	\paragraph{Case $\bm{\nu_{a_\ell}=0}$} In this case $\bm{u_{a_\ell}}=\bm{0}$. Since $\Row\left(\bm{F_v}\right)=\Row\left(\bm{F_{(v,0)}}\right)$ and the non-zero vectors in the path are unchanged, point 1 of the proposition is verified. Moreover \begin{align*}\bm{\bar H^{\ell+1}}=&\bm{\bar H_\downarrow^{\ell}}\\
		=&\left(\bm{F_{[\ell:n]}}\bm{G_\perp^\ell}(\bm{v})\right)_\downarrow\tag{$\because$ induction hypothesis}\\
		=&\bm{F_{[\ell+1:n]}}\bm{G_\perp^\ell}(\bm{v})\\
		=&\bm{F_{[\ell+1:n]}}\bm{G_\perp^{\ell+1}}\begin{pmatrix}\bm{v}\\0\end{pmatrix},
	\end{align*}
	where the last equality holds since $\Row(\bm{F_{v}})=\Row(\bm{F_{(v,0)}})$ (thus  the projection matrix onto the orthogonal complement is unchanged).
	
	\paragraph{Case $\bm{\nu_{a_\ell}=1}$} The results hold since execution of the transition function and adding $\bm{u_{a_\ell}}$ to the basis (if non-zero) corresponds exactly to an iteration of the Gram-Schmidt process (lines~\ref{line:remove}-\ref{line:addBasis} of Algorithm~\ref{alg:MGS}), and the result follows directly from Proposition~\ref{prop:GSInduction}.
\end{proof}

\subsection{Proposition~\ref{prop:pathLength}}
\begin{resultrestatement}{Proposition~\ref{prop:pathLength}}
	Assume that Assumption~\ref{ass:unbounded} holds. Given an exact decision diagram $\DD$, vectors $\bm{c},\bm{d}\in \R^n$ such that problem \eqref{eq:optimizationX} is not unbounded, and vector $\bm{\delta}\in \R^m$ satisfying $\bm{F\delta}=\bm{d}$, define the length of an arc $a\in A$ as $l_a=c_{\ell(a)}\nu_{a}-\frac{1}{4}\left(\bm{\delta^\top u_{a}}\right)^2$. Then the length of any arc-specified path $(a_1,a_2,\ldots,a_{\ell-1})$ in $\mathcal{G}$  is the partial cost $g^\ell(\bm{v})$ defined in Definition~\ref{def:functionG}, where 
		$\bm{v}\in \{0,1\}^{\ell-1}$ is the partial solution represented by the path, that is, the vector satisfying $v_i=\nu_{a_i}$. In particular, any shortest path between the root and a terminal node corresponds to an optimal solution of~\eqref{eq:optimizationX}.
\end{resultrestatement}

\begin{proof}
	The length of path $(a_1,a_2,\ldots,a_{\ell-1})$ is
	\begin{align}
		&\sum_{i\in [\ell-1]}c_{\ell(a_i)}\nu_{a_i}-\frac{1}{4}\sum_{i\in [\ell-1]}\left(\bm{\delta^\top u_{a_i}}\right)^2\notag\\
		=&\sum_{i\in [\ell-1]}c_{i}v_i-\frac{1}{4}\bm{\delta^\top}\left(\sum_{i\in [\ell-1]}\bm{u_{a_i}u_{a_i}^\top}\right)\bm{\delta}\notag\\
		=&\sum_{i\in [\ell-1]}c_{i}v_i-\frac{1}{4}\bm{\delta^\top}\bm{B^\ell}(\bm{v})^\top\bm{B^\ell}(\bm{v})\bm{\delta},\label{eq:gDerivation}
	\end{align}
	where $\bm{B^\ell}(\bm{v})$ is the matrix whose rows correspond to non-zero vectors $\{\bm{u_{a_i}}\}_{i=1}^{\ell-1}$, which by Proposition~\ref{prop:state} defines an orthonormal basis of $\Row(\bm{F_v})$. Expression \eqref{eq:gDerivation} is precisely the closed form expression of $g^\ell(\bm{v})$ given in Lemma~\ref{prop:Gexplicit}.
	
	The last sentence of the proposition then follows since any feasible solution $\bm{z}\in Z$ is represented by a root-terminal path, and (as just proven) the length of any such path is $g^{n+1}(\bm{z})$; thus by taking the shortest among these paths, we recover an optimal solution of \eqref{eq:optXSimple}.
\end{proof}

\subsection{Lemma~\ref{lem:closedFormState}}
\begin{resultrestatement}{Lemma~\ref{lem:closedFormState} (Closed form of the states)}
	Suppose that $\bm{Q}=\bm{FF^\top}$ is invertible. For any $\ell\in [2:n+1]$ and $\bm{v}\in \{0,1\}^{\ell-1}$ with support $T=\left\{i\in [\ell-1]: v_i=1\right\}$, the state matrix of the node of the exact decision diagram corresponding to $\bm{v}$ is
		\[
		\bm{H^\ell}(\bm{v}) = \bm{F_{[\ell:n]}}-\bm{Q_{[\ell:n],T}} \left(\bm{Q_{T,T}}\right)^{-1} \bm{F_{T}},
		\]
		with the convention that $\bm{H^\ell}(\bm{v})=\bm{F_{[\ell:n]}}$ if $T=\emptyset$.
\end{resultrestatement}

\begin{proof}
	If $T=\emptyset$, then $\Row(\bm{F_v})=\{\bm{0}\}$, thus $\bm{G_\perp^\ell}(\bm{v})=\bm{I}$ and Proposition~\ref{prop:state} gives $\bm{H^\ell}(\bm{v})=\bm{F_{[\ell:n]}}$. Otherwise, since $\bm{Q}\succ\bm{0}$, the rows of $\bm{F}$ are linearly independent; in particular, $\bm{F_v}$ has full row rank and $\bm{F_vF_v^\top}=\bm{Q_{T,T}}$ is invertible. Combining Propositions~\ref{prop:state} and \ref{prop:closedFormProjection}, we find that
	\begin{align*}
		\bm{H^\ell}(\bm{v})=\bm{F_{[\ell:n]}}\bm{G_\perp^\ell}(\bm{v})
		&=\bm{F_{[\ell:n]}}\left(\bm{I}-\bm{F_v^\top}\left(\bm{F_vF_v^\top}\right)^{-1}\bm{F_v}\right)\\
		&=\bm{F_{[\ell:n]}}-\bm{Q_{[\ell:n],T}}\left(\bm{Q_{T,T}}\right)^{-1}\bm{F_{T}},
	\end{align*}
	where the last equality uses $\bm{F_{[\ell:n]}}\bm{F_v^\top}=\bm{Q_{[\ell:n],T}}$ and $\bm{F_v}=\bm{F_T}$.
\end{proof}

\subsection{Proposition~\ref{prop:valid}}
\begin{resultrestatement}{Proposition~\ref{prop:valid} (Validity)}
	It holds that $X_{\bm{F},Z}\subseteq \text{proj}_{(x_0,\bm{x},\bm{z})}R_{\bm{F},Z}$.
\end{resultrestatement}

\begin{proof}
	Consider any $(x_0,\bm{x},\bm{z})\in X_{\bm{F},Z}$. We show how to find $(\bm{w},\bm{r})$ such that $(x_0,\bm{x},\bm{z},\bm{w},\bm{r})\in R_{\bm{F},Z}$. First, since $\bm{z}$ is binary and feasible, there exists a unique arc-specified path $(a_1,\dots, a_n)$ in the diagram representing this solution where $\ell(a_1)=1$ and $\ell(a_n)=n$; letting $\bm{r}\in \{0,1\}^A$ such that $r_{a_i}=1$ for arcs in the path and $r_a=0$ otherwise, we find that constraints \eqref{eq:convexhull_z}-\eqref{eq:convexhull_nonneg} are satisfied. Since $\bm{F^\top x}=\bm{F^\top} (\bm{z}\circ \bm{x})=\left(\Diag(\bm{z})\bm{F}\right)^\top\bm{x}$, it follows that $\bm{F^\top x}\in \Row(\Diag(\bm{z})\bm{F})=\Row(\bm{F_z})$. By point 1 of Proposition~\ref{prop:state}, the non-zero transition vectors in path $(a_1,\dots, a_n)$ are an orthonormal basis of $\Row(\bm{F_z})$, thus $$\bm{F^\top x}=\sum_{i\in [n]:\bm{u_{a_i}}\neq \bm{0}}\bm{u_{a_i}} y_i$$ for some suitable weights $\bm{y}\in \R^n$. Letting $\bm{w}\in \R^A$ be the vector such that $w_{a_i}=y_i$ for arcs in the path with non-zero transition vectors, and $w_a=0$ for all other arcs, we find that constraint \eqref{eq:convexhull_x} is satisfied. Finally, note that 
	\begin{align*}
		\|\bm{F^\top x}\|_2^2&=\left\|\sum_{i\in [n]:\bm{u_{a_i}}\neq \bm{0}}\bm{\bm{u_{a_i}}}w_{a_i}\right\|_2^2\\
		&=\sum_{i\in [n]:\bm{u_{a_i}}\neq \bm{0}}\|\bm{\bm{u_{a_i}}}w_{a_i}\|_2^2\tag{$\because$ \text{vectors $\bm{u_{a_i}}$ are orthogonal}}\\
		&=\sum_{i\in [n]:\bm{u_{a_i}}\neq \bm{0}}w_{a_i}^2\tag{$\because$ \text{vectors $\bm{u_{a_i}}$ have unit norm}}\\
		&=\sum_{i\in [n]:\bm{u_{a_i}}\neq \bm{0}}\frac{w_{a_i}^2}{r_{a_i}}\tag{$\because\; r_{a_i}=1$ for vectors in the path}\\
		&=\sum_{a\in A}\frac{w_{a}^2}{r_{a}}\tag{$\because\; w_{a}=0\implies w_a^2/r_a=0$ for other vectors},
	\end{align*}
	thus we have verified that $x_0\geq \|\bm{F^\top x}\|_2^2$ implies constraint \eqref{eq:convexhull_nonlinear}, thus all constraints in \eqref{eq:convexhull} are satisfied. 
\end{proof}

\subsection{Theorem~\ref{theo:hullDD}}
\begin{resultrestatement}{Theorem~\ref{theo:hullDD}}
	If Assumption~\ref{ass:unbounded} holds, then $\text{cl conv}(X_{\bm{F},Z})= \text{proj}_{(x_0,\bm{x},\bm{z})}R_{\bm{F},Z}$.
\end{resultrestatement}

\begin{proof}
	Consider an arbitrary optimization problem over $X_{\bm{F},Z}$, described in \eqref{eq:optimizationX}, and the optimization of the same linear function over $R_{\bm{F},Z}$:
	\begin{subequations}\label{eq:optimizationR}
		\begin{align}
			\min_{\bm{x},\bm{z},\bm{w},\bm{r}}&\;\bm{d^\top x}+\bm{c^\top z}+\sum_{a\in A}\frac{w_a^2}{r_a}\\
			\text{s.t.}\;&\eqref{eq:convexhull_x}-\eqref{eq:convexhull_nonneg}.
		\end{align}
	\end{subequations}
	We show that, under Assumption~\ref{ass:unbounded}, both problems are equivalent: either both of them are unbounded, or there is an optimal solution of \eqref{eq:optimizationR} whose projection onto $(x_0,\bm{x},\bm{z})$ is also optimal for \eqref{eq:optimizationX} with the same objective value.
	
	\noindent $\bullet$ If there does not exist $\bm{\delta}\in \R^m$ such that $\bm{F\delta}=\bm{d}$, then by Assumption~\ref{ass:unbounded} problem \eqref{eq:optimizationX} is unbounded. Since $R_{\bm{F},Z}$ is a relaxation of $X_{\bm{F},Z}$ by Proposition~\ref{prop:valid}, problem \eqref{eq:optimizationR} is also unbounded.
	
	\noindent $\bullet$ If there exists $\bm{\delta}\in \R^m$ such that $\bm{F\delta}=\bm{d}$, then by Assumption~\ref{ass:unbounded} problem \eqref{eq:optimizationX} is not unbounded. In this case we can rewrite  \eqref{eq:optimizationR} as 
	\begin{align*}
		&\min_{\bm{x},\bm{z},\bm{w},\bm{r}}\;\bm{\delta^\top F^\top x}+\bm{c^\top z}+\sum_{a\in A}\frac{w_a^2}{r_a};
		\text{s.t.}\;\eqref{eq:convexhull_x}-\eqref{eq:convexhull_nonneg}\\
		=&\min_{\bm{w},\bm{r}}\;\bm{\delta^\top}\left(\sum_{a\in A}\bm{u_a}w_a\right)+\bm{c^\top}\left(\sum_{\substack{a\in A\\\nu_a=1}}\bm{e_{\ell(a)}}r_a\right)+\sum_{a\in A}\frac{w_a^2}{r_a}\;
		\text{s.t.}\;\eqref{eq:convexhull_f3}-\eqref{eq:convexhull_nonneg}
	\end{align*}
	by projecting out the original variables $(\bm{x},\bm{z})$ using constraints \eqref{eq:convexhull_x}-\eqref{eq:convexhull_z}. Observe that since variables $\bm{w}$ no longer appear in any constraint, we find by taking derivatives with respect to $w_a$ and setting to zero that $w_a^*=-\frac{r_a}{2}\bm{\delta^\top u_a}$ is optimal for $w_a$. Replacing $\bm{w}$ with their optimal values, we find that problem \eqref{eq:optimizationR} reduces to 
	\begin{align}
		=&\min_{\bm{r}}\;-\frac{1}{2}\sum_{a\in A}\left(\bm{\delta^\top u_a}\right)^2r_a+\sum_{\substack{a\in A\\\nu_a=1}}c_{\ell(a)}r_a+\frac{1}{4}\sum_{a\in A}\left(\bm{\delta^\top u_a}\right)^2r_a\;
		\text{s.t.}\;\eqref{eq:convexhull_f3}-\eqref{eq:convexhull_nonneg}\notag\\
		=&\min_{\bm{r}}\;\sum_{a\in A}\left(c_{\ell(a)}\nu_a-\frac{1}{4}\left(\bm{\delta^\top u_a}\right)^2\right)r_a\;
		\text{s.t.}\;\eqref{eq:convexhull_f3}-\eqref{eq:convexhull_nonneg}.\label{eq:optPath}
	\end{align}
	Since constraints \eqref{eq:convexhull_f3}-\eqref{eq:convexhull_nonneg} are the constraints associated with the (unimodular) path polytope and the objective function is linear in $\bm{r}$, there exists an optimal solution that is integral in $\bm{r}$ and represents a path in the graph $\mathcal{G}$. Since the arc costs are precisely the arc lengths described in Proposition~\ref{prop:pathLength}, we find from this proposition that the optimal solution to this shortest path problem corresponds to the optimal solution $\bm{\bar z}\in \{0,1\}^n$ of optimization problem \eqref{eq:optimizationX}, and that the two problems have the same objective value. By repeating the same arguments of Proposition~\ref{prop:valid}, we find that $$\sum_{a\in A}\bm{u_a}w_a\in \Row\left(\Diag(\bm{\bar z})\bm{F}\right)\Leftrightarrow \sum_{a\in A}\bm{u_a}w_a=\bm{F^\top}\Diag(\bm{\bar z})\bm{y}$$
	for some $\bm{y}\in \R^m$, and letting $\bm{\bar x}=\Diag(\bm{\bar z})\bm{y}$ we obtain a solution for the continuous variables that satisfies all constraints in $X_{\bm{F},Z}$. 
\end{proof}

\subsection{Proposition~\ref{prop:lowRankSize}}
\begin{resultrestatement}{Proposition~\ref{prop:lowRankSize}}
	The number of nodes in the exact decision diagram for $X_{\bm{F}}$ is $\mathcal{O}(n^m)$.
\end{resultrestatement}

\begin{proof}
	From Proposition~\ref{prop:state} (point 1), the non-zero transition vectors of any given path describe an orthonormal basis of a subspace of $\R^m$; thus, there can be at most $m$ non-zero vectors on any path. Moreover, matrix $\bm{G_\perp^\ell}(\bm{v})$ in point 2 of the proposition, and the overall state matrix $\bm{\bar H^\ell}$, is determined by these non-zero vectors. Finally, note that any $m$ non-zero vectors result in the same space $\R^m$ with associated projection matrix $\bm{G_\perp^\ell}(\bm{v})=\bm{0}$. It follows that at any given layer $\ell\geq m$ the maximum number of distinct states is $1+\sum_{i=0}^{m-1}\binom{\ell-1}{i}=\mathcal{O}(\ell^{m-1})$: one absorbing state, plus one state for each choice of at most $m-1$ non-zero rows of $\bm{F_{[\ell-1]}}$ (including the empty choice, corresponding to the state $\bm{F_{[\ell:n]}}$).
	Summing this bound over all $n+1$ layers yields the desired result.
\end{proof}

\subsection{Lemma~\ref{lem::number-mincut}}
\begin{resultrestatement}{Lemma~\ref{lem::number-mincut}}
	If $\supp(\Q)$ is a connected rooted tree with $n$ nodes and
		$k$ leaves, then the number of distinct minimal cuts in
		$\mathcal{G}_{\Q}$ is at most $
		\left(\frac{n-1}{k}+2\right)^k.$
\end{resultrestatement}

\begin{proof}
	We prove the claim by induction on the number of nodes.
	Let $\mathcal{T}=\supp(\Q)$, and let $M(\mathcal{T})$ denote the number of minimal cuts in its augmented graph.
	
	\paragraph{Base case} If $\mathcal{T}$ consists of a single node, then $n=k=1$. Its augmented graph is the path $s_0$--$r$--$s_1$, where $r$ is the root, and hence $M(\mathcal{T})=2=((1-1)/1+2)^1$.
	
	\paragraph{Induction step} Now suppose that the root $r$ has $d\geq1$ children, and let $\mathcal{T}_1,\ldots,\mathcal{T}_d$ be the rooted subtrees induced by those children. For each $j\in[d]$, let $n_j$ and $k_j$ denote the numbers of nodes and leaves of $\mathcal{T}_j$, respectively. Thus, $\sum_{j=1}^d n_j=n-1$ and $\sum_{j=1}^d k_j=k$.
	
	We first claim that $M(\mathcal{T})=1+\prod_{j=1}^d M(\mathcal{T}_j)$. Let $S$ be a minimal cut in the augmented graph of $\mathcal{T}$. If $r\notin S$, then the connectivity of $S$, together with the fact that $s_0$ is adjacent only to $r$, implies that $S=\{s_0\}$. Thus, there is exactly one minimal cut that does not contain the root.
	
	Now suppose that $r\in S$. For each $j\in[d]$, let $U_j=S\cap V(\mathcal{T}_j)$ with $V(\mathcal{T}_j)$ denoting the node set of $\mathcal{T}_j$, and augment $\mathcal{T}_j$ by connecting a local source to its root and a local sink to all its leaves. Then the local source together with $U_j$ forms a minimal cut in this augmented graph. Its source side is connected because $S$ is connected and does not contain $s_1$: for every node of $U_j$, the path in $S$ to $r$ must pass through the root of $\mathcal{T}_j$. Its sink side is also connected. Indeed, every node of $V(\mathcal{T}_j)\setminus U_j$ has a path to $s_1$ in the complement of $S$. Since $r\in S$, this path cannot leave $\mathcal{T}_j$ through its edge to $r$ and must instead reach $s_1$ through a leaf of $\mathcal{T}_j$. It therefore induces a path to the local sink in the augmented graph of $\mathcal{T}_j$.
	
	Conversely, choose independently a minimal cut in the augmented graph of each $\mathcal{T}_j$, and let $U_j$ be the tree vertices on its source side. Then $S=\{s_0,r\}\cup\bigcup_{j=1}^d U_j$ is a minimal cut in the augmented graph of $\mathcal{T}$. Its source side is connected through $r$, and its sink side is connected because the sink sides of the selected cuts become connected when their local sinks are identified with $s_1$. This proves the claimed recursion.
	
	By the induction hypothesis, $M(\mathcal{T}_j)\leq\left(\frac{n_j-1}{k_j}+2\right)^{k_j}$ for every $j\in[d]$. The weighted arithmetic-geometric mean inequality, with weights $k_j/k$, therefore gives
	\begin{align*}
		\prod_{j=1}^d M(\mathcal{T}_j)
		\leq\prod_{j=1}^d\left(\frac{n_j-1}{k_j}+2\right)^{k_j}
		\leq\left(\sum_{j=1}^d\frac{k_j}{k}
		\left(\frac{n_j-1}{k_j}+2\right)\right)^k
		=\left(2+\frac{n-1-d}{k}\right)^k.
	\end{align*}
	Set $x=2+\frac{n-1-d}{k}$. Since $n-1=\sum_{j=1}^d n_j\geq d$, we have $x\geq2$. Moreover, the binomial theorem implies
	\[
	\left(x+\frac{d}{k}\right)^k-x^k\geq kx^{k-1}\left(\frac{d}{k}\right)=dx^{k-1}\geq1.
	\]
	Hence,
	\[
	M(\mathcal{T})
	=1+\prod_{j=1}^d M(\mathcal{T}_j)
	\leq1+x^k
	\leq\left(x+\frac{d}{k}\right)^k
	=\left(\frac{n-1}{k}+2\right)^k,
	\]
	which completes the induction.
\end{proof}

\subsection{Theorem~\ref{thm::identical-states-zero}}
\begin{resultrestatement}{Theorem~\ref{thm::identical-states-zero}}
	Suppose that $\supp(\Q)$ is a connected tree. For any $\ell\in [2:n]$ and $\bm{v} \in \{0,1\}^{\ell-1}$ let $S = S_1^\ell(\bm{v})$ be the associated one-cut. Then, in the decision diagram, the state matrix of the node corresponding to assignment $\bm{v}$ is
		\[
		\bm{H^\ell}(\bm{v}) 
		= \bm{F_{[\ell:n]}}-\bm{Q_{[\ell:n], S}}\left(\bm{Q_{S, S}}\right)^{-1}\bm{F_{S}}.
		\]
\end{resultrestatement}

\begin{proof}
	Let $\bar S=\left\{i\in [\ell-1]: v_i=1\right\}$ and let $T=\bar S\setminus S$. We have $\bar S= T\cup S$. From Definition~\ref{def:one-cut} and the discussion immediately following it, there is no edge in $\supp(\bm{Q})$ between $S$ and $T$. Applying Lemma~\ref{lem:closedFormState} to $\bm{v}$, whose support is $\bar S$, we find that
	\begin{align*}\bm{H^\ell}(\bm{v})&=\bm{F_{[\ell:n]}}-\bm{Q_{[\ell:n],\bar S}}\left(\bm{Q_{\bar S,\bar S}}\right)^{-1}\bm{F_{\bar S}}\\
		&= \bm{F_{[\ell:n]}}-\bm{Q_{[\ell:n],S\cup T}} \left(\bm{Q_{S\cup T,S\cup  T}}\right)^{-1} \bm{F_{S\cup T}}\\
		&\stackrel{(a)}{=} \bm{F_{[\ell:n]}}-\begin{bmatrix}
			\bm{Q_{[\ell:n],S}} & \bm{Q_{[\ell:n], T}}
		\end{bmatrix}\begin{bmatrix}
			\left(\bm{Q_{S,S}}\right)^{-1} & 0\\
			0 & \left(\bm{Q_{T, T}}\right)^{-1}
		\end{bmatrix}\begin{bmatrix}
			\bm{F_{S}}\\
			\bm{F_{T}}
		\end{bmatrix}\\
		&\stackrel{(b)}{=} \bm{F_{[\ell:n]}}-\bm{Q_{[\ell:n],S}}\left(\bm{Q_{ S,S}}\right)^{-1}\bm{F_{S}},
	\end{align*}
	where, in $(a)$, we use the fact that $\bm{Q_{S,T}} = 0$ to compute the inverse; moreover, in $(b)$, we use the fact that $[\ell:n]\subset \tilde S_1(\bm{v})$ (see Definition~\ref{def:one-cut}), and hence, $\bm{Q_{[\ell:n],T}}=0$. This completes the proof.
\end{proof}

\subsection{Corollary~\ref{cor::polsize-zero}}
\begin{resultrestatement}{Corollary~\ref{cor::polsize-zero}}
	Suppose that $\supp(\Q)$ is a connected rooted tree with $k$ leaves. In this case, the number of nodes in the decision diagram constructed according to the variable order induced by the labeling of $\supp(\Q)$ is at most $n\left(\frac{n-1}{k}+2\right)^k=\mathcal{O}\left(n^{k+1}\right)$.
\end{resultrestatement}

\begin{proof}
	By Theorem~\ref{thm::identical-states-zero}, the number of distinct states at any layer of the decision diagram is at most the number of distinct one-cuts in $\mathcal{G}_{\Q}$.  Lemma~\ref{lem::number-mincut} shows that the number of such cuts is at most $\left(\frac{n-1}{k}+2\right)^k$.
	Therefore, each layer $\ell\in[2:n]$ of the decision diagram contains at most $\left(\frac{n-1}{k}+2\right)^k$ distinct states, while layers $1$ and $n+1$ contain a single node each (the root and the terminal node, respectively). The total number of nodes after merging identical states is thus at most $(n-1)\left(\frac{n-1}{k}+2\right)^k+2\leq n\left(\frac{n-1}{k}+2\right)^k$.
\end{proof}

\subsection{Lemma~\ref{lem::inverse}}\label{sec:proofLemInverse}
\begin{resultrestatement}{Lemma~\ref{lem::inverse}}
	Let $S$, $T$ and $V$ be a partition of $[n]$, with $S,T\neq \emptyset$ and $V$ possibly empty, such that no edge of $\supp(\bm{A})$ connects $S$ and $V$, that is, $\bm{A_{S,V}}=\bm{0}$. Then
		\begin{align*}
			\bm{Q_{S,T}} \left(\bm{Q_{T,T}}\right)^{-1} \;=\; - \left(\bm{A_{S,S}}\right)^{-1} \bm{A_{S,T}}.
		\end{align*}
\end{resultrestatement}

\begin{proof}
	Let $T^c = [n]\setminus T = S \cup V$, and let $\bm{I_{S,T^c}}$ denote the submatrix of the $n\times n$ identity matrix with rows indexed by $S$ and columns indexed by $T^c$. Since $\bm{A}\succ \bm{0}$, the block $\bm{A_{T^c,T^c}}$ is invertible, and the Schur-complement identity for the inverse of a partitioned matrix, applied to $\bm{Q}=\bm{A}^{-1}$, yields
	\begin{equation*}
		\left(\bm{Q_{T,T}}\right)^{-1} = \bm{A_{T,T}}-\bm{A_{T,T^c}}\left(\bm{A_{T^c,T^c}}\right)^{-1}\bm{A_{T^c,T}}.
	\end{equation*}
	
	Moreover, consider the matrix-inverse identity
	{\footnotesize\begin{align*}
			\bm{QA}=\bm{I}\Leftrightarrow \begin{pmatrix}\bm{Q_{S,S}}&\bm{Q_{S,T}}&\bm{Q_{S,V}}\\
				\bm{Q_{T,S}}&\bm{Q_{T,T}}&\bm{Q_{T,V}}\\
				\bm{Q_{V,S}}&\bm{Q_{V,T}}&\bm{Q_{V,V}}\end{pmatrix} \begin{pmatrix}\bm{A_{S,S}}&\bm{A_{S,T}}&\bm{0}_{S,V}\\
				\bm{A_{T,S}}&\bm{A_{T,T}}&\bm{A_{T,V}}\\
				\bm{0}_{V,S}&\bm{A_{V,T}}&\bm{A_{V,V}}\end{pmatrix}=\begin{pmatrix}\bm{I_{S,S}}&\bm{0}_{S,T}&\bm{0}_{S,V}\\
				\bm{0}_{T,S}&\bm{I_{T,T}}&\bm{0}_{T,V}\\
				\bm{0}_{V,S}&\bm{0}_{V,T}&\bm{I_{V,V}}\end{pmatrix}
	\end{align*}}
	The blocks of the matrix-inverse identity with rows indexed by $S$ and columns indexed by $T$ read \begin{align*}
		&\bm{0}_{S,T}=\bm{Q_{S,S}A_{S,T}}+\bm{Q_{S,T}}\bm{A_{TT}}+\bm{Q_{S,V}A_{V,T}} =\bm{Q_{S,T}}\bm{A_{TT}}+ \bm{Q_{S,T^c}A_{T^c,T}}\\
		\implies\;& \bm{Q_{S,T}A_{TT}}=-\bm{Q_{S,T^c}A_{T^c,T}}.\end{align*}
	Similarly, the blocks with rows indexed by $S$ and columns indexed by $T^c$ read
	\begin{align*}
		&\begin{pmatrix}\bm{I_{S,S}}&\bm{0}_{S,V}\end{pmatrix}=\begin{pmatrix}\bm{Q_{S,S}A_{S,S}}+\bm{Q_{S,T}A_{T,S}} & \bm{Q_{S,T}A_{T,V}}+\bm{Q_{S,V}A_{V,V}}\\
		\end{pmatrix}
		\\
		\Leftrightarrow\;&\begin{pmatrix}\bm{I_{S,S}}-\bm{Q_{S,S}A_{S,S}}&\bm{0}_{S,V}-\bm{Q_{S,V}A_{V,V}}\end{pmatrix}=\begin{pmatrix}\bm{Q_{S,T}A_{T,S}} & \bm{Q_{S,T}A_{T,V}}\\
		\end{pmatrix}
		\\
		\Leftrightarrow\;&\bm{I_{S,T^c}}-\bm{Q_{S,T^c}A_{T^c,T^c}}=\bm{Q_{S,T}A_{T,T^c}}.
	\end{align*}
	
	Combining these derived Schur complement and matrix inverse identities, we find that
	\begin{align*}
		\bm{Q_{S,T}}\left(\bm{Q_{T,T}}\right)^{-1} &= \bm{Q_{S,T}A_{T,T}}-\bm{Q_{S,T}A_{T,T^c}}\left(\bm{A_{T^c,T^c}}\right)^{-1}\bm{A_{T^c,T}}\\
		&= -\bm{Q_{S,T^c}A_{T^c,T}}-\left(\bm{I_{S,T^c}}-\bm{Q_{S,T^c}A_{T^c,T^c}}\right)\left(\bm{A_{T^c,T^c}}\right)^{-1}\bm{A_{T^c,T}}\\
		&=-\bm{I_{S,T^c}}\left(\bm{A_{T^c,T^c}}\right)^{-1}\bm{A_{T^c,T}}.
	\end{align*}
	Finally, since $\bm{A_{S,V}} = \bm{0}$ and $\bm{A_{V,S}}=\bm{0}$, we may write $\bm{A_{T^c,T^c}} = \begin{pmatrix}
		\bm{A_{S,S}} & \bm{0}\\
		\bm{0} & \bm{A_{V,V}}
	\end{pmatrix}$, and thus $\bm{I_{S,T^c}}\left(\bm{A_{T^c,T^c}}\right)^{-1} = \begin{pmatrix}
		\left(\bm{A_{S,S}}\right)^{-1} & \bm{0}
	\end{pmatrix}$, leading to
	\begin{align*}
		-\bm{I_{S,T^c}}\left(\bm{A_{T^c,T^c}}\right)^{-1}\bm{A_{T^c,T}} = -\begin{pmatrix}
			\left(\bm{A_{S,S}}\right)^{-1} & \bm{0}
		\end{pmatrix}\begin{pmatrix}
			\bm{A_{S,T}}\\
			\bm{A_{V,T}}
		\end{pmatrix} = - \left(\bm{A_{S,S}}\right)^{-1}\bm{A_{S,T}},
	\end{align*}
	thereby completing the proof; if $V=\emptyset$, the last step holds trivially with $\bm{A_{T^c,T^c}}=\bm{A_{S,S}}$.
\end{proof}

\subsection{Theorem~\ref{thm::identical-states-one}}
\begin{resultrestatement}{Theorem~\ref{thm::identical-states-one}}
	Suppose that $\supp(\bm{A})$ is a connected tree. For any $\ell\in [2:n]$ and $\bm{v} \in \{0,1\}^{\ell-1}$, let $S = S_0^\ell(\bm{v})$ be the associated zero-cut and let $\Gamma=\Gamma_{\bm{A}}(S)$ be the set of neighbors of $S$ in $\supp(\bm{A})$. Then, in the decision diagram, the state matrix of the node corresponding to assignment $\bm{v}$ is
		\[
		\bm{H^\ell}(\bm{v})
		= \bm{F_{[\ell:n]}} + \left(\bm{A_{S,S}^{-1}}\right)_{[\ell:n]}\bm{A_{S,\Gamma}} \bm{F_{\Gamma}},
		\]
		where $\left(\bm{A_{S,S}^{-1}}\right)_{[\ell:n]}$ denotes the submatrix of $\bm{A_{S,S}^{-1}}$ corresponding to rows with indices in $[\ell:n]\subseteq S$, and the second term is understood to be $\bm{0}$ if $\Gamma=\emptyset$.
\end{resultrestatement}

\begin{proof}
	Let $T = \left\{i\in [\ell-1]:v_i=1\right\}$ and $V = [n]\setminus (S\cup T)$. By the properties of zero-cuts discussed after Definition~\ref{def:zero-cut}, sets $S$, $T$ and $V$ indeed partition $[n]$, and $\Gamma\subseteq T$ (in turn implying that $\Gamma\cap V=\emptyset$ and in particular there are no edges in $\supp(\A)$ between $S$ and $V$).
	
	First suppose that $T=\emptyset$, that is, $\bm{v}=\bm{0}_{\ell-1}$. Then $S=[n]$, $\Gamma=\emptyset$, and Lemma~\ref{lem:closedFormState} gives $\bm{H^\ell}(\bm{v})=\bm{F_{[\ell:n]}}$, matching the claimed identity. In the remainder of the proof we assume $T\neq \emptyset$.
	
	By Lemma~\ref{lem:closedFormState}, we have
	\begin{equation}\label{eq:invTreeBase}
		\bm{H^\ell}(\bm{v}) = \bm{F_{[\ell:n]}}-\bm{Q_{[\ell:n],T}} \left(\bm{Q_{T,T}}\right)^{-1} \bm{F_{T}}.
	\end{equation}
	Since no edge in $\supp(\A)$ connects $S$ and $V$, Lemma~\ref{lem::inverse} applies to the partition $(S,T,V)$. Restricting the resulting identity to the rows indexed by $[\ell:n]\subseteq S$, we find that
	\begin{align*}
		\bm{Q_{[\ell:n],T}}\left(\bm{Q_{T,T}}\right)^{-1}= -\left(\bm{A_{S,S}^{-1}}\right)_{[\ell:n]}\bm{A_{S,T}}.
	\end{align*}
	Finally, since $\Gamma\subseteq T$ and $\bm{A_{S,\{j\}}}=\bm{0}$ for every $j\in T\setminus \Gamma$, we have $\bm{A_{S,T}F_{T}}=\bm{A_{S,\Gamma}F_{\Gamma}}$. Substituting the last two identities into \eqref{eq:invTreeBase} completes the proof.
\end{proof}

\subsection{Corollary~\ref{cor::polsize-one}}
\begin{resultrestatement}{Corollary~\ref{cor::polsize-one}}
	Suppose that $\supp(\bm{A})$ is a connected rooted tree with $k$ leaves. Then the number of nodes in the decision diagram constructed according to the node ordering induced by the labeling of $\supp(\bm{A})$ is at most $n\left(\frac{n-1}{k}+2\right)^k=\mathcal{O}\left(n^{k+1}\right)$.
\end{resultrestatement}

\begin{proof}
	By Theorem~\ref{thm::identical-states-one}, the number of distinct states at any layer $\ell$ of the decision diagram is at most the number of distinct zero-cuts $S_0^\ell(\bm{v})$. As discussed after Definition~\ref{def:zero-cut}, the map $S_0^\ell(\bm{v})\mapsto S_0^\ell(\bm{v})\cup\{s_0\}$ assigns to each zero-cut a distinct minimal cut of $\mathcal{G}_{\bm{A}}$, and by Lemma~\ref{lem::number-mincut} there are at most $\left(\frac{n-1}{k}+2\right)^k$ such cuts. The result follows by summing over the layers of the diagram.
\end{proof}

\subsection{Theorem~\ref{thm:approxSummary}}
\begin{resultrestatement}{Theorem~\ref{thm:approxSummary} (Summary of the main results of the section)}
	Suppose that $G=\supp(\Q)$ or $G=\supp(\Q^{-1})$ has polynomial volume growth with parameters $\gamma,\delta>0$, and that the associated $\eta$-boundaries satisfy $\omega_{G}(\eta)\leq \delta'\eta^{\gamma'}$ for all $\eta\geq 1$, for some parameters $\gamma',\delta'>0$. Then, for every $\epsilon>0$, there exists an $\epsilon$-exact decision diagram for $X_{\bm{F}}$ with the number of nodes bounded by
		\[
		(n+1)\cdot 2^{\delta'\eta_\epsilon^{\gamma'}},
		\] where $\eta_\epsilon=\mathcal{O}\left(\log(1/\epsilon)\right)$ with constants depending only on $\gamma$, $\delta$ and the extreme eigenvalues of $\Q$.
\end{resultrestatement}

\begin{proof}
	The result follows by combining Corollary~\ref{cor::epsilon-Q-size} (if $G=\supp(\Q)$) or Corollary~\ref{cor::epsilon-A-size} (if $G=\supp(\Q^{-1})$) with the assumed bound on $\omega_G(\eta)$, and by observing that the values of $\eta$ prescribed in \eqref{eq::m-lb-Q} and \eqref{eq::m-lb-A} satisfy $\eta=\mathcal{O}\left(\log(1/\epsilon)\right)$.
\end{proof}

\subsection{Auxiliary lemmas and notation for Theorem~\ref{thm::approx-Q} and Theorem~\ref{thm::approx-A}}\label{sec:auxNotation}

We record some useful structural properties of positive definite matrices. Let $I \subseteq [n]$, and let $V$ and $W$ be a partition of $I$, i.e., $I = V \cup W$ and $V \cap W = \emptyset$. Denoting by $\bm{S}\defeq \Q_{II}/\Q_{VV}= \Q_{WW}-\Q_{WV}\Q_{VV}^{-1}\Q_{VW}$ the Schur complement of $\Q_{VV}$ in $\Q_{II}$, we have
\begin{align}
	\Q_{II}=\begin{pmatrix}
		\Q_{VV} & \Q_{VW}\\
		\Q_{WV} & \Q_{WW}
	\end{pmatrix},\;\,
	\left(\Q_{II}\right)^{-1} = \begin{pmatrix}
		\Q^{-1}_{VV}+\Q^{-1}_{VV}\Q_{VW}\bm{S}^{-1}\Q_{WV}\Q^{-1}_{VV} & -\Q^{-1}_{VV}\Q_{VW}\bm{S}^{-1}\\
		-\bm{S}^{-1}\Q_{WV}\Q^{-1}_{VV} & \bm{S}^{-1}
	\end{pmatrix}.\label{eq::inverse}
\end{align} The following lemma is borrowed from \citet{bhathena2026solving} and characterizes the decaying structure of $\left(\Q_{II}\right)^{-1}$.
\begin{lemma}[Lemma 5 in \citealp{bhathena2026solving}]\label{lem:decayingInv}
	Let $\Q\in \mathbb{R}^{n\times n}$ be a symmetric positive definite matrix, and $I\subseteq [n]$ any subset of its rows/columns. Let $\pi: I\to \{1,\dots, |I|\}$ be the canonical indexing map that assigns to each $i \in I$ its corresponding row/column position within the submatrix $\Q_{II}$.  For any $i,j\in I$, we have
	\begin{align}
		\left|\left(\left(\Q_{II}\right)^{-1}\right)_{\pi(i),\pi(j)}\right|\le C_1\sigma^{d_{\supp(\Q)}(i,j)},
	\end{align}
	where $C_1\defeq\frac{1}{\mu_{\min}(\Q)}\ \max\left\{1,\frac{(1+\sqrt{\kappa_2})^2}{2\kappa_2}\right\}$.
\end{lemma}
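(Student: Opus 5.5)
The plan is to follow the polynomial-approximation argument of Demko, Moss and Smith for banded and sparse matrices, applied to the principal submatrix $\Q_{II}$. There are two observations. First, $\Q_{II}$ is symmetric positive definite, and by Cauchy interlacing its spectrum lies in $[\mu_{\min}(\Q),\mu_{\max}(\Q)]$. Second, the support graph of $\Q_{II}$, with indices relabeled through $\pi$, is the subgraph of $\supp(\Q)$ induced by $I$. Distances in an induced subgraph can only be larger than in the whole graph, so $d_{\supp(\Q_{II})}(\pi(i),\pi(j))\geq d_{\supp(\Q)}(i,j)$. Since $\sigma<1$, it therefore suffices to prove the bound with the induced-graph distance $d$ in place of $d_{\supp(\Q)}(i,j)$.

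\textbf{Trivial cases.} If $i$ and $j$ lie in different connected components of the induced graph, then $\Q_{II}$ is block diagonal up to permutation. The corresponding entry of the inverse is then $0$, and the bound holds with either distance convention. If $i=j$ (so $d=0$), then $|((\Q_{II})^{-1})_{\pi(i),\pi(i)}|\leq \|(\Q_{II})^{-1}\|_2\leq 1/\mu_{\min}(\Q)\leq C_1$. This case is the source of the ``$1$'' inside the maximum defining $C_1$.

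\textbf{Main case, $d\geq 1$ finite.} Let $p$ be any real polynomial of degree at most $d-1$. Entry $(\pi(i),\pi(j))$ of $(\Q_{II})^t$ is a sum over walks of length $t$ in the induced graph, so it vanishes whenever $t<d$. Hence $(p(\Q_{II}))_{\pi(i),\pi(j)}=0$, and
\[
\left|\left(\left(\Q_{II}\right)^{-1}\right)_{\pi(i),\pi(j)}\right|
=\left|\left(\left(\Q_{II}\right)^{-1}-p(\Q_{II})\right)_{\pi(i),\pi(j)}\right|
\leq \left\|\left(\Q_{II}\right)^{-1}-p(\Q_{II})\right\|_2 .
\]
By the spectral theorem, the right-hand side is at most $\sup_{x\in[a,b]}|1/x-p(x)|$, where $a=\mu_{\min}(\Q)$ and $b=\mu_{\max}(\Q)$.

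\textbf{Approximation bound.} I will then invoke the classical Chebyshev result on best uniform approximation of $1/x$ on $[a,b]$ (Demko--Moss--Smith, Proposition 2.1). With $\kappa_2=b/a$ and $q=(\sqrt{\kappa_2}-1)/(\sqrt{\kappa_2}+1)=\sigma$, the best approximation error by polynomials of degree at most $k$ is
\[
\frac{(1+\sqrt{\kappa_2})^2}{2b}\,q^{k+1}=\frac{1}{a}\cdot\frac{(1+\sqrt{\kappa_2})^2}{2\kappa_2}\,\sigma^{k+1}.
\]
Taking $k=d-1$ gives exactly $C_1\sigma^{d}$ with the second term of the maximum in $C_1$, which completes the argument.

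\textbf{Main obstacle.} The only delicate point is the exact constant in the Chebyshev bound. If needed, I would recover it by writing $1/x$ in the shifted variable $y=(2x-a-b)/(b-a)\in[-1,1]$ and summing the explicit Chebyshev expansion of $1/(y-y_0)$ with $y_0=-(b+a)/(b-a)$. Its coefficients decay geometrically with ratio $q$, and bounding the tail of this expansion yields the stated constant. Everything else is bookkeeping.
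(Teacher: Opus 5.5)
Your proof is correct. The paper gives no proof of this lemma; it imports it from Bhathena et al. Your argument supplies what the paper outsources, and the form of $C_1$ indicates it is the intended one. Indeed, $C_1=\max\{1/\mu_{\min}(\Q),\,(1+\sqrt{\kappa_2})^2/(2\mu_{\max}(\Q))\}$ is exactly the Demko--Moss--Smith constant:
\begin{itemize}
\item the second term is the minimax error $\frac{(1+\sqrt{\kappa_2})^2}{2\mu_{\max}(\Q)}\sigma^{k+1}$ of $1/x$ on $[\mu_{\min}(\Q),\mu_{\max}(\Q)]$, taken at $k=d-1$;
\item the $1$ covers $d=0$, where that formula does not apply (the only admissible polynomial is $0$, with error $1/\mu_{\min}(\Q)$).
\end{itemize}
Your two reductions make the bound hold uniformly over all $I$ with distances measured in $\supp(\Q)$. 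The first is interlacing, so a single spectral interval serves every principal submatrix. The second is monotonicity of graph distance when passing to the induced subgraph, combined with $\sigma<1$. You also get the slightly stronger bound in terms of the induced-graph distance for free.

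One correction concerns your fallback for recovering the constant: truncating the Chebyshev series and bounding the tail does \emph{not} give $C_1$. With $c=\frac{\mu_{\max}(\Q)+\mu_{\min}(\Q)}{\mu_{\max}(\Q)-\mu_{\min}(\Q)}$,
\[
\frac{1}{y+c}=\frac{2}{\sqrt{c^2-1}}\Bigl(\tfrac12+\sum_{k\ge1}(-\sigma)^kT_k(y)\Bigr).
\]
The sup norm of the tail beyond degree $d-1$ is attained at $y=-1$ and equals exactly $1+\sigma$ times the minimax error $\sigma^{d-1}/(c^2-1)$. Back in the $x$ variable, the truncated series therefore only yields
\[
\frac{1}{\mu_{\min}(\Q)}\bigl(1+\kappa_2^{-1/2}\bigr)\sigma^d,
\]
which is strictly larger than $C_1\sigma^d$ for every $\kappa_2>1$. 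To obtain the stated constant you need Chebyshev's closed-form best-approximation error, which is proved via equioscillation. So keep the citation to Demko--Moss--Smith rather than replacing it with a tail estimate. This affects only the constant, not the structure of your argument.
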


\paragraph{Zero-padding convention}
A hat over a matrix indicates that a vector or matrix whose rows and columns are indexed by subsets of $[n]$ is zero-padded in the remaining coordinates, so that sums and differences of quantities indexed by different subsets are well defined. For instance, given $T\subseteq[n]$ and $\bm{F}\in\R^{n\times m}$, the matrix $\hat{\bm{F}}_T\in\R^{n\times m}$ coincides with $\bm{F_T}$ in the rows indexed by $T$ and is zero elsewhere; in particular, $\hat{\bm{F}}_T=\Diag(\bm{z})\bm{F}$, where $\bm{z}\in\{0,1\}^n$ is the indicator vector of $T$. Similarly, given $S,T\subseteq[n]$ and $\bm{M}\in\R^{n\times n}$, the matrix $\hat{\bm{M}}_{S,T}\in\R^{n\times n}$ coincides with $\bm{M_{S,T}}$ in the entries indexed by $S\times T$ and is zero elsewhere. Inverses are always taken before zero-padding: for example, $(\hat{\bm{M}}_{T,T})^{-1}$ denotes the zero-padding of $(\bm{M_{T,T}})^{-1}$, because the padded matrix itself is singular whenever $T\neq[n]$.

\subsection{Theorem~\ref{thm::approx-Q}}

\begin{resultrestatement}{Theorem~\ref{thm::approx-Q}}
	Fix any $\ell\in [2:n]$ and $i \in [\ell:n]$, and consider $\bm{v_1}, \bm{v_2} \in \{0,1\}^{\ell-1}$ that are $\eta$-similar in $\supp(\Q)$ with respect to $i$. Then,
		\begin{align*}
			\norm{\bm{H^\ell}(\bm{v_1})_{\{i\}}-\bm{H^\ell}(\bm{v_2})_{\{i\}}}_2\leq \left(\frac{10\,\mu^{3.5}_{\max}(\Q)}{\mu^{3}_{\min}(\Q)}\right)\sqrt{\Delta_{i,1}\Delta_{i,\eta}}\,\sigma^\eta,
		\end{align*}
		where $\bm{H^\ell}(\bm{v})_{\{i\}}$ is the row of the state matrix $\bm{H^\ell}(\bm{v})$ corresponding to index $i$.
\end{resultrestatement}

\begin{proof} We use in this proof the additional lemmas and notation introduced in \ref{sec:auxNotation}. 
	For $s\in \{1,2\}$, let $T^{(s)}=\{j\in [\ell-1]: (\bm{v_s})_j=1\}$ denote the support of $\bm{v_s}$; if $T^{(s)}=\emptyset$, the corresponding terms below are understood to vanish. Recall from Lemma~\ref{lem:closedFormState} that
	\begin{align*}
		\bm{H^\ell}(\bm{v_s}) &= \F_{[\ell:n]}-\Q_{[\ell:n],T^{(s)}} \left(\Q_{T^{(s)}T^{(s)}}\right)^{-1} \F_{T^{(s)}}.
	\end{align*}
	Therefore, using the zero-padding convention above, we have
	{\small\begin{align*}
			&\bm{H^\ell}(\bm{v_1})_{\{i\}}-\bm{H^\ell}(\bm{v_2})_{\{i\}} 
			= \hat\Q_{\{i\},T^{(2)}} \left(\hat\Q_{T^{(2)}T^{(2)}}\right)^{-1} \hat\F_{T^{(2)}}-\hat\Q_{\{i\},T^{(1)}} \left(\hat\Q_{T^{(1)}T^{(1)}}\right)^{-1}\hat\F_{T^{(1)}} \\
			&= \underbrace{\left(\hat\Q_{\{i\},T^{(2)}}-\hat\Q_{\{i\},T^{(1)}}\right)\left(\hat\Q_{T^{(2)}T^{(2)}}\right)^{-1}\hat\F_{T^{(2)}}}_{:= A_1}
			+ \underbrace{\hat\Q_{\{i\},T^{(1)}}\left(\left(\hat\Q_{T^{(2)}T^{(2)}}\right)^{-1}-\left(\hat\Q_{T^{(1)}T^{(1)}}\right)^{-1}\right)\hat\F_{T^{(2)}}}_{:= A_2}\\
			&\quad+\underbrace{\hat\Q_{\{i\},T^{(1)}}\left(\hat\Q_{T^{(1)}T^{(1)}}\right)^{-1}\left(\hat\F_{T^{(2)}}-\hat\F_{T^{(1)}}\right)}_{:= A_3}.
	\end{align*}}%
	We bound each term in the above equation separately.
	First, we argue $\hat\Q_{\{i\},T^{(2)}} = \hat\Q_{\{i\},T^{(1)}}$, which in turn implies $A_1 = 0$. On the one hand, if $j\in [n]$ satisfies $Q_{ij}= 0$, then $(\hat\Q_{\{i\},T^{(2)}})_j = (\hat\Q_{\{i\},T^{(1)}})_j=0$ and the equality for that entry $j$ holds. On the other hand, if $Q_{ij}\neq 0$ then $d(i,j)\leq 1\leq \eta$, and $\eta$-similarity of $\bm{v_1}$ and $\bm{v_2}$ ensures that $j\in T^{(1)}\cap T^{(2)}$, implying $(\hat\Q_{\{i\},T^{(2)}})_j = (\hat\Q_{\{i\},T^{(1)}})_j=Q_{ij}$.

	Second, to bound $A_2$, let $V = \{j\in [\ell-1]: (\bm{v_1})_j=(\bm{v_2})_j=1,\; d(i,j)\leq \eta\}$ and $W^{(s)} = \{j\in [\ell-1]: (\bm{v_s})_j=1,\; d(i,j)> \eta\}$ for $s\in\{1,2\}$; By $\eta$-similarity, $T^{(s)}=V\cup W^{(s)}$. Note also that $\hat\Q_{\{i\},T^{(1)}}$ is supported on $V$, since $d(i,j)>\eta\geq 1$ implies $\Q_{ij}=0$ for $j\in W^{(1)}$. Denoting by $\bm{S^{(s)}}\defeq\Q_{T^{(s)}T^{(s)}}/\Q_{VV}$ the corresponding Schur complements, invoking~\eqref{eq::inverse} with the partition $T^{(s)}=V\cup W^{(s)}$, and observing that the terms involving $\Q_{VV}^{-1}$ alone cancel, straightforward algebraic manipulation yields
	{\small\begin{align*}
		\norm{A_2}_2 \leq&\!\!\!\! \sum_{s\in\{1,2\}}\!\!\!\!
		\left(\norm{\Q_{\{i\},V}\Q^{-1}_{VV}\Q_{VW^{(s)}}\left(\bm{S^{(s)}}\right)^{-1}\Q_{W^{(s)}V}\Q^{-1}_{VV}}_2\!\!+\norm{\Q_{\{i\},V}\Q^{-1}_{VV}\Q_{VW^{(s)}}\left(\bm{S^{(s)}}\right)^{-1}}_2\right)\norm{\F_{T^{(2)}}}_2\\
		\leq & \sum_{s\in\{1,2\}}\left(1+\norm{\Q_{W^{(s)}V}}_2\norm{\Q^{-1}_{VV}}_2\right)\norm{\Q_{\{i\},V}\Q^{-1}_{VV}\Q_{VW^{(s)}}}_2\norm{\left(\bm{S^{(s)}}\right)^{-1}}_2\norm{\F_{T^{(2)}}}_2.
	\end{align*}}
	Invoking Lemma~\ref{lem::decay}, we have
	{\small\begin{align*}
			\max\left\{\norm{\Q_{\{i\},V}\Q^{-1}_{VV}\Q_{VW^{(1)}}}_2,\norm{\Q_{\{i\},V}\Q^{-1}_{VV}\Q_{VW^{(2)}}}_2\right\}\leq \frac{2\mu^2_{\max}(\Q)}{\mu_{\min}(\Q)}\sqrt{\Delta_{i,1}\Delta_{i,\eta}}\, \sigma^\eta.
	\end{align*}}%
	Moreover, we have $\norm{\F_{T^{(2)}}}_2 \le \norm{\F}_2 \le \sqrt{\mu_{\max}(\Q)}$, $\max\{\norm{\Q_{VW^{(1)}}}_2, \norm{\Q_{VW^{(2)}}}_2\} \le \mu_{\max}(\Q)$, and $\norm{\Q_{VV}^{-1}}_2 \le 1/\mu_{\min}(\Q)$.
	Furthermore, by properties of the Schur complement, we have $\mu_{\min}\!\left(\bm{S^{(s)}}\right) \ge \mu_{\min}\!\left(\Q_{T^{(s)}T^{(s)}}\right) \ge \mu_{\min}(\Q)$, implying $\norm{(\bm{S^{(s)}})^{-1}}_2 \le 1/\mu_{\min}(\Q)$.
	Combining these bounds, we obtain
	\begin{align*}
		\norm{A_2}_2 \le \frac{8\,\mu_{\max}^{3.5}(\Q)}{\mu_{\min}^{3}(\Q)} \sqrt{\Delta_{i,1}\,\Delta_{i,\eta}}\, \sigma^\eta.
	\end{align*}
	Third, we bound $A_3$. Since $\hat\F_{T^{(2)}}-\hat\F_{T^{(1)}}$ is supported on $W^{(1)}\cup W^{(2)}$ and $\hat\Q_{\{i\},T^{(1)}}$ is supported on $V$, using~\eqref{eq::inverse} and straightforward algebra, we obtain
	\begin{align*}
		\norm{A_3}_2
		&\le \norm{\Q_{\{i\},V}\Q^{-1}_{VV}\Q_{VW^{(1)}}\left(\bm{S^{(1)}}\right)^{-1}}_2 \, \norm{\hat\F_{T^{(2)}}-\hat\F_{T^{(1)}}}_2 \\
		&\le \norm{\Q_{\{i\},V}\Q^{-1}_{VV}\Q_{VW^{(1)}}}_2 \, \norm{\left(\bm{S^{(1)}}\right)^{-1}}_2 \cdot 2\norm{\F}_2 \\
		&\le \frac{2\,\mu_{\max}^{2.5}(\Q)}{\mu_{\min}^{2}(\Q)} \sqrt{\Delta_{i,1}\,\Delta_{i,\eta}}\, \sigma^\eta.
	\end{align*}
	Combining the above bounds and noting that $\frac{\mu_{\max}^{2.5}(\Q)}{\mu_{\min}^2(\Q)}\leq \frac{\mu_{\max}^{3.5}(\Q)}{\mu_{\min}^3(\Q)}$ completes the proof.
\end{proof}

\subsection{Corollary~\ref{cor::epsilon-Q-size}}

\begin{resultrestatement}{Corollary~\ref{cor::epsilon-Q-size}}
	Suppose $\supp(\Q)$ has polynomial volume growth with parameters $\gamma,\delta>0$ (Definition~\ref{def::vol-Q}). Given any $\epsilon>0$, there exists an $\epsilon$-exact decision diagram with at most $(n+1)\cdot 2^{\omega_{\Q}(\eta)}$ nodes, where $\omega_{\Q}(\eta)$ is the maximum size of the $\eta$-boundaries of $\supp(\Q)$ (Definition~\ref{def:boundary}) and
		\begin{align*}
			\eta \;=\;
			\left\lceil\max\left\{
			\frac{2\log(C_{\Q}\delta / \epsilon)}{\log(1/\sigma)}, \;
			\frac{2\gamma}{\log(1/\sigma)}, \;
			\left(\frac{\gamma}{\log(1/\sigma)}\right)^2
			\right\}\right\rceil.
		\end{align*}
\end{resultrestatement}

\begin{proof}
	Consider the decision diagram obtained by merging, at every layer $\ell\in[2:n+1]$, all nodes whose associated partial solutions coincide on the $\eta$-boundary $\Lambda_{\ell}^{\eta}$; this is precisely the \underline{neighborhood merging} implementation of \texttt{merge} announced in \S\ref{sec:pseudocode}. The number of distinct nodes at layer $\ell$ is then at most $2^{|\Lambda_{\ell}^{\eta}|}\leq 2^{\omega_{\Q}(\eta)}$ and, since the diagram has $n+1$ layers, its total size is at most $(n+1)\cdot 2^{\omega_{\Q}(\eta)}$.
	
	It remains to verify that the resulting diagram is $\epsilon$-exact. By the discussion preceding the corollary, it suffices to ensure that
	\begin{align*}
		C_{\Q} \sqrt{\Delta_{1} \Delta_{\eta}}\, \sigma^\eta\leq \epsilon &\Longleftarrow C_{\Q}\delta \eta^{\gamma/2}\sigma^\eta\leq \epsilon\\
		&\iff \log\left(\eta^{\gamma/2}\sigma^\eta\right)\leq \log\left(\frac{\epsilon}{C_{\Q}\delta}\right)\\
		&\iff \frac{2\log(1/\sigma)}{\gamma}\cdot \eta - \log(\eta)\geq \frac{2\log\left(\frac{C_{\Q}\delta}{\epsilon}\right)}{\gamma}\\
		&\stackrel{(a)}{\Longleftarrow} \frac{\log(1/\sigma)}{\gamma}\cdot \eta\geq \frac{2\log\left(\frac{C_{\Q}\delta}{\epsilon}\right)}{\gamma}\\
		&\iff \eta\geq \frac{2\log\left(\frac{C_{\Q}\delta}{\epsilon}\right)}{\log(1/\sigma)},
	\end{align*}
	where in $(a)$ we use \citet[Claim 1]{bhathena2026solving}, which implies $\frac{\log(1/\sigma)}{\gamma} \eta\geq \log(\eta)$ for every $\eta\geq \max\left\{1, \frac{2\gamma}{\log(1/\sigma)}, \left(\frac{\gamma}{\log(1/\sigma)}\right)^2\right\}$. Therefore, if $\eta$ is chosen according to \eqref{eq::m-lb-Q}, then all merged nodes have state matrices within $\norm{\cdot}_{2,\infty}$-distance $\epsilon$ of each other, completing the proof.
\end{proof}

\subsection{Inverse-sparse structures}\label{sec:invSparse}
Next, we extend the results of \S\ref{sec:sparse} to the case where $\supp(\Q^{-1})$ is sparse. As in \S\ref{sec:invTree}, to streamline the presentation, we define $\A \defeq \Q^{-1}$ throughout this section; all graph-related quantities (distances, neighborhoods, similarity and boundaries) are now computed with respect to $G=\supp(\A)$.

Before proceeding, we present a slight generalization of Lemma~\ref{lem::inverse}, which does not require any sparsity assumption.

\begin{lemma}\label{lem::invQ2}
	Let $W$ and $V$ be disjoint subsets of $[n]$ with $V\neq\emptyset$, and let $V^c=[n]\setminus V$. Then,
	\begin{align}
		\Q_{WV} \bigl(\Q_{VV}\bigr)^{-1}
		\;=\; -\, \bm{I}_{W,V^c} \bigl(\A_{V^c V^c}\bigr)^{-1} \A_{V^c V},
	\end{align}
	where $\bm{I}_{W,V^c}$ denotes the submatrix of the $n\times n$ identity matrix with rows indexed by $W$ and columns indexed by $V^c$.
\end{lemma}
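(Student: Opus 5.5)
The plan is to reuse the first half of the proof of Lemma~\ref{lem::inverse} almost verbatim, simply stopping before the step that exploited the sparsity $\A_{S,V}=\bm{0}$. Rename so that $V$ plays the role of the inverted block (the former $T$) and $V^c=[n]\setminus V$ plays the role of the former $T^c$. Since $\A\succ\bm{0}$, the principal submatrix $\A_{V^cV^c}$ is invertible (if $V^c=\emptyset$ then $W=\emptyset$ and the claim is vacuous, so assume $V^c\neq\emptyset$). The Schur-complement formula for the inverse of a partitioned matrix, applied to $\Q=\A^{-1}$ with partition $(V,V^c)$, gives
\[
\left(\Q_{VV}\right)^{-1}=\A_{VV}-\A_{VV^c}\left(\A_{V^cV^c}\right)^{-1}\A_{V^cV}.
\]

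Next I will read off two block identities from $\Q\A=\bm{I}$, restricted to the rows indexed by $W$. First, I take the columns indexed by $V$. Since $W\cap V=\emptyset$, the identity block vanishes, which yields $\Q_{WV}\A_{VV}=-\Q_{WV^c}\A_{V^cV}$. Second, I take the columns indexed by $V^c$, which yields $\Q_{WV}\A_{VV^c}=\bm{I}_{W,V^c}-\Q_{WV^c}\A_{V^cV^c}$. Here $\bm{I}_{W,V^c}$ is the correct identity block because $W\subseteq V^c$.

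Finally, I multiply the Schur-complement expression on the left by $\Q_{WV}$ and substitute the two identities:
\[
\Q_{WV}\left(\Q_{VV}\right)^{-1}=-\Q_{WV^c}\A_{V^cV}-\left(\bm{I}_{W,V^c}-\Q_{WV^c}\A_{V^cV^c}\right)\left(\A_{V^cV^c}\right)^{-1}\A_{V^cV}.
\]
The two terms involving $\Q_{WV^c}\A_{V^cV}$ cancel, leaving $-\bm{I}_{W,V^c}\left(\A_{V^cV^c}\right)^{-1}\A_{V^cV}$, as claimed.

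No step is a real obstacle. The only points needing care are the bookkeeping that $W\subseteq V^c$, so that the identity block is exactly $\bm{I}_{W,V^c}$ and the $(W,V)$ block of $\bm{I}$ is zero, and the invertibility of $\A_{V^cV^c}$, which follows from positive definiteness. Unlike in Lemma~\ref{lem::inverse}, no further block-diagonal simplification is attempted, which is why no sparsity hypothesis is needed.
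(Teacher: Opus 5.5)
Your proof is correct and takes the same route as the paper, which simply points to the first half of the proof of Lemma~\ref{lem::inverse}: the Schur-complement formula for $(\Q_{VV})^{-1}$ combined with the $(W,V)$ and $(W,V^c)$ blocks of $\Q\A=\bm{I}$, stopping before the block-diagonal simplification. Your direct reading of the $(W,V^c)$ block is slightly cleaner than the paper's, whose intermediate display in that earlier proof drops terms using $\A_{S,V}=\bm{0}$, even though the resulting identity holds without that assumption.
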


\begin{proof}
	The proof follows directly from the arguments provided in the proof of Lemma~\ref{lem::inverse} in \ref{sec:proofLemInverse}, before the sparsity of $\A$ is invoked.
\end{proof}

Next, we establish the analog of Theorem~\ref{thm::approx-Q} for the case where $\supp(\A)$ is sparse. Note that, in contrast with Theorem~\ref{thm::approx-Q}, the right-hand side of the bound involves the sizes of all neighborhoods of radius at least $\eta$: since $\Q$ is in general dense in this setting, every row of the state matrix depends on all decisions, and the influence of far-away decisions is controlled through the decay properties of $\A$.

\begin{theorem}\label{thm::approx-A}
	Fix any $\ell\in[2:n]$ and $i \in [\ell:n]$, and consider $\bm{v_1}, \bm{v_2} \in \{0,1\}^{\ell-1}$ that are $\eta$-similar in $\supp(\A)$ with respect to $i$. Then,
	\begin{align*}
		\norm{
			\bm{H^\ell}(\bm{v_1})_{\{i\}}
			- \bm{H^\ell}(\bm{v_2})_{\{i\}}
		}_2
		\;\leq\;
		\frac{8 \, \mu_{\max}^{2.5}(\Q)}{\mu_{\min}^{3}(\Q)}\,
		C_1\left(\sum_{t=\eta}^{n} \sqrt{\Delta_t} \, \sigma^{t}\right),
	\end{align*}
	where $C_1\defeq\frac{1}{\mu_{\min}(\Q)}\max\left\{1,\frac{(1+\sqrt{\kappa_2})^2}{2\kappa_2}\right\}$.
\end{theorem}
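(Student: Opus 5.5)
The plan is to follow the template of Theorem~\ref{thm::approx-Q}, but to express each state row through $\A$ instead of $\Q$. For $s\in\{1,2\}$ let $T^{(s)}$ be the support of $\bm{v_s}$. Lemma~\ref{lem:closedFormState} gives
\[
\bm{H^\ell}(\bm{v_s})_{\{i\}}=\F_{\{i\}}-\Q_{\{i\}T^{(s)}}\bigl(\Q_{T^{(s)}T^{(s)}}\bigr)^{-1}\F_{T^{(s)}}.
\]
I will apply Lemma~\ref{lem::invQ2} with $W=\{i\}$ and $V=T^{(s)}$, and write $U^{(s)}\defeq [n]\setminus T^{(s)}$, which contains $i$. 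This rewrites the row as
\[
\F_{\{i\}}+\Bigl(\bigl(\A_{U^{(s)}U^{(s)}}\bigr)^{-1}\Bigr)_{\{i\}}\A_{U^{(s)}T^{(s)}}\F_{T^{(s)}}.
\]
After expanding, each row is a sum over pairs $(k,j)$ with $k\in U^{(s)}$, $j\in T^{(s)}$ and $A_{kj}\neq0$. The term for $(k,j)$ is the product of the entry of $(\A_{U^{(s)}U^{(s)}})^{-1}$ at $(i,k)$, the scalar $A_{kj}$, and the row $\F_{\{j\}}$. So the dependence on $\bm{v_s}$ enters only through the index sets $U^{(s)}$ and $T^{(s)}$.

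\textbf{Locality.} Let $B$ be the $\eta$-ball around $i$ in $\supp(\A)$. By $\eta$-similarity, $U^{(1)}\cap B=U^{(2)}\cap B$ and $T^{(1)}\cap B=T^{(2)}\cap B$. Lemma~\ref{lem:decayingInv} applies to $\A$, whose decay rate $\sigma$ and condition number coincide with those of $\Q$. It gives
\[
\bigl|((\A_{UU})^{-1})_{ik}\bigr|\le C_1'\sigma^{d(i,k)},
\]
where $C_1'$ is the constant of Lemma~\ref{lem:decayingInv} for $\A$. Since $\mu_{\min}(\A)=1/\mu_{\max}(\Q)$, this constant equals $\mu_{\max}(\Q)\mu_{\min}(\Q)\,C_1$, so the statement's prefactor must absorb this conversion; I will track the $\mu$ factors at the end. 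I will split the difference of the two rows into three pieces, mirroring $A_1,A_2,A_3$ of Theorem~\ref{thm::approx-Q}:
\begin{enumerate}
\item Pairs with $k$ far from $i$. These contribute at most $\sum_{t\ge\eta}\sigma^t$ times the number of relevant $k$ in the sphere of radius $t$.
\item Pairs with $k$ in the ball $B$. Here $U$ and $T$ agree locally, and the difference reduces to $\bigl((\A_{U^{(1)}U^{(1)}})^{-1}-(\A_{U^{(2)}U^{(2)}})^{-1}\bigr)_{ik}$.
\item Mismatched $j$. These have $d(i,j)>\eta$, so $k$ adjacent to $j$ lies at distance at least $\eta$.
\end{enumerate}

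\textbf{Difference of inverses (main obstacle).} For the second piece I will use the Schur-complement formula~\eqref{eq::inverse}, splitting each $U^{(s)}$ into its common part $V_0=U^{(s)}\cap B$ and its far part $W^{(s)}$. The difference is then a sum of terms, each containing a factor $(\A_{V_0V_0})^{-1}\A_{V_0W^{(s)}}$ evaluated at row $i$. Only $k\in V_0$ adjacent to $W^{(s)}$ matter, and these lie at distance about $\eta$ from $i$. To obtain the summed form $\sum_{t\ge\eta}\sqrt{\Delta_t}\sigma^t$ rather than a single $\sigma^\eta$ term, I will do a shell decomposition. Entries of $(\A_{UU})^{-1}$ in row $i$ and at distance $t$ from $i$ form a vector whose 2-norm is at most $\sqrt{\Delta_t}\,C_1'\sigma^t$. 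I then apply Cauchy--Schwarz shell by shell against $\norm{\A_{UT}\F_T}_2$, which is at most $\mu_{\max}(\A)\sqrt{\mu_{\max}(\Q)}$. The norms of Schur complements and inverses are bounded by $\mu_{\max}(\Q)$ or by $1/\mu_{\min}(\Q)$ as in Theorem~\ref{thm::approx-Q}.

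\textbf{Assembly.} I will sum the three pieces and collect constants into $8\mu_{\max}^{2.5}(\Q)\mu_{\min}^{-3}(\Q)C_1$, with the factor $8$ coming from two terms per $s$ and two choices of $s$. The hardest part is showing that the second piece really decays like $\sigma^{t}$ from distance $\eta$, rather than losing a factor through products of inverse norms. If a direct Schur argument gives only $\sigma^{\eta}$ times global norms, I will instead apply Lemma~\ref{lem:decayingInv} to the full inverse difference via the resolvent-type identity
\[
X^{-1}-Y^{-1}=X^{-1}(Y-X)Y^{-1},
\]
after padding both matrices to a common index set.
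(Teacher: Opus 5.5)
Your outline follows the paper's proof. You use Lemma~\ref{lem:closedFormState} and Lemma~\ref{lem::invQ2} to write each row as $\F_{\{i\}}+\bigl((\A_{UU})^{-1}\bigr)_{\{i\}}\A_{UT}\F_{T}$. You then split the difference into a block-difference term, an inverse-difference term and an $\F$-difference term. For the inverse difference you apply \eqref{eq::inverse} around a block common to both index sets; your $U\cap B$ works as well as the paper's $[\ell:n]\cup V_c$. Finally you bound rows of inverses shell by shell with Lemma~\ref{lem:decayingInv}.

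Your ``main obstacle'' is not one. The nodes of $U\cap B$ adjacent to the far part lie at distance exactly $\eta$ from $i$. So the Schur term gives $\sqrt{\Delta_\eta}\,\sigma^\eta$ times global norms, which is already a summand of $\sum_{t\ge\eta}\sqrt{\Delta_t}\sigma^t$. The resolvent fallback is unnecessary, and with zero-padding both matrices would be singular anyway.

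The gap is in the assembly step. You correctly note that Lemma~\ref{lem:decayingInv}, applied to principal submatrices of $\A$, comes with the constant
$C_1^{\A}=\mu_{\max}(\Q)\max\bigl\{1,\frac{(1+\sqrt{\kappa_2})^2}{2\kappa_2}\bigr\}=\mu_{\max}(\Q)\mu_{\min}(\Q)\,C_1$.
This factor cannot be absorbed into the prefactor, because $\mu_{\max}(\Q)\mu_{\min}(\Q)$ is not bounded by any constant; it is not even scale invariant.

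In fact the printed inequality is false. Under $\F\mapsto\alpha\F$ the states, and hence the left-hand side, scale like $\alpha$. Meanwhile $\sigma$, $\kappa_2$ and $\Delta_t$ are invariant, and $\mu_{\max}^{2.5}(\Q)\mu_{\min}^{-3}(\Q)C_1$ scales like $\alpha^{5-6-2}=\alpha^{-3}$. Any instance with a nonzero left-hand side therefore violates the bound for large $\alpha$. For example, take $\A$ irreducible tridiagonal of order $3$, $\ell=i=3$, $\eta=1$, $\bm{v_1}=(1,0)$ and $\bm{v_2}=(0,0)$; the left-hand side is $|Q_{31}|/\sqrt{Q_{11}}>0$.

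The paper's own proof makes exactly the slip you flagged. It invokes Lemma~\ref{lem:decayingInv} for $\A_{T_c^{(s)}T_c^{(s)}}$ and $\A_{\tilde V\tilde V}$ but uses $\Q$'s constant $C_1$.

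Carried out correctly, your plan (like the paper's) proves the bound with $C_1$ replaced by $C_1^{\A}$, still with the factor $8$. The inverse-difference term contributes $4$, since each $s$ gives a factor $1+\kappa_2\le 2\kappa_2$. The other two terms contribute $2$ each, after bounding $\sqrt{\mu_{\max}(\Q)}/\mu_{\min}(\Q)\le\mu_{\max}^{2.5}(\Q)/\mu_{\min}^{3}(\Q)$. That corrected, scale-consistent statement is what you should prove, rather than claiming the printed constant.
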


\begin{proof} This proof uses the auxiliary notation introduced in \ref{sec:auxNotation}. 
	
	For $s \in \{1,2\}$, let $T^{(s)} = \{j \in [\ell-1] : (\bm{v_s})_j = 1\}$ and $T^{(s)}_c = [n] \setminus T^{(s)}$; note that $[\ell:n]\subseteq T_c^{(s)}$, and in particular $i\in T_c^{(s)}$.
	Moreover, define $V = \{j \in [\ell-1] : (\bm{v_1})_j = (\bm{v_2})_j = 1,\ d(i,j) \leq \eta\}$ and
	$W^{(s)} = \{j \in [\ell-1] : (\bm{v_s})_j = 1,\ d(i,j) > \eta\}$ for $s \in \{1,2\}$, so that $T^{(s)}=V\cup W^{(s)}$ by $\eta$-similarity.
	We begin with the following identity:
	\begin{align*}
		\bm{H^\ell}(\bm{v_1})_{\{i\}}-\bm{H^\ell}(\bm{v_2})_{\{i\}} = &\hat\Q_{\{i\},T^{(2)}} \left(\hat\Q_{T^{(2)}T^{(2)}}\right)^{-1} \hat\F_{T^{(2)}}-\hat\Q_{\{i\},T^{(1)}} \left(\hat\Q_{T^{(1)}T^{(1)}}\right)^{-1} \hat\F_{T^{(1)}} \\
		=& \bm{I_{\{i\}}}\left(\hat \A_{T^{(1)}_c T^{(1)}_c}\right)^{-1} \hat \A_{T^{(1)}_c T^{(1)}}\hat\F_{T^{(1)}}-\bm{I_{\{i\}}}\left(\hat \A_{T^{(2)}_c T^{(2)}_c}\right)^{-1} \hat \A_{T^{(2)}_c T^{(2)}}\hat\F_{T^{(2)}},
	\end{align*}
	where the first equality follows from Lemma~\ref{lem:closedFormState}, and the second equality follows from Lemma~\ref{lem::invQ2}, with $\bm{I_{\{i\}}}$ denoting the $i$-th row of the $n\times n$ identity matrix. This identity leads naturally to a decomposition similar to that used in the proof of Theorem~\ref{thm::approx-Q}:
	{\small\begin{align*}
			\bm{H^\ell}(\bm{v_1})_{\{i\}}-\bm{H^\ell}(\bm{v_2})_{\{i\}}=& \underbrace{\bm{I_{\{i\}}}\left(\hat\A_{T^{(1)}_c T^{(1)}_c}\right)^{-1}\left(\hat\A_{T^{(1)}_c T^{(1)}}-\hat\A_{T^{(2)}_c T^{(2)}}\right)\hat\F_{T^{(1)}}}_{:= A_1}\\
			&+\underbrace{\bm{I_{\{i\}}}\left(\left(\hat\A_{T^{(1)}_c T^{(1)}_c}\right)^{-1}-\left(\hat\A_{T^{(2)}_c T^{(2)}_c}\right)^{-1}\right)\hat\A_{T^{(2)}_c T^{(2)}}\hat\F_{T^{(1)}}}_{:= A_2}\\
			&+\underbrace{\bm{I_{\{i\}}}\left(\hat\A_{T^{(2)}_c T^{(2)}_c}\right)^{-1}\hat\A_{T^{(2)}_c T^{(2)}}\left(\hat\F_{T^{(1)}}-\hat\F_{T^{(2)}}\right)}_{:= A_3}.
	\end{align*}}%
	We control each term separately, starting with $A_3$. Observe that for every $j \in V$, we have
	$\bigl(\hat\F_{T^{(1)}}\bigr)_{j} = \bigl(\hat\F_{T^{(2)}}\bigr)_{j}$,
	and therefore $\hat\A_{T^{(2)}_c T^{(2)}}\left(\hat\F_{T^{(1)}}-\hat\F_{T^{(2)}}\right) = \hat\A_{T^{(2)}_c W^{(2)}}\left(\hat\F_{T^{(1)}}-\hat\F_{T^{(2)}}\right)_{W^{(2)}}$. Let $\Gamma^{(2)}\defeq \Gamma_{\A}\left(W^{(2)}\right)\cap T_c^{(2)}$ denote the set of nodes of $T_c^{(2)}$ that are adjacent to $W^{(2)}$ in $\supp(\A)$.
	For every $j \in T_c^{(2)} \setminus \Gamma^{(2)}$, we have
	$\bigl(\hat\A_{T^{(2)}_c W^{(2)}}\bigr)_{j} = \bm{0}$.
	Consequently,
	\begin{align*}
		\norm{A_3}_2&\leq \norm{\left(\left(\hat\A_{T^{(2)}_c T^{(2)}_c}\right)^{-1}\right)_{\{i\},\Gamma^{(2)}}}_2\norm{\hat\A_{\Gamma^{(2)} W^{(2)}}}_2\norm{\hat\F_{T^{(1)}}-\hat\F_{T^{(2)}}}_2.
	\end{align*}
	Note that $\norm{\hat\F_{T^{(1)}}-\hat\F_{T^{(2)}}}_2 \leq 2\norm{\F}_2 \leq 2\sqrt{\mu_{\max}(\Q)}$.
	and $\norm{\hat\A_{\Gamma^{(2)} W^{(2)}}}_2 \leq \norm{\A}_2 = \mu_{\max}(\A)=1/\mu_{\min}(\Q)$. Next, we control $\norm{((\hat\A_{T^{(2)}_c T^{(2)}_c})^{-1})_{\{i\},\Gamma^{(2)}}}_2$. Since every $j\in \Gamma^{(2)}$ is adjacent to a node at distance greater than $\eta$ from $i$, we have $d(i,j) \geq \eta$ for every $j \in \Gamma^{(2)}$. Let $M_t$ be the set of nodes at distance exactly $t$ from $i$. Then, we have
	\begin{align*}
		\norm{\left(\left(\hat\A_{T^{(2)}_c T^{(2)}_c}\right)^{-1}\right)_{\{i\},\Gamma^{(2)}}}_2\leq \sum_{t=\eta}^n\sqrt{|M_t|}\,C_1\sigma^{t}\leq C_1\left(\sum_{t=\eta}^n\sqrt{\Delta_t}\,\sigma^{t}\right),
	\end{align*}
	where the first inequality follows by grouping the columns of $\Gamma^{(2)}$ according to their distance from $i$ and applying Lemma~\ref{lem:decayingInv} to the positive definite matrix $\A_{T^{(2)}_c T^{(2)}_c}$. Combining these bounds, we obtain
	\begin{align*}
		\norm{A_3}_2\leq \frac{2\sqrt{\mu_{\max}(\Q)}}{\mu_{\min}(\Q)}\,C_1\left(\sum_{t=\eta}^n\sqrt{\Delta_t}\,\sigma^{t}\right).
	\end{align*}
	
	We now turn to controlling $A_1$.
	Let $M_{\leq \eta}$ denote the set of nodes of $\supp(\A)$ whose distance from $i$ is at most $\eta$, and let $M_{>\eta}\defeq[n]\setminus M_{\leq\eta}$.
	By the $\eta$-similarity assumption, we have
	$T^{(1)} \cap M_{\leq \eta} = T^{(2)} \cap M_{\leq \eta}$,
	which implies
	$T^{(1)}_c \cap M_{\leq \eta} = T^{(2)}_c \cap M_{\leq \eta}$.
	Consequently, for every $j$ such that $d(i,j)\leq \eta-1$ (so that $j$ and all of its neighbors belong to $M_{\leq \eta}$), the rows $\bigl(\hat\A_{T_c^{(1)} T^{(1)}}\bigr)_{j}$ and $\bigl(\hat\A_{T_c^{(2)} T^{(2)}}\bigr)_{j}$ coincide.
	Using this observation, and letting $M_{\geq \eta}\defeq \{j\in[n]: d(i,j)\geq \eta\}$, we can write
	\begin{align*}
		\norm{A_1}_2 \leq& \norm{\left(\left(\hat\A_{T^{(1)}_c T^{(1)}_c}\right)^{-1}\right)_{\{i\}, M_{\geq \eta}}}_2\left(\norm{\hat \A_{ M_{\geq \eta} T^{(1)}}}_2+\norm{\hat \A_{ M_{\geq \eta} T^{(2)}}}_2\right)\norm{\hat\F_{T^{(1)}}}_2\\
		\leq& \frac{2\sqrt{\mu_{\max}(\Q)}}{\mu_{\min}(\Q)}\sum_{t=\eta}^n\norm{\left(\left(\hat\A_{T^{(1)}_c T^{(1)}_c}\right)^{-1}\right)_{\{i\}, M_{t}}}_2\\
		\leq& \frac{2\sqrt{\mu_{\max}(\Q)}}{\mu_{\min}(\Q)}\,C_1\left(\sum_{t=\eta}^n\sqrt{\Delta_t}\,\sigma^{t}\right),
	\end{align*}
	where the last inequality again follows from Lemma~\ref{lem:decayingInv}.
	
	Finally, we control $A_2$.
	Let $V_c = \{j \in [\ell-1] : d(i,j) \leq \eta,\ j \notin V\}$ denote the decided nodes near $i$ that are fixed to zero, for $s \in \{1,2\}$ let
	$W^{(s)}_c = \{j \in [\ell-1] : d(i,j) > \eta,\ j \notin W^{(s)}\}$ denote the decided nodes far from $i$ that are fixed to zero in $\bm{v_s}$, and define $\tilde V \defeq [\ell:n] \cup V_c$.
	With these definitions, $T_c^{(s)}=\tilde V\cup W_c^{(s)}$ and we can express
	\begin{align*}
		\A_{T_c^{(1)} T_c^{(1)}}=\begin{pmatrix}
			\A_{\tilde V \tilde V} & \A_{\tilde V W_c^{(1)}}\\
			\A_{W_c^{(1)} \tilde V} & \A_{W_c^{(1)} W_c^{(1)}}
		\end{pmatrix}, \quad \A_{T_c^{(2)} T_c^{(2)}}=\begin{pmatrix}
			\A_{\tilde V \tilde V} & \A_{\tilde V W_c^{(2)}}\\
			\A_{W_c^{(2)} \tilde V} & \A_{W_c^{(2)} W_c^{(2)}}
		\end{pmatrix},
	\end{align*}
	where the two matrices share the block $\A_{\tilde V\tilde V}$ and $i\in \tilde V$. We may therefore apply~\eqref{eq::inverse} to both inverses; since the terms involving $(\A_{\tilde V\tilde V})^{-1}$ alone cancel, and denoting by $\bm{P^{(s)}}\defeq \A_{T_c^{(s)}T_c^{(s)}}/ \A_{\tilde V \tilde V}$ the corresponding Schur complements, proceeding analogously to the proof of Theorem~\ref{thm::approx-Q}, we obtain
	{\footnotesize\begin{align*}
			&\norm{A_2}_2
			\leq \norm{\left(\left(\hat\A_{T^{(1)}_c T^{(1)}_c}\right)^{-1}-\left(\hat\A_{T^{(2)}_c T^{(2)}_c}\right)^{-1}\right)_{\{i\}}}_2\norm{\hat \A_{T_c^{(2)} T^{(2)}}}_2\norm{\hat\F_{T^{(1)}}}_2\\
			&\leq \frac{\sqrt{\mu_{\max}(\Q)}}{\mu_{\min}(\Q)}\!\sum_{s=1}^2\!\left(\norm{\left(\left(\A_{\tilde V \tilde V}\right)^{-1}\right)_{\{i\}}\!\!\!\A_{\tilde V W_c^{(s)}}\left(\bm{P^{(s)}}\right)^{-1}\!\!\A_{W_c^{(s)}\tilde V}\left(\A_{\tilde V \tilde V}\right)^{-1}}_2 \!\!\!+\norm{\left(\left(\A_{\tilde V \tilde V}\right)^{-1}\right)_{\{i\}}\A_{\tilde V W_c^{(s)}}\left(\bm{P^{(s)}}\right)^{-1}}_2\right).
	\end{align*}}%
	Upon defining $\Gamma_c^{(s)}$ as the set of nodes of $\tilde V$ that are adjacent to $W_c^{(s)}$ in $\supp(\A)$,
	we note that $\bigl(\A_{\tilde V W_c^{(s)}}\bigr)_{j} = \bm{0}$ unless $j \in \Gamma_c^{(s)}$, and that every $j \in \Gamma_c^{(s)}$ satisfies $d(i,j) \geq \eta$.
	Hence, by the same argument used to bound $A_3$,
	\begin{align*}
		\norm{\left(\left(\A_{\tilde V \tilde V}\right)^{-1}\right)_{\{i\}}\A_{\tilde V W_c^{(s)}}}_2 \leq \norm{\left(\left(\A_{\tilde V \tilde V}\right)^{-1}\right)_{\{i\}, \Gamma_c^{(s)}}}_2\norm{\A_{\Gamma_c^{(s)} W_c^{(s)}}}_2
		\leq \frac{1}{\mu_{\min}(\Q)}\,C_1\left(\sum_{t=\eta}^n\sqrt{\Delta_t}\,\sigma^{t}\right)
	\end{align*}
	for $s\in\{1,2\}$. Combining this bound with our expression for $A_2$, and using
	\begin{align*}
		\norm{\left(\bm{P^{(s)}}\right)^{-1}}_2 &\leq \frac{1}{\mu_{\min}(\A)}=\mu_{\max}(\Q)\text{ and }
		\norm{\A_{W_c^{(s)}\tilde V}\left(\A_{\tilde V \tilde V}\right)^{-1}}_2 \leq \frac{\mu_{\max}(\A)}{\mu_{\min}(\A)}=\frac{\mu_{\max}(\Q)}{\mu_{\min}(\Q)},
	\end{align*}
	we conclude that
	\begin{align*}
		\norm{A_2}_2 \;\leq\;
		\frac{4\, \mu_{\max}^{2.5}(\Q)}{\mu_{\min}^{3}(\Q)}\,
		C_1 \left(\sum_{t=\eta}^n\sqrt{\Delta_t}\,\sigma^{t}\right).
	\end{align*}
	Finally, since $\kappa_2\geq 1$,
	\[
	\frac{\sqrt{\mu_{\max}(\Q)}}{\mu_{\min}(\Q)}
	\leq \frac{\mu_{\max}^{2.5}(\Q)}{\mu_{\min}^{3}(\Q)}.
	\]
	Combining this inequality with the bounds on $A_1$, $A_2$, and $A_3$ completes the proof.
\end{proof}

As in the case where $\supp(\Q)$ is sparse, Theorem~\ref{thm::approx-A} implies that all partial solutions that coincide on the $\eta$-boundary $\Lambda_{\ell}^{\eta}$ of $\supp(\A)$ are $\eta$-similar with respect to every $i\in[\ell:n]$, and thus satisfy
\[
\norm{\bm{H^\ell}(\bm{v_1}) - \bm{H^\ell}(\bm{v_2})}_{2,\infty}
\;\leq\;
C_{\Q}' \left(\sum_{t=\eta}^{n} \sqrt{\Delta_t} \, \sigma^{t}\right),
\;\;\text{where}\;\;
C_{\Q}'\defeq \frac{8\,\mu_{\max}^{2.5}(\Q)}{\mu_{\min}^{3}(\Q)}C_1.
\]
This leads to the following counterpart of Corollary~\ref{cor::epsilon-Q-size}.
\begin{corollary}\label{cor::epsilon-A-size}
	Suppose $\supp(\A)$ has polynomial volume growth with parameters $\gamma,\delta>0$ (Definition~\ref{def::vol-Q}). Given any $\epsilon>0$, there exists an $\epsilon$-exact decision diagram with at most $(n+1)\cdot 2^{\omega_{\A}(\eta)}$ nodes, where $\omega_{\A}(\eta)$ is the maximum size of the $\eta$-boundaries of $\supp(\A)$ and
	{\small\begin{align}\label{eq::m-lb-A}
			\eta \;=\;
			\left\lceil\max\left\{
			\frac{2\log\left(C_{\Q}'\sqrt{\delta}/ \epsilon\right)+2\log\left(1/(1-\sqrt{\sigma})\right)}{\log(1/\sigma)}, \;
			\frac{2\gamma}{\log(1/\sigma)}, \;
			\left(\frac{\gamma}{\log(1/\sigma)}\right)^2
			\right\}\right\rceil.
	\end{align}}%
\end{corollary}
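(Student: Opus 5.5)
The argument mirrors the proof of Corollary~\ref{cor::epsilon-Q-size}, with Theorem~\ref{thm::approx-A} replacing Theorem~\ref{thm::approx-Q}.

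\emph{Size.} Use neighborhood merging with respect to $G=\supp(\A)$: at each layer $\ell\in[2:n+1]$, merge all nodes whose partial solutions coincide on the $\eta$-boundary $\Lambda_{\ell}^{\eta}$ of $\supp(\A)$. This leaves at most $2^{|\Lambda_\ell^\eta|}\le 2^{\omega_{\A}(\eta)}$ nodes per layer. Summing over the $n+1$ layers gives the claimed bound on the number of nodes.

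\emph{$\epsilon$-exactness.} Partial solutions that agree on $\Lambda_\ell^\eta$ are $\eta$-similar with respect to every $i\in[\ell:n]$. By the display following Theorem~\ref{thm::approx-A}, merged states therefore satisfy
\[
\norm{\bm{H^\ell}(\bm{v_1})-\bm{H^\ell}(\bm{v_2})}_{2,\infty}\le C_{\Q}'\sum_{t\ge\eta}\sqrt{\Delta_t}\,\sigma^t .
\]
It therefore suffices to show that this tail sum is at most $\epsilon/C_{\Q}'$ when $\eta$ is chosen as in \eqref{eq::m-lb-A}.

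\emph{Bounding the tail sum.} By polynomial volume growth, $\sqrt{\Delta_t}\le\sqrt{\delta}\,t^{\gamma/2}$, so the sum is at most $\sqrt{\delta}\sum_{t\ge\eta}t^{\gamma/2}\sigma^{t}$. Split $\sigma^t=\sigma^{t/2}\cdot\sigma^{t/2}$ and absorb the polynomial factor into the first half: require $t^{\gamma/2}\sigma^{t/2}\le 1$ for all $t\ge\eta$, i.e.\ $\tfrac{\log(1/\sigma)}{\gamma}\,t\ge\log t$. By \citet[Claim 1]{bhathena2026solving}, this holds for every $t\ge\max\{1,2\gamma/\log(1/\sigma),(\gamma/\log(1/\sigma))^2\}$, which the second and third terms of \eqref{eq::m-lb-A} guarantee. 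The remaining factor is a geometric series:
\[
\sum_{t\ge\eta}\sigma^{t/2}=\frac{\sigma^{\eta/2}}{1-\sqrt{\sigma}} .
\]
Hence the tail is at most $\sqrt{\delta}\,\sigma^{\eta/2}/(1-\sqrt\sigma)$.

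\emph{Solving for $\eta$.} Imposing $C_{\Q}'\sqrt{\delta}\,\sigma^{\eta/2}/(1-\sqrt\sigma)\le\epsilon$ and taking logarithms gives
\[
\eta\ge \frac{2\log\!\left(C_{\Q}'\sqrt{\delta}/\epsilon\right)+2\log\!\left(1/(1-\sqrt\sigma)\right)}{\log(1/\sigma)},
\]
which is exactly the first term of \eqref{eq::m-lb-A}.

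The only delicate point is the tail-sum step. The split $\sigma^t=\sigma^{t/2}\sigma^{t/2}$ must be set up so that the condition supplied by Claim~1 for $t\ge\eta$ matches the thresholds in \eqref{eq::m-lb-A}. Because that condition is monotone in $t$, verifying it at the single point $t=\eta$ covers the whole tail.
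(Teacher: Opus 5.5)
Your proposal is correct and follows the paper's proof essentially step for step. It uses neighborhood merging on $\supp(\A)$ for the $(n+1)\cdot 2^{\omega_{\A}(\eta)}$ size bound, then $\sqrt{\Delta_t}\le\sqrt{\delta}\,t^{\gamma/2}$, Claim~1 to get $t^{\gamma/2}\le\sigma^{-t/2}$ for all $t\ge\eta$, and a geometric series in $\sqrt{\sigma}$ to recover the first term in the definition of $\eta$. One small point: your closing monotonicity remark is superfluous, since Claim~1 already applies to each $t\ge\eta$ individually, as your tail-sum step itself states; it is also only true on $t\ge\gamma/\log(1/\sigma)$, because $t\mapsto \tfrac{\log(1/\sigma)}{\gamma}t-\log t$ is not monotone everywhere.
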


\begin{proof}
	As in the proof of Corollary~\ref{cor::epsilon-Q-size}, consider the diagram obtained by merging, at every layer $\ell$, all nodes whose partial solutions coincide on the $\eta$-boundary $\Lambda_\ell^\eta$ of $\supp(\A)$ (again, neighborhood merging); its size is at most $(n+1)\cdot 2^{\omega_{\A}(\eta)}$, and it suffices to verify that the merged states are within $\norm{\cdot}_{2,\infty}$-distance $\epsilon$. We have
	\begin{align*}
		C_{\Q}' \left(\sum_{t=\eta}^{n} \sqrt{\Delta_t} \, \sigma^{t}\right)\leq \epsilon
		& \Longleftarrow  C_{\Q}'\sqrt{\delta}\left(\sum_{t=\eta}^n t^{\gamma/2}\sigma^t\right)\leq \epsilon\\
		& \stackrel{(a)}{\Longleftarrow}  C_{\Q}'\sqrt{\delta}\left(\sum_{t=\eta}^n \sigma^{t/2}\right)\leq \epsilon\\
		& \Longleftarrow C_{\Q}'\sqrt{\delta}\,\sigma^{\eta/2}\left(\sum_{t=0}^\infty (\sqrt{\sigma})^t\right)\leq \epsilon\\
		& \iff \left(\frac{C_{\Q}'\sqrt{\delta}}{1-\sqrt{\sigma}}\right)\sigma^{\eta/2}\leq \epsilon\\
		& \Longleftarrow \eta\geq \frac{2\log\left(C_{\Q}'\sqrt{\delta}/\epsilon\right)+2\log\left(1/(1-\sqrt{\sigma})\right)}{\log(1/\sigma)},
	\end{align*}
	where in $(a)$, we again use \citet[Claim 1]{bhathena2026solving}, which implies $t^{\gamma/2}\leq \sigma^{-t/2}$ for every $t\geq \eta$, provided that $\eta\geq \max\left\{1,
	\frac{2\gamma}{\log(1/\sigma)}, \;
	\left(\frac{\gamma}{\log(1/\sigma)}\right)^2
	\right\}$. Therefore, if $\eta$ is chosen according to \eqref{eq::m-lb-A}, all merged nodes have state matrices within $\norm{\cdot}_{2,\infty}$-distance $\epsilon$ of each other, completing the proof.
\end{proof}

As in the sparse case, the value of $\eta$ in \eqref{eq::m-lb-A} is independent of $n$ and grows only logarithmically in $1/\epsilon$; thus, the size of the $\epsilon$-exact decision diagram scales linearly in $n$, multiplied by the boundary factor $2^{\omega_{\A}(\eta)}$. In particular, if $\A=\Q^{-1}$ is banded with bandwidth $k$, then $\omega_{\A}(\eta)\leq k\eta$ and the size is polynomial in $1/\epsilon$. We emphasize that, in contrast with the case of $\supp(\Q)$, this constitutes a new result even in the banded case.

\subsection{Proposition~\ref{prop:epsQuality}}
\begin{resultrestatement}{Proposition~\ref{prop:epsQuality}}
	Suppose that $\bm{Q}\succ\bm{0}$, let $\bm{\delta}\in\R^m$ satisfy $\bm{F\delta}=\bm{d}$, and let $\mathcal{D}_\epsilon$ be an $\epsilon$-exact decision diagram obtained via neighborhood merging as in Corollary~\ref{cor::epsilon-Q-size} or Corollary~\ref{cor::epsilon-A-size}, with arc lengths defined as in Proposition~\ref{prop:pathLength} using the states stored in the diagram. For $\bm{z}\in Z$, let $L(\bm{z})$ denote the length of the unique root--terminal path of $\mathcal{D}_\epsilon$ encoding $\bm{z}$ and $g^{n+1}(\bm{z})$ is the actual objective value associated with $\bm{z}$. Then
		$$\left|L(\bm{z})-g^{n+1}(\bm{z})\right|\leq \frac{\|\bm{\delta}\|_2^2}{\sqrt{\mu_{\min}(\Q)}}\, n\epsilon\qquad \forall \bm{z}\in Z.$$
\end{resultrestatement}

\begin{proof}
	First, we claim that every node of $\mathcal{D}_\epsilon$ stores the exact state of some partial solution belonging to its merging class. Indeed, the root stores $\bm{F}$, and if a node of layer $\ell$ stores $\bm{H^\ell}(\bm{\bar w})$ for a member $\bm{\bar w}$ of its class, then its children store the exact states $\bm{H^{\ell+1}}((\bm{\bar w},\bar z))$ for $\bar z\in \{0,1\}$. Moreover, $(\bm{\bar w},\bar z)$ belongs to the class of the corresponding child: every $j\leq \ell-1$ in $\Lambda_{\ell+1}^{\eta}$ belongs to $\Lambda_{\ell}^{\eta}$ as any $i\in [\ell+1:n]$ with $d(i,j)\le\eta$ also belongs to $[\ell:n]$, and on these coordinates all members of the parent class agree, while coordinate $\ell$ equals $\bar z$ for all partial solutions reaching the child.
	
	Now fix $\bm{z}\in Z$ and a layer $\ell$ with $z_\ell=1$, let $\bm{v}=\bm{z_{[\ell-1]}}$, and let $\bm{\bar v}$ be the class member whose exact state is stored at the node of layer $\ell$ visited by the path encoding $\bm{z}$. Since $\bm{v}$ and $\bm{\bar v}$ coincide on $\Lambda_\ell^\eta$, they are $\eta$-similar with respect to every $i\in [\ell:n]$ and, by the choice of $\eta$ in \eqref{eq::m-lb-Q} or \eqref{eq::m-lb-A}, the first rows $\bm{h}$ and $\bm{\bar h}$ of $\bm{H^\ell}(\bm{v})$ and $\bm{H^\ell}(\bm{\bar v})$ satisfy $\|\bm{h}-\bm{\bar h}\|_2\leq \epsilon$. We next argue that $\|\bm{h}\|_2^2$ and $\|\bm{\bar h}\|_2^2$ are bounded below by $\mu_{\min}(\Q)$. Indeed, since $\bm{h}^\top$ is the first row of $\bm{H^\ell}(\bm{v})=\bm{F_{[\ell:n]}}\bm{G_\perp^\ell}(\bm{v})$ (Proposition~\ref{prop:state}), we have $\bm{h}^\top=\bm{f_\ell^\top}(\bm{I}-\bm{G})$, where $\bm{f_\ell}$ is the $\ell$-th row of $\bm{F}$ and $\bm{G}=\bm{I}-\bm{G_\perp^\ell}(\bm{v})$ is the orthogonal projection matrix onto $\Row(\bm{F_v})$; since $\bm{I}-\bm{G}$ is symmetric and idempotent, it follows that $\|\bm{h}\|_2^2=\bm{f_\ell^\top}(\bm{I}-\bm{G})\bm{f_\ell}$. Letting $T=\{j\in [\ell-1]: v_j=1\}$ and expressing $\bm{G}=\bm{F_T^\top}(\bm{F_TF_T^\top})^{-1}\bm{F_T}$  (Proposition~\ref{prop:closedFormProjection}), the identities $\bm{F_T}\bm{F_T^\top}=\bm{Q_{TT}}$ and $\bm{F_T}\bm{f_\ell}=\bm{Q_{T,\{\ell\}}}$ yield $$\|\bm{h}\|_2^2=\bm{f_\ell}^\top (\bm{I}-\bm{F_T^\top}(\bm{F_TF_T^\top})^{-1}\bm{F_T})\bm{f_\ell}=Q_{\ell\ell}-\bm{Q_{\{\ell\},T}}\left(\bm{Q_{TT}}\right)^{-1}\bm{Q_{T,\{\ell\}}},$$ which is precisely the diagonal entry corresponding to index $\ell$ of the Schur complement $\Q_{II}/\Q_{TT}$ with $I=[n]$. Moreover, by \eqref{eq::inverse} applied with $V=T$ and $W=[n]\setminus T$, the inverse of this Schur complement is a principal submatrix of $\bm{Q^{-1}}$: its largest eigenvalue is thus at most $\mu_{\max}(\bm{Q^{-1}})=1/\mu_{\min}(\Q)$, and every diagonal entry of the Schur complement is at least $\mu_{\min}(\Q)$. The same argument applies to $\bm{\bar h}$, with $\bm{\bar v}$ in place of $\bm{v}$. A direct computation then shows that the normalized vectors satisfy $\left\|\frac{\bm{h}}{\|\bm{h}\|_2}-\frac{\bm{\bar h}}{\|\bm{\bar h}\|_2}\right\|_2\leq \frac{2\epsilon}{\sqrt{\mu_{\min}(\Q)}}$. Therefore, the lengths of the corresponding arcs of $\mathcal{D}_\epsilon$ and of the exact decision diagram differ by at most
	$$\frac{1}{4}\left|\left(\bm{\delta^\top}\tfrac{\bm{\bar h}}{\|\bm{\bar h}\|_2}\right)^2-\left(\bm{\delta^\top}\tfrac{\bm{h}}{\|\bm{h}\|_2}\right)^2\right|\leq \frac{1}{2}\|\bm{\delta}\|_2^2\left\|\tfrac{\bm{\bar h}}{\|\bm{\bar h}\|_2}-\tfrac{\bm{h}}{\|\bm{h}\|_2}\right\|_2\leq \frac{\|\bm{\delta}\|_2^2}{\sqrt{\mu_{\min}(\Q)}}\,\epsilon.$$
	Arcs with $\bar z=0$ have identical lengths in both diagrams, and each root--terminal path contains at most $n$ arcs with $\bar z=1$; since, by Proposition~\ref{prop:pathLength}, the length of the path encoding $\bm{z}$ in the exact diagram is $g^{n+1}(\bm{z})$, summing the per-arc errors yields the claim. 
\end{proof}

\end{document}